\font\cmssl=cmss10 at 12 pt
\newtheorem{thm}{Theorem}
\newtheorem{lem}[thm]{Lemma}
\newtheorem{prop}[thm]{Proposition}
\newtheorem{defn}[thm]{Definition}
\newtheorem{cor}[thm]{Corollary}
\newtheorem{rem}[thm]{Remark}
\newtheorem{ass}[thm]{Assumption}
\title{Classification of odd generalized Einstein metrics  on  $3$-dimensional Lie groups}
\date{\today}
\author{Vicente Cort\'es and  Liana David}
\begin{document}

\maketitle

{\bf Abstract:}  
An odd generalized metric   $E_{-}$ on a  Lie group  $G$  of dimension $n$  is a left-invariant generalized metric on a Courant algebroid 
$E_{H, F}$  of type $B_{n}$ over $G$ with left-invariant  twisting forms $H\in \Omega^{3}(G)$ and $F\in \Omega^{2}(G)$. 
Given an odd generalized 
metric $E_{-}$ on $G$ we determine the affine space of left-invariant Levi-Civita generalized connections of $E_{-}.$ 
Given in addition a left-invariant divergence operator  $\delta$  we 
show that there is  a left-invariant Levi-Civita generalized connection  of $E_{-}$ with divergence $\delta$ and we 
compute the  corresponding Ricci tensor 
$\mathrm{Ric}^{\delta}$  of  the pair $(E_{-}, \delta )$. 
The  odd generalized metric  $E_{-}$ is called odd generalized Einstein with  divergence $\delta$ 
if $\mathrm{Ric}^{\delta}=0$.
We describe all 
odd generalized Einstein metrics of arbitrary left-invariant divergence  on  all $3$-dimensional Lie groups. 
 
\section{Introduction} 

Generalized geometry is an active research area in mathematics with applications in mathematical physics.
The starting  idea proposed by N. Hitchin   in \cite{hitchin} was to include complex and symplectic structures on a manifold $M$ 
into a more general type of structure defined  on the generalized tangent bundle
$\mathbb{T} M = TM\oplus T^{*}M$, called a generalized  complex structure. Many notions from classical geometry were extended to generalized geometry. They  were initially considered  on generalized tangent bundles   twisted by closed $3$-forms (known 
in the literature as  exact Courant algebroids) and afterwards on arbitrary Courant algebroids, which were defined for the first time in  \cite{w}. \

The present paper may be seen as the odd analogue of \cite{c-k-paper}, where left-invariant generalized metrics 
on exact Courant algebroids over Lie groups 
were systematically  studied and, when the Lie  groups are  $3$-dimensional, a classification
of 
such metrics which satisfy the generalized Einstein equations 
was obtained.  Our aim in this paper is to develop an  analogous theory,
with exact Courant algebroids replaced by their odd analogue. Odd exact Courant algebroids were introduced in \cite{rubio}.
From the view-point of the classification of Courant algebroids developed in \cite{chen}, they  represent
the simplest class of Courant algebroids, after the exact ones.
In this paper we only consider (unless otherwise stated) odd exact Courant algebroids $E_{H, F}$
in the standard form (often called  Courant algebroids of type $B_{n}$)
over Lie groups $G$,  with left-invariant twisting forms $H$ and $F$ (where $n:= \dim G$).
The Lie group action  of $G$  by left-multiplication  lifts to the underlying bundle $TG\oplus T^{*}G\oplus \mathbb{R}$ 
of $E_{H, F}$  and preserves the Courant algebroid structure of $E_{H, F}.$ Tensor fields, generalized connections,  operators etc.\
 invariant under this action will be  called left-invariant.\

In Section  \ref{odd-metrics-section}   we consider an arbitrary  left-invariant generalized metric $E_{-}$ on  $E_{H, F}$.
By using a  Courant algebroid isomorphism,  we can (and will)  assume that $E_{-} $
is in the standard form,  i.e.\  
$E_{-} = \{ X - g(X) \ \mid X\in TG\}$, where $g$ is a (left-invariant) pseudo-Riemannian metric on $G$ (see Proposition \ref{standard-form}). 
We show  that given a left-invariant divergence operator $\delta \in  (\mathfrak{g}\oplus \mathfrak{g}^{*} \oplus \mathbb{R})^{*}$
(where $\mathfrak{g} := \mathrm{Lie}\, G$), 
 there is always a left-invariant Levi-Civita generalized connection 
$D$ of $E_{-}$ with divergence  $\delta$ (see Theorem \ref{left-inv-div}).  Along the way we describe the 
affine space of all left-invariant Levi-Civita generalized connections of $E_{-}$. It is worth to remark that similar  results are known for generalized metrics on
arbitrary Courant algebroids (not necessarily over Lie groups and not necessarily  of type $B_{n}$).  The specific 
point emphasized here is the left-invariance: we prove that a 
Levi-Civita generalized connection 
with prescribed divergence operator (whose existence is known from the general theory, see e.g.\ \cite{mario}) 
can be chosen to be left-invariant, when the generalized metric and divergence operator are so.

There are various approaches to define curvature in generalized geometry (see e.g. \cite{baraglia, hu, goto, mario}). In this paper we 
adopt view-point of \cite{mario}, where a generalized Ricci tensor was 
defined using Levi-Civita generalized connections with prescribed divergence operator. 
Following the general theory of \cite{mario}, 
given a  left-invariant generalized metric $E_{-}$ and a left-invariant divergence operator  $\delta$ on $E_{H, F}$,
we compute
the  generalized Ricci tensor 
of the pair  $(E_{-}, \delta )$
(see Theorem \ref{2-prime}). 
We define  an odd generalized Einstein metric on $G$ to be a left-invariant 
generalized metric  $E_{-}$ on a Courant algebroid $E_{H, F}$ of type $B_{n}$ over $G$, 
which satisfies the generalized Einstein equation 
\[ \mathrm{Ric}^{\delta}=0\] 
for a left-invariant divergence operator $\delta$.
We  often refer to $\delta$ as the divergence of $E_{-}.$\ 
In Corollary  \ref{3} we express  the generalized Einstein condition in terms of, on the one hand,
the coefficients of $H$ and $F$ with respect to  a $g$-orthonormal basis $(v_{a})$ of $\mathfrak{g}$ and, on the other hand, the coefficients of the Dorfman bracket and of $\delta$ with respect to 
the induced basis $(v_{A})$ of $\mathfrak{g}\oplus \mathfrak{g}^{*}\oplus \mathbb{R}.$

In Section \ref{three-dim} we apply the results developed in the previous section to the case when $G$ is $3$-dimensional. 
Corollary \ref{forapplic}, which is  a rewriting of Corollary  \ref{3} in three dimensions,  
represents  a main computational tool, which will be used all along the next sections. 
There are several  important features in three dimensions: 
$3$-dimensional Lie algebras have a relatively simple classification (see below); 
the constraints on the twisting forms $H$ and $F$ reduce to $ dF =0$; 
the classification of generalized Einstein metrics on Courant algebroids of type $B_3$ with $F=0$  reduces to that of
 ordinary generalized Einstein metrics on $3$-dimensional Lie groups, which was developed  in \cite{c-k-paper} (compare Remark~\ref{F} ii)). 
Owing to  this,   in the results summarized below we  assume that $F\neq 0$,  see Remark \ref{F} ii).

In Section \ref{unimod-section} we develop a complete description of  odd generalized Einstein metrics on $3$-dimensional  unimodular Lie groups.
Recall the classification of $3$-dimensional unimodular Lie algebras  (see e.g.\  \cite{milnor}, Section 4):  
such a Lie algebra is isomorphic to 
$\mathfrak{so}(3)$, $\mathfrak{so}(2,1)$, $\mathfrak{e}(2)$,  $\mathfrak{e}(1,1)$ 
(where $\mathfrak{e}(p, q)$ denotes the Lie algebra of the isometry group of $\mathbb{R}^{p,q}$
and $\mathfrak{e}(p):= \mathfrak{e}(p,0)$),
$\mathfrak{heis}$ (the Lie algebra of the Heisenberg group) or $\mathbb{R}^{3}$ (the abelian  $3$-dimensional Lie algebra). 
The results from Section  \ref{unimod-section} 
can be summarized as follows (for a  precise description of all  odd generalized Einstein metrics on unimodular Lie groups,
see  Theorems \ref{classif-diag}, \ref{classif-l2}, \ref{classif-l3}, \ref{classif-l5}):

\begin{thm}\label{short-unimod}  Let $G$ be a $3$-dimensional unimodular Lie group with Lie algebra $\mathfrak{g}.$ 
If  $\mathfrak{g}$ is isomorphic to 
$\mathfrak{so}(3)$, $\mathfrak{so}(2,1)$,  $\mathfrak{e}(1,1)$  or $\mathfrak{heis}$,  then there are divergence-free odd generalized  Einstein metrics on $G$, as well as odd generalized Einstein metrics with non-zero divergence. 
If $\mathfrak{g}$ is  abelian or isomorphic to $\mathfrak{e}(2)$,  then there are no
odd generalized Einstein metrics on $G$. 
\end{thm}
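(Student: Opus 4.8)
The plan is to obtain Theorem \ref{short-unimod} as the common corollary of the four detailed classifications (Theorems \ref{classif-diag}, \ref{classif-l2}, \ref{classif-l3} and \ref{classif-l5}), proceeding one isomorphism class at a time through Milnor's list of $3$-dimensional unimodular Lie algebras. For each $\mathfrak{g}$ I would first fix a basis $(v_1,v_2,v_3)$ adapted to the unimodular structure, in which the bracket takes the diagonal Milnor form $[v_2,v_3]=\lambda_1 v_1$, $[v_3,v_1]=\lambda_2 v_2$, $[v_1,v_2]=\lambda_3 v_3$; the signs and vanishing of $\lambda_1,\lambda_2,\lambda_3$ distinguish $\mathfrak{so}(3)$, $\mathfrak{so}(2,1)$, $\mathfrak{e}(2)$, $\mathfrak{e}(1,1)$, $\mathfrak{heis}$ and $\mathbb{R}^3$. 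By Proposition \ref{standard-form} the generalized metric reduces to a left-invariant pseudo-Riemannian $g$, and since $\dim G=3$ only the Riemannian and Lorentzian signatures occur; in each signature the automorphisms of $\mathfrak{g}$ preserving the diagonal form can be used to normalize $g$ and rescale the $\lambda_i$ down to a short list of representatives.

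With the signature, the metric $g$ and the $\lambda_i$ fixed, the remaining data are the twisting forms and the divergence. In three dimensions $H$ is a single scalar multiple of the volume form, $F$ is a $2$-form with three components constrained only by $dF=0$, and the left-invariant divergence $\delta\in(\mathfrak{g}\oplus\mathfrak{g}^*\oplus\mathbb{R})^*$ supplies finitely many further constants. The decisive step is then to substitute all of this into Corollary \ref{forapplic}, the three-dimensional form of the generalized Einstein condition, which converts $\mathrm{Ric}^\delta=0$ into an explicit finite system of polynomial equations in the components of $g$, the $\lambda_i$, the single component of $H$, the components of $F$ and of $\delta$. Imposing $F\neq 0$ as in Remark \ref{F} removes the overlap with the classical classification of \cite{c-k-paper}, so that the entire new content is encoded in this system.

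It remains to solve the system in each case. For $\mathfrak{so}(3)$, $\mathfrak{so}(2,1)$, $\mathfrak{e}(1,1)$ and $\mathfrak{heis}$ the claim is an existence statement, so it is enough to exhibit explicit solutions: one with $\delta=0$ to produce a divergence-free example and one with $\delta\neq 0$ for the non-divergence-free case. For $\mathbb{R}^3$ and $\mathfrak{e}(2)$ the claim is non-existence, and here one must prove that no solution exists for \emph{any} admissible signature, normalized $g$, value of $H$, nonzero $F$ and divergence $\delta$; the standard route is to eliminate variables until an equation forcing $F=0$, or a manifestly sign-definite combination forced to vanish, emerges. I expect this non-existence direction, together with the bookkeeping of signatures and automorphism normalizations, to be the main obstacle: producing examples only needs good guesses, whereas ruling out all Einstein metrics for the abelian and $\mathfrak{e}(2)$ cases demands a complete and sign-robust elimination in the system produced by Corollary \ref{forapplic}, with care that a degeneracy absent in the Riemannian signature does not resurface in the Lorentzian one.
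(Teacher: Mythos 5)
Your overall strategy --- reduce everything to the polynomial system of Corollary \ref{forapplic} and then either exhibit solutions or perform a complete elimination --- is the right one, and it is what the paper does. But your case decomposition has a genuine gap. You propose to fix a basis in which the bracket takes Milnor's diagonal form $[v_2,v_3]=\lambda_1v_1$, etc., and then to ``normalize $g$'' by automorphisms preserving that form. Corollary \ref{forapplic}, however, is stated for a $g$-\emph{orthonormal} basis, and when $g$ is indefinite one cannot in general choose a basis that is simultaneously Milnor-diagonal for the bracket and orthonormal for $g$: the obstruction is exactly that the operator $L$ relating the bracket to the cross product of $(g,\underline{\mathrm{vol}})$, though $g$-symmetric in the unimodular case, need not be diagonalizable over an indefinite scalar product. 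Moreover, the automorphism group preserving the diagonal structure constants is far too small to reduce the six components of $g$ to a short list, so the normalization you rely on does not exist. Your route would therefore force you either to redo the Dorfman/Ricci computations of Section \ref{three-dim} in a non-orthonormal basis, or to abandon the diagonal bracket form --- in either case the plan as written does not go through.

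The paper resolves this by inverting the order of quantification: it fixes the pair $(\mathfrak{g},g)$ and classifies the $g$-symmetric operator $L$ up to $g$-orthonormal change of basis, yielding the four families $L_1$ (diagonal), $L_2$, $L_{3,\eta}$, $L_5$, the last three occurring only in indefinite signature. Corollary \ref{forapplic} is then applied in each family (Theorems \ref{classif-diag}--\ref{classif-l5}), and only afterwards is the abstract isomorphism type of $\mathfrak{g}$ read off from the parameters of $L$ via Lemma \ref{identif-lie-alg}. The nonexistence for $\mathbb{R}^3$ and $\mathfrak{e}(2)$ then falls out automatically: the solution sets in Theorems \ref{classif-diag} and \ref{classif-l3} carry open sign conditions (e.g.\ $\epsilon_2\alpha_2(\alpha_1-\alpha_2)>0$, $\epsilon_1\alpha\eta<0$) that exclude precisely the parameter values realizing the abelian algebra and $\mathfrak{e}(2)$, so no separate sign-robust elimination argument is needed beyond the classification theorems themselves. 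If you adopt the $L$-based case division, the rest of your outline (explicit examples with $\delta=0$ and $\delta\neq 0$ for the four realizable algebras, exclusion of the other two) matches the paper.
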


In order to prove Theorem  \ref{short-unimod} let 
$E_{-} =\{ X- g(X) \mid  X\in TG\}$ be an odd generalized Einstein metric on  a $3$-dimensional unimodular Lie group $G$
with Lie algebra $\mathfrak{g}.$
As shown in \cite{c-k-paper}, 
the pair $(\mathfrak{g}, g )$ defines an operator $L\in \mathrm{End}\, \mathfrak{g}$ (unique up to multiplication by $\pm1$),
which relates the Lie bracket of $\mathfrak{g}$ with the cross product defined by $g$ and a choice of orientation on $\mathfrak{g}.$ 
Since  $\mathfrak{g}$ is unimodular, the operator $L$  is ${g}$-symmetric and admits five classes of  canonical forms.
The description of odd generalized Einstein metrics on $G$  (see Theorems \ref{classif-diag}, \ref{classif-l2}, \ref{classif-l3} and 
\ref{classif-l5}) follows by considering each of these classes  separately and applying Corollary \ref{forapplic} in a basis $(v_{a})$
of $\mathfrak{g}$,  adapted to these canonical forms.
The identification of the Lie algebra 
$\mathfrak{g}$
(and Theorem \ref{short-unimod} above) 
follows  then from Lemma \ref{identif-lie-alg}, which compares 
the classification   of  $3$-dimensional unimodular Lie algebras mentioned above with the classification 
(by means of canonical forms for the operator $L$) 
of such  Lie algebras endowed with a  non-degenerate  scalar product.\

In Section  \ref{non-unimod-section} we consider the same question as in the previous section, but under the assumption
that  the $3$-dimensional Lie group $G$ is non-unimodular.   
Then the  Lie algebra $\mathfrak{g} =\mathrm{Lie}\, G$ is a semi-direct product 
$\mathbb{R}\ltimes_{A} \mathbb{R}^{2}$ 
 of $\mathbb{R}$ and an abelian Lie algebra $\mathfrak{u} =\mathbb{R}^{2}$
 (the unimodular kernel of $\mathfrak{g}$), 
 where $\mathbb{R}$ acts on $\mathbb{R}^{2}$ 
by an endomorphism $A\in \mathrm{End}\, \mathbb{R}^{2}$ with non-zero trace. The isomorphism classes of 3-dimensional non-unimodular  Lie algebras correspond to 
a list of canonical forms for the operator $A$, see  (\ref{canonical-forms-A}), which,
in the notation of \cite{vinb},  define  the Lie algebras 
$\tau_{2} (\mathbb{R})\oplus \mathbb{R}$, $\tau_{3}(\mathbb{R})$, $\tau_{3, \lambda}(\mathbb{R})$ ($\lambda \in \mathbb{R}\setminus \{ -1\}$, 
$0<  | \lambda | \leq 1$) and 
$\tau_{3, \lambda}^{\prime} (\mathbb{R}) $ ($\lambda \in \mathbb{R}\setminus \{ 0\}$). Our main results from this section are summarized 
as follows:

\begin{thm}\label{short-non-unimod}
Let $G$ be a $3$-dimensional non-unimodular Lie group with Lie algebra $\mathfrak{g}$ and let $\mathfrak{u}$  be the  unimodular kernel of 
$\mathfrak{g}.$\

i)  There   are 
odd generalized Einstein metrics 
$E_{-} =  \{ X - g(X) \mid X\in TG\}$  on $G$ with  $g\vert_{\mathfrak{u}\times \mathfrak{u}}$ degenerate.
If   $\mathfrak{g}$ is isomorphic to $\tau_{2} (\mathbb{R})\oplus \mathbb{R}$,
$\tau_{3}(\mathbb{R})$ or  $\tau_{3, \lambda }(\mathbb{R})$,  then $E_{-}$ can be chosen to be divergence-free or with non-zero divergence.    If 
$\mathfrak{g}$ is isomorphic to $\tau_{3, \lambda}^{\prime} (\mathbb{R})$ then $E_{-}$ has necessarily non-zero divergence.\

ii)  There are 
odd generalized Einstein metrics  $E_{-} =\{X - g(X) \mid  X\in TG\}$ on $G$ with $g\vert_{\mathfrak{u}\times \mathfrak{u}}$ non-degenerate,
if and only if $\mathfrak{g}$ is isomorphic to   $\tau_{2} (\mathbb{R})\oplus \mathbb{R}$, $\tau_{3, \frac{1}{2}} (\mathbb{R})$ or 
$\tau_{3, -\frac{1}{2}} (\mathbb{R})$. All such metrics have non-zero divergence. 
 \end{thm}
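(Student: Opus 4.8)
The plan is to treat Theorem~\ref{short-non-unimod} as a direct application of the explicit Einstein equations furnished by Corollary~\ref{forapplic}, specialized to the non-unimodular Lie algebras $\mathfrak{g}=\mathbb{R}\ltimes_A\mathbb{R}^2$. By Proposition~\ref{standard-form} we may assume $E_-=\{X-g(X)\mid X\in TG\}$ is in standard form, and by Theorem~\ref{left-inv-div} a left-invariant Levi-Civita generalized connection with divergence $\delta$ always exists, so that $\mathrm{Ric}^\delta$ is well defined and computed via Theorem~\ref{2-prime}. I fix a basis $(e_1,e_2,e_3)$ of $\mathfrak{g}$ with $e_1$ generating the $\mathbb{R}$-factor and $\mathfrak{u}=\mathrm{span}(e_2,e_3)$ the unimodular kernel, so that $[e_1,u]=Au$ for $u\in\mathfrak{u}$ and $[e_2,e_3]=0$, with $A$ running through the canonical forms (\ref{canonical-forms-A}). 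Throughout, $H=h\,e^1\wedge e^2\wedge e^3$ is a multiple of the volume form, $F$ is a closed $2$-form with $F\neq0$ (our standing assumption), and $\delta$ is an arbitrary left-invariant divergence. The two cases of the theorem correspond to whether $g\vert_{\mathfrak{u}\times\mathfrak{u}}$ is degenerate; I treat them separately, and in each case I first cut down the free parameters of $g$ by applying the automorphisms of $(\mathfrak{g},A)$ preserving $\mathfrak{u}$ together with overall rescalings.

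For part ii), where $g\vert_{\mathfrak{u}\times\mathfrak{u}}$ is non-degenerate, I choose a $g$-orthonormal basis $(v_2,v_3)$ of $\mathfrak{u}$ and complete it to an adapted basis $(v_1,v_2,v_3)$ of $\mathfrak{g}$. I then express the structure constants of $\mathfrak{g}$, the coefficients of $F$ (subject to $dF=0$), of $H$, and of $\delta$ in this basis, and substitute into Corollary~\ref{forapplic}. Once its trace/scalar part is isolated, the resulting polynomial system is expected to constrain the spectrum of $A$ very rigidly: I anticipate that compatibility forces the eigenvalue ratio to be $\pm\tfrac12$, i.e.\ $\mathfrak{g}\cong\tau_{3,\frac12}(\mathbb{R})$ or $\tau_{3,-\frac12}(\mathbb{R})$, or else the decomposable algebra $\tau_2(\mathbb{R})\oplus\mathbb{R}$, while excluding all remaining canonical forms. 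The same scalar part should show that $\delta\neq0$ is necessary, since $\delta=0$ over-determines the system; the remaining equations then produce the explicit Einstein metrics.

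For part i), where $g\vert_{\mathfrak{u}\times\mathfrak{u}}$ is degenerate, there is a $g$-null line $\ell\subset\mathfrak{u}$ orthogonal to all of $\mathfrak{u}$, and I choose a basis adapted to this null structure (with $e_2$ spanning $\ell$, $e_3$ complementary in $\mathfrak{u}$, and $e_1$ transverse), so that the Gram matrix of $g$ on $\mathfrak{u}$ is nilpotent. After parametrizing $F$ (with $dF=0$), $H$ and $\delta$ and inserting them into Corollary~\ref{forapplic}, I expect solutions for every canonical form of $A$, hence for every non-unimodular $\mathfrak{g}$. The divergence analysis is the delicate point: for $\tau_2(\mathbb{R})\oplus\mathbb{R}$, $\tau_3(\mathbb{R})$ and $\tau_{3,\lambda}(\mathbb{R})$ I plan to exhibit explicit divergence-free solutions as well as solutions with $\delta\neq0$, whereas for $\tau_{3,\lambda}'(\mathbb{R})$ (non-real eigenvalues of $A$) the equations coming from the skew part of $A$ should obstruct $\delta=0$ and force a non-zero divergence.

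The main obstacle is the uniform bookkeeping and solution of the nonlinear algebraic systems produced by Corollary~\ref{forapplic} across all canonical forms in (\ref{canonical-forms-A}), and in particular the sharp existence/non-existence dichotomy in part ii). The cleanest route is to separate, in each case, the scalar (trace) components of $\mathrm{Ric}^\delta=0$ from the remaining tensorial components: the scalar components are expected to decide the admissible Lie algebras and the (non-)vanishing of $\delta$, after which the remaining components are solved to construct the metrics explicitly. Verifying that no spurious solutions appear for the excluded algebras — that is, that $\lambda\notin\{\pm\tfrac12\}$ together with all of $\tau_{3,\lambda}'(\mathbb{R})$ genuinely admit no non-degenerate solution — is where the argument will require the most care.
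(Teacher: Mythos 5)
Your overall strategy (reduce to standard form, fix an adapted basis, feed everything into Corollary~\ref{forapplic}, and split on whether $g\vert_{\mathfrak{u}\times\mathfrak{u}}$ is degenerate) matches the paper's, but there are two genuine problems. First, a concrete methodological error in the degenerate case: you propose a basis adapted to the null line $\ell\subset\mathfrak{u}$ in which the Gram matrix of $g$ on $\mathfrak{u}$ is nilpotent. Corollary~\ref{forapplic} — and all of the Dorfman and Ricci coefficient formulas it relies on, imported from \cite{c-k-paper} — is stated exclusively for a $g$-\emph{orthonormal} basis $(v_a)$ with $\epsilon_a=g(v_a,v_a)\in\{\pm1\}$, so your null frame cannot be substituted into it; you would have to rederive the entire Ricci computation in a non-orthonormal frame, which you do not address. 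The paper avoids this by using the $g$-orthonormal basis of Proposition~3.7 of \cite{c-k-paper}, in which the degeneracy of $g\vert_{\mathfrak{u}\times\mathfrak{u}}$ is encoded not in the Gram matrix but in the special shape of the bracket relations (\ref{bracket-deg}) (with $\epsilon_1=\epsilon_2=-\epsilon_3$ and $\lambda+\rho\neq0$).

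Second, and more fundamentally, essentially all of the mathematical content of the theorem is deferred: the statements you ``anticipate'' or ``expect'' — that in the non-degenerate case compatibility forces the eigenvalue ratio of $A$ to be $\pm\tfrac12$ or the decomposable algebra, that $\delta=0$ is excluded there, and that in the degenerate case the complex-eigenvalue algebras $\tau'_{3,\lambda}(\mathbb{R})$ force $\delta\neq0$ — are precisely the assertions that require proof. The paper establishes them through a lengthy case analysis (Theorems~\ref{deg-thm} and \ref{non-deg-thm}, Corollary~\ref{case-7}, Propositions~\ref{identif-lie} and \ref{identif-lie-non-deg}) organized around the vanishing or non-vanishing of quantities such as $F_{12}F_{23}$, $\delta_1-\delta_4$ and $\delta_1\delta_3$, not around a ``scalar/trace part versus tensorial part'' split of the Einstein equations; your proposed organizing principle does not correspond to any structure actually present in the system. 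For instance, the non-zero-divergence claim for $\tau'_{3,\lambda}(\mathbb{R})$ in part i) follows in the paper from the explicit form (\ref{cond-ii}) of the second solution family (since $\lambda+\rho\neq0$ forces $\delta_3\neq0$ or $\delta_1\neq 0$) together with the fact that the first family has $\nu=0$ and hence real eigenvalues of $\mathbf{A}$; nothing in your outline produces such an argument. As it stands the proposal is a plausible plan of attack with one wrong turn, not a proof.
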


In order to prove Theorem \ref{short-non-unimod} let  $E_{-} = \{ X - g(X)\mid  X\in TG\}$ be an odd generalized Einstein metric on 
a $3$-dimensional non-unimodular Lie group $G$ with Lie algebra $\mathfrak{g}.$  
As proved in \cite{c-k-paper}, $\mathfrak{g}$ admits a basis $(v_{a})$ adapted to the decomposition of $\mathfrak{g}$ into a semi-direct product. The form of the basis $(v_{a})$ depends on the (non)-degeneracy 
of $g\vert_{\mathfrak{u}\times \mathfrak{u}}.$ For this reason, the cases when $g\vert_{\mathfrak{u}\times \mathfrak{u}}$ 
is degenerate or not are treated separately (see Subsections \ref{degenerate-section} and  \ref{nondeg-sect}). 
Applying Corollary \ref{forapplic} in the   basis $(v_{a})$
we  arrive at  Theorems \ref{deg-thm}, \ref{non-deg-thm} and Corollary \ref{case-7}, which 
provide a complete description of odd generalized  Einstein metrics on all $3$-dimensional non-unimodular
Lie groups.
The identification of the corresponding Lie algebras is done in
Propositions \ref{identif-lie} and \ref{identif-lie-non-deg}, which lead to Theorem \ref{short-non-unimod}.\\

{\bf Acknowledgements.} Research of V.C.\ was supported by the German Science Foundation (DFG) under
Germany's Excellence Strategy -- EXC 2121 ``Quantum Universe'' -- 390833306. L.D.\ was partially supported by the UEFISCDI research grant PN-III-P4-ID-PCE-2020-0794, project title  ``Spectral Methods in Hyperbolic Geometry''.

\section{Odd  generalized metrics  on  Lie groups}\label{odd-metrics-section}

\subsection{Basic definitions}

Let $G$ be a Lie group  of dimension $n$ with Lie algebra $\mathfrak{g}$ and 
$$ 
E_{H, F}:= (TG \oplus T^{*}G \oplus \mathbb{R} , \pi ,  \langle \cdot , \cdot \rangle , 
 [\cdot , \cdot ]_{H, F} )
$$ 
a Courant algebroid  of type $B_{n}$ over $G$,  where $H\in \Omega^{3} (G)$,  $F\in \Omega^{2}(G)$ are such
that   $dF =0$ and $dH +F\wedge F =0.$ 
Unless otherwise stated,  in this paper we always assume that $H$ and $F$ are left-invariant.
Recall that  the anchor $\pi : E_{H, F} \rightarrow TG$  is the natural projection,  the  scalar product  is given by
\begin{equation}
\langle X_{x} +\xi_{x} +\lambda_{x} , Y_{x}  +\eta_{x}+ \mu_{x} \rangle := \frac{1}{2} ( \xi_{x} (Y_{x}) +\eta_{x} (X_{x}) ) +\lambda_{x} \mu_{x}
\end{equation}
where   $x\in G$, $X_{x}, Y_{x}\in T_{x} G$, $\xi_{x} , \eta_{x} \in T^{*}_{x}G$,  $\lambda_{x}  , \mu_{x} \in \mathbb{R} = (G\times \mathbb{R})_{x}$
and the Dorfman bracket is given  
\begin{align}
\nonumber& [ X +\xi + \lambda , Y +\eta + \mu ]_{H, F}= {\mathcal L}_{X} ( Y+\eta ) - i_{Y} d\xi + 2\mu d\lambda + i_{X} i_{Y} H \\
\label{def-dorfman}& - 2( \mu i_{X} F - \lambda i_{Y} F ) + X(\mu) - Y(\lambda ) + F(X, Y),
\end{align}
for any $X, Y\in {\mathfrak X}(G)$, $\xi , \eta \in \Omega^{1}(G)$ and $\lambda , \mu \in C^{\infty}(G)$.

There is a natural action of  $G$  on $TG\oplus T^{*}G\oplus \mathbb{R}$ which lifts the action  of $G$ on $G$ by left-translations:
for any $x,y\in G$,  
$$
x: ( E_{H, F})_{y} \rightarrow (E_{H, F})_{xy}
$$ 
is defined by 
\begin{equation}\label{action-G}
x ( X_{y} +\alpha_{y} +\lambda_{y} ) := (d_{y} L_{x})X_{y} + \alpha_{y} \circ  (  d_{y} L_{x})^{-1}  +\lambda_{y} ,
\end{equation}
for any $X_{y}\in T_{y}G$, $\alpha_{y} \in T^{*}_{y}G$ and $\lambda_{y}\in \mathbb{R}$.
It induces naturally an action of $G$ on $\Gamma (E_{H ,F})=\mathfrak{X}(G)\times \Omega^1 (G)\times C^\infty (G)$, defined by 
\[ (x,u=(X,\alpha, \lambda )) \mapsto x\cdot u:=((L_{x})_*X,(L_{x})_*\alpha, (L_{x})_*\lambda),\]
where  $(L_{x})_*$ denotes the push-forward on tensor fields induced by the diffeomorphism $L_{x}$.
(In particular, $(L_{x})_*\lambda = \lambda \circ L_{x}^{-1}$.)
The section $u\in \Gamma (E_{H, F})$ is  called left-invariant if $x\cdot u =u$ for any $x\in G.$
The anchor, scalar product  and Dorfman bracket  of $E_{H, F}$ are compatible with this action, i.e.\ for any $x\in G$, 
\begin{equation}\label{invariance-scalar-anchor}
 \pi ( xu) = (d_{y}L_{x})\pi (u),\ 
\langle x u, x v\rangle = \langle u, v\rangle ,\\
\forall u, v\in (E_{H, F})_{y},\ y\in G,
\end{equation}
and 
\begin{equation}\label{four}
 [ x\cdot u, x\cdot v]_{H, F} = x\cdot [u, v]_{H, F}, \ \forall u, v\in \Gamma (E_{H, F}), 
\end{equation}
where in (\ref{four}) we used that $H$ and $F$ are left-invariant.
In particular, for any   left-invariant sections $u, v\in \Gamma (E_{H, F})$, $\langle u, v\rangle$ is constant and
$[u, v]_{H, F}$ is also left-invariant.

Recall that a {\cmssl generalized metric}  on $E_{H, F}$ is a subbundle $E_{-} \subset TG \oplus T^{*} G\oplus\mathbb{R}$ 
such that $\langle \cdot , \cdot \rangle\vert_{E_{-}}$ is non-degenerate and $\pi\vert_{E_{-}} : E_{-} \rightarrow TG$ is an isomorphism. 
 The generalized metric $E_{-}$ induces
a pseudo-Riemannian metric $g$ on $G$, defined by 
\begin{equation}\label{induced-metric}
g(X, Y) := - \langle s(X), s(Y) \rangle ,\  \forall X, Y\in TG,
\end{equation}
where  $s:TG \rightarrow E_-$ is the  inverse of $\pi\vert_{E_{-}}.$ 
Let $E_{+} := E_{-}^{\perp}$ be the orthogonal complement  of $E_{-}$ with respect to $\langle \cdot , \cdot \rangle$
and $\mathcal G\in\Gamma (\mathrm{Sym}^2E^*)$ defined by
\begin{equation}\label{mathcal-g}
\mathcal G := \langle \cdot , \cdot \rangle\vert_{E_{+}} - \langle \cdot , \cdot \rangle\vert_{E_{-}},
\end{equation}
where $E=E_{H,F}$.
Remark that $\mathcal G \vert_{E_{\pm}}= \pm \langle \cdot , \cdot\rangle\vert_{E_{\pm}}$. 

\begin{defn}
An {\cmssl odd generalized metric} on $G$ is a left-invariant generalized metric on a Courant algebroid $E_{H , F}$  of type $B_{n}$ over $G$
(with $H$ and $F$ left-invariant) 
 i.e.\  a generalized metric $E_{-} $ 
which is preserved by the action  (\ref{action-G}) of $G$ on $TG\oplus T^{*}G\oplus 
\mathbb{R} .$
\end{defn}

\begin{rem}\label{invariance} {\rm If $E_{-}$ is an odd generalized metric and $s: TG \rightarrow E_{-}$ is the inverse of 
$\pi\vert_{E_{-}}$, 
then 
\begin{equation}\label{inv-s}
s (  (d L_{x})(X)) = x  s(X),\ \forall X\in TG,\ x\in G.
\end{equation}
It follows that the induced metric defined by (\ref{induced-metric})  is left-invariant.
}
\end{rem}

\begin{prop}\label{standard-form} Let $E_{-}$ be an odd generalized metric on $G$, defined on a  Courant algebroid $E_{H, F}$ of type $B_{n}$. 
Then  there is an isomorphism from $E_{H, F}$   to another  Courant algebroid $E_{\tilde{H}, \tilde{F}}$ of type $B_{n}$, 
with $\tilde{H}$ and $\tilde{F}$ left-invariant, 
which maps $E_{-}$ to the odd generalized  metric 
\begin{equation}\label{standard-e}
\tilde{E}_{-} =  \{ X - g(X) \mid X\in TG\}  ,
\end{equation}
 where $g$ is the (left-invariant) pseudo-Riemannian metric on $G$ induced by $E_{-}.$ 
 
 \end{prop}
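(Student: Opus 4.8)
The plan is to realize the reduction to standard form as an explicit gauge transformation of $E_{H,F}$, assembled from a left-invariant $1$-form and a left-invariant $2$-form that one reads off from $E_{-}$.

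First I would parametrize the generalized metric. Since $\pi\vert_{E_{-}}$ is an isomorphism, its inverse $s$ has the form $s(X) = X + \beta(X) + f(X)$, with $\beta(X)\in T^{*}G$ and $f(X)\in\mathbb{R}$ depending linearly on $X\in TG$. Writing $b(X,Y) := \beta(X)(Y)$ and splitting $b = b_{s} + b_{a}$ into symmetric and antisymmetric parts, the defining relation (\ref{induced-metric}) yields $b_{s} = -g - f\otimes f$. By Remark \ref{invariance} (see (\ref{inv-s})) the maps $\beta$ and $f$ are left-invariant, so $f$ is a left-invariant $1$-form and $b_{a}$ a left-invariant $2$-form on $G$.

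Next I would introduce the two families of orthogonal, anchor-preserving automorphisms of $TG\oplus T^{*}G\oplus\mathbb{R}$ generating the gauge group of a type $B_{n}$ Courant algebroid: for a $1$-form $A$ and a $2$-form $C$,
\begin{align}
\label{TA} T_{A}(X+\xi+\lambda) &= X + \bigl(\xi - 2\lambda A - A(X)\,A\bigr) + \bigl(\lambda + A(X)\bigr),\\
\label{TC} T_{C}(X+\xi+\lambda) &= X + \bigl(\xi + i_{X} C\bigr) + \lambda .
\end{align}
A direct check shows that each preserves $\langle\cdot,\cdot\rangle$ and commutes with $\pi$, and that each is an isomorphism of Courant algebroids from $E_{H,F}$ onto a type $B_{n}$ Courant algebroid with new twisting forms: $T_{C}$ sends $(H,F)$ to $(H-dC,\,F)$, while $T_{A}$ sends $(H,F)$ to $(H + 2A\wedge F - A\wedge dA,\ F - dA)$ (up to the sign conventions of (\ref{def-dorfman})). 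Using $dF=0$ and $dH+F\wedge F=0$ one checks that the new forms again satisfy $dF'=0$ and $dH'+F'\wedge F'=0$.

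Then I would compose these to straighten $E_{-}$. Choosing $A := -f$ cancels the $\mathbb{R}$-component, giving $T_{A}s(X) = X + \bigl(\beta(X) + f(X)\, f\bigr)$, whose $T^{*}G$-part has symmetric component $-g$ and antisymmetric component $b_{a}$. Choosing then $C := -b_{a}$ removes the antisymmetric part, so that $T_{C}T_{A}\,s(X) = X - g(X)$. Hence $T := T_{C}\circ T_{A}$ is a Courant algebroid isomorphism $E_{H,F}\to E_{\tilde H,\tilde F}$ carrying $E_{-}$ to $\tilde E_{-} = \{X - g(X)\mid X\in TG\}$. Since $A=-f$ and $C=-b_{a}$ are left-invariant, the transformed forms $\tilde H,\tilde F$ are again left-invariant, so $E_{\tilde H,\tilde F}$ is of the required type. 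I expect the main obstacle to be the bookkeeping behind the bracket-intertwining, namely verifying that (\ref{TA}) and (\ref{TC}) indeed send $[\cdot,\cdot]_{H,F}$ to $[\cdot,\cdot]_{\tilde H,\tilde F}$ with the stated $\tilde H,\tilde F$; once this is in hand, the orthogonality, anchor compatibility, and the reduction to standard form are routine linear algebra.
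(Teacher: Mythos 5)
Your proposal is correct and follows essentially the same route as the paper: the paper parametrizes $E_{-}$ by the triple $(g,A,b)$ (your $(g,f,b_{a})$) and applies the gauge transformation with twisting forms $\tilde H = H - db - (2F+dA)\wedge A$, $\tilde F = F+dA$, citing \cite{c-d-bn} for exactly the two facts you derive by hand, and then concludes left-invariance of $\tilde H,\tilde F$ from that of $g,A,b$ just as you do. The only caveat is the sign bookkeeping in your transformation rules for $(H,F)$ under $T_A$ and $T_C$, which you already flag and which matches the cited formulas up to the orientation of $A$ and $C$.
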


\begin{proof} This is the analogue of  Lemma~7 of \cite{c-d-bn}  in the setting of Lie groups.
As shown in \cite{c-d-bn} (see also \cite{baraglia}),  $E_{-}$ is of the form  
\begin{equation}\label{form}
E_{-} = \{ X - i_{X} (g-b) - A(X)A + A(X)\mid  X\in TG\}
\end{equation}
where $g$ is the pseudo-Riemannian metric 
on $G$ induced by $E_{-}$, $A\in \Omega^{1}(G)$ and $b\in \Omega^{2}(G).$ 
From Remark \ref{invariance}, $g$ is left-invariant. Relations (\ref{inv-s}) and (\ref{form})
imply  that $A$ and $b$ are left-invariant too.  From \cite{c-d-bn}, Section 2.1, $(E_{H, F}, E_{-} )$ is isomorphic to
$(E_{\tilde{H}, \tilde{F}}, \tilde{E}_{-})$, where 
\begin{align*}
\nonumber \tilde{H} & = H - db - ( 2F + dA) \wedge A,\\
\nonumber \tilde{F} & = F + dA
\end{align*}
and $\tilde{E}_{-}$ is given by  (\ref{standard-e}). Since $g, A, b$ are left-invariant, so are $\tilde{H}$ and $\tilde{F}.$ 
\end{proof}

\subsection{Left-invariant Levi-Civita generalized connections}

Following \cite{mario}, we start by recalling well known facts on Levi-Civita generalized connections.
Then we will define and study  left-invariant Levi-Civita generalized connections of  odd generalized metrics.

Let  $E= E_{H, F}$ be a Courant algebroid  of type $B_{n}$ over a Lie group  $G$, with anchor $\pi : E \rightarrow TG$,
scalar product $\langle\cdot , \cdot \rangle$ and Dorfman bracket $[\cdot , \cdot ]_{H, F}.$ We do not assume for the moment that
$H$ and $F$ are left-invariant.  
Recall that a generalized connection on $E$ is a linear map 
$$ 
D: \Gamma (E) \rightarrow \Gamma (E^{*} \otimes E),\ v \mapsto  Dv = ( u\mapsto  D_{u}v)
$$
which satisfies, for any $u, v ,w\in \Gamma (E)$ and $f\in C^{\infty}(G)$, 
\begin{align}
\nonumber& D_{u} ( fv ) = \pi (u) (f) v + f D_{u} v\\
\label{def-gen-conn}& \pi (u) \langle v, w\rangle = \langle D_{u} v, w\rangle + \langle v, D_{u} w\rangle .
\end{align}
The torsion of $D$ is defined by 
\begin{equation}
T^{D}(u, v, w) :=\langle D_{u} v - D_{v} u - [u, v]_{H, F} , w\rangle + \langle v, D_{w} u \rangle,
\end{equation}
for any $u, v, w\in \Gamma (E)$ and  is a section of $\Lambda^{3} E^{*}.$ 
The generalized connection  $D$ preserves a generalized metric $E_{-}$ on $E$ if, by definition, 
$ D_{u} \Gamma ( E_{-}) \subset \Gamma (E_{-})$
for any $u\in \Gamma (E)$, 
or, equivalently, 
\begin{equation}
\pi (u) \mathcal  G (v,w) = \mathcal G ( D_{u} v, w) + \mathcal G( v, D_{u} w),\ \forall  u, v, w\in \Gamma (E ),
\end{equation}
where $\mathcal G$ is the bilinear form  defined by    (\ref{mathcal-g}).  A {\cmssl Levi-Civita generalized connection}
of $E_{-}$ is a  torsion-free generalized connection which preserves $E_{-}.$

A {\cmssl divergence operator} is a map
$\delta : \Gamma (E) \rightarrow C^{\infty} (G)$
which satisfies
\begin{equation}
\delta ( fu ) = \pi (u) (f) + f \delta (u),\ \forall u\in \Gamma (E),\ f\in C^{\infty}(G).
\end{equation}
Given a generalized connection $D$, the map $\delta^{D} : \Gamma (E) \rightarrow C^{\infty} (G)$ defined by
$\delta^{D} (u) := \mathrm{tr}\, Du$ is a divergence operator, called the {\cmssl divergence of $D$}. 
Given a generalized metric $E_{-}$ and a divergence operator $\delta$, there is always a Levi-Civita generalized connection
of $E_{-}$  with divergence $\delta$ (see \cite{mario}).\\

From now on we assume that  the forms  $H$ and $F$ are left-invariant. 

\begin{defn} A generalized connection on $E$ is  {\cmssl left-invariant} if $D_{u} v$ is left-invariant, for any left-invariant sections $u, v\in \Gamma (E).$
\end{defn} 

Let $\mathfrak{g}:= \mathrm{Lie}\, (G)$ and  $\tilde{E}:=\mathfrak{g}\oplus
\mathfrak{g}^{*} \oplus \mathbb{R}$, which will be identified with the vector space of left-invariant sections of $E$. 
Since $\langle u,v\rangle$ is constant  for any left-invariant sections $u, v\in \Gamma (E)$, $\langle\cdot , \cdot \rangle$
induces a scalar product on $\tilde{E}$, also denoted by $\langle \cdot , \cdot \rangle .$  
With these identifications, 
a left-invariant generalized connection $D$  is an element of $\tilde{E}^{*}\otimes \mathfrak{so} (\tilde{E})$ and 
its  torsion  is  an element of $\Lambda^{3} \tilde{E}^{*}.$ 
Let $E_{-}$ be a left-invariant generalized metric on $E$. It  induces a subspace  $\tilde{E}_{-}\subset \tilde{E}$,  and,
 similarly $E_{+} = E_{-}^{\perp}$ induces a subspace
$\tilde{E}_{+}\subset \tilde{E}$. We obtain that $D$  preserves
$E_{-}$ if and only if it is an element of $\tilde{E}^{*}\otimes ( \mathfrak{so}(\tilde{E}_{+}) \oplus \mathfrak{so} (\tilde{E}_{-} )).$

\begin{defn} 
A {\cmssl  left-invariant divergence operator}  is a divergence operator $\delta : \Gamma (E) \rightarrow C^{\infty}(G)$ such that $\delta (u)$ is a left-invariant (hence constant)
function, for any left-invariant section $u\in \Gamma (E).$ 
\end{defn}

A  left-invariant divergence operator  will be  identified with an element of $\tilde{E}^{*}.$ 
Our main result from this section is the following theorem.

\begin{thm}\label{left-inv-div}  Assume that $\dim G\ge 2$. Given a  left-invariant generalized metric $ E_{-}$ on $E$  and a left-invariant divergence operator $\delta \in \tilde{E}^{*}$, there is a left-invariant Levi-Civita generalized connection $D$  of $E_{-}$ such
that $ \delta^{D} = \delta .$ 
\end{thm}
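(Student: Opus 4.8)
The strategy is to reduce the existence of a left-invariant Levi-Civita generalized connection with prescribed divergence to a linear-algebra problem on the finite-dimensional space $\tilde{E} = \mathfrak{g}\oplus\mathfrak{g}^{*}\oplus\mathbb{R}$, exploiting the identification (already established in the excerpt) of left-invariant generalized connections with elements of $\tilde{E}^{*}\otimes\mathfrak{so}(\tilde{E})$ and of the metric-preserving ones with $\tilde{E}^{*}\otimes(\mathfrak{so}(\tilde{E}_{+})\oplus\mathfrak{so}(\tilde{E}_{-}))$. The plan is to first parametrize the full affine space of left-invariant Levi-Civita generalized connections of $E_{-}$, and then show that the divergence map, restricted to this affine space, surjects onto $\tilde{E}^{*}$.

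First I would fix a reference left-invariant connection that preserves $E_{-}$; a canonical choice is the left-invariant Bismut-type or ``minimal'' connection $D^{0}$ built directly from the Dorfman bracket and the projections onto $\tilde{E}_{\pm}$, which manifestly lands in $\tilde{E}^{*}\otimes(\mathfrak{so}(\tilde{E}_{+})\oplus\mathfrak{so}(\tilde{E}_{-}))$ and is left-invariant by construction (since the bracket of left-invariant sections is left-invariant by (\ref{four})). Any other metric-preserving left-invariant connection differs from $D^{0}$ by a tensor $\Sigma\in\tilde{E}^{*}\otimes(\mathfrak{so}(\tilde{E}_{+})\oplus\mathfrak{so}(\tilde{E}_{-}))$, so the condition of being torsion-free (equivalently, having a prescribed symmetric part matching that of the torsion of $D^{0}$) becomes a \emph{linear} condition on $\Sigma$. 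The totally antisymmetric part of $\Sigma$ is then forced, and the remaining freedom lies in the kernel of the total-antisymmetrization map composed with the metric-compatibility constraint. I would track this freedom explicitly: writing $\tilde{E}_{+}$ and $\tilde{E}_{-}$ for the $\pm$-eigenspaces and decomposing $\Sigma$ into its $(\mathfrak{so}(\tilde{E}_{+}),\mathfrak{so}(\tilde{E}_{-}))$ blocks indexed by the $\tilde{E}^{*}$ factor, the Levi-Civita (vanishing torsion) equations pin down the ``mixed'' pieces while leaving a residual space $\mathcal{K}$ of pure-type endomorphism-valued one-forms undetermined. This reproduces the description of the affine space of Levi-Civita connections promised before the theorem.

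Second, and this is the heart of the argument, I would compute the divergence $\delta^{D}=\mathrm{tr}\,D$ as an affine function of $\Sigma$: $\delta^{D}=\delta^{D^{0}}+\mathrm{tr}\,\Sigma$, where the trace is over the $\tilde{E}^{*}\otimes\mathrm{End}(\tilde{E})$ slot. The task is to show that as $\Sigma$ ranges over the residual space $\mathcal{K}$ of Levi-Civita perturbations, the linear map $\Sigma\mapsto\mathrm{tr}\,\Sigma\in\tilde{E}^{*}$ is surjective; equivalently, that one can prescribe any target divergence $\delta$ by solving $\mathrm{tr}\,\Sigma=\delta-\delta^{D^{0}}$ within $\mathcal{K}$. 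Concretely I would exhibit, for each basis covector $v^{A}\in\tilde{E}^{*}$, an explicit element of $\mathcal{K}$ whose trace is $v^{A}$, using the rank-one-type perturbations $\Sigma=\theta\otimes(\text{skew endomorphism of a fixed signature block})$ that remain Levi-Civita because their total antisymmetrization vanishes. The key structural input is the presence of the extra $\mathbb{R}$-summand in type $B_{n}$: because $\tilde{E}_{+}$ and $\tilde{E}_{-}$ have dimensions differing by one (one of them absorbs the $\mathbb{R}$-factor), the dimension count for $\mathcal{K}$ against $\tilde{E}^{*}$ works out, and the hypothesis $\dim G\ge 2$ guarantees enough room in the skew blocks $\mathfrak{so}(\tilde{E}_{\pm})$ to realize an arbitrary trace.

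\textbf{Main obstacle.}
The delicate point is the surjectivity of the trace map on the Levi-Civita perturbation space $\mathcal{K}$ rather than on all of $\tilde{E}^{*}\otimes\mathfrak{so}(\tilde{E})$: imposing vanishing torsion removes exactly the totally antisymmetric freedom, and one must verify that enough trace-carrying directions survive. I expect this to come down to a careful dimension count and an explicit construction of perturbations valued in $\mathfrak{so}(\tilde{E}_{+})$ and $\mathfrak{so}(\tilde{E}_{-})$ separately, checking in each signature block that a nonzero trace is attainable while the total antisymmetrization stays zero; here the condition $\dim G\ge 2$ is what rules out the degenerate low-dimensional blocks where $\mathfrak{so}$ is too small. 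Once surjectivity is established, choosing a preimage of $\delta-\delta^{D^{0}}$ yields the desired left-invariant Levi-Civita connection $D=D^{0}+\Sigma$ with $\delta^{D}=\delta$.
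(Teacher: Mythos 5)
Your proposal follows essentially the same route as the paper: a canonical divergence-free left-invariant Levi-Civita connection $D^{0}$ built from the Dorfman coefficients, the identification of the affine space of Levi-Civita connections with the kernel of the total antisymmetrization map (which the paper shows equals $\mathrm{Im}(\mathrm{alt}_{+})\oplus\mathrm{Im}(\mathrm{alt}_{-})$), and explicit rank-one perturbations $\mathrm{alt}_{\pm}(\alpha^{2}\otimes\beta)$ with $\langle\alpha,\beta\rangle=0$ whose traces realize an arbitrary prescribed divergence, with $\dim G\ge 2$ guaranteeing that $\mathfrak{so}(\tilde{E}_{-})\neq 0$. The only minor inaccuracy is attributing the success of the dimension count to the mismatch $\dim\tilde{E}_{+}=\dim\tilde{E}_{-}+1$; what is actually needed is just that each block $\tilde{E}_{\pm}$ has dimension at least $2$, so that for every $\beta\in\tilde{E}_{\pm}^{*}$ one can choose $\alpha\perp\beta$ with $\langle\alpha,\alpha\rangle\neq 0$.
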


\begin{rem}{\rm Note that when $\dim G=1$, then $\delta^D|_{\tilde{E}_-}=0$ for any left-invariant Levi-Civita generalized connection, 
since $\mathfrak{so} (\tilde{E}_-)=0$. In this case, one can realize as the divergence of a left-invariant Levi-Civita generalized connection 
only the elements $\delta\in \tilde{E}^{*}$ in the annihilator of $\tilde{E}_-$.}
\end{rem}

The remaining part of this section is devoted to the proof of  Theorem~\ref{left-inv-div}.
As the argument is analogous to 
the proof of Proposition 2.16 of \cite{c-k-paper}, we only sketch the main steps.
Consider the setting of  Theorem \ref{left-inv-div}.  From Proposition~\ref{standard-form},  we can assume that
\begin{equation}
E_{-} = \{ X - g(X) \mid X\in TG\} 
\end{equation}
where $g$ is the induced  (left-invariant) pseudo-Riemannian metric on $G$. 
Then 
\begin{equation}
E_{+} = E_{-}^{\perp} =  \{ X + g(X) + a \mid X\in TG,\ a\in \mathbb{R}\}  .
\end{equation}
The subspaces $\tilde{E}_{\pm} \subset \tilde{E}$  defined by $E_{\pm}$ have the same form  as $E_{\pm}$, with $X\in TG$ replaced by $X\in \mathfrak{g}.$ Let $( v_{a})_{1\leq a\leq n}$ be a $g$-orthonormal basis of $\mathfrak{g}$  and $\epsilon_{a}:= g( v_{a}, v_{a})\in 
\{ \pm 1\} .$ Then
\begin{align}
\nonumber& ( e_{0}:= 1,\ e_{a}:= v_{a} + g(v_{a}),\  a\in \{1,\ldots , n\})\\
\nonumber&   ( e_{i} := v_{i-n} - g(v_{i-n}),\  i\in \{n+1,\ldots , 2n\} )
\end{align}
are bases of $\tilde{E}_{+}$ and $\tilde{E}_{-}$ respectively. 
Together, they form a basis 
\begin{equation}\label{frame}
( e_{A} ,\  A\in \{0,\ldots ,  2n\} ) = ( e_{a},\  e_{i},\ 0\leq a\leq n, \ n+1\leq i\leq 2n )
\end{equation}
of $\tilde{E}$.
Unless otherwise stated, capital indices  $A, B, C$, etc.\ will always run from $0$ to $2n$, 
indices $a, b, c, d$   will run from $0$ to $n$ and indices $i, j, k$ will run  from $n+1$ to $2n.$ 
Remark that 
$$
\langle e_{A}, e_{B} \rangle =\epsilon_{A} \delta_{AB},
$$
where $\epsilon_{0} :=1$, $\epsilon_{A} := \epsilon_{a}$ for any $1\leq A =a \leq n$ and $\epsilon_{A} := - \epsilon_{A-n}$ 
for any $A\geq n+1$.
The dual basis of $(e_{A})$ is given by 
$$ 
(e^{A})_{A= 0,\ldots , 2n} =
(e^{a} = \epsilon_{a} \langle e_{a}, \cdot \rangle ,\ e^{i} = -  \epsilon_{i-n} \langle e_{i} ,\cdot \rangle ,\,  
0\leq a\leq n,\ n+1\leq i\leq 2n).
$$
As in \cite{c-k-paper}, 
given a left-invariant generalized connection $D$ 
we define the  coefficients $(\omega_{AB}^{C})$ and $(\omega_{ABC})$  of $D$  in the basis $(e_{A})$  by
\begin{equation}\label{d-c-f}
D_{e_{A}} e_{B} = \sum_C \omega_{AB}^{C}e_{C},\ \omega_{ABC} = \langle D_{e_{A}} e_{B}, e_{C} \rangle = \sum_{D}\omega_{AB}^{D} \eta_{DC}=\omega_{AB}^{C} \epsilon_{C},
\end{equation}
where $\eta_{AB}:= \langle e_{A}, e_{B}\rangle$. We denote by  $(\eta^{AB}) := (\eta_{AB})^{-1}$ the inverse matrix of $(\eta_{AB}).$ 
It coincides with $(\eta_{AB})$,  i.e.\  $\eta^{AB} = \eta_{AB}.$  
We define 
\begin{equation}
B\in \otimes^{3} \tilde{E}^{*},\ 
B(u, v, w) := \langle [u, v]_{H, F}, w\rangle 
\end{equation}
and the {\cmssl Dorfman coefficients  of $E_{H, F}$ in the basis $(v_{A})$}  by
\begin{equation}\label{def-coeff}
B_{ABC}:=  B(e_{A}, e_{B}, e_{C}),
\end{equation}
for any $0\leq A, B, C\leq 2n.$  The same argument as in Lemma  2.9 of \cite{c-k-paper} shows that $B$ is completely skew.
Proposition \ref{zero} below proves  Theorem \ref{left-inv-div} in the case of zero divergence. 
Its proof is completely similar to the proof of   Proposition~2.15  of \cite{c-k-paper} and will be omitted. 
(In relation (\ref{d-0}) below we identify $\tilde{E}$ with $\tilde{E}^{*}$ using $\langle \cdot , \cdot \rangle$).

\begin{prop}\label{zero} The expression
\begin{equation}\label{d-0}
D^{0}:=\frac{1}{3} B\vert_{\Lambda^{3} \tilde{E}_{+}} + \frac{1}{3}  B\vert_{\Lambda^{3} \tilde{E}_{-}} + B\vert_{ \tilde{E}_{+} \otimes \Lambda^{2} \tilde{E}_{-}}  + B\vert_{ \tilde{E}_{-} \otimes \Lambda^{2} \tilde{E}_{+}} 
\end{equation}
defines a left-invariant, divergence-free, Levi-Civita  generalized connection of $E_{-}$ 
with Dorfman coefficients  in the basis $(e_{A})$ given by
\begin{equation}
\omega_{abc} =\frac{1}{3} B_{abc},\ \omega_{ijk} =\frac{1}{3} B_{ijk},\ \omega_{ibc} = B_{ibc},\ \omega_{ajk} = B_{ajk},\
\omega_{Aai} = \omega_{Aia} =0,
\end{equation}
for any $0\leq a, b, c\leq n$,  $i, j, k\geq n+1$ and $0\leq A\leq 2n.$
\end{prop}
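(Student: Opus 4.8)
The plan is to turn the statement into a piece of linear algebra on the totally skew tensor $B$, organized by the splitting $\tilde{E}=\tilde{E}_{+}\oplus\tilde{E}_{-}$. By the discussion preceding the proposition, a left-invariant generalized connection preserving $E_{-}$ is the same as an element of $\tilde{E}^{*}\otimes(\mathfrak{so}(\tilde{E}_{+})\oplus\mathfrak{so}(\tilde{E}_{-}))$; lowering the last index with $\langle\cdot,\cdot\rangle$, I would encode it by the trilinear form $\Gamma(u,v,w):=\langle D_{u}v,w\rangle$, which is skew in $(v,w)$ by metric compatibility and which vanishes whenever $v$ and $w$ lie in different summands $\tilde{E}_{\pm}$ by $E_{-}$-preservation. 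Thus $\Gamma\in\tilde{E}^{*}\otimes(\Lambda^{2}\tilde{E}_{+}^{*}\oplus\Lambda^{2}\tilde{E}_{-}^{*})$, and I would decompose it into four blocks according to whether its first argument lies in $\tilde{E}_{+}^{*}$ or $\tilde{E}_{-}^{*}$.

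Next I would compute the torsion. Substituting into the definition of $T^{D}$ and using $\langle[u,v]_{H,F},w\rangle=B(u,v,w)$ gives $T^{D}(u,v,w)=\Gamma(u,v,w)-\Gamma(v,u,w)+\Gamma(w,u,v)-B(u,v,w)$. Since $T^{D}$ is totally skew and $B$ is totally skew (by the argument for complete skewness of $B$ quoted above), the trilinear form $\mathcal{S}(\Gamma)(u,v,w):=\Gamma(u,v,w)-\Gamma(v,u,w)+\Gamma(w,u,v)$ is a $3$-form and $T^{D}=\mathcal{S}(\Gamma)-B$. Hence $D$ is Levi-Civita precisely when $\mathcal{S}(\Gamma)=B$, and the whole proposition reduces to exhibiting one preimage of $B$ under $\mathcal{S}$ and reading off its coefficients.

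The main step, and the point where I would concentrate all the care, is that $\mathcal{S}$ is block-diagonal for the type splitting $\Lambda^{3}\tilde{E}^{*}=\Lambda^{3}\tilde{E}_{+}^{*}\oplus(\Lambda^{2}\tilde{E}_{+}^{*}\otimes\tilde{E}_{-}^{*})\oplus(\tilde{E}_{+}^{*}\otimes\Lambda^{2}\tilde{E}_{-}^{*})\oplus\Lambda^{3}\tilde{E}_{-}^{*}$. Because $\Gamma$ has no $\tilde{E}_{+}$--$\tilde{E}_{-}$ mixing in its last two arguments, on three $\tilde{E}_{+}$-arguments only the $\tilde{E}_{+}^{*}\otimes\Lambda^{2}\tilde{E}_{+}^{*}$-block contributes, while on two $\tilde{E}_{+}$- and one $\tilde{E}_{-}$-argument exactly one of the three terms of $\mathcal{S}(\Gamma)$ survives, coming from the $\tilde{E}_{-}^{*}\otimes\Lambda^{2}\tilde{E}_{+}^{*}$-block; the two $\tilde{E}_{-}$-dominated types behave symmetrically. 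A short computation shows that on a totally skew tensor $\mathcal{S}$ acts as multiplication by $3$, so to reproduce the pure components $B|_{\Lambda^{3}\tilde{E}_{\pm}}$ by a totally skew block one takes the coefficient $\tfrac{1}{3}$, whereas each mixed type is reproduced from its single surviving block with coefficient $1$. This yields exactly the four summands in (\ref{d-0}); evaluating them in the basis $(e_{A})$ gives $\omega_{abc}=\tfrac{1}{3}B_{abc}$, $\omega_{ijk}=\tfrac{1}{3}B_{ijk}$, $\omega_{ibc}=B_{ibc}$, $\omega_{ajk}=B_{ajk}$ and $\omega_{Aai}=\omega_{Aia}=0$. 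The genuine obstacle here is the bookkeeping: verifying the decoupling of the four types and correctly obtaining the factors $3$ versus $1$.

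It remains to check the divergence and left-invariance. The divergence is $\delta^{D^{0}}(e_{B})=\sum_{A}\omega_{AB}^{A}=\sum_{A}\epsilon_{A}\omega_{ABA}$; since every nonzero coefficient of $D^{0}$ is a multiple of some $B_{ABC}$ and $B$ is totally skew, each term $\omega_{ABA}$ carries a repeated index and hence vanishes, so $\delta^{D^{0}}=0$. Finally, $D^{0}$ is built solely from the left-invariant tensor $B$ on $\tilde{E}$, so it is left-invariant, which completes the verification that $D^{0}$ is a left-invariant divergence-free Levi-Civita generalized connection of $E_{-}$.
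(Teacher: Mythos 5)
Your proof is correct and follows the same route as the argument the paper refers to (Proposition 2.15 of \cite{c-k-paper}): encode the connection as the trilinear form $\Gamma(u,v,w)=\langle D_uv,w\rangle\in\tilde{E}^{*}\otimes(\Lambda^{2}\tilde{E}_{+}^{*}\oplus\Lambda^{2}\tilde{E}_{-}^{*})$, observe that torsion-freeness is the equation $\sum_{(u,v,w)}\Gamma(u,v,w)=B(u,v,w)$, and solve it blockwise in the type decomposition of $\Lambda^{3}\tilde{E}^{*}$, which produces exactly the factors $\tfrac13$ on the pure blocks and $1$ on the mixed ones. The verification that the trace terms $\omega_{ABA}$ all vanish (either by skewness of $B$ or because the last two indices are of mixed type) correctly gives $\delta^{D^{0}}=0$.
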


To prove the statement of Theorem \ref{left-inv-div} for arbitrary divergence, 
one shows (as in  the proof of Proposition 2.8 of \cite{c-k-paper})   that the  space of left-invariant Levi-Civita generalized connections  of $E_{-}$ is an affine space modeled on  the  generalized first prolongation $\mathfrak{so} (\tilde{E})_{\tilde{E}_-}^{<1>}= \Sigma_{+} \oplus \Sigma_{-}$ of the Lie algebra $\mathfrak{so} (\tilde{E})_{\tilde{E}_-}:  = \{ A \in \mathfrak{so} (\tilde{E}) \mid 
A \tilde{E}_- \subset \tilde{E}_-\} = \mathfrak{so} (\tilde{E}_+)\oplus
\mathfrak{so} (\tilde{E}_-)$, where
\begin{equation}
\Sigma_{\pm}:= \mathrm{Ker}\, \partial_{\pm} : \tilde{E}_{\pm}^{*} \otimes \mathfrak{so} (\tilde{E}_{\pm }) \rightarrow \Lambda^{3} \tilde{E}^{*}, 
\end{equation}
and $\partial_{\pm}$ are defined by 
\begin{equation}
(\partial_{\pm} \alpha )(u, v, w) := \sum_{(u, v, w)} \langle \alpha_{u} (v), w\rangle, 
\end{equation}
where $\sum_{(u, v, w)}$ denotes  sum over cyclic permutations.  Moreover, $\Sigma_{\pm} = \mathrm{Im} ( \mathrm{alt}_{\pm} )$
where 
\begin{equation}
\mathrm{alt}_{\pm} : \mathrm{Sym}^{2} (\tilde{E}_{\pm}^{*} ) \otimes \tilde{E}_{\pm}^*\rightarrow \tilde{E}_{\pm}^{*} \otimes \mathfrak{so} (\tilde{E}_{\pm}) 
\end{equation}
are defined by
\begin{equation}
\langle \mathrm{alt}_{\pm} (\sigma )_{u} v, w\rangle := \sigma (u, v, w) - \sigma (u, w, v),
\end{equation}
for any $\sigma \in \mathrm{Sym}^{2} (\tilde{E}_{\pm}^{*} ) \otimes \tilde{E}_{\pm}^*$ and $u, v, w\in \tilde{E}_{\pm}.$ 
Let $\mathrm{alt}:= \mathrm{alt}_{+} \oplus \mathrm{alt}_{-}.$ 
The next proposition concludes the proof of Theorem \ref{left-inv-div}.

\begin{prop}\label{non-zero} Assume that $n\ge 2$.
Let  $\delta \in \tilde{E}^{*}$ be a left-invariant divergence operator.   Define  $D := D^{0} + S$,  where 
$D^{0}$ is the generalized connection from
Proposition \ref{zero} and
\begin{align}\label{S}
\nonumber S:= &  - \mathrm{alt} \left( \delta_{0} \epsilon_{1} ( e^{1})^{2} \otimes e^{0} +\delta_{n+1} \epsilon_{n+2} (e^{n+2})^{2} \otimes e^{n+1}\right)\\
\nonumber&  - \mathrm{alt} \left( \sum_{A=1}^{n} \delta_{A} (e^{0})^{2} \otimes e^{A} +\sum_{A=n+2}^{2n} \epsilon_{n+1} \delta_{A} (e^{n+1})^{2} \otimes e^{A}\right) ,
\end{align}
where  $\delta_{A}:= \delta (e_{A})$.  Then $D$ 
is a left-invariant Levi-Civita  generalized connection  of $E_{-}$ with divergence $\delta$.
\end{prop}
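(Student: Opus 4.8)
The plan is to verify three things: that $D = D^0 + S$ is a left-invariant Levi-Civita generalized connection of $E_-$, and that its divergence equals $\delta$. The first two properties reduce to checking that the correction term $S$ lies in the model space of the affine space of left-invariant Levi-Civita connections. Since $D^0$ is already such a connection by Proposition \ref{zero}, it suffices to show that $S \in \Sigma_+ \oplus \Sigma_-$, so that $D^0 + S$ remains torsion-free, metric-preserving, and block-diagonal with respect to $\tilde{E}_+ \oplus \tilde{E}_-$. This is automatic from the very definition of $S$: each summand is of the form $\mathrm{alt}(\sigma)$ for some $\sigma \in \mathrm{Sym}^2(\tilde{E}_\pm^*) \otimes \tilde{E}_\pm^*$, and we recorded above that $\Sigma_\pm = \mathrm{Im}(\mathrm{alt}_\pm)$. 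The first $\mathrm{alt}$-term involves only the $+$-basis elements $e^0, e^1$ and the $-$-basis elements $e^{n+1}, e^{n+2}$; the second involves $e^0, e^A$ (for $1 \le A \le n$) and $e^{n+1}, e^A$ (for $n+2 \le A \le 2n$). In each case the symmetric factor and the remaining covector live in the same $\pm$-block, so $S \in \mathrm{Im}(\mathrm{alt}) = \Sigma_+ \oplus \Sigma_-$. Hence $D$ is a left-invariant Levi-Civita connection.

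The main content is therefore the divergence computation $\delta^D = \delta$. First I would observe that $\delta^{D^0} = 0$ by Proposition \ref{zero}, so that $\delta^D = \delta^{D^0} + \delta^S = \delta^S$, and it remains to show $\delta^S = \delta$, i.e.\ $\mathrm{tr}(Su) = \delta(u)$ for each basis vector $u = e_B$. Using the trace formula $\mathrm{tr}(S e_B) = \sum_A \langle (S_{e_A} e_B), e^A \rangle$ together with the dual-basis conventions and $\langle e_A, e_B \rangle = \epsilon_A \delta_{AB}$ recorded above, this becomes a finite sum over the coefficients of $S$. The key is to compute $\langle \mathrm{alt}_\pm(\sigma)_u v, w \rangle = \sigma(u,v,w) - \sigma(u,w,v)$ explicitly for each of the four symmetric tensors appearing in $S$, and to track which diagonal entries $\langle S_{e_A} e_A, e^A\rangle$ survive the contraction. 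Because each $\mathrm{alt}$-term is built from squares like $(e^1)^2$ or $(e^0)^2$, the symmetrization produces a predictable pattern: the factor $\delta_0 \epsilon_1 (e^1)^2 \otimes e^0$ contributes precisely to $\mathrm{tr}(S e_0)$, recovering $\delta(e_0) = \delta_0$, and likewise the $\delta_{n+1}$ term recovers the $e_{n+1}$ trace, while the two sums in the second line recover $\delta_A$ for the remaining indices.

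The main obstacle I expect is purely bookkeeping: getting every sign right. The $\epsilon$-factors inserted into the definition of $S$ (the $\epsilon_1$, $\epsilon_{n+2}$, $\epsilon_{n+1}$ weights) are there precisely to cancel against the signs $\epsilon_A$ coming from the indefinite scalar product and from the dual basis $e^i = -\epsilon_{i-n}\langle e_i, \cdot\rangle$ (note the extra minus sign on the $E_-$ side). The reason the specific basis indices $1$, $n+1$, $n+2$ appear — rather than a symmetric-looking expression — is that one needs, for each target index $B$, a distinct ``spectator'' index in the same block along which to place the square, and this requires $n \ge 2$ so that indices $1$ and $2$ (hence $n+1$ and $n+2$) are available and distinct. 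Once I have computed $\langle \mathrm{alt}_\pm(\sigma)_{e_A} e_B, e^A\rangle$ for the four tensors and summed over $A$, matching $\delta^S(e_B) = \delta_B$ for all $B$ is a direct check; I would organize it by treating the four index ranges $B = 0$, $1 \le B \le n$, $B = n+1$, and $n+2 \le B \le 2n$ separately, confirming in each range that exactly one $\mathrm{alt}$-term contributes and reproduces the prescribed value $\delta_B$.
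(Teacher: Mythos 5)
Your proposal is correct and follows essentially the same route as the paper: $S$ lies in $\Sigma_+\oplus\Sigma_-=\mathrm{Im}(\mathrm{alt})$ by construction, so $D^0+S$ remains a left-invariant Levi-Civita connection, and the only content is the trace computation, which the paper packages as the identity $\mathrm{tr}\,(\mathrm{alt}_\pm(\alpha^2\otimes\beta)\,\cdot\,)=\langle\alpha,\beta\rangle\alpha-\langle\alpha,\alpha\rangle\beta$ (reducing to $-\langle\alpha,\alpha\rangle\beta$ when $\langle\alpha,\beta\rangle=0$) and applies to the four terms of $S$. Your term-by-term bookkeeping, your explanation of the $\epsilon$-weights, and your identification of why $n\ge 2$ is needed (a distinct spectator index in each block) all match the paper's argument.
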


\begin{proof} 
From Proposition \ref{zero} and the above considerations, it remains to prove that $\delta^{D} = \delta .$ 
For this, it is sufficient to remark that for any $S_{\pm}:=\mathrm{alt}_{\pm}\, (\alpha_{\pm}^{2} \otimes \beta_{\pm})$ 
where $\alpha_{\pm}, \beta_{\pm}\in \tilde{E}_{\pm}^{*}$, the covector $\lambda_{\pm} \in \tilde{E}_{\pm}^{*}$ defined by
$\lambda_{\pm } (v):= \mathrm{tr}\, (S_{\pm} v)$, where
$$
\mathrm{tr}\, (S_{\pm} v):= \mathrm{trace}\, ( \tilde{E}_{\pm} \ni u \mapsto (S_{\pm})_{u} v\in \tilde{E}_{\pm}) , 
$$
is given by 
\begin{equation}\label{contraction}
\lambda_{\pm} = \langle \alpha_{\pm}, \beta_{\pm} \rangle
\alpha_{\pm} - \langle \alpha_{\pm}, \alpha_{\pm}\rangle \beta_{\pm}.
\end{equation} 
In particular, if $\langle \alpha_{\pm}, \beta_{\pm} \rangle =0$ then
\begin{equation}\label{contraction-simpl}
\lambda_{\pm} = -\langle \alpha_{\pm}, \alpha_{\pm} \rangle  \beta_\pm .
\end{equation}
The  relation $\delta^{D} =\delta$ 
follows from $D = D^{0} + S$, 
$\delta^{D^{0}} =0$,  the definition of $S$ and relation
(\ref{contraction-simpl}).  
\end{proof}

\subsection{The Ricci tensors }

Let $E_{-}$ be a generalized metric on a Courant algebroid $E= E_{H, F} $ of type $B_{n}$ over a Lie group $G$, 
with  $H$ and $F$ not necessarily left-invariant. 
Let  $D$  be a Levi-Civita generalized connection 
of $E_{-}.$  Following \cite{mario}, we define 
$$ 
R^{D}(u, v) w:= D_{u} D_{v} w - D_{v} D_{u} w - D_{[u, v]_{H, F}} w,\ \forall u, v, w\in \Gamma (E).
$$
It is well known  (see e.g. \cite{mario}) that  $R^{D}$ restricts to tensor fields
$$
R^{D, \pm} \in \Gamma ( E_{\pm}^{*}\otimes E_{\mp}^{*} \otimes \mathfrak{so}(E_{\pm})).
$$
Let   $\mathrm{Ric}^{\pm }_{D} \in \Gamma ( E_{\mp }^{*}\otimes E_{\pm}^{*})$ defined 
by 
\begin{align}
\nonumber& \mathrm{Ric}^{+}_{D} (u, v) := \mathrm{tr}_{E_{+}} R^{D, +} (\cdot , u)v,\  \forall u\in E_{-},\ v\in E_{+}\\
\label{ricci}& \mathrm{Ric}^{-}_{D} (u, v) := \mathrm{tr}_{E_{-}} R^{D, +} (\cdot , u)v,\  \forall u\in E_{+},\ v\in E_{-},
\end{align}
where $\mathrm{tr}_{E_{\pm}} (A)$ denotes the trace of $A\in \mathrm{End}\, E_{\pm}.$ 
As shown in \cite{mario},  the tensor fields $\mathrm{Ric}^{\pm }_{D} \in \Gamma ( E_{\mp }^{*}\otimes E_{\pm}^{*})$ are independent of the chosen 
Levi-Civita generalized connection $D$,  as long as the divergence $\delta^{D}:= \delta$ is fixed. They are called the 
{\cmssl Ricci tensors of  the pair $(E_{-}, \delta )$} 
and are denoted by $\mathrm{Ric}^{\delta , \pm}.$  The sum 
$$
\mathrm{Ric}^{\delta}:= \mathrm{Ric}^{\delta , +} \oplus \mathrm{Ric}^{\delta , -}\in 
\Gamma (E_{-}^{*}\otimes E_{+}^{*}\oplus E_{+}^{*}\otimes E_{-}^{*}) \subset \Gamma (E^*\otimes E^*)
$$ is called the {\cmssl (total) Ricci tensor}.\

Assume now that  the forms  $H$ and $F$, 
the generalized metric $E_{-} = \{ X - g(X),\ X\in TG\}$ and  the divergence operator 
 $\delta$ are  left-invariant.
Let $(v_{a})_{1\leq a\leq n}$ be a $g$-orthonormal basis of $\mathfrak{g}$, $\epsilon_{a}:= g(v_{a}, v_{a})$, 
and $(e_{A})_{0\leq A\leq 2n}$ the corresponding basis of
$\tilde{E} = \mathfrak{g}\oplus \mathfrak{g}^{*} \oplus \mathbb{R}$, where $\mathfrak{g}:= \mathrm{Lie}\, (G)$,
see  relation (\ref{frame}).   We define the {\cmssl components 
$R_{ia}^{\delta , +}$ and $R_{ai}^{\delta , -}$ 
of the Ricci tensor $\mathrm{Ric}^{\delta}$  of $(E_{-}, \delta )$ in the basis $(e_{A})$}  by 
\begin{align}
\nonumber& R^{\delta , +}_{ia} := \mathrm{Ric}^{+}_{D} ( e_{i}, e_{a}) =  \mathrm{Ric}^{\delta , +} ( e_{i}, e_{a})\\
\label{comp-ricci-gen}& R^{\delta , -} _{ai} := \mathrm{Ric}^{-}_{D} ( e_{a}, e_{i}) =  \mathrm{Ric}^{\delta , -} ( e_{a}, e_{i}),
\end{align}
for any $0\leq a\leq n$ and $n+1\leq i\leq 2n.$\

The next proposition is the analogue of Theorem~2.25 of \cite{c-k-paper} and can be proved by a similar argument. For this reason, 
we only sketch its proof (for more details, see \cite{c-k-paper}).  From now on we will use Einstein's convention, such that summation over repeated lower and upper indices is usually understood. Nonetheless we may sometimes use the summation sign, in particular,  to indicate whether the summation over the indices $a, b, c$, ... and $A, B, C$, ... starts from $0$ or from $1$.

\begin{prop}\label{ricci-expr} 
For  any $0\leq a\leq n$ and $n+1\leq i\leq 2n$, 
\begin{align}
\nonumber& \mathrm{R}^{\delta , +} _{ia} = \sum_{b=0}^{n} ( B_{bi}^{j} B_{aj}^{b} + B_{ia}^{b} \delta_{b}) \\
\label{ricci-delta-gen}&\mathrm{R}^{\delta , -}_{ai} =  \sum_{b=0}^{n} B_{bi}^{j} B_{aj}^{b} +  B_{ai}^{j} \delta_{j},
\end{align}
where  $\delta_{A}:= \delta (e_{A})$ and   in the above sums $j$ ranges from $n+1$ to $2n$ and $B_{AB}^{C}$ are defined by
$$
B_{AB}^{C} := \sum_{D=0}^{2n}B_{ABD} \eta^{DC}= B_{ABC}\epsilon_{C}.
$$
\end{prop}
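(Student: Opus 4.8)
The plan is to compute the Ricci tensors directly from their definitions in (\ref{ricci}) using the explicit Levi-Civita generalized connection $D = D^{0} + S$ constructed in Propositions \ref{zero} and \ref{non-zero}. Since $\mathrm{Ric}^{\delta,\pm}$ are independent of the choice of Levi-Civita connection with divergence $\delta$, I am free to use this particular $D$. First I would write out $R^{D,+}(e_{b}, e_{i})e_{a}$ for the basis vectors, expanding $D_{e_{b}}D_{e_{i}}e_{a} - D_{e_{i}}D_{e_{b}}e_{a} - D_{[e_{b},e_{i}]}e_{a}$ in terms of the connection coefficients $\omega_{AB}^{C}$ and the Dorfman coefficients $B_{AB}^{C}$. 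The trace over $E_{+}$ (for $\mathrm{Ric}^{+}$) or $E_{-}$ (for $\mathrm{Ric}^{-}$) then picks out the appropriate index ranges, and the final expression should assemble into the stated formulas after using the vanishing coefficients $\omega_{Aai}=\omega_{Aia}=0$ from Proposition \ref{zero} and the skew-symmetry of $B$.

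The key simplification comes from the structure of $D^{0}$: its only nonzero coefficients are $\omega_{abc}=\frac{1}{3}B_{abc}$, $\omega_{ijk}=\frac{1}{3}B_{ijk}$, $\omega_{ibc}=B_{ibc}$, and $\omega_{ajk}=B_{ajk}$, so that $D^{0}$ maps $\tilde{E}_{+}$ into itself and $\tilde{E}_{-}$ into itself along the ``pure'' directions while the mixed coefficients $\omega_{Aai}$ vanish. This block structure, combined with the fact that $R^{D,+}$ already lands in $\mathfrak{so}(E_{+})$ by the general theory cited from \cite{mario}, should cause most cross terms to drop out. I would organize the calculation by first treating the divergence-free part $D^{0}$ to obtain the terms $\sum_{b=0}^{n} B_{bi}^{j}B_{aj}^{b}$ (which appears identically in both $\mathrm{R}^{\delta,+}_{ia}$ and $\mathrm{R}^{\delta,-}_{ai}$, reflecting the shared ``Riemannian'' contribution), and then separately compute the correction coming from $S$, which should produce precisely the divergence-dependent terms $B_{ia}^{b}\delta_{b}$ and $B_{ai}^{j}\delta_{j}$.

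The main obstacle I expect is the careful bookkeeping of index ranges and signs when raising indices via $B_{AB}^{C}=B_{ABC}\epsilon_{C}$ and when the $D_{[u,v]}$ term is expanded, since the Dorfman bracket coefficients themselves contribute through $[e_{b},e_{i}] = B_{bi}^{C}e_{C}$ with $C$ ranging over both positive and negative blocks. One must verify that the contributions from the ``wrong'' block (e.g. the $\tilde{E}_{-}$ part of $[e_{b},e_{i}]$ feeding into $D_{\cdot}e_{a}$) either vanish by the coefficient relations or cancel against terms from the double-covariant-derivative part. The contribution of $S$ requires using formula (\ref{contraction-simpl}) and the explicit form (\ref{S}) to extract exactly the $\delta_{b}$ and $\delta_{j}$ coefficients, and here one must confirm that the trace over the correct subspace isolates a single divergence component rather than a sum. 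Since the analogous statement is Theorem 2.25 of \cite{c-k-paper}, I would follow that computation closely, with the caveat that the extra $\mathbb{R}$-summand in $\tilde{E}$ (the index $A=0$, distinguishing the $B_{n}$ type from the exact case) must be tracked throughout — in particular the sums $\sum_{b=0}^{n}$ run down to $0$, so the $e_{0}$ direction contributes to the traces and cannot be omitted.
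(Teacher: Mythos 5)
Your proposal follows essentially the same route as the paper: compute the curvature of the divergence-free connection $D^{0}$ and take traces to get the quadratic terms $\sum_{b}B_{bi}^{j}B_{aj}^{b}$, then treat $D=D^{0}+S$ and show the correction contributes exactly the linear terms in $\delta$ (the paper does this via $R^{D}(u,v)w=R^{D^{0}}(u,v)w-(D^{0}_{v}S)_{u}w$ and the trace identities (\ref{vic}), which is the precise form of the step you describe). Your attention to the index $A=0$ and to the block structure of $D^{0}$ matches the paper's bookkeeping, so the plan is sound.
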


\begin{proof}  
For a Levi-Civita generalized connection $D$ we define 
$$
(R^{D} )_{ABCD} := \langle R^{D} (e_{A}, e_{B}) e_{C}, e_{D}\rangle .
$$
A computation as in Proposition~2.19 of    \cite{c-k-paper} shows that 
\begin{align}
R^{D^{0}}_{ajcd} = \frac{2}{3} B_{aj}^{l} B_{cld} +\frac{1}{3} B_{jc}^{l} B_{lad}+\frac{1}{3} B_{ca}^{l} B_{ljd}\\
R^{D^{0}}_{ibkl} =\frac{2}{3} B_{ib}^{c} B_{kcl} + \frac{1}{3} B_{bk}^{c} B_{cil} +\frac{1}{3} B_{ki}^{c} B_{cbl} ,
\end{align}
where $D^{0}$ is the Levi-Civita generalized connection from  Proposition \ref{zero}, and, as usual,
$0\leq a,b, c, d\leq n$ and $ n+1\leq i, j, k, l\leq 2n.$
Taking traces, we obtain 
relations (\ref{ricci-delta-gen}) with $\delta =0.$ In order to obtain (\ref{ricci-delta-gen}) for any $\delta$, we 
consider the connection $D= D^{0} + S$ from Proposition \ref{non-zero}.  As in   Lemma~2.22  and Lemma~2.23 of  \cite{c-k-paper}, we obtain 
\begin{equation}
R^{D} (u, v) w = R^{D^{0}} (u, v)w - (D^{0}_{v} S)_{u} w,\ \forall u, w\in \tilde{E}_{\pm},\ v\in \tilde{E}_{\mp}
\end{equation}
and, taking traces,
\begin{align*}
\nonumber& \mathrm{Ric}^{+}_{D} (e_{i}, e_{a}) = \mathrm{Ric}^{+}_{D^{0}} (e_{i}, e_{a}) -\sum_{b=0}^{n} \epsilon_{b} \langle (D^{0}_{e_{i}} S)_{e_{b}} (e_{a}), e_{b}
\rangle\\
\nonumber& \mathrm{Ric}^{-}_{D} (e_{a}, e_{i}) = \mathrm{Ric}^{-}_{D^{0}} (e_{a}, e_{i}) -\sum_{j={n+1}}^{2n}\epsilon_{j} \langle (D^{0}_{e_{a}} S)_{e_{j}} (e_{i}), e_{j}
\rangle .
\end{align*}
Using 
\begin{align}
\nonumber&\sum_{b=0}^{n}  \epsilon_{b} \langle ( D^{0}_{e_{i}} S)_{e_{b}} (e_{a}), e_{b} \rangle = (D^{0}_{e_{i}} \delta )(e_{a})
= - \sum_{b=0}^{n} B_{ia}^{b} \delta_{b} \\
\label{vic}&\sum_{j=n+1}^{2n}  \epsilon_{j}  \langle (D^{0}_{e_{a}}S)_{e_{j}} e_{i}, e_{j}\rangle  = ( D^{0}_{e_{a}}\delta ) (e_{i})=-
\sum_{j=n+1}^{2n} B_{ai}^{j} \delta_{j} 
\end{align}
we obtain (\ref{ricci-delta-gen}) as required.
\end{proof}

Let
$$
F_{ab} := F(v_{a}, v_{b}),\ H_{abc}:= H( v_{a}, v_{b}, v_{c}),
$$
for any $ 1\leq a, b, c\leq n$ and recall that $\delta_{A}= \delta (e_{A})$ for any $0\leq A\leq 2n$. 
Our next aim is to compute 
the components $R_{ia}^{\delta  , +}$ and $R_{ai}^{\delta , -}$  of the Ricci tensor $\mathrm{Ricci}^{\delta}$ in terms of  $H_{abc}$,  $F_{ab}$, $\delta_{A}$,  and the Lie algebra coefficients
$k_{abc}$ defined by
$$
 k_{abc} := g (  {\mathcal L}_{v_{a}} v_{b}, v_{c}),\  1\leq a, b, c\leq n.
 $$
We start with
the Dorfman coefficients  defined in (\ref{def-coeff}). 
\begin{lem}\label{1}
The Dorfman coefficients 
are given by
\begin{align}
\nonumber& B_{abc} = \frac{1}{2} ( \sum_{(a,b,c)} k_{abc} - H_{abc})\\
\nonumber& B_{ijk} = - \frac{1}{2} ( \sum_{ (i^{\prime} j^{\prime} k^{\prime})} k_{i^{\prime} j^{\prime} k^{\prime}} +  H_{i^{\prime} j^{\prime} k^{\prime}})\\
\nonumber& B_{ajk} = \frac{1}{2} ( k_{a k^{\prime} j^{\prime}} + k_{j^{\prime}k^{\prime} a} - k_{aj^{\prime} k^{\prime}} - H_{aj^{\prime} k^{\prime}})\\
\nonumber& B_{ibc} = \frac{1}{2} ( k_{ci^{\prime} b} + k_{cb i^{\prime}} + k_{i^{\prime} bc} - H_{i^{\prime} bc})\\
\label{ec-dorfman}&   B_{0bc} = F_{bc},\ B_{0jk} = F_{j^{\prime}k^{\prime}},\ B_{ib0} = F_{i^{\prime}b} , 
\end{align}
where 
$1\leq a, b, c\leq n$, $n+1 \leq i, j, k\leq 2n$,  $i^{\prime} := i -n$ (similarly for $j^{\prime}$ and $k^{\prime}$).
 \end{lem}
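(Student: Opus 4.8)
The plan is to compute each Dorfman coefficient directly from the defining formula (\ref{def-dorfman}), exploiting two simplifications. First, since all sections involved are left-invariant, the functions $\lambda,\mu$ are constant and the one-forms $\xi,\eta$ are left-invariant, so that $d\lambda=0$, $X(\mu)=Y(\lambda)=0$, and the Lie-derivative and exterior-derivative terms collapse to purely Lie-algebraic expressions: for left-invariant $X,Z$ and a left-invariant one-form $\eta$ one has $(\mathcal L_X\eta)(Z)=-\eta([X,Z])$ and $(i_Y d\xi)(Z)=d\xi(Y,Z)=-\xi([Y,Z])$. Second, since $B(u,v,w)=\langle[u,v]_{H,F},w\rangle$ is completely skew (as recalled just before Proposition \ref{zero}), it suffices to compute $B_{ABC}$ on one representative of each type of triple $(0,\pm,\pm)$; the seven lines of the lemma exhaust these types and the remaining orderings follow by skew-symmetry.

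For the coefficients with all three indices in $\{1,\dots,2n\}$ I would first derive a single master formula. Writing a nonzero-index basis vector as $e_A=\underline A+s_A\,g(\underline A)$, where $\underline A\in\{v_1,\dots,v_n\}$ is the underlying vector and $s_A=+1$ for $A\le n$, $s_A=-1$ for $A\ge n+1$, I plug $u=e_A$, $v=e_B$, $w=e_C$ into (\ref{def-dorfman}) and contract. Here $\lambda=\mu=0$, so the $\lambda,\mu$-terms vanish and the scalar $F(\underline A,\underline B)$ sits in the $\mathbb R$-component of the bracket, which is annihilated on pairing with the nonzero-index $e_C$; hence $F$ does not enter these coefficients. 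Using the two identities above together with $(i_Xi_YH)(Z)=H(\underline B,\underline A,Z)$, I obtain
\[
B_{ABC}=\tfrac12\bigl(-s_B\,k_{\bar A\bar C\bar B}+s_A\,k_{\bar B\bar C\bar A}-H_{\bar A\bar B\bar C}\bigr)+\tfrac{s_C}{2}\,k_{\bar A\bar B\bar C},
\]
where $\bar A\in\{1,\dots,n\}$ is the underlying index and $k_{abc}=g([v_a,v_b],v_c)$. Specializing the signs $(s_A,s_B,s_C)$ to $(+,+,+)$, $(-,-,-)$, $(+,-,-)$ and $(-,+,+)$ yields the first four lines of the lemma after reorganizing into cyclic sums by means of the antisymmetry $k_{abc}=-k_{bac}$.

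For the three coefficients involving $e_0=1$ I would compute the relevant bracket component directly. Since $e_0$ has vanishing vector and one-form parts and $\mathbb R$-component equal to $1$, the formula (\ref{def-dorfman}) gives $[e_0,e_b]_{H,F}=2\,i_{v_b}F$, a pure one-form; pairing with $e_c$ or $e_k$ then produces $B_{0bc}=F_{bc}$ and $B_{0jk}=F_{j'k'}$. For $B_{ib0}$ the scalar product with $e_0=1$ extracts exactly the $\mathbb R$-component of $[e_i,e_b]_{H,F}$, which by the last line of (\ref{def-dorfman}) equals $F(v_{i'},v_b)=F_{i'b}$.

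The one delicate point — the only place where a careless computation would fail — is sign bookkeeping. In particular one must read $i_Xi_YH$ as first contracting with $Y$ and then with $X$, so that $(i_Xi_YH)(Z)=H(Y,X,Z)=-H(X,Y,Z)$; this is what flips the sign of $H$ in the master formula and produces the $-H_{abc}$ (rather than $+H_{abc}$) in the lemma. Similar care is needed with the signs $s_A$ carried by the metric parts and with the repeated use of $k_{abc}=-k_{bac}$ to convert terms such as $-k_{acb}$ into $+k_{cab}$ when assembling the cyclic sums. Everything else is a routine, if somewhat lengthy, verification.
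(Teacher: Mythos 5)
Your proposal is correct and follows essentially the same route as the paper, which simply computes each coefficient directly from the definition of the Dorfman bracket (the paper's proof consists of the remark that this is straightforward, plus the sample computation of $B_{0bc}$). Your master formula $B_{ABC}=\tfrac12(-s_B\,k_{\bar A\bar C\bar B}+s_A\,k_{\bar B\bar C\bar A}-H_{\bar A\bar B\bar C}+s_C\,k_{\bar A\bar B\bar C})$ checks out against all four nonzero-index cases after applying $k_{abc}=-k_{bac}$, and the $e_0$-computations agree with the paper's.
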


\begin{proof}
The proof is straightforward from the definition  (\ref{def-dorfman}) of the Dorfman bracket. 
For example, to compute  $B_{0bc}$  we  observe that
$$
[ 1, X]_{H, F} = - [X, 1]_{H, F} = 2 i_{X}F,\ [1, \eta ]_{H, F} = [\eta , 1]_{H, F} =0
$$
for any $X\in {\mathfrak X}(G)$ and $\eta \in \Omega^{1}(G)$. Then
\begin{align*}
\nonumber B_{0bc} &= \langle [ e_{0}, e_{b}]_{H, F}, e_{c} \rangle =\langle [1, v_{b} + g( v_{b})]_{H, F} , v_{c} + g( v_{c})\rangle \\
\nonumber&  =  2 \langle i_{v_{b}} F, v_{c} + g(v_{c}) \rangle = F_{bc}. \qedhere
\end{align*}
\end{proof}

\begin{thm}\label{2-prime} 
The  components of the Ricci tensor $\mathrm{Ric}^{\delta }$ of the pair  $(E_{-}, \delta )$ in the basis  $(e_{A})_{0\leq A\leq 2n}$ 
are given by 
\begin{align}
\nonumber& R^{\delta, + }_{ia} = \sum_{b=1}^{n} ( B_{bi}^{j} B_{aj}^{b} + B_{ia}^{b}\delta_{b}) + F_{i^{\prime} a} \delta_{0} -\sum_{b=1}^{n} \epsilon_{b} F_{i^{\prime} b} F_{ab}\\\nonumber& R^{\delta , -}_{ai} = \sum_{b=1}^{n} B_{bi}^{j} B_{aj}^{b} + B_{ai}^{j} \delta_{j} 
-\sum_{b=1}^{n} \epsilon_{b} F_{i^{\prime} b} F_{ab}\\
\nonumber& R^{\delta , +}_{i0}= - \sum_{b=1}^{n}  \sum_{j=n+1}^{2n}\epsilon_{j^{\prime}} \epsilon_{b} F_{j^{\prime} b} B_{bij} - \sum_{b=1}^{n} F_{i^{\prime} b} \epsilon_{b} \delta_{b}\\
\label{expresii-d-t}& R^{\delta , -}_{0i} = -\sum_{b=1}^{n} \sum_{j=n+1}^{2n}\epsilon_{j^{\prime}} \epsilon_{b} F_{j^{\prime} b} B_{b ij} - 
\sum_{j=n+1}^{2n} F_{i^{\prime} j^{\prime}} \epsilon_{j^{\prime}} \delta_{j}
\end{align}
where  
$1\leq a\leq n$, $n+1 \leq i \leq 2n$, and in the above sums 
$j$ ranges from  $n+1$ to $2n$.
\end{thm}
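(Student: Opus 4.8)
The plan is to derive everything directly from Proposition \ref{ricci-expr}, whose formulas already express $R^{\delta,\pm}$ through the Dorfman coefficients $B^{C}_{AB}$, but with the inner sums running over $b$ from $0$ to $n$. The essential observation is that the passage from Proposition \ref{ricci-expr} to the present statement amounts to isolating the contribution of the distinguished index $0$ (the $\mathbb{R}$-summand $e_0=1$ of $\tilde{E}_{+}$) and evaluating the Dorfman coefficients that carry a $0$ by means of Lemma \ref{1}. Thus I would first split each sum $\sum_{b=0}^{n}$ as the $b=0$ term plus $\sum_{b=1}^{n}$, so that the latter reproduces the leading sums appearing in (\ref{expresii-d-t}) while the former produces the correction terms involving $F$. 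Throughout I would use the conventions recorded above: $B^{C}_{AB}=B_{ABC}\epsilon_{C}$, the complete skew-symmetry of $B$, $\epsilon_{0}=1$, and $\epsilon_{j}=-\epsilon_{j^{\prime}}$.

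For the components $R^{\delta,+}_{ia}$ and $R^{\delta,-}_{ai}$ with $1\leq a\leq n$, the only new computation is the single $b=0$ term $B^{j}_{0i}B^{0}_{aj}+B^{0}_{ia}\delta_{0}$ (respectively $B^{j}_{0i}B^{0}_{aj}$). Using the last line of Lemma \ref{1} together with skew-symmetry, one evaluates $B^{0}_{ia}=B_{ia0}=F_{i^{\prime}a}$, $B^{0}_{aj}=B_{aj0}=-F_{j^{\prime}a}$ and $B^{j}_{0i}=B_{0ij}\epsilon_{j}=-\epsilon_{j^{\prime}}F_{i^{\prime}j^{\prime}}$. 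Substituting, renaming $j^{\prime}\to b$, and invoking the antisymmetry of $F$ collapses $B^{j}_{0i}B^{0}_{aj}$ to $-\sum_{b=1}^{n}\epsilon_{b}F_{i^{\prime}b}F_{ab}$, which, combined with the $F_{i^{\prime}a}\delta_{0}$ contribution, yields exactly the first two lines of (\ref{expresii-d-t}).

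For $R^{\delta,+}_{i0}$ and $R^{\delta,-}_{0i}$ I would simply specialize $a=0$ in the formulas of Proposition \ref{ricci-expr}. The terms $B^{0}_{0j}$ and $B^{0}_{i0}$ vanish by skew-symmetry (a repeated $0$), so the first sums survive only over $b$ from $1$ to $n$; evaluating $B^{b}_{0j}=F_{j^{\prime}b}\epsilon_{b}$ and $B^{b}_{i0}=-F_{i^{\prime}b}\epsilon_{b}$ by the same mechanism produces the $F_{j^{\prime}b}B_{bij}$ double sums and the $\delta_{b}$, $\delta_{j}$ terms of the last two lines. The only genuine difficulty is bookkeeping: one must correctly track the signs arising from raising an index with the indefinite $\eta$, from $\epsilon_{j}=-\epsilon_{j^{\prime}}$, from permuting a $0$ into the correct slot via complete skew-symmetry of $B$, and from the antisymmetry of $F$. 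None of these is conceptually hard, but a single sign slip would propagate through all four formulas, so I would verify each $b=0$ (respectively $a=0$) evaluation independently before assembling the final expressions.
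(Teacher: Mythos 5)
Your proposal is correct and follows essentially the same route as the paper's proof: starting from Proposition \ref{ricci-expr}, isolating the $b=0$ (respectively $a=0$) contribution, and evaluating the Dorfman coefficients carrying a $0$ index via Lemma \ref{1} and the skew-symmetry of $B$. The individual evaluations $B^{0}_{ia}=F_{i^{\prime}a}$, $B^{0}_{aj}=-F_{j^{\prime}a}$, $B^{j}_{0i}=-\epsilon_{j^{\prime}}F_{i^{\prime}j^{\prime}}$, $B^{b}_{0j}=\epsilon_{b}F_{j^{\prime}b}$, $B^{b}_{i0}=-\epsilon_{b}F_{i^{\prime}b}$ all agree with those in the paper.
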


\begin{proof}
From Proposition \ref{ricci-expr}, for any $0\leq a\leq n$ and $n+1\leq i\leq 2n$,
\begin{equation}\label{ricci-new}
 R^{\delta , +} _{ia} = \sum_{b=1}^{n} B_{bi}^{j} B_{aj}^{b} + B_{0i}^{j} B_{aj}^{0} 
 + \sum_{b=1}^{n} B_{ia}^{b} \delta_{b}+ B_{ia}^{0} \delta_{0}. 
\end{equation}
Consider first  $1\leq a\leq  n.$
Using Lemma \ref{1}, $B_{AB}^{C} = B_{ABC} \epsilon_{C}$ and 
$\epsilon_{C}= \epsilon_{c}$ for any $1\leq  C= c\leq n$, $ \epsilon_{C} = - \epsilon_{C-n}$ for any $n+1\leq C\leq 2n$ and $\epsilon_{0}=1$,  we obtain
\begin{equation}\label{obtain}
B_{0i}^{j} = -\epsilon_{j^{\prime}} F_{i^{\prime} j^{\prime}},\ B_{aj}^{0} = - F_{j^{\prime} a},\ B_{ia}^{0} = F_{i^{\prime} a}. 
\end{equation}
The first  relation (\ref{expresii-d-t})  follows from relation (\ref{ricci-new}) and relations (\ref{obtain}).   The second relation (\ref{expresii-d-t}) can be proved in a similar way. 
In order to compute $R_{i0}^{\delta , +}$ we use  again relation (\ref{ricci-new}), together with $B_{0i}^{0}  = B_{i0}^{0}=0$ (because $B_{ABC}$ is completely skew). From  relation (\ref{ricci-new}) with $a=0$ (and $n+1\leq i\leq 2n$), 
$$
R_{i0}^{\delta , +} = \sum_{b=1}^{n} ( B_{bi}^{j} B_{0j}^{b} + B_{i0}^{b} \delta_{b} ) = \sum_{b=1}^{n}  ( -  \epsilon_{j^{\prime}} \epsilon_{b} B_{bij} B_{0jb} + 
\epsilon_{b} B_{i0b}  \delta_{b} ).
$$
Using $B_{0jb} = B_{jb0} = F_{j^{\prime} b}$ we obtain the third relation (\ref{expresii-d-t}). The last relation
(\ref{expresii-d-t}) can be obtained similarly. 
\end{proof}

\begin{rem}{\rm 
The notions of generalized metrics, generalized connections, Levi-Civita generalized connections,  divergence operators  
and  Ricci tensors 
can be defined, in a  completely similar way,   on arbitrary Courant
algebroids (not necessarily of type $B_{n}$)  and some of the results presented above hold in this general setting, see  e.g.\  \cite{mario}.  More precisely, given a generalized metric  $E_{-}$ and a divergence operator  $\delta$ on an arbitrary Courant algebroid  $E=E_+ \oplus E_-$, such that $E_+$ and $E_-$ are at least of rank $2$,
there is always a Levi-Civita generalized connection $D$ of $E_{-}$ with divergence $\delta$. The tensor fields    
$\mathrm{Ric}_{D}^\pm$, defined  by (\ref{ricci}), are  independent of the choice of $D$.  They are called 
the Ricci tensors of the pair $(E_{-}, \delta )$ and are denoted by $\mathrm{Ric}^{\delta ,\pm}$.
Below we apply these considerations on generalized tangent bundles.  Our aim will be to relate the Ricci tensors 
on   Courant algebroids of type $B_{n}$  with closed twisting $3$-form $H$  to the Ricci tensors  
on generalized tangent bundles  $E_{H}$ (see below)}. 
\end{rem}

In the remaining part of this section we only consider Courant algebroids  $E= E_{H, F}$ of type $B_{n}$ for which 
$dH =0$ (and  $F\wedge F =0$,  $d F =0$). 
Since $H$ is closed,  the generalized tangent bundle $E_{H}:= \mathbb{T}G = TG\oplus T^{*}G$ with 
Dorfman bracket  
\begin{equation}\label{dorfman-here}
[ X + \xi  , Y +\eta ]_{H}:= {\mathcal L}_{X} ( Y+\eta ) - i_{Y} d\xi  + i_{X} i_{Y} H,
\end{equation}
for any $X, Y\in {\mathfrak X}(G)$ and $\xi , \eta \in \Omega^{1}(G)$, 
anchor the natural projection $\mathbb{T}G \rightarrow TG$ and  scalar product 
$$
\langle X +\xi , Y+\eta \rangle_{\mathbb{T}G} = \frac{1}{2} ( \xi (Y) + \eta (X))
$$
is a Courant algebroid. A left-invariant  generalized  metric $E_{-}= \{ X - g(X)\mid X\in TG\}$ on $E$ 
can be  also considered as a generalized metric on $E_{H}$.
Let $\delta \in \tilde{E}^{*}$ be a left-invariant divergence operator,  
$\tilde{\delta}:= \delta\vert_{\mathfrak{g}\oplus \mathfrak{g}^{*}}$ 
and $\mathrm{Ric}^{\tilde{\delta}, \pm}$ the Ricci tensors of the pair $(E_{-}, \tilde{\delta} )$   on $E_{H}.$ 
Let
$(v_{a})$ be a $g$-orthonormal basis of $\mathfrak{g}$ and 
$R^{\tilde{\delta}, +}_{ia}$, 
$R_{ai}^{\tilde{\delta}, -}$ 
(with $1\leq a\leq n$ and $n+1\leq i\leq 2n$) 
be the components  of the Ricci tensor $\mathrm{Ric}^{\tilde{\delta }}$ 
in the basis $(e_{A})_{1\leq A\leq 2n}$ of $\mathfrak{g}\oplus \mathfrak{g}^{*}$, defined as in 
(\ref{comp-ricci-gen}).

\begin{lem}\label{ricci-diferit}  Assume that $H$ is  closed. Then for  any $1\leq a\leq n$ and $n+1\leq i\leq 2n$, 
\begin{align}
\nonumber& R_{ia}^{\tilde{\delta}, +}=   \sum_{b=1}^{n}   ( B_{bi}^{j} B_{aj}^{b}  + B_{ia}^{b} \delta_{b} )    \\
\nonumber& R_{ai}^{\tilde{\delta}, - } = \sum_{b=1}^{n}  B_{bi}^{j} B_{aj}^{b}  + B_{ai}^{j} \delta_{j}.  
\end{align}
\end{lem}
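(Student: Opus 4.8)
The plan is to observe that this lemma is the exact counterpart of Proposition \ref{ricci-expr} with the Courant algebroid $E_{H,F}$ of type $B_n$ replaced by the generalized tangent bundle $E_H = \mathbb{T}G$, and to argue that the proof of Proposition \ref{ricci-expr} carries over to this setting with only one bookkeeping change. Two features distinguish the two Courant algebroids, and I would track both: the space of left-invariant sections of $E_H$ is $\mathfrak{g} \oplus \mathfrak{g}^*$, obtained from $\tilde{E} = \mathfrak{g} \oplus \mathfrak{g}^* \oplus \mathbb{R}$ by deleting the summand $\mathbb{R} = \mathbb{R} e_0$; and the bracket \ref{dorfman-here} of $E_H$ is obtained from the bracket \ref{def-dorfman} of $E_{H,F}$ by dropping every term involving $F$.

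First I would fix, for a $g$-orthonormal basis $(v_a)_{1 \leq a \leq n}$, the induced bases $(e_a)_{1 \leq a \leq n}$ of $\tilde{E}_+$ and $(e_i)_{n+1 \leq i \leq 2n}$ of $\tilde{E}_-$ exactly as in \ref{frame}, the only difference being that the vector $e_0 = 1$ is absent, so that $\tilde{E}_+ = \mathrm{span}(e_1, \dots, e_n)$ now has dimension $n$. Consequently every summation over an $E_+$-index — whether it arises from the $\mathfrak{so}(E_+)$-contraction inside the curvature $R^{D^0}$ or from the $E_+$-trace in the definition \ref{ricci} of the Ricci tensors — ranges over $\{1, \dots, n\}$ rather than $\{0, \dots, n\}$. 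This is precisely what converts the sums $\sum_{b=0}^n$ of Proposition \ref{ricci-expr} into the sums $\sum_{b=1}^n$ of the present statement, while the $E_-$-traces (the sums over $j$ from $n+1$ to $2n$) are unchanged, since $E_-$ has rank $n$ for both algebroids.

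Next I would check that deleting the $F$-terms leaves the relevant Dorfman coefficients untouched. Inspecting Lemma \ref{1}, the coefficients $B_{abc}$, $B_{ijk}$, $B_{ajk}$ and $B_{ibc}$, whose indices all lie in $\{1, \dots, 2n\}$, are expressed purely through $k_{abc}$ and $H_{abc}$ with no $F$-dependence; the form $F$ enters only the coefficients $B_{0bc}$, $B_{0jk}$, $B_{ib0}$ carrying the index $0$, none of which survives on $E_H$. Hence the coefficients $B_{ABC}$ with $1 \leq A, B, C \leq 2n$ computed from the $H$-twisted bracket \ref{dorfman-here} coincide with those of Lemma \ref{1}. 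Moreover the restriction $\tilde\delta = \delta|_{\mathfrak{g} \oplus \mathfrak{g}^*}$ satisfies $\tilde\delta(e_A) = \delta_A$ for every $A \geq 1$, so the divergence data also agree on the relevant index range.

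With these two observations, the whole derivation of Proposition \ref{ricci-expr} — the canonical divergence-free Levi-Civita connection $D^0$ of Proposition \ref{zero}, the correction term $S$ of Proposition \ref{non-zero} adjusting the divergence to $\tilde\delta$, and the trace computation using $R^D(u,v)w = R^{D^0}(u,v)w - (D^0_v S)_u w$ — applies verbatim on $E_H$, the sole change being the restricted range of the $E_+$-summations. This yields the two asserted formulas. I do not anticipate a real obstacle; the one point deserving care is to confirm that there is no contribution from the deleted direction, i.e.\ that the left-invariant Levi-Civita connections and Ricci tensors of $E_H$ are governed solely by the coefficients $B_{ABC}$ with $A, B, C \geq 1$. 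This holds because $E_H$ simply does not contain the $e_0$ summand, so the $0$-indexed coefficients — the only place where $F$ and the extra direction enter — never appear in any of the contractions.
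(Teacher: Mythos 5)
Your proposal is correct and follows essentially the same route as the paper: both hinge on the observation that the Dorfman coefficients $B_{ABC}$ with $1\le A,B,C\le 2n$ involve no $F$-dependence (and no $e_0$-direction) and therefore coincide with the coefficients $\mathcal B_{ABC}$ of $E_H$, after which the Ricci formula on the exact Courant algebroid, with the $E_+$-trace running over $n$ rather than $n+1$ directions, yields the stated identities. The only cosmetic difference is that the paper imports that formula directly from Theorem~2.25 of \cite{c-k-paper}, whereas you re-derive it by adapting the proof of Proposition~\ref{ricci-expr}; the content is the same.
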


\begin{proof}
Let  $\mathcal B_{ABC}:= \langle [ e_{A}, e_{B}]_{H}, e_{C}\rangle_{\mathbb{T}G}$ 
be the Dorfman coefficients of $ E_{H}$ in the basis $(e_{A})_{1\leq A\leq 2n}$ of $\mathfrak{g}\oplus \mathfrak{g}^{*}$   
and $\mathcal B_{AB}^{C} : = \sum_{D=1}^{2n}\mathcal B_{ABD}\tilde{\eta}^{DC}$, where $(\tilde{\eta}^{AB} )$ is the inverse matrix of  $(\tilde{\eta}_{AB})$
and $\tilde{\eta}_{AB}  := \langle e_{A}, e_{B}\rangle_{\mathbb{T}G}$ for any  $1\leq A, B\leq 2n$.  
Let  $ \mathrm{pr}_{\mathfrak{g}\oplus \mathfrak{g}^{*}} : \tilde{E} =\mathfrak{g}\oplus \mathfrak{g}^{*}\oplus \mathbb{R} \rightarrow  \mathfrak{g}\oplus \mathfrak{g}^{*}$  be the natural projection. 
From 
$$
\mathrm{pr}_{\mathfrak{g}\oplus \mathfrak{g}^{*}}  [ u, v]_{H, F}= [u, v]_{H},\ 
\langle u, v\rangle_{\mathbb{T}G}  = \langle u, v\rangle ,\  \langle u , 1\rangle =0, 
$$
for all $u, v\in \mathfrak{g}\oplus \mathfrak{g}^{*}$, 
we obtain that 
\begin{equation}\label{ABC-tang}
B_{ABC} = {\mathcal B}_{ABC},\  B_{AB}^{C} =\mathcal B_{AB}^{C},\ 
1\leq A,  B,  C\leq 2n.
\end{equation}
The claim follows from   Theorem 2.25 of \cite{c-k-paper} combined with
(\ref{ABC-tang}).
\end{proof}

\begin{rem}\label{compare}{\rm 
There is a  conventional difference of sign  (in the  $3$-form $H$),  between   the Dorfman bracket (\ref{dorfman-here}) and the Dorfman bracket   considered in 
\cite{c-k-paper}. 
Owing to this,  in the next sections, where we will use 
various expressions (for $\mathcal B_{ABC}$, $R_{ia}^{\tilde{\delta}, +}$ or $R_{ai}^{\tilde{\delta}, -}$)   computed in   \cite{c-k-paper}, 
we will have to replace $H$ with $-H$.}
\end{rem}

\begin{cor}\label{2-prime-cor} Let $E_{H, F}$ be a Courant algebroid of type $B_{n}$ over a Lie group $G$
such that $dH =0$. Let 
$E_{-} = \{ X - g(X) \mid X\in TG\}$ be a  left-invariant generalized metric  on $E_{H, F}$  and 
$(e_{A})_{0\leq A\leq n}$ the  basis of  $\tilde{E} =\mathfrak{g}\oplus \mathfrak{g}^{*}\oplus \mathbb{R}$ induced  by a $g$-orthonormal basis
$(v_{a})$ of $\mathfrak{g}$, see  (\ref{frame}). 
Let 
$\delta \in \tilde{E}^{*}$ be a left-invariant divergence operator and  $\tilde{\delta}:= \delta\vert_{\mathfrak{g}\oplus \mathfrak{g}^{*}}$.
Then
\begin{align}
\nonumber& R^{\delta, + }_{ia} = R^{\tilde{\delta}, +}_{ia} + F_{i^{\prime} a} \delta_{0} -\sum_{b=1}^{n} \epsilon_{b} F_{i^{\prime} b} F_{ab}\\
\nonumber& R^{\delta , -}_{ai} = R^{\tilde{\delta}, -}_{ai} 
-\sum_{b=1}^{n}  \epsilon_{b} F_{i^{\prime} b} F_{ab}\\
\nonumber& R^{\delta , +}_{i0}= - \sum_{b=1}^{n}   \sum_{j=n+1}^{2n}\epsilon_{j^{\prime}} \epsilon_{b} F_{j^{\prime} b} B_{bij} - \sum_{b=1}^{n} F_{i^{\prime} b} \epsilon_{b} \delta_{b}\\
& R^{\delta , -}_{0i} = -\sum_{b=1}^{n}  \sum_{j=n+1}^{2n}\epsilon_{j^{\prime}} \epsilon_{b} F_{j^{\prime} b} B_{b ij} -  \sum_{j=n+1}^{2n}F_{i^{\prime} j^{\prime}} \epsilon_{j^{\prime}} \delta_{j},
\end{align}
where   $R^{\tilde{\delta}, +}_{ia} $ and $R^{\tilde{\delta}, -}_{ai} $ are the components of the Ricci tensor of   the pair $(E_{-}, \tilde{\delta })$ defined on $E_{H}$, 
$1\leq a\leq n$, $n+1 \leq i \leq 2n$, and 
the sums are over $1\leq b\leq n$ and $n+1\leq j\leq 2n$.
\end{cor}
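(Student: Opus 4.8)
The plan is to obtain the corollary by directly comparing the two Ricci computations already available in the excerpt: Theorem~\ref{2-prime}, which gives the components $R^{\delta,\pm}$ of the Ricci tensor of $(E_{-},\delta)$ on the type $B_{n}$ Courant algebroid $E_{H,F}$, and Lemma~\ref{ricci-diferit}, which (under the standing hypothesis $dH=0$) gives the components $R^{\tilde{\delta},\pm}$ of the Ricci tensor of $(E_{-},\tilde{\delta})$ on the generalized tangent bundle $E_{H}$. Since both are expressed in the same basis $(e_{A})$ induced by a fixed $g$-orthonormal basis $(v_{a})$, and since by the proof of Lemma~\ref{ricci-diferit} (see relation (\ref{ABC-tang})) the Dorfman coefficients $B^{C}_{AB}$ with indices $1\le A,B,C\le 2n$ coincide with those of $E_{H}$, the quadratic-in-$B$ terms appearing in the two sets of formulas are literally the same.

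Concretely, I would first recall from Lemma~\ref{ricci-diferit} that
\[
R^{\tilde{\delta},+}_{ia}=\sum_{b=1}^{n}\bigl(B^{j}_{bi}B^{b}_{aj}+B^{b}_{ia}\delta_{b}\bigr),\qquad
R^{\tilde{\delta},-}_{ai}=\sum_{b=1}^{n}B^{j}_{bi}B^{b}_{aj}+B^{j}_{ai}\delta_{j}.
\]
I would then observe that these are exactly the first summands of the corresponding expressions for $R^{\delta,+}_{ia}$ and $R^{\delta,-}_{ai}$ in Theorem~\ref{2-prime}: the passage from $E_{H,F}$ to $E_{H}$ removes precisely the $F$-dependent corrections, which in Theorem~\ref{2-prime} have already been isolated (the index-$0$ contributions coming from $B_{0i}^{j}$, $B_{aj}^{0}$ and $B_{ia}^{0}$, computed via Lemma~\ref{1}, were split off there and rewritten in terms of $F$). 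Substituting the two displayed identities into the formulas of Theorem~\ref{2-prime} therefore yields the first two equations of the corollary. The last two equations, for $R^{\delta,+}_{i0}$ and $R^{\delta,-}_{0i}$, are identical to those in Theorem~\ref{2-prime} and so require no further argument.

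There is no serious obstacle here: the genuine computational work was already carried out in the proofs of Theorem~\ref{2-prime} and Lemma~\ref{ricci-diferit}, and the corollary is essentially a bookkeeping statement that reorganizes those outputs. The only points demanding a little care are, first, keeping track of the index ranges, noting in particular that the sums in Theorem~\ref{2-prime} already start at $b=1$ so that the separation between the ``tangent-bundle'' part and the $F$-corrections is transparent, and second, recording that the identification of $R^{\tilde{\delta},\pm}$ with the $b\ge 1$ sums relies essentially on the hypothesis $dH=0$, which is exactly what permits Lemma~\ref{ricci-diferit} to be invoked.
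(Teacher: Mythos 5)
Your proposal is correct and follows exactly the paper's own argument: the corollary is obtained by substituting the formulas of Lemma~\ref{ricci-diferit} into those of Theorem~\ref{2-prime}, the quadratic-in-$B$ and $\delta_b$ (resp.\ $\delta_j$) terms being identified with $R^{\tilde{\delta},\pm}$ and the remaining $F$-terms surviving unchanged. The paper states this in one line; your more explicit bookkeeping, including the remark that the last two identities are literally those of Theorem~\ref{2-prime}, is the same proof spelled out.
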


\begin{proof}
The claim follows from Theorem \ref{2-prime} combined with Lemma  \ref{ricci-diferit}.
\end{proof}

\subsection{The generalized Einstein equation}

Recall that a  generalized metric $E_{-}$  on a Courant  algebroid  is  called  {\cmssl generalized Einstein  with divergence $\delta$} if the total Ricci tensor  of the pair
$(E_{-}, \delta )$ vanishes, i.e. 
$\mathrm{Ric}^{\delta , \pm } =0$ (see e.g. \cite{mario}).

\begin{defn} An {\cmssl odd generalized  Einstein metric} on a Lie group $G$ is a generalized Einstein metric $E_{-}$ 
on a Courant algebroid  $E_{H, F}$ of type $B_{n}$ over $G$, such that $H$, $F$, $E_{-}$  and the divergence  $\delta$ of $E_{-}$ are all 
left-invariant.
\end{defn}

\begin{cor}\label{3} A left-invariant generalized  metric $E_{-} = \{ X - g(X) \mid X\in TG\}$
defined on a Courant algebroid $E_{H, F}$ of type $B_{n}$ over a Lie group $G$  
 is  odd generalized Einstein with divergence $\delta$ if and only if
the following relations hold: for any $1\leq a \leq n$ and $n+1\leq i\leq 2n$, 
\begin{align}
\nonumber  & \sum_{b=1}^{n} (B_{bi}^{j} B_{aj}^{b} + B_{ia}^{b} \delta_{b})  
+ F_{i^{\prime} a} \delta_{0} - \sum_{b=1}^{n}  \epsilon_{b} F_{i^{\prime}b} F_{ab} =0\\
\nonumber& \sum_{b=1}^{n}  \epsilon_{b} ( \epsilon_{j^{\prime} }F_{j^{\prime} b} B_{bij} + F_{i^{\prime}b} \delta_{b}) =0\\
\nonumber&  \sum_{b=1}^{n}  F_{ab} \epsilon_{b} (\delta_{b} - \delta_{b+n} ) =0\\
\label{rel-ricci}& \sum_{b=1}^{n}\epsilon_{b} ( B_{iab}\delta_{b} - B_{ia\, b+n}\delta_{b+n} ) +\delta_{0} F_{i^{\prime}a} =0,
\end{align}
where in the above sums $n+1 \leq j \leq 2n.$ 
\end{cor}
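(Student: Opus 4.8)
The plan is to derive the stated equivalence directly from the definition of the odd generalized Einstein condition together with the explicit Ricci formulas of Theorem \ref{2-prime}. By definition, $E_-$ is odd generalized Einstein with divergence $\delta$ precisely when the total Ricci tensor $\mathrm{Ric}^{\delta} = \mathrm{Ric}^{\delta,+}\oplus\mathrm{Ric}^{\delta,-}$ vanishes. Since $(e_a)_{0\le a\le n}$ is a basis of $\tilde{E}_+$ and $(e_i)_{n+1\le i\le 2n}$ is a basis of $\tilde{E}_-$, and $\mathrm{Ric}^{\delta,+}\in\Gamma(E_-^*\otimes E_+^*)$, $\mathrm{Ric}^{\delta,-}\in\Gamma(E_+^*\otimes E_-^*)$, the vanishing of $\mathrm{Ric}^{\delta}$ is equivalent to the simultaneous vanishing of all components $R^{\delta,+}_{ia}$, $R^{\delta,-}_{ai}$ ($0\le a\le n$, $n+1\le i\le 2n$). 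Separating off the value $a=0$ splits these into the four families $R^{\delta,+}_{ia}$, $R^{\delta,+}_{i0}$, $R^{\delta,-}_{ai}$, $R^{\delta,-}_{0i}$ (with $1\le a\le n$), for each of which Theorem \ref{2-prime} furnishes a closed formula.

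The second step is to match each relation in (\ref{rel-ricci}) against a suitable combination of these components. The first relation is read off verbatim as $R^{\delta,+}_{ia}=0$, and the second as $-R^{\delta,+}_{i0}=0$ after factoring out $\epsilon_b$ from the formula for $R^{\delta,+}_{i0}$. For the third relation I would form the difference $R^{\delta,-}_{0i}-R^{\delta,+}_{i0}$: the common double sum in $\epsilon_{j'}\epsilon_b F_{j'b}B_{bij}$ cancels, and after the index substitution $j=b+n$ (turning $\sum_j F_{i'j'}\epsilon_{j'}\delta_j$ into $\sum_b F_{i'b}\epsilon_b\delta_{b+n}$) the remainder is $\sum_{b=1}^n F_{i'b}\epsilon_b(\delta_b-\delta_{b+n})$, which is the third relation with $a$ specialized to $i'=i-n$. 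As $a\mapsto a+n$ is a bijection between $\{1,\dots,n\}$ and $\{n+1,\dots,2n\}$, running over all $a$ is the same as running over all $i$, so the third relation is exactly $R^{\delta,-}_{0i}-R^{\delta,+}_{i0}=0$.

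For the fourth relation I would similarly form $R^{\delta,+}_{ia}-R^{\delta,-}_{ai}$; here the $\sum_b B^j_{bi}B^b_{aj}$ and $\sum_b\epsilon_b F_{i'b}F_{ab}$ terms cancel, leaving $\sum_b B^b_{ia}\delta_b + F_{i'a}\delta_0 - B^j_{ai}\delta_j$. The only place requiring genuine care, and the main (mild) obstacle, is to re-express this in the raised/skew form appearing in (\ref{rel-ricci}): using $B^C_{AB}=B_{ABC}\epsilon_C$, the sign rule $\epsilon_j=-\epsilon_{j'}$ for $j\ge n+1$, and the complete skew-symmetry of $B$ (established as in Lemma~2.9 of \cite{c-k-paper}), one verifies $\sum_b B^b_{ia}\delta_b=\sum_b\epsilon_b B_{iab}\delta_b$ and, after $j=b+n$, $\sum_j B^j_{ai}\delta_j=\sum_b\epsilon_b B_{ia\,b+n}\delta_{b+n}$; hence $R^{\delta,+}_{ia}-R^{\delta,-}_{ai}$ coincides with the left-hand side of the fourth relation.

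Finally I would assemble the logical equivalence. Relations one and two say $R^{\delta,+}_{ia}=0$ and $R^{\delta,+}_{i0}=0$; together with relation three ($R^{\delta,-}_{0i}=R^{\delta,+}_{i0}$) they force $R^{\delta,-}_{0i}=0$, and together with relation four ($R^{\delta,-}_{ai}=R^{\delta,+}_{ia}$) they force $R^{\delta,-}_{ai}=0$. Since these four families exhaust the components of $\mathrm{Ric}^{\delta}$, the four relations hold simultaneously if and only if $\mathrm{Ric}^{\delta}=0$, that is, if and only if $E_-$ is odd generalized Einstein with divergence $\delta$. Beyond the index bookkeeping just described, no real difficulty arises, since all the analytic content is already packaged in Theorem \ref{2-prime}.
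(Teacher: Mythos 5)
Your proposal is correct and follows essentially the same route as the paper: both read off the first two relations as $R^{\delta,+}_{ia}=0$ and $R^{\delta,+}_{i0}=0$ from Theorem \ref{2-prime}, and obtain the last two as the differences $R^{\delta,+}_{i0}-R^{\delta,-}_{0i}$ and $R^{\delta,+}_{ia}-R^{\delta,-}_{ai}$. The index bookkeeping you carry out (the substitution $j=b+n$ and the use of skew-symmetry of $B$) is exactly what underlies the paper's two displayed identities.
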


\begin{proof}
The claim follows from 
Theorem \ref{2-prime}. 
The first  two relations (\ref{rel-ricci}) coincide with  $R_{ia}^{\delta , +} =0$ and
$R_{i0}^{\delta , +}=0$ for any $1\leq a\leq n$ and $n+1\leq i\leq 2n.$  
The last two relations (\ref{rel-ricci}) follow by noticing that 
\begin{align}
\nonumber& R_{i0}^{\delta , +} - R_{0i}^{\delta , -} =   \sum_{b=1}^{n}F_{i^{\prime} b} \epsilon_{b} ( \delta_{b+n} - \delta_{b})\\
\nonumber&  R_{ia}^{\delta , + } - R_{ai}^{\delta , -} = \sum_{b=1}^{n} \epsilon_{b} ( B_{iab} \delta_{b} - B_{ia\, b+n} \delta_{b+n}  ) +  \delta_{0} F_{i^{\prime} a}.\qedhere
\end{align}
\end{proof}

\begin{rem}\label{F}{\rm   i)  When  the $3$-form $H$ is closed  we can replace the term $ \sum_{b=1}^{n} (B_{bi}^{j} B_{aj}^{b} + B_{ia}^{b} \delta_{b})$
from the first relation (\ref{rel-ricci}) with $R^{\tilde{\delta}, +}_{ia}$ (see Lemma \ref{ricci-diferit}).\

ii) When  $F=0$,   the $3$-form $H$ is closed  and we obtain that
$E_{-}$ is an odd  generalized Einstein metric with divergence $\delta$ if and only if, 
 considered as  a generalized metric on  the Courant algebroid  $E_{H}$,  it
 is ordinary generalized Einstein with divergence $\tilde{\delta }=\delta\vert_{\mathfrak{g}\oplus \mathfrak{g}^{*}}.$ 
 This is straightforward  from  Corollary~\ref{2-prime-cor}.  Ordinary generalized Einstein metrics on $3$-dimensional Lie groups 
 were classified in \cite{c-k-paper}.}
\end{rem}

\section{The $3$-dimensional case }\label{three-dim}

Assume now that $G$ is a $3$-dimensional Lie group and let 
$E_{H, F}$ be  a  Courant algebroid of type $B_{3}$ over $G$. Then 
$dH =0$,  $F\wedge F = 0$ and the constraints on the pair $(H, F)$ reduce to $d F=0.$ 
The next corollary will be used systematically in the next sections. 

\begin{cor}\label{forapplic}  Let $E_{-} = \{ X - g(X)\mid X\in TG \}$ be a left-invariant generalized metric defined on a  
Courant algebroid $E_{H, F}$ of type $B_{3}$ over a $3$-dimensional Lie group 
$G$ with Lie algebra $\mathfrak{g}.$  Let $(v_{a})$ be a $g$-orthonormal basis of $\mathfrak{g}$, $\epsilon_{a} := g(v_{a}, v_{a} )\in \{ \pm 1\}$ 
and $(e_{A})$ the corresponding basis  of
$\tilde{E} = \mathfrak{g}\oplus \mathfrak{g}^{*} \oplus \mathbb{R}$, see (\ref{frame}).  Let $\delta\in \tilde{E}^{*}$ and $\tilde{\delta} = \delta\vert_{ \mathfrak{g} \oplus \mathfrak{g}^{*}}$.  Then  $E_{-}$ is odd generalized Einstein
with divergence $\delta$ 
 if and only if the components of the Ricci tensor $\mathrm{Ric}^{\tilde{\delta }, +}$ of the  pair  $(E_{-}, \tilde{\delta }) $ defined on $E_{H}$, 
 in the basis $(e_{A})_{1\leq A\leq 6}$ of $\mathfrak{g}\oplus \mathfrak{g}^{*}$, 
 satisfy
\begin{align}
\nonumber& R^{\tilde{\delta}, +}_{41} = \epsilon_{2} (F_{12})^{2} +\epsilon_{3} (F_{13})^{2},\ R^{\tilde{\delta}, +}_{51} =\delta_{0} F_{12} + \epsilon_{3} F_{23} F_{13},\
R^{\tilde{\delta}, +}_{61} = \delta_{0} F_{13} +\epsilon_{2} F_{32} F_{12}\\
\nonumber& R^{\tilde{\delta}, +}_{42}  = - \delta_{0} F_{12} +\epsilon_{3} F_{13} F_{23},\ R^{\tilde{\delta}, +}_{52}  = \epsilon_{1} (F_{12})^{2}  +\epsilon_{3} (F_{23})^{2},\ 
R^{\tilde{\delta}, +}_{62} = \delta_{0} F_{23}  +\epsilon_{1}  F_{12} F_{13}\\
\label{rel-1}  &  R^{\tilde{\delta}, +}_{43}= - \delta_{0} F_{13} + \epsilon_{2} F_{12} F_{32},\  R^{\tilde{\delta}, +}_{53}= - \delta_{0} F_{23} + \epsilon_{1} F_{21} F_{31},\ 
 R^{\tilde{\delta}, +}_{63}= \epsilon_{1}  (F_{13} )^{2} + \epsilon_{2}  (F_{23})^{2},
  \end{align}
  the components of the $2$-form $F$ satisfy 
  \begin{align}
  \nonumber& F_{12} \epsilon_{2} (\delta_{2} - \epsilon_{1} B_{145} ) + F_{13} \epsilon_{3} (\delta_{3} - \epsilon_{1} B_{146}) + F_{23} \epsilon_{2} \epsilon_{3}
  ( B_{345} - B_{246}) =0\\
  \nonumber& F_{12} \epsilon_{1} (\epsilon_{2} B_{254} - \delta_{1}) +\epsilon_{1}\epsilon_{3} F_{13} ( B_{354} - B_{156}) + F_{23}\epsilon_{3} (\delta_{3} - \epsilon_{2} B_{256}) =0\\
  \nonumber& F_{12} \epsilon_{1} \epsilon_{2} (B_{264} - B_{165}) +F_{13} \epsilon_{1} (\epsilon_{3} B_{364} -\delta_{1}) +F_{23} \epsilon_{2} (\epsilon_{3} B_{365} -\delta_{2} ) =0\\
  \nonumber& F_{12} \epsilon_{2} (\delta_{2} -\delta_{5}) + F_{13} \epsilon_{3} (\delta_{3} - \delta_{6})=0\\
  \nonumber& F_{21} \epsilon_{1} (\delta_{1} -\delta_{4}) +F_{23} \epsilon_{3} (\delta_{3} -\delta_{6}) =0\\
  \label{rel-2}& F_{31} \epsilon_{1} (\delta_{1} -\delta_{4}) +F_{32} \epsilon_{2} ( \delta_{2} - \delta_{5}) =0,
  \end{align}
  the products $\delta_{0} F_{ab}$  are given by 
   \begin{align}
  \nonumber& \delta_{0} F_{12}=  \epsilon_{2} B_{512} \delta_{2} +\epsilon_{3} B_{513} \delta_{3} - \epsilon_{1} B_{514} \delta_{4} - \epsilon_{3} B_{516} \delta_{6} \\
  \nonumber&  \delta_{0} F_{13}= \epsilon_{2} B_{612} \delta_{2} +\epsilon_{3} B_{613} \delta_{3} -\epsilon_{1} B_{614} \delta_{4} -\epsilon_{2} B_{615}\delta_{5} \\
  \label{rel-3}& \delta_{0} F_{23}= \epsilon_{1} B_{621} \delta_{1} +\epsilon_{3} B_{623} \delta_{3} - \epsilon_{1} B_{624} \delta_{4} - \epsilon_{2} B_{625}\delta_{5} 
  \end{align}
 and the following additional relations hold: 
  \begin{align}
 \nonumber&  \epsilon_{2} B_{412} \delta_{2} +\epsilon_{3} B_{413} \delta_{3} - \epsilon_{2} B_{415} \delta_{5} -\epsilon_{3} B_{416} \delta_{6} =0\\
  \nonumber& \epsilon_{1} B_{521} \delta_{1} +\epsilon_{3} B_{523}\delta_{3} - \epsilon_{1} B_{524}\delta_{4} -\epsilon_{3} B_{526}\delta_{6} =0\\
  \nonumber& \epsilon_{1} B_{631}\delta_{1} +\epsilon_{2} B_{632}\delta_{2} - \epsilon_{1} B_{634} \delta_{4} - \epsilon_{2} B_{635} \delta_{5} =0\\
  \nonumber& \epsilon_{1} B_{421}\delta_{1} +\epsilon_{2} B_{512} \delta_{2} +\epsilon_{3} ( B_{513} +  B_{423}) \delta_{3} - \epsilon_{1} B_{514} \delta_{4}
  -\epsilon_{2} B_{425}\delta_{5}\\
  \nonumber&  - \epsilon_{3} (B_{516} + B_{426})\delta_{6} =0\\
 \nonumber&  \epsilon_{1}  B_{431} \delta_{1} +\epsilon_{2} (B_{612} + B_{432}) \delta_{2} +\epsilon_{3} B_{613}\delta_{3} - \epsilon_{1} B_{614}\delta_{4}
 - \epsilon_{2} (B_{615} + B_{435}) \delta_{5}\\
 \nonumber&  - \epsilon_{3} B_{436} \delta_{6} =0\\
 \nonumber& \epsilon_{1} (B_{621} + B_{531} )\delta_{1} +\epsilon_{2} B_{532}\delta_{2} +\epsilon_{3} B_{623}\delta_{3} - \epsilon_{1} ( B_{624} + B_{534}) \delta_{4}
 - \epsilon_{2} B_{625} \delta_{5}\\
 \label{rel-4}&  - \epsilon_{3} B_{536}\delta_{6} =0.
  \end{align}
\end{cor}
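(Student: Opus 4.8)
The plan is to obtain the system (\ref{rel-1})--(\ref{rel-4}) as nothing more than an explicit, index-by-index transcription of the general characterization of Corollary~\ref{3} in the case $n=3$, combined with the simplification available when $H$ is closed. First I would record the two features special to three dimensions: since $\dim G=3$, the $4$-forms $dH$ and $F\wedge F$ vanish identically, so the only Courant-algebroid constraint is $dF=0$, and, crucially, the hypothesis $dH=0$ of Corollary~\ref{2-prime-cor} (equivalently Remark~\ref{F}~i)) is automatic. This lets me replace the quadratic-plus-divergence term $\sum_{b=1}^{3}(B_{bi}^{j}B_{aj}^{b}+B_{ia}^{b}\delta_{b})$ appearing in Corollary~\ref{3} by the component $R^{\tilde{\delta},+}_{ia}$ of the Ricci tensor of $(E_{-},\tilde{\delta})$ on $E_{H}$ (Lemma~\ref{ricci-diferit}), which is exactly the quantity displayed in (\ref{rel-1}).

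Next I would expand each of the four families of (\ref{rel-ricci}) over the ranges $a\in\{1,2,3\}$, $i\in\{4,5,6\}$ (so that $i'=i-3\in\{1,2,3\}$), repeatedly using the complete skew-symmetry of $B_{ABC}$ and of $F_{ab}$ to discard the vanishing ``diagonal'' terms ($B_{411}$, $F_{aa}$, etc.). The first family, $R^{\delta,+}_{ia}=0$, produces the nine relations (\ref{rel-1}): after substituting $R^{\tilde{\delta},+}_{ia}$ and simplifying $\sum_{b}\epsilon_{b}F_{i'b}F_{ab}$ by skew-symmetry one reads off, for example, $R^{\tilde{\delta},+}_{41}=\epsilon_{2}(F_{12})^{2}+\epsilon_{3}(F_{13})^{2}$. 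The second family, $R^{\delta,+}_{i0}=0$ for $i=4,5,6$, yields the first three relations of (\ref{rel-2}), and the third family, $\sum_{b}F_{ab}\epsilon_{b}(\delta_{b}-\delta_{b+3})=0$ for $a=1,2,3$, yields the last three.

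The fourth family, the difference relation indexed by the nine pairs $(i,a)$, is the only one requiring a small reorganization, and I would handle it in three groups. For the three ``diagonal'' pairs $(4,1),(5,2),(6,3)$ the factor $F_{i'a}=F_{aa}$ vanishes, so the $\delta_{0}$-term drops out and one obtains the first three relations of (\ref{rel-4}) directly. For the pairs $(5,1),(6,1),(6,2)$ I would instead solve the relation for the product $\delta_{0}F_{ab}$, producing the three identities (\ref{rel-3}). Finally, for the remaining pairs $(4,2),(4,3),(5,3)$ I would substitute the expressions for $\delta_{0}F_{12},\delta_{0}F_{13},\delta_{0}F_{23}$ just obtained in (\ref{rel-3}); this eliminates $\delta_{0}$ and collects the coefficients of $\delta_{1},\dots,\delta_{6}$ into the last three relations of (\ref{rel-4}). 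For instance, the pair $(4,2)$ combined with the first line of (\ref{rel-3}) gives $\epsilon_{1}B_{421}\delta_{1}+\epsilon_{2}B_{512}\delta_{2}+\epsilon_{3}(B_{513}+B_{423})\delta_{3}-\cdots=0$. Since each step is an invertible linear rearrangement of the $24$ scalar equations produced by Corollary~\ref{3}, the resulting system is equivalent to the Einstein condition, which gives the stated ``if and only if''.

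I expect no conceptual difficulty here: the content is entirely a specialization, and the only analytic inputs (the closed-$H$ simplification and the explicit Ricci formulas) are already available from Corollary~\ref{2-prime-cor}, Theorem~\ref{2-prime} and Lemma~\ref{ricci-diferit}. The main obstacle is purely bookkeeping — tracking which of the nine pairs $(i,a)$ in the fourth family maps to (\ref{rel-3}) rather than (\ref{rel-4}), and checking that the substitution of the $\delta_{0}F_{ab}$ identities reproduces precisely the displayed coefficients, including the combined terms such as $B_{513}+B_{423}$ and $B_{516}+B_{426}$.
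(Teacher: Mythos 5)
Your proposal is correct and follows essentially the same route as the paper: the corollary is obtained by specializing Corollary~\ref{3} to $n=3$, using that $dH=0$ is automatic in dimension $3$ so that Lemma~\ref{ricci-diferit} lets one substitute $R^{\tilde{\delta},+}_{ia}$ for the quadratic-plus-divergence term, and then transcribing the four families of (\ref{rel-ricci}) index by index into (\ref{rel-1})--(\ref{rel-4}). Your bookkeeping (in particular the split of the fourth family into the diagonal pairs, the pairs solved for $\delta_{0}F_{ab}$, and the pairs where those identities are back-substituted) reproduces exactly the paper's grouping.
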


\begin{proof}  The claim follows from Corollary \ref{3}.  
The first  relation (\ref{rel-ricci}) coincides with
relations (\ref{rel-1}), the second relation (\ref{rel-ricci}) coincides with the first three relations 
 (\ref{rel-2}),
 the third relation (\ref{rel-ricci}) coincides with the last three relations
 (\ref{rel-2}). The last relation (\ref{rel-ricci}) coincides with 
 relations (\ref{rel-3}) and (\ref{rel-4}).
 \end{proof}

 Our goal in the next sections is to describe all 
 odd generalized Einstein metrics on $3$-dimensional Lie groups.

\begin{ass}\label{ass-unimod}{\rm  As  ordinary generalized Einstein metrics over $3$-dimensional Lie groups were completely 
classified in \cite{c-k-paper},  in view of  Remark \ref{F} ii) 
we will assume from now   on (without repeating it each time)  that 
$$
F\neq 0.
$$}\end{ass}

\section{The classification when $\mathfrak{g}$ is unimodular}\label{unimod-section}

\subsection{Basic facts on $3$-dimensional   unimodular metric Lie algebras}
\label{basic_facts_unimod:sec}

Let $\mathfrak{g}$ be a $3$-dimensional  Lie algebra with a non-degenerate scalar product  $g$. 
We start by recalling several facts on the pair $(\mathfrak{g}, g).$ 
For details, see \cite{c-k-paper}. Let   $\underline{\mathrm{vol}}=\mathbb{R}_{>0}\cdot \mathrm{vol}\subset \Lambda^{3} \mathfrak{g}^{*}$, $\mathrm{vol}\in  (\Lambda^{3} \mathfrak{g}^{*})\setminus \{ 0\}$,  be an orientation 
of $\mathfrak{g}$ 
and   $L\in \mathrm{End}\, \mathfrak{g}$  the operator  which relates the cross product defined by $(g, \underline{\mathrm{vol}})$  with 
the Lie bracket of $\mathfrak{g}$, 
namely 
$$
{\mathcal L}_{u} v= L( u\times v),\  \forall u, v\in \mathfrak{g}.
$$
The operator $L$ was first considered in \cite{milnor} when $g$ is positive definite  and  later in \cite{c-k-paper} for arbitrary signature.

\begin{rem}{\rm  Changing   the orientation  the operator $L$ multiplies  by a  minus sign. By abuse of language we will refer to $L$ 
as {\it the}  operator associated $(\mathfrak{g}, g)$ rather than $(\mathfrak{g}, g, \underline{\mathrm{vol}}).$
}
\end{rem}

As proved in  Proposition 3.2 of \cite{c-k-paper}, 
$\mathfrak{g}$ is unimodular (i.e.\  $\mathrm{tr}\, \mathrm{ad}_{u} =0$ for any $u\in \mathfrak{g}$) 
if and only if the operator $L$ is $g$-symmetric. In this case,
there is a $g$-orthonormal basis $(v_{a})$ of $\mathfrak{g}$ such that  $L$ takes one the following    canonical forms 
in this basis: 
$$
L_{1}(\alpha , \beta , \gamma ) = \left(\begin{tabular}{ccc}
$\alpha $ &  $0$ & $0$\\
$0$ & $\beta$ & $0$\\
$0$ & $ 0$ & $\gamma$
\end{tabular}\right),\  L_{2} (\alpha , \beta , \gamma )= \left(\begin{tabular}{ccc}
$\gamma $ & $0$ & $ 0$\\
$0$ & $ \alpha $ & $ -\beta$\\
$0$ & $ \beta $ & $\alpha $
\end{tabular}\right) 
$$
and 
$$ 
L_{3, \eta } (\alpha , \beta   )= \left( \begin{tabular}{ccc}
$ \beta$ & $0$ & $0$\\
$0$ & $\frac{\eta}{2} +\alpha $ & $ \frac{\eta}{2}$\\
$ 0$ & $ -\frac{\eta}{2} $ & $ -\frac{\eta}{2} +\alpha$
\end{tabular}\right),\ L_{5}(\alpha )=\left( \begin{tabular}{ccc}
$ \alpha$ & $\frac{1}{\sqrt{2}}$  & $0$\\
$\frac{1}{\sqrt{2}}$ & $\alpha$ & $\frac{1}{\sqrt{2}}$\\
$0$ & $-\frac{1}{\sqrt{2}}$ & $\alpha$
\end{tabular}\right),
$$
where $\alpha , \beta , \gamma \in \mathbb{R}$, $\beta \neq 0$ in  the form $L_{2} (\alpha , \beta , \gamma )$
and $\eta = \pm 1$.
Moreover,  $\epsilon_{1} = \epsilon_{2}$  and, when $L$ is non-diagonal, 
 also  $\epsilon_{3} = -\epsilon_{2}$
(where $\epsilon_{a}:= g (v_{a}, v_{a}) \in \{ \pm 1\}$).   When $g$ is  definite,  $L$ is always diagonal.

\begin{rem}{\rm The canonical forms  $L_{3, \eta } $ 
were denoted in \cite{c-k-paper} by  $L_{3}$ (when $\eta =1$) and by $L_{4}$ (when $\eta =-1$). 
}
\end{rem}

As remarked in the proof of Proposition 3.2 of  \cite{c-k-paper}, the above  canonical  forms  can be written in a unified way as
\begin{equation}\label{unified}
L = \left( \begin{tabular}{ccc}
$\alpha$  & $\lambda$ & $0$\\
$\lambda$ & $\beta$ & $\mu$\\
$0$ & $\epsilon_{2}\epsilon_{3} \mu $ & $ \gamma$
\end{tabular}\right) ,
\end{equation}
where $\alpha , \beta , \lambda , \mu \in \mathbb{R}$. 
The Lie bracket of $\mathfrak{g}$  is given by 
\begin{align}
\nonumber& \mathcal L_{ v_{1}} v_{2} = \epsilon_{3} L(v_{3}) = \epsilon_{3} ( \mu v_{2} + \gamma v_{3})\\
\nonumber& \mathcal L_{v_{2} } v_{3} = \epsilon_{1} L(v_{1}) = \epsilon_{1} ( \alpha v_{1} + \lambda v_{2})\\
\label{brackets}& \mathcal L_{v_{3}} v_{1}  = \epsilon_{2} L(v_{2}) = \epsilon_{2} ( \lambda v_{1} +\beta v_{2} +\epsilon_{2}\epsilon_3\mu v_{3}),
\end{align}
where we use the convention $L(v_{a}) = \sum_{b=1}^{3} L_{ba} v_{b}$ for any $1\leq a \leq 3.$\

In the next  lemma we  relate the classification 
of $3$-dimensional  unimodular Lie algebras 
mentioned in the introduction  with the classification (in terms of the canonical forms for the operator $L$)
of such Lie algebras endowed  with  a non-degenerate scalar product. The case when $L$ is diagonal can be deduced from \cite[Table on page 307]{milnor} by replacing $(\lambda_1,\lambda_2,\lambda_3)$ with $(\epsilon_1\alpha,\epsilon_2\beta,\epsilon_3\gamma)$. For this reason we assume that $L$ belongs to one of the classes
$L_{2}$, $L_{3, \eta}$  or $L_{5}$ and, in particular, that $\epsilon_{2}\epsilon_3=-1$.

\begin{lem} \label{identif-lie-alg} Let $(\mathfrak{g}, g)$ be a  $3$-dimensional non-abelian unimodular Lie algebra 
with a non-degenerate scalar product  $g$  and  associated operator $L$.\

i) Assume that $L = L_{2} (\alpha , \beta , \gamma )$ (with $\beta\neq 0$). If $\gamma \neq 0$ then $\mathfrak{g}$ is isomorphic to
$\mathfrak{so}(2, 1).$ If $\gamma =0$ then $\mathfrak{g}$ is isomorphic to $\mathfrak{e}(1,1)$.\

ii) Assume that $L = L_{3, \eta} (\alpha , \beta )$. If $\alpha =\beta =0$ then $\mathfrak{g}$ is isomorphic to 
$\mathfrak{heis}$. If $\alpha \neq 0$ and $\beta =0$, then $\mathfrak{g}$ is isomorphic to $\mathfrak{e}(1,1)$.  If $\alpha =0$ and
$\beta \neq 0$, then $\mathfrak{g}$ is isomorphic to $\mathfrak{e} (2)$ when $\beta \eta >0$ and to
$\mathfrak{e}(1,1)$ when $\beta \eta < 0$.  If $\alpha \beta \neq 0$,  then $\mathfrak{g}$ is isomorphic to $\mathfrak{so}(2,1).$\

iii) Assume that $L = L_{5} (\alpha ).$ If $ \alpha =0$, then $\mathfrak{g}$ is isomorphic to $\mathfrak{e}(1,1).$ If $\alpha \neq 0$, then
$\mathfrak{g}$ is isomorphic to $\mathfrak{so}(2,1).$
\end{lem}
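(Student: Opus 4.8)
The plan is to exhibit a metric-independent isomorphism invariant of $\mathfrak{g}$ and to read it off from the canonical form of $L$, extending the idea already used in the diagonal case (where the authors match Milnor's eigenvalues $(\lambda_1,\lambda_2,\lambda_3)$ with $(\epsilon_1\alpha,\epsilon_2\beta,\epsilon_3\gamma)$). The bracket of a $3$-dimensional unimodular Lie algebra is encoded by a symmetric tensor: fixing the orientation and writing $u\times v = g^{-1}(\mathrm{vol}(u,v,\cdot))$, the relation $\mathcal L_u v = L(u\times v)$ becomes, after identifying $\Lambda^2\mathfrak{g}$ with $\mathfrak{g}^*$ via $\mathrm{vol}$, the symmetric operator $n:=L\,g^{-1}:\mathfrak{g}^*\to\mathfrak{g}$, i.e. an element of $\mathrm{Sym}^2\mathfrak{g}$. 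I would first note that $n$ depends only on the bracket and the chosen volume: a different scalar product $g'$ produces an operator $L'$ with $L'g'^{-1}=Lg^{-1}=n$, while rescaling $\mathrm{vol}$ only rescales $n$ (flipping its sign if the scalar is negative). Hence the rank of $n$ and its signature, up to interchanging the two inertia indices, are genuine isomorphism invariants of $\mathfrak{g}$.

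I would then recall the dictionary between these invariants and the classification list (the indefinite-signature extension of Milnor's table): $n$ of rank $3$ definite corresponds to $\mathfrak{so}(3)$ and of rank $3$ indefinite to $\mathfrak{so}(2,1)$; rank $2$ definite to $\mathfrak{e}(2)$ and rank $2$ indefinite to $\mathfrak{e}(1,1)$; rank $1$ to $\mathfrak{heis}$ and rank $0$ to $\mathbb{R}^3$. In the $g$-orthonormal basis $(v_a)$ the matrix of $n$ is simply $M:=L\,\mathrm{diag}(\epsilon_1,\epsilon_2,\epsilon_3)$, reproducing $\mathrm{diag}(\epsilon_1\alpha,\epsilon_2\beta,\epsilon_3\gamma)$ when $L$ is diagonal. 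For the three non-diagonal classes we are in the situation $\epsilon_1=\epsilon_2=:\epsilon$, $\epsilon_3=-\epsilon$, so everything reduces to computing the rank and signature of $M$.

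The proof is then a short case analysis. For $L=L_2(\alpha,\beta,\gamma)$ the matrix $M$ is block diagonal with a $1\times1$ block $\epsilon\gamma$ and a $2\times2$ block $\epsilon\left(\begin{smallmatrix}\alpha&\beta\\ \beta&-\alpha\end{smallmatrix}\right)$ of determinant $-(\alpha^2+\beta^2)<0$ (using $\beta\neq0$), hence of signature $(1,1)$; thus $M$ has rank $3$ and is indefinite when $\gamma\neq0$ (giving $\mathfrak{so}(2,1)$) and rank $2$ indefinite when $\gamma=0$ (giving $\mathfrak{e}(1,1)$). For $L=L_{3,\eta}(\alpha,\beta)$ the $2\times2$ block has determinant $-\alpha^2$ and trace $\eta$ while the $1\times1$ block is $\epsilon\beta$, which separates the four subcases $\alpha=\beta=0$ (rank $1$, $\mathfrak{heis}$), $\alpha\neq0=\beta$ (rank $2$ indefinite, $\mathfrak{e}(1,1)$), $\alpha=0\neq\beta$ (rank $2$, with nonzero eigenvalues $\epsilon\beta$ and $\epsilon\eta$, so definite or indefinite according to the sign of $\beta\eta$, giving $\mathfrak{e}(2)$ or $\mathfrak{e}(1,1)$), and $\alpha\beta\neq0$ (rank $3$ indefinite, $\mathfrak{so}(2,1)$). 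For $L=L_5(\alpha)$ a direct computation gives $\det M=-\epsilon\alpha^3$, and the principal $2\times2$ submatrix on the indices $1,3$ equals $\mathrm{diag}(\epsilon\alpha,-\epsilon\alpha)$, which is indefinite for $\alpha\neq0$; since a definite matrix cannot have an indefinite principal submatrix, $M$ is never definite, so $\alpha\neq0$ forces rank $3$ indefinite ($\mathfrak{so}(2,1)$), while for $\alpha=0$ the eigenvalues of $M$ are $0,\pm\epsilon$, i.e. rank $2$ indefinite ($\mathfrak{e}(1,1)$).

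The main obstacle is the conceptual first step — making precise that the signature of $n=Lg^{-1}$ is a genuine, metric-independent isomorphism invariant and matching it to the list — rather than the arithmetic; once that is in place each case reduces to evaluating one $2\times2$ determinant together with one diagonal entry. The only mild care needed is to exclude the definite (compact, $\mathfrak{so}(3)$) case in the rank-$3$ situations, which I would do uniformly by exhibiting an indefinite $2\times2$ principal submatrix of $M$, consistent with the fact that $\mathfrak{so}(3)$ occurs only for definite $g$, where $L$ is already diagonal and hence outside the scope of this lemma.
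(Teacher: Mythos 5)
Your argument is correct, and it takes a genuinely different route from the paper's. The paper identifies $\mathfrak{g}$ through the pair of invariants $\dim [\mathfrak{g},\mathfrak{g}]$ and the signature of the Killing form (respectively of the form it induces on $\mathfrak{g}/[\mathfrak{g},\mathfrak{g}]$ when the latter is $2$-dimensional), and then evaluates these for each canonical form by writing out the brackets (\ref{brackets}) and computing $K$. You instead package the bracket itself as the symmetric tensor $n=Lg^{-1}$, observe that $n$ depends only on the bracket and the volume form (hence changes only by a nonzero scalar under a change of $g$ or of orientation), and use the rank and the signature-up-to-overall-sign of $n$ as the invariant. Since $\mathrm{rank}\, n=\dim[\mathfrak{g},\mathfrak{g}]$, the two approaches share half of their invariant; the genuine difference is that you replace the Killing-form computation by reading off the signature of the explicit symmetric matrix $M=L\,\mathrm{diag}(\epsilon_1,\epsilon_2,\epsilon_3)$, which in each canonical form reduces to one $2\times2$ determinant together with a diagonal entry. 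Both routes ultimately calibrate against Milnor's sign table (yours via an auxiliary positive definite metric, for which $M=\mathrm{diag}(\lambda_1,\lambda_2,\lambda_3)$), but yours is more uniform across the three non-diagonal classes and makes the exclusion of $\mathfrak{so}(3)$ transparent, since an indefinite principal $2\times2$ block obstructs definiteness. I verified the computations: for $L_2$ the lower block of $M$ is $\epsilon\left(\begin{smallmatrix}\alpha&\beta\\ \beta&-\alpha\end{smallmatrix}\right)$ with determinant $-(\alpha^2+\beta^2)<0$; for $L_{3,\eta}$ it has determinant $-\alpha^2$ and trace $\epsilon\eta$ (you wrote $\eta$ --- a harmless slip, since only the product $\beta\eta$ of the two nonzero eigenvalues $\epsilon\beta,\epsilon\eta$ enters in the case $\alpha=0\neq\beta$); and for $L_5$ indeed $\det M=-\epsilon\alpha^3$, with eigenvalues $0,\pm\epsilon$ when $\alpha=0$. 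All conclusions agree with the statement.
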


\begin{proof}  Note  that  for any $3$-dimensional non-abelian unimodular Lie algebra
$\mathfrak{g}$ with Killing form $K$ one of the following situations hold:

a) $[\mathfrak{g}, \mathfrak{g} ] = \mathfrak{g}$. Then either $K$ is negative definite 
or is of signature $(2,1).$ In the first case,  $\mathfrak{g}$ is isomorphic to $\mathfrak{so}(3)$. In the second case, 
$\mathfrak{g}$ is isomorphic to $\mathfrak{so}(2,1)$;\

b) $\mathrm{dim}\, [ \mathfrak{g}, \mathfrak{g} ] =2.$ Then $[\mathfrak{g}, \mathfrak{g} ] =\mathrm{Ker}\, K$ and 
the induced form $\widehat{K}$ on  $\mathfrak{g}/ [ \mathfrak{g}, \mathfrak{g} ]$ 
is non-trivial. When $\widehat{K}$  is negative definite,  $\mathfrak{g}$ is isomorphic to
$\mathfrak{e}(2)$. When $\widehat{K}$ is  positive definite, $\mathfrak{g}$ is isomorphic to
$\mathfrak{e}(1,1)$;\

c) $\mathrm{dim}\, [ \mathfrak{g}, \mathfrak{g} ] =1.$ Then $\mathfrak{g}$ is isomorphic to $\mathfrak{heis}$.

To prove the above  statements it is sufficient to consider   a basis $(w_{a})$ in which the Lie bracket  of $\mathfrak{g}$ takes the form
$[w_{a}, w_{b}]  = \beta_{c} w_{c}$ for any cyclic permutation $(a, b, c)$, and to use the identification of the Lie algebra $\mathfrak{g}$ in terms of the signs of $\beta_{c}$, see  \cite{milnor} page 307.  (Such a basis can be obtained by choosing a  positive  definite scalar product on $\mathfrak{g}$ and using  that the associated operator $L$ is diagonalizable). \

The claims of the lemma are now  obtained by writing the Lie brackets 
(\ref{brackets}) for each normal form $L = L_{2}$, $L_{3, \eta}$, $L_{5}$,  computing  the commutator $[\mathfrak{g}, \mathfrak{g} ]$ 
and the Killing form 
for  each of the corresponding 
Lie algebras $\mathfrak{g}$, and applying the above statements.
\end{proof}

The following remark will be useful in the  computations from the next sections.

\begin{rem}\label{rem-dorfman}{\rm i) 
One can show  (e.g.\ by using the basis $(w_{a})$ from the previous proof), 
that any left-invariant $2$-form on a  $3$-dimensional unimodular Lie group $G$ is closed.
Therefore, any pair $(H, F)$, where $H\in \Omega^{3}(G)$ and $F\in \Omega^{2}(G)$ are left-invariant, defines a  Courant algebroid $E_{H, F}$ of type $B_{3}$ over $G$.\

ii) Let $G$ be a  $3$-dimensional  unimodular Lie group 
with  Lie algebra $\mathfrak{g}$,  $g$ a  left-invariant metric on $G$
and $(v_{a})$ a $g$-orthonormal basis of $\mathfrak{g}.$ 
Let  
$ H\in \Omega^{3} (G)$ be a left-invariant $3$-form.
The 
Dorfman coefficients of  the generalized tangent bundle $E_{-H}$ 
with respect to the induced basis $ (e_{A}):= (e_{a} := v_{a} + g(v_{a}),\ e_{i} := v_{i-3} - g(v_{i-3}),\
1\leq a\leq 3,\ 4\leq i\leq 6)$ of
$\mathfrak{g}\oplus \mathfrak{g}^{*}$ were computed in the proof of Proposition 3.3 of \cite{c-k-paper}  (when $L$ is  of the general form 
(\ref{unified}) and 
non-diagonal) and  in the proof of Theorem 3.4 
of \cite{c-k-paper} 
(when $L$ is diagonal).
Replacing in  those expressions $H$ with $ - H$  we obtain 
the Dorfman coefficients 
$B_{ABC} = {\mathcal B}_{ABC}$  (where $1\le A, B, C\leq 2n$) 
of $E_{H, F}$ in the basis
$( e_{A})_{0\leq A\leq 6}$ of $\mathfrak{g}\oplus \mathfrak{g}^{*}\oplus \mathbb{R}$
(see the proof of Lemma \ref{ricci-diferit} and Remark  \ref{compare}).\

iii)  In the above setting, let 
$E_{-} = \{ X - g(X)\mid X\in TG\}$, $\delta \in (\mathfrak{g}\oplus \mathfrak{g}^{*} \oplus \mathbb{R})^{*}$ and 
$\tilde{\delta}:= \delta\vert_{\mathfrak{g}\oplus \mathfrak{g}^{*}} $.  The components of the Ricci tensor of 
the pair $(E_{-}, \tilde{\delta })$ defined on $E_{-H}$ 
in the basis $(e_{A})_{1\leq A\leq 6}$ of $\mathfrak{g}\oplus \mathfrak{g}^{*}$ 
were  computed in the proof of Proposition 3.11 of \cite{c-k-paper}
(when $L$ is of the general form (\ref{unified}) and 
non-diagonal) and in the proof of Theorem 3.12  of \cite{c-k-paper} (when $L$ is diagonal).
Replacing in those  expressions $H$ with $ - H$ 
and using Remark \ref{compare} again 
we obtain the components $R^{\tilde{\delta}, +}_{ia}$  ($4\leq i\leq 6$, $1\leq a\leq 3$) of the Ricci tensor
$\mathrm{Ric}^{\tilde{\delta}  , + }$  
from  Corollary \ref{forapplic}.}
\end{rem}

\subsection{The case $L$ diagonal}\label{diag-section}

Let $\mathfrak{g}$ be a $3$-dimensional  Lie algebra with a non-degenerate  scalar product  $g$ and 
diagonal operator $L$ in a $g$-orthonormal basis $(v_{a})$ of $\mathfrak{g}.$ 
Thus 
$L = \mathrm{diag} (\alpha_{1}, \alpha_{2}, \alpha_{3})$, where $\alpha_{i} \in \mathbb{R}$, 
and 
the Lie bracket  of $\mathfrak{g}$ is given by
\begin{equation}\label{def-bracket-diag}
\mathcal L_{v_{1}} v_{2} = \epsilon_{3} \alpha_{3} v_{3},\ \mathcal L_{ v_{3}} v_{1}  = \epsilon_{2} \alpha_{2} v_{2},\  \mathcal L_{v_{2}} v_{3}  = \epsilon_{1} \alpha_{1} v_{1},
\end{equation}
where $\epsilon_{a} = g( v_{a}, v_{a}) \in \{ \pm 1\}$.    
Let $G$ be a Lie group with Lie algebra $\mathfrak{g}$ and 
$E_{H, F}$ (with $F\neq 0$)  a Courant algebroid of type $B_{3}$ over $G$, with $H\in \Omega^{3}(G)$ and $F\in \Omega^{2}(G)$ left-invariant.
We write 
\begin{equation}\label{H-F-coord} 
H= h v_{1}^{*}\wedge v_{2}^{*}\wedge v_{3}^{*},\ F= \frac{1}{2}F_{ab} v_{a}^{*}\wedge v_{b}^{*},
\end{equation} 
where
$( v_{a}^{*})$ is the dual basis of $(v_{a})$ and $h, F_{ab}\in \mathbb{R}.$  
Let $\delta\in
(\mathfrak{g}\oplus\mathfrak{g}^{*}\oplus \mathbb{R})^{*}$ be a left-invariant divergence operator and $\delta_{A}:= \delta (e_{A})$, 
where $(e_{A})_{0\leq A\leq 6}$ is the basis of $\mathfrak{g}\oplus \mathfrak{g}^{*}\oplus \mathbb{R}$ defined by $(v_{a}).$

\begin{thm}\label{classif-diag}  i) The metric  $E_{-} =\{ X - g(X)\mid  X\in TG \}$
defined on $E_{H, F}$ 
is generalized Einstein with divergence $\delta$ if and only if (up to a permutation of  the basis $(v_{a}$)), 
\begin{align}
\nonumber&  \alpha_{1} =\alpha_{3},\ \epsilon_{2} \alpha_{2} (\alpha_{1} -\alpha_{2} ) >0,\  h = - \alpha_{2},\ F_{12} = F_{23} =0,\\
\nonumber& \delta_{A} =0,\ \forall A\in \{ 1, 3, 4, 6\},\ \delta_{5}\in \mathbb{R}
\end{align}
and either $\delta_{0}\delta_{2}\neq  0$ in which case 
$$
(\frac{\delta_{2}}{\delta_{0}})^{2} =\frac{ \epsilon_{2} (\alpha_{1} -\alpha_{2})}{\alpha_{2}},\ 
F_{13} = \epsilon_{2} \alpha_{2} \frac{\delta_{2}}{\delta_{0}},\ 
$$
or  $ \delta_{0}=\delta_{2} =0$ in which case  $(F_{13})^{2} =\epsilon_{2} \alpha_{2} (\alpha_{1} -\alpha_{2}).$\

 ii) If  $\alpha_{1} =\alpha_{3} =0$ then $\mathfrak{g}$ is isomorphic to $\mathfrak{heis}.$ If 
$\alpha_{1} = \alpha_{3} \neq 0$ then $\mathfrak{g}$ is isomorphic to $\mathfrak{so}(3)$
when $( \epsilon_{1} =\epsilon_{3}, \mathrm{sign} (\epsilon_{2} \alpha_{2} ) =  \mathrm{sign} (\epsilon_{1} \alpha_{1} ) )$
and to $\mathfrak{so}(2,1)$ when  $( \epsilon_{1} =\epsilon_{3}, \mathrm{sign} (\epsilon_{2} \alpha_{2} ) =  
-\mathrm{sign} (\epsilon_{1} \alpha_{1} ) )$ or when $\epsilon_{3} =-\epsilon_{1}.$ 
\end{thm}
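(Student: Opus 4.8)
The plan is to reduce everything to the polynomial system furnished by Corollary \ref{forapplic}. First I would import the explicit data: by Remark \ref{rem-dorfman} ii) and iii), the Dorfman coefficients $B_{ABC}$ and the ordinary Ricci components $R^{\tilde\delta,+}_{ia}$ on $E_{H}$ for a diagonal operator $L$ are read off from Theorem 3.4 and Theorem 3.12 of \cite{c-k-paper}, after replacing $H$ by $-H$ (Remark \ref{compare}). Since $L=\mathrm{diag}(\alpha_1,\alpha_2,\alpha_3)$ makes the bracket (\ref{def-bracket-diag}) very simple, these coefficients are correspondingly sparse, built only from $\alpha_1,\alpha_2,\alpha_3$ and the single parameter $h$ of $H$ in (\ref{H-F-coord}). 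Substituting them into (\ref{rel-1})--(\ref{rel-4}) then gives the odd generalized Einstein condition as an explicit system in the unknowns $(\alpha_i,h,F_{ab},\delta_A)$.

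Next I would solve this system in a deliberate order, exploiting that the linear relations constrain the data before the quadratic ones are invoked. The relations (\ref{rel-2}) and (\ref{rel-4}), being linear in $F$ and in the $\delta_A$ with Dorfman-coefficient weights, sharply limit which components may be nonzero; using $F\neq 0$ (Assumption \ref{ass-unimod}) together with the freedom to permute the $g$-orthonormal basis $(v_a)$ (which permutes the diagonal entries of $L$ and so preserves the diagonal form), I would normalize the surviving component to be $F_{13}$, forcing $F_{12}=F_{23}=0$, together with $\delta_A=0$ for $A\in\{1,3,4,6\}$ and $\delta_5$ free. The quadratic relations (\ref{rel-1}) then impose $\alpha_1=\alpha_3$ and $h=-\alpha_2$, and express $(F_{13})^2$ through $\epsilon_2\alpha_2(\alpha_1-\alpha_2)$; the sign condition $\epsilon_2\alpha_2(\alpha_1-\alpha_2)>0$ is exactly the requirement that this quadratic admit a real solution with $F_{13}\neq 0$. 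Finally, relation (\ref{rel-3}) is what splits the result into the two alternatives: when $\delta_0\delta_2\neq 0$ it solves for $F_{13}$ in terms of $\delta_2/\delta_0$ (with $(\delta_2/\delta_0)^2=\epsilon_2(\alpha_1-\alpha_2)/\alpha_2$), while when $\delta_0=\delta_2=0$ it degenerates and leaves only $(F_{13})^2=\epsilon_2\alpha_2(\alpha_1-\alpha_2)$.

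The main obstacle is bookkeeping rather than any single hard idea: the combined system (\ref{rel-1})--(\ref{rel-4}) is large and the case analysis must close without gaps. In particular, one must track the permutation normalization carefully, and verify that every solution of the reduced equations genuinely satisfies all of (\ref{rel-1})--(\ref{rel-4}) and not merely the subsystem used to derive it, so that the stated conditions are not only necessary but also sufficient. I would therefore organize the argument so that the forward direction (extracting the constraints) and the backward check (that the listed data solve the full system) are done together case by case.

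Part ii) is then immediate and separate from the computation. With $\alpha_1=\alpha_3$ in hand, the isomorphism type of $\mathfrak{g}$ is determined by the signs of the parameters $(\epsilon_1\alpha_1,\epsilon_2\alpha_2,\epsilon_3\alpha_3)$ via Milnor's table (\cite{milnor}, page 307), as recorded just before Lemma \ref{identif-lie-alg}. When $\alpha_1=\alpha_3=0$ only the middle parameter survives and is nonzero by the sign constraint, giving $\mathfrak{heis}$. When $\alpha_1=\alpha_3\neq 0$, the signs of $\epsilon_1\alpha_1$ and $\epsilon_3\alpha_3=\epsilon_3\alpha_1$ agree precisely when $\epsilon_1=\epsilon_3$; comparing with the sign of $\epsilon_2\alpha_2$ then yields $\mathfrak{so}(3)$ when all three signs coincide and $\mathfrak{so}(2,1)$ otherwise, which is exactly the dichotomy in the statement (the case $\epsilon_3=-\epsilon_1$ automatically producing opposite signs among $\epsilon_1\alpha_1,\epsilon_3\alpha_3$, hence $\mathfrak{so}(2,1)$).
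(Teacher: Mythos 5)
Your overall strategy coincides with the paper's: import the Dorfman coefficients and the components $R^{\tilde\delta,+}_{ia}$ for diagonal $L$ from \cite{c-k-paper} via Remark \ref{rem-dorfman} (with $h\mapsto -h$), substitute into Corollary \ref{forapplic}, and solve the resulting polynomial system; your treatment of part ii) via Milnor's table is also exactly what the paper does and is correct. However, the central step of your plan has a genuine gap. You claim that the linear relations (\ref{rel-2}) and (\ref{rel-4}) alone ``sharply limit which components may be nonzero,'' forcing $F_{12}=F_{23}=0$ and $\delta_A=0$ for $A\in\{1,3,4,6\}$ after a permutation. This is not so: in the diagonal case the first three relations of (\ref{rel-2}) form a $3\times 3$ linear system in $(F_{12},F_{13},F_{23})$ with coefficients built from the $\delta_a$ and the quantities $X_a+X_b$, and the condition $F\neq 0$ only forces its determinant to vanish; the nontrivial kernel need not lie along a coordinate axis, so no component of $F$ is forced to vanish at this stage. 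Worse, the last three relations of (\ref{rel-2}) involve only the differences $\delta_a-\delta_{a+3}$ and become vacuous whenever $\delta_a=\delta_{a+3}$, so they cannot by themselves kill any $\delta_A$.

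In the paper, the vanishing of $F_{12}$ and $F_{23}$ is extracted from the \emph{quadratic} Ricci equations: pairing the 2nd with the 4th, the 3rd with the 7th, and the 6th with the 8th relations of the system coming from (\ref{rel-1}) yields $\delta_0F_{12}=\delta_0F_{23}=F_{12}F_{13}=F_{23}F_{13}=0$, and these identities are only decisive inside a two-level case analysis governed by which of the sums $X_a+X_b$ and which of the differences $Y_a-Y_b$ vanish (relations (\ref{3-diag}) and (\ref{4-diag}), which encode (\ref{rel-3}) and the nontrivial part of (\ref{rel-4})). That case analysis is also where all the non-solution branches (cases I, III, IV and the remaining subcases of II) are eliminated; your plan omits it entirely, so as written it neither establishes the necessity of the listed conditions nor rules out further families of solutions. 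The plan is salvageable --- the equations you propose to use are the right ones --- but the order of attack must be reversed: the quadratic relations and the $X,Y$ case distinctions drive the argument, and the linear relations (\ref{rel-2}) only close it out afterwards.
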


\begin{proof}
i) From   Remark \ref{rem-dorfman} ii), we obtain  the Dorfman coefficients
$B_{ABC}$  ($1\leq A, B, C\leq 6$) 
 in the basis $(e_{A})_{0\leq A\leq 6}$ 
of $\tilde{E}= \mathfrak{g}\oplus \mathfrak{g}^{*}\oplus \mathbb{R}$ by replacing $h\mapsto - h$  in the Dorfman coefficients 
computed in the proof of Theorem 3.4 of \cite{c-k-paper}: 
\begin{align}
\nonumber& B_{156} =\frac{1}{2} ( - h +\alpha_{1} - \alpha_{2} - \alpha_{3} ) :=\frac{1}{2} X_{1}\\
\nonumber& B_{264} = \frac{1}{2} ( - h -\alpha_{1} +\alpha_{2} -\alpha_{3} ) :=\frac{1}{2} X_{2}\\
\nonumber& B_{345} =\frac{1}{2} ( - h -\alpha_{1} -\alpha_{2} +\alpha_{3}) :=\frac{1}{2} X_{3}\\
\nonumber& B_{423} =\frac{1}{2} ( - h -\alpha_{1} +\alpha_{2} +\alpha_{3}) :=\frac{1}{2} Y_{1}\\
\nonumber& B_{531} =\frac{1}{2} ( - h +\alpha_{1} -\alpha_{2} +\alpha_{3} ) :=\frac{1}{2} Y_{2}\\
\label{dorf-diag} & B_{612}=\frac{1}{2} ( - h +\alpha_{1} +\alpha_{2} -\alpha_{3} ) :=\frac{1}{2} Y_{3},
\end{align}
while the remaining Dorfman coefficients  of the type $B_{aij} $ and $B_{ibc} $ vanish, 
for any other  $1\leq a, b\leq 3$ and $4\leq i, j\leq 6.$  
From   Remark \ref{rem-dorfman} iii)   and the proof of  Theorem 3.12 of \cite{c-k-paper}, 
we obtain  the  coefficients 
$R^{\tilde{\delta}, +}_{ia}$ 
 of the Ricci tensor
$\mathrm{Ric}^{\tilde{\delta} , +}$ 
from Corollary \ref{forapplic} 
in the basis $( e_{A})_{1\leq A\leq 6}$ of $\mathfrak{g}\oplus \mathfrak{g}^{*}$:
\begin{align}
\nonumber& R^{\tilde{\delta}, +}_{41} =-\frac{\epsilon_{2} \epsilon_{3} }{4} (X_{2} Y_{3} + X_{3} Y_{2} ),\ R^{\tilde{\delta}, +}_{51} =-\frac{1}{2} \epsilon_{3} \delta_{3} Y_{2},\ 
R^{\tilde{\delta}, +}_{61} = \frac{1}{2} \epsilon_{2} \delta_{2} Y_{3}\\
\nonumber& R^{\tilde{\delta}, +}_{42}=\frac{1}{2} \epsilon_{3} \delta_{3} Y_{1}, R^{\tilde{\delta}, +}_{52}= -\frac{\epsilon_{1} \epsilon_{3}}{4} (X_{1} Y_{3} + X_{3} Y_{1}),\
R^{\tilde{\delta}, +}_{62} = -\frac{1}{2} \epsilon_{1} \delta_{1} Y_{3}\\
\label{ricci-diag} & R^{\tilde{\delta}, +}_{43}  = -\frac{1}{2} \epsilon_{2} \delta_{2} Y_{1},\ 
R^{\tilde{\delta}, +}_{53} = \frac{1}{2} \epsilon_{1} \delta_{1} Y_{2},\ 
R^{\tilde{\delta}, +}_{63}= -\frac{\epsilon_{1} \epsilon_{2}}{4} ( X_{1} Y_{2} + X_{2} Y_{1}).
\end{align}
Using  relations (\ref{dorf-diag}) and (\ref{ricci-diag}), we obtain from Corollary \ref{forapplic} that  the generalized metric $E_{-}$ 
defined on $E_{H, F}$ 
is generalized Einstein with divergence $\delta$  if and only if 
relations
\begin{align}
\nonumber& -\frac{\epsilon_{2} \epsilon_{3}}{4} (X_{2} Y_{3} + X_{3} Y_{2}) =\epsilon_{2} (F_{12})^{2} +\epsilon_{3} (F_{13})^{2}\\
\nonumber& -\frac{1}{2} \epsilon_{3} \delta_{3} Y_{2} = \delta_{0} F_{12} +\epsilon_{3} F_{23} F_{13}\\
\nonumber& \frac{1}{2} \epsilon_{2} \delta_{2} Y_{3} = \delta_{0} F_{13} -\epsilon_{2} F_{23} F_{12}\\
\nonumber& \frac{1}{2} \epsilon_{3} \delta_{3} Y_{1} = -\delta_{0} F_{12} +\epsilon_{3} F_{23} F_{13}\\
\nonumber& -\frac{\epsilon_{1} \epsilon_{3}}{4} ( X_{1} Y_{3} + X_{3} Y_{1}) = \epsilon_{1} (F_{12})^{2} + \epsilon_{3} (F_{23})^{2}\\
\nonumber& -\frac{1}{2} \epsilon_{1} \delta_{1} Y_{3} = \delta_{0} F_{23} +\epsilon_{1} F_{12} F_{13}\\
\nonumber& -\frac{1}{2} \epsilon_{2} \delta_{2} Y_{1} = -\delta_{0} F_{13} -\epsilon_{2} F_{12} F_{23}\\
\nonumber& \frac{1}{2}  \epsilon_{1}\delta_{1} Y_{2} = -\delta_{0} F_{23} +\epsilon_{1} F_{12} F_{13}\\
\label{set-1-diag} & -\frac{\epsilon_{1} \epsilon_{2}}{4} (X_{1} Y_{2} + X_{2} Y_{1}) = \epsilon_{1} (F_{13})^{2} +\epsilon_{2} (F_{23})^{2}
\end{align}
(which are equivalent to  relations (\ref{rel-1})), 
together with 
\begin{align}
\nonumber& F_{12} \epsilon_{2} \delta_{2} + F_{13} \epsilon_{3} \delta_{3} +F_{23} \frac{\epsilon_{2} \epsilon_{3}}{2} (X_{2}+ X_{3}) =0\\
\nonumber& F_{12} \epsilon_{1} \delta_{1} +\frac{\epsilon_{1}\epsilon_{3}}{2}  F_{13} (X_{1}+ X_{3}) - \epsilon_{3} F_{23} \delta_{3} =0\\
\nonumber& F_{12} \frac{\epsilon_{1}\epsilon_{2}}{2} (X_{1} + X_{2}) - F_{13}\epsilon_{1} \delta_{1} - F_{23} \epsilon_{2}\delta_{2} =0\\
\nonumber& F_{12} \epsilon_{2} (\delta_{2} - \delta_{5}) +F_{13} \epsilon_{3} (\delta_{3} -\delta_{6}) =0\\
\nonumber& F_{12} \epsilon_{1} (\delta_{1} -\delta_{4}) - F_{23}\epsilon_{3} (\delta_{3} - \delta_{6}) =0\\
\label{set-2-diag}& F_{13} \epsilon_{1} (\delta_{1} - \delta_{4}) + F_{23} \epsilon_{2} (\delta_{2} -\delta_{5}) =0
\end{align}
(which are equivalent to  relations  (\ref{rel-2})), 
together with
\begin{align}
\nonumber& \delta_{0} F_{12} = \frac{ \epsilon_{3}}{2}  ( X_{1} \delta_{6} - Y_{2} \delta_{3})\\
\nonumber& \delta_{0} F_{13} = \frac{\epsilon_{2}}{2} ( Y_{3} \delta_{2} - X_{1} \delta_{5} )\\
\nonumber& \delta_{0} F_{23} = \frac{\epsilon_{1}}{2}    (X_{2} \delta_{4} - Y_{3} \delta_{1} )\\
\nonumber& (Y_{1} - Y_{2}) \delta_{3} + (X_{1} - X_{2}) \delta_{6} =0\\
\nonumber& (Y_{3} - Y_{1}) \delta_{2} + (X_{3} - X_{1}) \delta_{5} =0\\
\label{set-3-diag}& (Y_{2} - Y_{3}) \delta_{1} + (X_{2} - X_{3}) \delta_{4}=0
\end{align}
are satisfied. (Relations (\ref{set-3-diag}) are equivalent to relations (\ref{rel-3}) and (\ref{rel-4}). Here we used that   
the first three relations  (\ref{rel-4}) are satisfied, as $B_{41A} = B_{52A} = B_{63A} =0$, which follow from 
$[ v_{a} - g(v_{a}), v_{a} + g(v_{a}) ]_{H, F} =0$).  
Combining  the 2nd relation (\ref{set-1-diag}) with the 4th relation (\ref{set-1-diag}), the 3rd relation (\ref{set-1-diag}) with the 
7th relation
(\ref{set-1-diag}), and the 6th relation (\ref{set-1-diag}) with the 8th relation (\ref{set-1-diag}),  we obtain
$$
\delta_{0} F_{12} = -\frac{1}{4} \epsilon_{3}\delta_{3} (Y_{1}+ Y_{2}),\ \delta_{0} F_{13} = \frac{1}{4} \epsilon_{2}\delta_{2} (Y_{1}+ Y_{3}),\
\delta_{0} F_{23} = - \frac{1}{4} \epsilon_{1}\delta_{1} (Y_{2}+ Y_{3}).
$$
Using the above relations, the first three relations (\ref{set-3-diag}) become
\begin{equation}\label{3-diag}
(Y_{1} - Y_{2}) \delta_{3} + 2 X_{1} \delta_{6}=0,\ (Y_{3} - Y_{1}) \delta_{2} - 2 X_{1} \delta_{5} =0,\  (Y_{2} - Y_{3}) \delta_{1} + 2 X_{2} \delta_{4} =0.
\end{equation}
From (\ref{3-diag}), we deduce that the last three relations (\ref{set-3-diag}) take the form
\begin{equation}\label{4-diag}
(X_{1}+ X_{2}) \delta_{6} = (X_{3}+ X_{1}) \delta_{5} = (X_{2}+ X_{3}) \delta_{4} =0. 
\end{equation}
To summarize our argument so far:  the generalized metric  $E_{-}$  is generalized Einstein with divergence
$\delta$ if and only if 
relations (\ref{set-1-diag}),   (\ref{set-2-diag}),  (\ref{3-diag}) and  (\ref{4-diag}) hold. 
In view of relations (\ref{4-diag}) we distinguish several cases:\

$\bullet$ case I):  none of the vectors $X_{a}+ X_{b}$ (with $a\neq b$) vanishes. 
Then $\delta_{4} = \delta_{5} = \delta_{6} =0$ and relations
(\ref{3-diag}) become
$$
( Y_{1} - Y_{2}) \delta_{3} = (Y_{3} - Y_{1}) \delta_{2} = (Y_{2} - Y_{3}) \delta_{1} =0.
$$
Distinguishing various subcases (according to how many differences $Y_{a} - Y_{b}$  with $a\neq b$ vanish)  and using that a least one from $F_{12}$, $F_{23}$, 
$F_{13}$ is non-zero (because $F\neq 0$), we conclude that case I) does not lead to any solution.\

$\bullet$ case II):  exactly one of the vectors  $X_{a}+ X_{b}$  (where $a\neq b$) vanishes. Without loss of generality, we assume that 
$$
X_{1}+ X_{3} =0,\  (X_{1}+ X_{2}) ( X_{2}+ X_{3}) \neq 0.
$$  
From (\ref{4-diag}) we deduce that $\delta_{4} = \delta_{6} =0$ and relations (\ref{3-diag}) imply 
\begin{equation}\label{sub-2-diag}
(Y_{1} - Y_{2}) \delta_{3} = (Y_{2} - Y_{3} ) \delta_{1} =0.
\end{equation}
In view of (\ref{sub-2-diag}), we consider  various subcases: $(Y_{1} - Y_{2}) (Y_{2} - Y_{3}) \neq 0$;  or $Y_{1} = Y_{2}$ and $Y_{2}\neq Y_{3}$;  or
$Y_{2} = Y_{3}$ and $Y_{1} \neq Y_{2}$;  or $Y_{1} = Y_{2} = Y_{3}.$ It turns out that the first subcase leads to the solutions from the statement of the theorem, while the other subcases do not lead to solutions. Let us give more details on the first subcase. Since $(Y_{1} - Y_{2}) (Y_{2} - Y_{3}) \neq 0$ we obtain from  (\ref{sub-2-diag})
that $\delta_{3} = \delta_{1} =0$. Hence all $\delta_{i} =0$ except, perhaps, $\delta_{0}$, $\delta_{2}$ and $\delta_{5}.$ 
From  the 2nd  and 4th relation (\ref{set-1-diag}), and the 6th and 8th relation (\ref{set-1-diag}), we obtain
\begin{equation}
\delta_{0} F_{12} = \delta_{0} F_{23} = F_{23} F_{13} = F_{12} F_{13} =0.
\end{equation}
If $\delta_{0} \neq 0$ then $F_{12} = F_{23} =0$ (and $F_{13}\neq 0$ because $F\neq 0$). The generalized Einstein condition on $E_{-}$  reduces  to
\begin{align}
\nonumber& \delta_{A} =0,\ \forall A\in \{ 1, 3, 4, 6\},\  \delta_{0} \neq 0,\  F_{12} = F_{23} =0, F_{13}\neq 0,\  X_{3} = - X_{1},\\
\label{details-diag-0} &  (X_{1}+ X_{2}) (X_{2} + X_{3})\neq 0,\ (Y_{1} - Y_{2})( Y_{2} - Y_{3})\neq 0
\end{align}
together with
\begin{align}
\nonumber& (F_{13})^{2}= \frac{\epsilon_{2}}{4} ( X_{1} Y_{2} - X_{2} Y_{3}  ) =-\frac{\epsilon_{2}}{4} ( X_{1} Y_{2} + X_{2} Y_{1}),\\
\nonumber&  \delta_{0} F_{13}=  \frac{1}{2} \epsilon_{2} \delta_{2} Y_{3} = \frac{1}{2} \epsilon_{2} \delta_{2} Y_{1}\\
\label{details-diag}& X_{1} ( Y_{3} - Y_{1}) =0,\  (Y_{3} - Y_{1}) \delta_{2} = 2 X_{1} \delta_{5}.
\end{align}
From the second line of  (\ref{details-diag})  together with $\delta_{0} F_{13} \neq 0$ 
we obtain $Y_{1} = Y_{3}$, which,  together with $X_{3} = -X_{1}$, imply  $\alpha_{1} = \alpha_{3}$ and $h = -\alpha_{2}.$  
The last relation  (\ref{details-diag-0}) is equivalent to $\alpha_{1} \neq \alpha_{2}$. The first and second line of  (\ref{details-diag}) reduce to 
\begin{equation}\label{details-diag-1}
(F_{13})^{2} = -\frac{\epsilon_{2} X_{2} Y_{3}}{ 4},\ F_{13} = \frac{ \epsilon_{2} \delta_{2} Y_{3}}{ 2\delta_{0}}.
\end{equation}
Since $X_{2} = 2 (\alpha_{2} -\alpha_{1})$ and $Y_{3} = 2\alpha_{2}$, we deduce from (\ref{details-diag-1}) that  
$F_{13} = \epsilon_{2}\alpha_{2}\frac{\delta_{2}}{\delta_{0}}$. 
Since $F_{13}\neq 0$,  we also obtain that
$\alpha_{2}\delta_{2} \neq 0$ and then
$(\frac{\delta_{2}}{\delta_{0}})^{2} = \frac{\epsilon_{2} (\alpha_{1} -\alpha_{2})}{\alpha_{2}}$.
We arrive at the class of solutions from Theorem \ref{classif-diag} with $\delta_{0}\delta_{2}\neq 0.$
A similar argument shows that the assumption $\delta_{0}=0$ leads to the other class  of solutions from Theorem \ref{classif-diag}.\

$\bullet$ case III) and IV):  exactly two or all  $(X_{a} + X_{b})$ (with $a\neq b$) vanish.
These cases do not lead to any solution.\

 ii)  Claim ii) follows from \cite{milnor}, p.\ 307.
\end{proof}

\subsection{The case $L=L_{2}$}

Let $G$ be a $3$-dimensional unimodular Lie group with Lie algebra $\mathfrak{g}.$

\begin{thm}\label{classif-l2}  
There are no odd generalized Einstein metrics  $E_{-}$  on 
$G$, with the property that the operator $L$ associated to  $(\mathfrak{g}, g)$ belongs to the second class of canonical forms.
Here $g$ is the (left-invariant) pseudo-Riemannian metric on $G$ induced by $E_{-}.$
\end{thm}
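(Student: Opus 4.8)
The plan is to translate the odd generalized Einstein condition into the algebraic system of Corollary \ref{forapplic}, specialized to the canonical form $L = L_2(\alpha,\beta,\gamma)$ (with $\beta\neq 0$), and then to show that this system is incompatible with $F\neq 0$ (Assumption \ref{ass-unimod}).

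First I would fix a $g$-orthonormal basis $(v_a)$ in which $L = L_2(\alpha,\beta,\gamma)$. As recalled in Section \ref{basic_facts_unimod:sec}, a non-diagonal canonical form forces $\epsilon_2\epsilon_3 = -1$ and $\epsilon_1 = \epsilon_2$, so that $\epsilon_1 = \epsilon_2 = -\epsilon_3$; the Lie bracket of $\mathfrak{g}$ is then read off from (\ref{brackets}) with $L$ written in the unified shape (\ref{unified}). By Proposition \ref{standard-form} I may assume $E_- = \{X - g(X)\mid X\in TG\}$, so that Remark \ref{rem-dorfman} ii) supplies the Dorfman coefficients $B_{ABC}$ of $E_{H,F}$ (imported from the proof of Proposition 3.3 of \cite{c-k-paper}, replacing $H$ by $-H$, cf.\ Remark \ref{compare}), while Remark \ref{rem-dorfman} iii) supplies the components $R^{\tilde\delta,+}_{ia}$ from the proof of Proposition 3.11 of \cite{c-k-paper}. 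Writing $F = \frac12 F_{ab}\, v_a^*\wedge v_b^*$ and $H = h\, v_1^*\wedge v_2^*\wedge v_3^*$, this reduces everything to explicit expressions in the parameters $\alpha,\beta,\gamma,h$, the components $F_{ab}$, and the divergence components $\delta_A = \delta(e_A)$.

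Next I would substitute these into the four groups of equations (\ref{rel-1})--(\ref{rel-4}), using the proof of the diagonal case (Theorem \ref{classif-diag}) as a template. The decisive new feature of the class $L_2$ is that, because $\beta\neq 0$, several Dorfman coefficients that vanish in the diagonal case are now nonzero and proportional to $\beta$, coupling the $(v_2,v_3)$-block to the remaining directions. These extra couplings enter the quadratic ``diagonal'' relations of (\ref{rel-1}), which equate signed sums $\epsilon_b(F_{ab})^2$ to bilinear expressions in the structure constants and in $\delta$, as well as the linear relations (\ref{rel-2}) and the relations (\ref{rel-3})--(\ref{rel-4}) expressing the products $\delta_0 F_{ab}$ and the remaining constraints. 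The strategy is to first use (\ref{rel-2})--(\ref{rel-4}) to constrain the divergence components $\delta_A$ in terms of $\alpha,\beta,\gamma,h$ and the $F_{ab}$, and then to feed this back into the quadratic relations of (\ref{rel-1}); organizing the argument as a case distinction according to which of $F_{12}, F_{13}, F_{23}$ and $\delta_0$ vanish, I expect every branch to collapse to $F_{12}=F_{13}=F_{23}=0$, contradicting $F\neq 0$.

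The main obstacle is the bookkeeping of this overdetermined nonlinear system. Unlike the diagonal case, the $L_2$ Dorfman coefficients are not supported only on the cyclic triples, so more terms survive in each trace, the case analysis branches further, and the many sign factors $\epsilon_a$ together with the substitution $H\mapsto -H$ make the computation genuinely error-prone; I would therefore carry out each trace sum with care. The crux will be to show that the quadratic relations of (\ref{rel-1}), once the linear constraints (\ref{rel-2})--(\ref{rel-4}) have been imposed, admit no solution with $(F_{12},F_{13},F_{23})\neq 0$ for any $\alpha,\gamma,h$ and any $\beta\neq 0$. I expect the obstruction to come precisely from the $\beta$-proportional Dorfman coefficients, which couple the diagonal and off-diagonal Ricci components tightly enough to over-determine $(F_{ab},\delta_A)$ and leave $F=0$ as the only possibility. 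Note that this does not contradict Theorem \ref{short-unimod}: by Lemma \ref{identif-lie-alg} the algebras arising here are $\mathfrak{so}(2,1)$ and $\mathfrak{e}(1,1)$, which do carry odd generalized Einstein metrics, but for those metrics the associated operator $L$ lies in a different canonical class.
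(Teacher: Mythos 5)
Your overall strategy coincides with the paper's: reduce to the algebraic system of Corollary \ref{forapplic} in a basis adapted to $L_2(\alpha,\beta,\gamma)$ (with $\epsilon_1=\epsilon_2=-\epsilon_3$), import the Dorfman and Ricci coefficients from \cite{c-k-paper} via Remark \ref{rem-dorfman} with the sign change $H\mapsto -H$, and run a case analysis on the vanishing of $F_{12},F_{13},F_{23},\delta_0$ until every branch forces $F=0$. So the route is the right one. The difficulty is that your write-up never actually derives the contradiction: the decisive steps are only announced (``I expect every branch to collapse'', ``I expect the obstruction to come from the $\beta$-proportional coefficients''), and for a non-existence statement of this kind the content of the proof \emph{is} precisely that computation.

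Concretely, the mechanism you would need to exhibit is the following chain, which is where the paper's argument closes. The relations (\ref{rel-4}) specialize to (\ref{set4-l2}); since $\beta\neq 0$ they force $\delta_1=\delta_4$, $\delta_2=\delta_5$, $\delta_3=\delta_6$, which collapses (\ref{rel-3}) to (\ref{delta0-l2}). One then shows that $F_{12}$ and $F_{13}$ cannot both vanish (else $F_{23}\neq0$, the first relation of (\ref{set2-l2}) gives $h=-\gamma$, and the first relation of (\ref{set1-l2}) gives $\beta=0$), and next that $F_{23}=0$ and $\gamma\delta_1=0$. In each of the remaining cases ($\gamma=0$ with $h=0$ or $h\neq0$, and $\gamma\neq0$) one ends up with the pair of linear equations $F_{12}\beta+F_{13}(h+\alpha)=0$ and $F_{12}(h+\alpha)-F_{13}\beta=0$, whose determinant $-\bigl(\beta^2+(h+\alpha)^2\bigr)$ is nonzero because $\beta\neq0$; this kills $F_{12}=F_{13}=0$ and yields the contradiction with $F\neq0$. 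Your intuition that the $\beta$-terms overdetermine the system points at exactly this $2\times 2$ determinant, but until you carry out the elimination above, the proof is not complete. Your closing remark reconciling the result with Theorem \ref{short-unimod} via Lemma \ref{identif-lie-alg} is correct and worth keeping.
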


\begin{proof}   Assume, by contradiction, that
$E_{-} = \{ X - g(X)\mid  X\in TG \}$ is   a left-invariant generalized Einstein metric 
on a Courant algebroid 
$E_{H, F}$  of type $B_{3}$ 
over $G$, 
with  left-invariant divergence $\delta$ and with the property from the statement of the theorem. 
Let  $(v_{a})_{1\leq a\leq 3}$ be a $g$-orthonormal basis of $\mathfrak{g}$, in which the operator $L$ takes the form 
$L = L_{2} (\alpha , \beta , \gamma )$ where $\alpha , \beta  ,\gamma \in \mathbb{R}$ and $\beta \neq 0 .$ 
Since $L$ is not diagonal, $\epsilon_{1} = \epsilon_{2} = -\epsilon_{3}.$ 
We will arrive at a contradiction. We write $H$ and $F$ 
as in (\ref{H-F-coord}) and, as before, $\delta_{A}:= \delta (e_{A})$ where  
$(e_{A})_{0\leq A\leq 6}$ is the basis of $\mathfrak{g}\oplus \mathfrak{g}^{*}\oplus \mathbb{R}$ determined by $(v_{a}) .$

From Remark \ref{rem-dorfman}  ii) we obtain the Dorfman coefficients  $B_{ABC}$ ($1\leq A, B, C\leq 6$)  in the basis $(e_{A})_{0\leq A\leq 6}$,  by replacing in the Dorfman coefficients
computed in  the  proof of Proposition 
3.3 of \cite{c-k-paper}  $h\mapsto - h$,   $\alpha \mapsto\gamma$, $\lambda \mapsto 0$, $\beta \mapsto\alpha$, 
$\gamma\mapsto \alpha$ and $\mu\mapsto - \beta$. We obtain
\begin{align}
\nonumber& B_{145} = B_{146} = B_{256} = B_{356} = B_{412} = B_{413}=
B_{523} = B_{623} =0\\
\nonumber& B_{156} = \frac{1}{2} ( - h -2\alpha +\gamma ),\  B_{245} =\beta ,\ B_{246} =\frac{1}{2} (h+\gamma )\\
\nonumber& B_{345} = -\frac{1}{2} ( h+\gamma ),\ B_{346}=\beta ,\ B_{423} =\frac{1}{2} ( -h +2\alpha -\gamma )\\
\label{dorf-l2}& B_{512} = -\beta ,\ B_{513} = \frac{1}{2} (h-\gamma ),\ B_{612} = \frac{1}{2} ( \gamma - h),\ B_{613} = -\beta .
\end{align}
From  Remark \ref{rem-dorfman} iii)   we obtain 
 the coefficients 
$R_{ia}^{\tilde{\delta}, +}$ 
of the Ricci tensor $\mathrm{Ric}^{\tilde{\delta}, +}$ from Corollary \ref{forapplic} by replacing 
$h\mapsto - h$ in the Ricci coefficients computed 
in the proof of Proposition 3.11 of \cite{c-k-paper}:
\begin{align}
\nonumber& R^{\tilde{\delta}, +}_{41} = - 2\beta^{2} -\frac{\gamma^{2}}{2} +\frac{h^{2}}{2}\\
\nonumber& R^{\tilde{\delta}, +}_{42} = \frac{\epsilon_{3}}{2} ( - h + 2\alpha -\gamma )\delta_{3}\\
\nonumber& R^{\tilde{\delta}, +}_{43}= \frac{\epsilon_{2}}{2} ( h - 2\alpha + \gamma ) \delta_{2}\\
\nonumber& R^{\tilde{\delta}, +}_{51}= -\epsilon_{2}\beta \delta_{2} +\frac{\epsilon_{3}}{2} (h -\gamma )\delta_{3}\\
\nonumber& R^{\tilde{\delta}, +}_{52}= -\frac{\alpha^{2}}{2} +\frac{h^{2}}{2} +\frac{1}{2} ( \alpha -\gamma )^{2} +\epsilon_{1} \beta\delta_{1}\\
\nonumber& R^{\tilde{\delta}, +}_{53}= \beta (2\alpha -\gamma ) + \frac{\epsilon_{1}}{2} ( \gamma -h )\delta_{1}\\
\nonumber& R^{\tilde{\delta}, +}_{61}=\frac{\epsilon_{2}}{2} ( \gamma - h) \delta_{2} -\epsilon_{3} \beta  \delta_{3}\\
\nonumber& R^{\tilde{\delta}, +}_{62}= \beta (2\alpha - \gamma  ) + \frac{\epsilon_{1}}{2} ( h-\gamma )\delta_{1}\\
\label{ricci-l2}& R^{\tilde{\delta}, +}_{63}= \frac{\alpha^{2}}{2} -\frac{h^{2}}{2} -\frac{1}{2}  ( \alpha - \gamma )^{2} +\epsilon_{1} \beta \delta_{1}.
\end{align}
Using relations (\ref{dorf-l2}) and (\ref{ricci-l2})  we obtain from Corollary \ref{forapplic} that  
\begin{align}
\nonumber& - 2\beta^{2} -\frac{\gamma^{2}}{2} + \frac{h^{2}}{2} = \epsilon_{1}  (( F_{12})^{2} - (F_{13})^{2})\\
\nonumber& - \epsilon_{1}\beta \delta_{2} + \frac{\epsilon_{1}}{2} ( \gamma -h) \delta_{3} = \delta_{0} F_{12} -\epsilon_{1} F_{23} F_{13}\\
\nonumber& \frac{\epsilon_{1}}{2} ( \gamma -h ) \delta_{2} +\epsilon_{1} \beta \delta_{3} =\delta_{0} F_{13} +\epsilon_{1} F_{32} F_{12}\\
\nonumber& \frac{\epsilon_{1}}{2} ( h - 2\alpha +\gamma ) \delta_{3} = -\delta_{0} F_{12} -\epsilon_{1} F_{13} F_{23}\\
\nonumber& -\frac{\alpha^{2}}{2} +\frac{h^{2}}{2} +\frac{ (\alpha -\gamma )^{2}}{2} +\epsilon_{1} \beta \delta_{1} = \epsilon_{1} ( (F_{12})^{2} - (F_{23})^{2})\\
\nonumber& \beta ( 2\alpha -\gamma ) +\frac{\epsilon_{1}}{2} ( h-\gamma )\delta_{1} = \delta_{0} F_{23} +\epsilon_{1} F_{12} F_{13}\\
\nonumber& \frac{\epsilon_{1}}{2} ( h -2\alpha +\gamma ) \delta_{2} = - \delta_{0} F_{13} +\epsilon_{2} F_{12} F_{32}\\
\nonumber& \beta ( 2\alpha -\gamma ) + \frac{\epsilon_{1}}{2} ( \gamma -h ) \delta_{1} = -\delta_{0}  F_{23}  +\epsilon_{1} F_{12} F_{13}\\
\label{set1-l2}& \frac{\alpha^{2}}{2} -\frac{h^{2}}{2} - \frac{ (\alpha - \gamma )^{2}}{2} +\epsilon_{1} \beta \delta_{1} = \epsilon_{1} ( (F_{13})^{2} + (F_{23})^{2})
\end{align}
(which are equivalent to relations (\ref{rel-1})), 
together with 
\begin{align}
\nonumber& F_{12}\delta_{2} - F_{13}\delta_{3} +\epsilon_{1} F_{23} ( h+\gamma ) =0\\
\nonumber& F_{12} ( \beta +\epsilon_{1} \delta_{1}) + F_{13} ( h+\alpha ) + F_{23} \epsilon_{1} \delta_{3} =0\\
\label{set2-l2} & F_{12}(h +\alpha )  + F_{13} ( \epsilon_{1} \delta_{1}- \beta ) +\epsilon_{1} F_{23} \delta_{2} =0
 \end{align}
(which are equivalent to the first three relations (\ref{rel-2})), 
together with
\begin{align}
\nonumber& \epsilon_{1} \delta_{0} F_{12} = -\beta \delta_{2} + \frac{1}{2} (\gamma - h) \delta_{3} +\frac{1}{2} ( h+ 2\alpha -\gamma ) \delta_{6}\\
\nonumber& \epsilon_{1} \delta_{0} F_{13} = \frac{1}{2} ( - h +\gamma ) \delta_{2} +\beta \delta_{3} +\frac{1}{2} ( h -\gamma +2\alpha ) \delta_{5} \\
\label{set3-l2} & \epsilon_{1} \delta_{0} F_{23} = \frac{1}{2} ( h-\gamma ) \delta_{1} -\frac{1}{2} ( h+\gamma ) \delta_{4} 
\end{align}
(which are equivalent to relations (\ref{rel-3})), together with
\begin{align}
\nonumber& \beta ( \delta_{1} -\delta_{4}) =0\\
\nonumber&  \beta  ( \delta_{2} -\delta_{5}) + (\alpha -\gamma ) (\delta_{3} - \delta_{6}) =0\\
\label{set4-l2} &  \beta ( \delta_{3}-\delta_{6}) - (\alpha  -\gamma ) ( \delta_{2} - \delta_{5} ) =0
\end{align}
(which are equivalent to relations  (\ref{rel-4})) hold.  (The last three relations (\ref{rel-2}) will not be used in this proof.) 
Since $\beta \neq 0$, the 1st relation (\ref{set4-l2}) implies that $\delta_{1} =\delta_{4}$, while 
the  2nd and 3rd relation (\ref{set4-l2})  imply that $\delta_{2}= \delta_{5} $ and $\delta_{3} =\delta_{6}$.
Relations  (\ref{set3-l2}) become
\begin{equation}\label{delta0-l2}
\epsilon_{1} \delta_{0} F_{12} = -\beta \delta_{2} +\alpha \delta_{3},\ \epsilon_{1} \delta_{0} F_{13} = \alpha \delta_{2} +\beta \delta_{3},\ \epsilon_{1} \delta_{0} F_{23}=
-\gamma \delta_{1}.
\end{equation}
We claim  that either $F_{12} \neq 0$ or
$F_{13} \neq 0$. Suppose,  by contradiction, that $F_{12} = F_{13} =0$. Then $F_{23} \neq 0$ (because $F\neq 0$). 
The  1st relation (\ref{set2-l2}) implies $h =-\gamma $. But then the 1st relation (\ref{set1-l2}) implies $\beta =0$, which is a contradiction.\

We claim  that $F_{23} =  0.$ Assume, by contradiction, that $F_{23} \neq 0.$ We distinguish two  cases, namely
$\delta_{0}=0$ and $\delta_{0}\neq 0$,  and we show that both lead to a contradiction. If $\delta_{0} =0$ then
the first two relations (\ref{delta0-l2})  and $\beta \neq 0$ 
imply that $\delta_{2} =\delta_{3} =0.$
But the 2nd and 3rd  relation (\ref{set1-l2}) with $\delta_{0} = \delta_{2}=\delta_{3}=0$ and  $F_{23} \neq 0$ imply that $F_{12} = F_{13} =0$ which is impossible by our previous argument.    If $\delta_{0} \neq 0$ then relations (\ref{delta0-l2}) become
\begin{equation}\label{delta0-l2-1}
F_{12} = \frac{\epsilon_{1} }{\delta_{0}} ( -\beta \delta_{2} +\alpha \delta_{3}),\ F_{13} = \frac{\epsilon_{1}}{\delta_{0}} ( \alpha\delta_{2} +\beta \delta_{3}),\
F_{23}  = -\frac{\epsilon_{1}\gamma }{\delta_{0}} \delta_{1}.
\end{equation}
In particular,  $\gamma \delta_{1}\neq 0$ because $F_{23} \neq 0$. Using the 3rd relation (\ref{delta0-l2-1}) we obtain from the 6th and 8th relation
(\ref{set1-l2}) that $(h+\gamma ) \delta_{1} =0$, i.e.\ $ h=-\gamma$  (because $\delta_{1}\neq 0$). Multiplying the 1st 
relation (\ref{set2-l2}) with $\delta_{0}$ and
using (\ref{delta0-l2-1}) together with $ h = -\gamma$ and  $\beta\neq 0$  we obtain
$\delta_{2} =\delta_{3} =0. $
But $\delta_{2} =\delta_{3}  =0$ imply again  
from (\ref{delta0-l2-1}) 
that $F_{12} = F_{13} =0$ which is a contradiction. We proved that $F_{23} =0.$ Moreover, since 
$\delta_{1} =\delta_{4}$ and $F_{23} =0$, the last relation (\ref{set3-l2}) implies that $\gamma \delta_{1} =0.$\

To summarize our argument so far:  we proved that 
$$
\delta_{1} = \delta_{4},\ \delta_{2} =\delta_{5},\ \delta_{3} =\delta_{6},\  F_{23} =\gamma \delta_{1} =0
$$
and 
either $F_{12}$ or  $F_{13}$ is different from zero. We  will distinguish  two cases, namely $\gamma =0$ respectively $\gamma \neq  0$, and we will prove that both lead to a contradiction.\ 

$\bullet$ case I): $\gamma =0.$ The 6th and 8th relations  (\ref{set1-l2}) imply that  $h\delta_{1} =0.$ 
Using (\ref{delta0-l2}), the 4th and 7th relation (\ref{set1-l2})  become
$ \frac{h}{2} \delta_{3} =\beta \delta_{2}$, $ \frac{h}{2} \delta_{2} =- \beta \delta_{3}$, which imply that $\delta_{2} = \delta_{3} =0.$ Using that either $F_{12}$ of
$F_{13}$ is non-zero, the first two relations (\ref{delta0-l2}) together with $\delta_{2} =\delta_{3} =0$ imply that $\delta_{0}=0.$ 
So:  
\begin{equation}
h\delta_{1} =0,\   \delta_{0} = \delta_{2} = \delta_{3} = \delta_{5} =\delta_{6} =0,\   \delta_{1} =\delta_{4},\ F_{23} =0,\ \gamma =0.
\end{equation}
If $h=0$, the 5th and 9th  relation (\ref{set1-l2}) imply that 
$(F_{12})^{2} = (F_{13})^{2}$  and then, from the 1st relation (\ref{set1-l2}), $\beta =0$, which is  contradiction.
If $h\neq 0$, $\delta_{1} =0$ and the 2nd  and 3rd relation (\ref{set2-l2}) give
\begin{equation}\label{repeta}
F_{12} \beta + F_{13} ( h+\alpha ) =0,\  F_{12} ( h+\alpha )-F_{13} \beta =0,
\end{equation}
which imply $F_{12} = F_{13} =0$.  We arrived again at a contradiction.\ 

$\bullet$ case II): $\gamma \neq 0.$ From $\gamma \delta_{1} =0$ we obtain $\delta_{1} =0.$  
The 2nd and 3rd relation (\ref{set2-l2}) with $\delta_{1} = F_{23}=0$ 
reduce  again to (\ref{repeta}) (and to a contradiction).  
\end{proof}

\subsection{The case $L = L_{3, \eta}$}

We consider  the setting from  the beginning of Section \ref{diag-section}, but  we assume that 
$L = L_{3, \eta } (\alpha , \beta  )$ in the $g$-orthonormal basis $(v_{a})$,  
where $\alpha , \beta \in \mathbb{R}$ and $\eta \in \{ \pm 1\} .$ 
Since $L$ is non-diagonal, $\epsilon_{1} =\epsilon_{2} = - \epsilon_{3}.$

\begin{thm}\label{classif-l3}
i) The   metric $E_{-}= \{ X - g(X)\mid  X\in TG\}$  defined on $E_{H, F}$ is generalized Einstein with divergence
$\delta$  if and only if one of the following situations holds:\

1) $\beta = 2\alpha\neq 0$,   $\epsilon_{1} 
= \epsilon_{2} = -1$,  $\epsilon_{3} =1$, $ h = -2\alpha$,    $F_{12} = F_{13} =0$, $F_{23}= \epsilon \sqrt{2} \alpha$
(where $\epsilon \in \{ \pm 1\}$), 
  $\delta =0$;\

2)  $\beta \neq 2\alpha $, $\epsilon_{1} \beta (\alpha -\beta )>0$,    $ h = -\beta$,  $F_{12} = F_{13}=0$, $F_{23} =\frac{
\beta (2\alpha -\beta )}{\delta_{0}}$,  $(\delta_{0})^{2} = \frac{\epsilon_{1}\beta ( 2\alpha - \beta )^{2}}{ \alpha -\beta }$, 
$\delta_{1} = \delta_{4} = \epsilon_{1} ( \beta - 2\alpha )$ and  $\delta_{A} =0$
for any $A\notin  \{ 0, 1, 4\}$;\

3)  $\beta=0$,   $\epsilon_{1}\alpha \eta < 0$,  $h=0$, 
$F_{12} = F_{13}$, $(F_{12})^{2} = -\frac{\epsilon_{1} \alpha \eta}{ 2}$, 
$F_{23} =0$,  
$\delta_{A} =0$ for any $A\notin \{ 1, 4\}$ and  $\delta_{1} =\delta_{4} = - \epsilon_{1} \alpha$;\

4) $\beta = \alpha$,  $\epsilon_{1}\alpha \eta <0$,  
$h = -\alpha$,  $F_{12} = F_{13}$,  
$F_{23} =0$, $\delta_{1} =\delta_{4} =0$, $\delta_{2} =\delta_{3}$, $\delta_{5} =\delta_{6}$ and
$$
\delta_{0} F_{12} = \epsilon_{1}\alpha \delta_{2},\ (F_{12})^{2} = -\frac{\epsilon_{1} \alpha \eta}{2}. 
$$

ii)  The Lie algebra  $\mathfrak{g}$ is isomorphic to $\mathfrak{so}(2,1)$, except for the class of solutions 3), when it is isomorphic to
$\mathfrak{e}(1,1)$.
\end{thm}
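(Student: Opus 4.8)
The plan is to apply Corollary \ref{forapplic} in the $g$-orthonormal basis $(v_a)$ in which $L = L_{3,\eta}(\alpha,\beta)$, proceeding exactly as in the proofs of Theorems \ref{classif-diag} and \ref{classif-l2}; recall that since $L$ is non-diagonal we have $\epsilon_1 = \epsilon_2 = -\epsilon_3$. First I would assemble the explicit data. By Remark \ref{rem-dorfman} ii), the Dorfman coefficients $B_{ABC}$ ($1 \le A,B,C \le 6$) are obtained from those computed in the proof of Proposition 3.3 of \cite{c-k-paper} for the general non-diagonal operator (\ref{unified}), by the substitution $h \mapsto -h$ and by reading $L_{3,\eta}(\alpha,\beta)$ off from (\ref{unified}), i.e. $\alpha \mapsto \beta$, $\lambda \mapsto 0$, $\beta \mapsto \tfrac{\eta}{2}+\alpha$, $\gamma \mapsto -\tfrac{\eta}{2}+\alpha$, $\mu \mapsto \tfrac{\eta}{2}$ (the symbols being reused exactly as in the $L_2$ computation). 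In the same way, Remark \ref{rem-dorfman} iii) together with the proof of Proposition 3.11 of \cite{c-k-paper} yields the Ricci components $R^{\tilde{\delta},+}_{ia}$. This produces explicit tables analogous to (\ref{dorf-l2}) and (\ref{ricci-l2}).

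Substituting these into the four blocks (\ref{rel-1})--(\ref{rel-4}) of Corollary \ref{forapplic} turns the odd generalized Einstein condition into an explicit polynomial system in the unknowns $\alpha,\beta,h$, the form components $F_{12},F_{13},F_{23}$ and the divergence components $\delta_0,\ldots,\delta_6$. As a preliminary reduction, and as in the $L_2$ case (see (\ref{set4-l2})), I would first exploit the relations descending from (\ref{rel-4}): because of the specific shape of the brackets for $L_{3,\eta}$ these should force several linear identities among the $\delta_A$, cutting down the number of independent divergence parameters before the genuine casework begins.

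The heart of the argument, and the main obstacle, is then the case analysis solving the remaining system. I would organize it primarily by which of $F_{12},F_{13},F_{23}$ are non-zero (recall $F \ne 0$ by Assumption \ref{ass-unimod}) and, secondarily, by the vanishing of $\delta_0$ and of the distinguished combinations $\beta$, $\beta - 2\alpha$ and $\beta - \alpha$, which are exactly the conditions separating families 1)--4). One expects two surviving branches. In the branch $F_{12} = F_{13} = 0$, $F_{23} \ne 0$, a further split on $\delta_0$ gives family 1) (when $\delta_0 = 0$, which should force $\beta = 2\alpha$ and $\delta = 0$) and family 2) (when $\delta_0 \ne 0$). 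In the branch $F_{12} = F_{13} \ne 0$, $F_{23} = 0$, the diagonal equations should pin down $\beta \in \{0,\alpha\}$, yielding families 3) and 4) respectively; all mixed configurations of the $F_{ab}$ must be shown inconsistent. In each branch the diagonal equations (\ref{rel-1}) fix $h$ and the squares $(F_{ab})^2$, while the off-diagonal equations (\ref{rel-2})--(\ref{rel-3}) determine the remaining $\delta_A$ together with the sign constraints such as $\epsilon_1\beta(\alpha-\beta) > 0$ and $\epsilon_1\alpha\eta < 0$. The delicate part is the careful bookkeeping of the signs $\epsilon_a$ and $\eta$, and the verification that the branches are exhaustive and mutually exclusive with no spurious solutions.

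Finally, for part ii) I would read off $\alpha$ and $\beta$ in each family and apply Lemma \ref{identif-lie-alg} ii). Families 1) and 4) have $\alpha\beta \ne 0$ (since there $\beta = 2\alpha \ne 0$, respectively $\beta = \alpha \ne 0$), hence give $\mathfrak{so}(2,1)$; the same conclusion holds for family 2) after verifying that its derivation excludes $\alpha = 0$, so that again $\alpha\beta \ne 0$. Family 3) has $\beta = 0$ with $\alpha \ne 0$ (forced by $\epsilon_1\alpha\eta < 0$), which by the lemma gives $\mathfrak{e}(1,1)$. This matches the stated identifications and completes the proof.
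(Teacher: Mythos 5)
Your proposal follows essentially the same route as the paper's proof: the same substitutions into the Dorfman and Ricci coefficients of \cite{c-k-paper}, the same preliminary reduction via (\ref{rel-4}) (which in the paper yields $\delta_1=\delta_4$, $(\beta-\alpha)(\delta_2-\delta_5)=0$ and $\delta_2-\delta_5=\delta_3-\delta_6$), and the same case analysis --- the paper splits primarily on $F_{23}\neq 0$ versus $F_{23}=0$, then on $\delta_0$ (giving families 1 and 2) and on $\beta$ (giving families 3 and 4), exactly the branching you describe, with $F_{12}=F_{13}=0$ resp.\ $F_{12}=F_{13}$ derived rather than assumed. The identification of $\mathfrak{g}$ via Lemma \ref{identif-lie-alg} ii) is likewise as in the paper.
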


\begin{proof} 
i) From Remark \ref{rem-dorfman} ii), we 
obtain the Dorfman coefficients  $B_{ABC}$ ($1\leq A, B, C\leq 6$) 
in the basis $(e_{A})_{0\leq A\leq 6}$ by 
replacing  in the Dorfman coefficients computed in the proof of Proposition 
3.3 of \cite{c-k-paper} $h\mapsto - h$,   $\alpha\mapsto \beta$, $\lambda\mapsto 0$, $\beta\mapsto \frac{\eta}{2} +\alpha$, $\mu\mapsto \frac{\eta}{2}$
and $\gamma \mapsto -\frac{\eta}{2} +\alpha$:
\begin{align}
\nonumber& B_{145} =B_{146} = B_{256} = B_{356} = B_{412} = B_{413} = B_{523} = B_{623} =0,\\
\nonumber& B_{156} =\frac{1}{2} ( - h -2\alpha +\beta ),\ B_{245} =-\frac{1}{2}\eta ,\ B_{246} =\frac{1}{2} ( h -\eta +\beta ),\\
\nonumber& B_{345} =-\frac{1}{2} (h+\eta +\beta ),\ B_{346}  = -\frac{1}{2}\eta ,\ B_{423} =\frac{1}{2} ( - h +2\alpha - \beta ),\\
\label{dorfman-l34} & B_{512} =\frac{1}{2}\eta ,\ B_{513} =\frac{1}{2} ( h+\eta -\beta ),\ B_{612} =\frac{1}{2} ( - h +\eta +\beta ),\
B_{613} =\frac{1}{2}\eta .
\end{align} 
From Remark \ref{rem-dorfman} iii)  
we  obtain  the  coefficients  
$R_{ia}^{\tilde{\delta} , +}$ 
of the Ricci tensor $\mathrm{Ricci}^{\tilde{\delta}, + }$ from Corollary \ref{forapplic} 
by making the same replacements as above in the Ricci coefficients computed in
the proof of  Proposition 3.11 from \cite{c-k-paper}.
Using that  $ \epsilon_{1} =\epsilon_{2} = -\epsilon_{3}$ we  obtain
\begin{align}
\nonumber& R_{41}^{\tilde{\delta}, +} =\frac{1}{2} ( h^{2} - \beta^{2}),\ R_{42}^{\tilde{\delta}, +} = \frac{\epsilon_{1}}{2} ( h -2\alpha +\beta )\delta_{3},\
R_{43}^{\tilde{\delta}, +} = \frac{\epsilon_{1}}{2} ( h - 2\alpha +\beta ) \delta_{2},\\
\nonumber& R_{51}^{\tilde{\delta}, +} = \frac{\epsilon_{1} \eta}{2} \delta_{2}-\frac{\epsilon_{1}}{2} ( h +\eta -\beta ) \delta_{3},\ 
R_{52}^{\tilde{\delta}, +}  = \frac{1}{2} (\beta - 2\alpha  ) (\beta   +\eta ) +\frac{h^{2}}{2} -\frac{\epsilon_{1} \eta}{2}  \delta_{1},\\
\nonumber& R_{53}^{\tilde{\delta}, +} =\frac{\eta }{2} (\beta - 2\alpha ) -\frac{\epsilon_{1}}{2} ( h+\eta -\beta )\delta_{1},\ R_{61}^{\tilde{\delta}, +} =\frac{\epsilon_{1}}{2} ( - h+\eta +\beta ) \delta_{2} -\frac{\epsilon_{1}\eta }{2}  \delta_{3},\\
\label{ricci-l34}& R_{62}^{\tilde{\delta}, +} = \frac{\eta }{2} ( \beta - 2\alpha ) +\frac{\epsilon_{1}}{2} ( h -\eta -\beta ) \delta_{1},\
R_{63}^{\tilde{\delta}, +} =\frac{1}{2} ( 2\alpha - \beta )( \beta - \eta ) -\frac{h^{2}}{2} -\frac{\epsilon_{1} \eta }{2} \delta_{1}.
\end{align} 
The metric  $E_{-} $ is generalized Einstein  with divergence
$\delta$ 
if and only if the relations from Corollary \ref{forapplic}, with Dorfman coefficients given by  (\ref{dorfman-l34}), the  components of the Ricci tensor 
$\mathrm{Ric}^{\tilde{\delta}, +}$
given by  (\ref{ricci-l34})
and $\epsilon_{1}= \epsilon_{2} = -\epsilon_{3}$, hold.  
Relations (\ref{rel-4})  from Corollary \ref{forapplic} are equivalent to 
\begin{align}
\nonumber& \delta_{1}  =\delta_{4},\\
\nonumber&  \eta ( \delta_{2} -\delta_{5}) = 2( \beta -\alpha -\frac{\eta}{2}) (\delta_{6} -\delta_{3}),\\
\label{primele} & \eta ( \delta_{3} -\delta_{6}) = 2( \beta -\alpha +\frac{\eta}{2} ) (\delta_{2} -\delta_{5}),
\end{align}
or to
\begin{equation}\label{beta-alpha-delta}
\delta_{1} =\delta_{4},\ (\beta -\alpha ) (\delta_{2} - \delta_{5})=0,\ \delta_{2} - \delta_{5}=\delta_{3} - \delta_{6}.
\end{equation}
Using  $\delta_{1} = \delta_{4}$, the 3rd relation (\ref{rel-3})  becomes 
\begin{equation}\label{primele-1} 
- \epsilon_{1} \beta \delta_{1} = \delta_{0} F_{23}
\end{equation}
and the remaining relations from Corollary  \ref{forapplic}
reduce to 
\begin{align}
\nonumber& \frac{1}{2} ( h^{2} - \beta^{2}) = \epsilon_{1} (  (F_{12})^{2} - (F_{13})^{2}),\\
\nonumber& \frac{\epsilon_{1} \eta}{2} \delta_{2} -\frac{\epsilon_{1}}{2} ( h +\eta -\beta ) \delta_{3} = \delta_{0} F_{12} - \epsilon_{1} F_{23} F_{13},\\
\nonumber& \frac{\epsilon_{1}}{2} ( - h +\eta +\beta ) \delta_{2} -\frac{\epsilon_{1}\eta}{2} \delta_{3} = \delta_{0} F_{13} - \epsilon_{1} F_{23} F_{12},\\
\nonumber& \frac{\epsilon_{1}}{2} ( h -2\alpha +\beta ) \delta_{3} = - \delta_{0} F_{12} - \epsilon_{1} F_{13} F_{23},\\
\nonumber& \frac{1}{2} (\beta - 2\alpha  ) (\beta +\eta ) +\frac{h^{2}}{2} -\frac{\epsilon_{1} \eta}{2}  \delta_{1} = \epsilon_{1} ( (F_{12})^{2} - (F_{23})^{2}),\\
\nonumber& \frac{\eta}{2} ( \beta - 2\alpha ) +\frac{\epsilon_{1}}{2} ( h -\eta -\beta )\delta_{1} = \delta_{0} F_{23} +\epsilon_{1} F_{12} F_{13},\\
 \nonumber& \frac{\epsilon_{1}}{2} ( h -2\alpha +\beta ) \delta_{2} = - \delta_{0} F_{13} +\epsilon_{1} F_{12} F_{32},\\
 \nonumber& \frac{\eta}{2} ( 2\alpha -\beta ) +\frac{\epsilon_{1}}{2} ( h +\eta -\beta ) \delta_{1} = \delta_{0} F_{23} - \epsilon_{1} F_{12} F_{13},\\
 \label{l-45-1}& \frac{1}{2} ( 2\alpha -\beta ) (\beta -\eta ) -\frac{h^{2}}{2} -\frac{ \epsilon_{1} \eta}{2}  \delta_{1}  = \epsilon_{1} 
 ( (F_{13})^{2} + (F_{23})^{2})
 \end{align}
  and
 \begin{align}
 \nonumber& F_{12} \epsilon_{1}\delta_{2} - F_{13} \epsilon_{1} \delta_{3}   + ( h+\beta ) F_{23} =0,\\
 \nonumber& F_{12} ( \frac{\eta}{2} - \epsilon_{1} \delta_{1}) - F_{13} ( h +\alpha +\frac{\eta}{2}) - \epsilon_{1} F_{23} \delta_{3} =0,\\
 \label{l-45-2}& F_{12} (\frac{\eta}{2} - ( h+\alpha )) - F_{13} (\frac{\eta}{2} +\epsilon_{1} \delta_{1}) -  \epsilon_{1} F_{23} \delta_{2} =0 
 \end{align}
  and
\begin{equation}\label{l-45-3}
(F_{12}  - F_{13}) ( \delta_{2} -\delta_{5}) = 0,\  F_{23} ( \delta_{2} - \delta_{5}) =0
\end{equation}
and 
\begin{align}
\nonumber& \frac{\epsilon_{1} \eta}{2}\delta_{2}  -\frac{\epsilon_{1} }{2} ( h +\eta -\beta ) \delta_{3} +\frac{\epsilon_{1} }{2} ( h+ 2\alpha -\beta ) \delta_{6} =   \delta_{0} F_{12},\\
\label{l-45-4}& \frac{\epsilon_{1} }{2} ( - h +\eta +\beta ) \delta_{2} -\frac{\epsilon_{1} \eta}{2} \delta_{3} +\frac{\epsilon_{1} }{2} ( h +2\alpha -\beta ) \delta_{5} 
= \delta_{0}
F_{13}.
\end{align}
(Relations (\ref{l-45-3}) coincide with the last three relations (\ref{rel-2}),  by using  $\delta_{2} - \delta_{5} = \delta_{3} -\delta_{6}$ 
and $\delta_{1} =\delta_{4}$,  see 
relations  (\ref{beta-alpha-delta})). 
In view of 
the second relation   (\ref{l-45-3}), we consider two cases, namely  $F_{23} \neq 0$ or  $F_{23} =0.$\

$\bullet$ case I):   $F_{23} \neq 0.$     
From  the 3rd relation (\ref{beta-alpha-delta}) 
and  the 2nd relation (\ref{l-45-3}),  we get
$\delta_{2}= \delta_{5}$ 
and $\delta_{3} =\delta_{6}$. 
Relations  (\ref{beta-alpha-delta}),  (\ref{primele-1}), 
 (\ref{l-45-3}) and (\ref{l-45-4}) become 
\begin{align}
\nonumber& \delta_{2} = \delta_{5},\ \delta_{3} = \delta_{6},\ \delta_{1} = \delta_{4},\\
 \nonumber& \frac{\eta}{2} \delta_{2} +( \alpha -\frac{\eta}{2}) \delta_{3} = \epsilon_{1} \delta_{0} F_{12},\\
 \nonumber& (\alpha +\frac{\eta}{2}) \delta_{2} -\frac{\eta}{2}\delta_{3} =\epsilon_{1}\delta_{0} F_{13},\\
 \label{sec-l-45} & - \beta \delta_{1} = \epsilon_{1} \delta_{0} F_{23}.
 \end{align}
The generalized metric $E_{-}$ is generalized Einstein with divergence $\delta$  if and only if  relations (\ref{sec-l-45}) together with
(\ref{l-45-1}) 
and (\ref{l-45-2}) hold.\

$\bullet$ case I-1):  $\delta_{0}=0$.  The 2nd, 3rd and 4th lines from   (\ref{sec-l-45}) give
\begin{equation}
\delta_{2} = \delta_{3},\ \alpha \delta_{2} =0,\ \beta \delta_{1}=0
\end{equation}
and the remaining relations (\ref{l-45-1}) and (\ref{l-45-2}) are solved  precisely by
\begin{equation}
\beta =-h =  2\alpha \neq 0,\ F_{12} = F_{13} =0,\ \delta =0,\ \epsilon_{1} =-1,\  (F_{23})^{2} =2\alpha^{2}
\end{equation}
(where we used $F\neq 0$).  We obtain the 1st  class of solutions  listed in  the theorem.

$\bullet$ case I-2):  $\delta_{0}\neq 0$.  From the  2nd, 3rd and 4th  lines from  (\ref{sec-l-45}) we obtain
\begin{align}
\nonumber& F_{12} = \frac{\epsilon_{1}}{\delta_{0}} (\frac{\eta}{2} \delta_{2} + (\alpha -\frac{\eta}{2} )\delta_{3}),\\
\nonumber& F_{13} = \frac{\epsilon_{1}}{\delta_{0}} ( (\alpha + \frac{\eta}{2} ) \delta_{2}  -\frac{\eta}{2} \delta_{3}),\\
\label{f-23-new} & F_{23} = -\frac{\epsilon_{1}\beta \delta_{1}}{ \delta_{0}}.
\end{align}
As $F_{23}\neq 0$, we obtain that $\beta \delta_{1}\neq 0.$ 
Adding  the 6th and 8th  relation (\ref{l-45-1}),  using the expression of $F_{23}$ given by the 3rd
relation (\ref{f-23-new}) and $\delta_{1}\neq 0$  we obtain that 
$$
h +\beta =0.
$$ 
Since $h +\beta =0$ the 1st relation 
(\ref{l-45-2}) becomes $F_{12} \delta_{2} = F_{13}\delta_{3}$. Replacing in this relation   the expressions of $F_{12}$ and $F_{13}$ from 
(\ref{f-23-new}) we deduce that
$\delta_{2} =\delta_{3}.$ 
From   (\ref{sec-l-45}),  
we  obtain that
$$
\delta_{2} = \delta_{3} = \delta_{5}  = \delta_{6}.
$$
Relations (\ref{f-23-new}) become
\begin{equation}\label{expr-F-l-45}
F_{12} = F_{13} = \frac{\epsilon_{1} \alpha \delta_{2}}{\delta_{0}},\  F_{23} = -\frac{\epsilon_{1} \beta \delta_{1}}{ \delta_{0}}.
\end{equation}
Combining the first two relations (\ref{expr-F-l-45})  with the 7th  relation (\ref{l-45-1}) and using that 
$h+\beta =0$ and 
$F_{23} \neq 0$
we obtain that $F_{12} =0$. Hence $F_{12} =  F_{13} =0$  
and from  the 2nd relation (\ref{l-45-2}) 
we deduce that $\delta_{3}=0.$ 
Thus,
$$
\delta_{A} =0,\ \forall A\notin \{ 0, 1, 4\} .
$$
Combining the 6th  and 8th relation 
(\ref{l-45-1}) 
and using  (\ref{sec-l-45}) again, 
we obtain 
\begin{equation}\label{delta-1-1}
\delta_{4} = \delta_{1} = \epsilon_{1} ( \beta - 2\alpha ).
\end{equation}
In fact, the 5th, 6th, 8th and 9th relation (\ref{l-45-1}) 
reduce  to (\ref{delta-1-1}) 
and 
\begin{equation}\label{f-23-square-l} 
(F_{23})^{2} = \epsilon_{1} \beta (\alpha -\beta ).
\end{equation}
Replacing in (\ref{f-23-square-l})  the expression of $F_{23}$ given by
(\ref{expr-F-l-45}) and using (\ref{delta-1-1}) we obtain 
\begin{equation}
(\delta_{0})^{2} (\alpha -\beta ) = \epsilon_{1}\beta (\beta -2\alpha  )^{2}.
\end{equation}
(Recall that, after equation \eqref{f-23-new}, we showed that $\beta \neq 0$.)
Combining (\ref{delta-1-1}) with the 3rd equality 
(\ref{expr-F-l-45}) we obtain 
\begin{equation}
F_{23} = \frac{\beta ( 2\alpha -\beta )}{ \delta_{0}}.
\end{equation}
We arrive at the 2nd class of solutions listed in the theorem. This finishes case I).\

$\bullet$ case II):  $F_{23} =0.$  Since $F\neq 0$, either $F_{12}$ or $F_{13}$ is non-zero.
From relation (\ref{primele-1}) we deduce that 
\begin{equation}\label{beta-delta}
\beta \delta_{1} =0.
\end{equation} 
We distinguish two subcases, namely $\beta=0$ or $\beta \neq 0$.\

$\bullet$ case II-1):  $\beta =0.$ 
Recall, from relation (\ref{beta-alpha-delta}), that 
$ \delta_{2} -\delta_{5} = \delta_{3} -\delta_{6}$.
We claim that
\begin{equation}\label{delta-2-5-3}
\delta_{2} = \delta_{5},\ \delta_{3} = \delta_{6}.
\end{equation}
To prove the claim we assume, by contradiction,  that 
\begin{equation}\label{abs}
\delta_{3} - \delta_{6} = \delta_{2} - \delta_{5}\neq 0.
\end{equation}
From  the 2nd relation (\ref{beta-alpha-delta}) we obtain that $\alpha =0.$
The 1st relation (\ref{l-45-3}) implies  that  $F_{12} = F_{13}$. 
The 1st  relation (\ref{l-45-2}) with $F_{23} =0$ and $F_{12} = F_{13}\neq 0$ implies that $\delta_{2} =\delta_{3}.$ So 
$\delta_{2} =\delta_{3}$, $\delta_{5} =\delta_{6}$ and $\delta_{2} \neq \delta_{5}$. 
The 1st  relation (\ref{l-45-1}) with $F_{12} = F_{13}$ 
and $\beta =0$ 
implies that 
$h =0.$  Then the  2nd  relation (\ref{l-45-2}) implies that $\delta_{1} =0$  and  the  5th relation  (\ref{l-45-1}) implies that $F_{12} =0$.
We obtain  a contradiction.  Relation (\ref{delta-2-5-3}) follows.

Combining  the 2nd relation (\ref{l-45-1}) with the 1st relation
(\ref{l-45-4}), and, respectively, the 3rd  relation (\ref{l-45-1}) with the 2nd relation (\ref{l-45-4}), we obtain
\begin{equation}\label{has}
( h+2\alpha ) \delta_{6} = ( h+2\alpha )\delta_{5} =0.
\end{equation}
In view of relation (\ref{has}) we distinguish two subcases, namely  $h + 2\alpha \neq 0$ or   $h+2\alpha =0.$ If $h+ 2\alpha \neq 0$ then, 
 from relations (\ref{delta-2-5-3}) and  (\ref{has}), 
$$
\delta_{2} = \delta_{3} = \delta_{5} = \delta_{6} =0.
$$
The 2nd  and 3rd relation (\ref{l-45-1})
(with $\delta_{2} =\delta_{3} = F_{23} =  0$  and $F\neq 0$)  imply that $\delta_{0} =0.$ 
Thus
\begin{equation}
F_{23} = \beta = 0,\ h +2\alpha \neq 0,\  \delta_{A} =0,\ \forall A\notin\{ 1, 4\},\  \delta_{1} =\delta_{4}.
\end{equation}
If $h =0$, the  1st, 5th and  6th relation (\ref{l-45-1}) imply that $F_{12} = F_{13}$ and 
the 3rd  relation (\ref{l-45-2}) implies that $\delta_{1}  = - \epsilon_{1}\alpha$.
The 5th  relation (\ref{l-45-1}) implies that
$(F_{12})^{2} =  - \frac{\epsilon_{1}\alpha  \eta}{2}.$ This leads to the 3rd class of solutions  listed in the theorem. 
If $h\neq 0$ then, from 6th  and 8th relations  (\ref{l-45-1}) we obtain that $\delta_{1} =0.$ 
The 2nd and 3rd relation (\ref{l-45-2})  with $\delta_{1} =0$ imply that $F_{12} = F_{13}$ (and $h = -\alpha$). 
From the  1st  relation (\ref{l-45-1})  we obtain that $h=0$, which contradicts our assumption.
Similar  computations show that  $h +2\alpha =0$ leads  to  a contradiction ($F=0$).
We concluded the case II-1).\

$\bullet$ case II-2):  $\beta \neq 0.$ From  relation (\ref{beta-delta})  we obtain  $\delta_{1} =0.$   
The 2nd  and 3rd relations (\ref{l-45-2})  imply, as before, that  $h=-\alpha$ and $F_{12} = F_{13}.$ 
The  5th  and 9th relations  (\ref{l-45-1})  imply that 
$\alpha = \beta .$ Similar computations as above lead to the 4th  class of solutions  listed in  the theorem.\

ii) Claim ii) follows from Lemma \ref{identif-lie-alg} ii).
\end{proof}

\subsection{The case $L = L_{5}$}

We consider  the setting from  the beginning of Section \ref{basic_facts_unimod:sec}, but  we assume that 
$L = L_{5} (\alpha )$ in the $g$-orthonormal basis $(v_{a})$,  
where $\alpha \in  \mathbb{R}$.  Since $L$ is non-diagonal, $\epsilon_{1} = \epsilon_{2} = -\epsilon_{3}.$

\begin{thm}\label{classif-l5}  The metric $E_{- } =\{ X - g(X)\mid  X\in TG\}$ defined on $E_{H, F}$ is generalized Einstein with divergence 
$\delta$ if and only if 
\begin{align}
\nonumber& \alpha =h= F_{13} = 0,\  F_{23} = - F_{12},\ (F_{12})^{2} = -\epsilon_{1} -\frac{\delta_{1}}{ \sqrt{2}}\\
\nonumber& \delta_{0} = \delta_{2} =\delta_{5} =0,\  \delta_{1} =\delta_{3} =\delta_{4} = \delta_{6},\  \epsilon_{1} +\frac{\delta_{1}}{ \sqrt{2}} < 0.
\end{align}
The Lie algebra $\mathfrak{g}$ is isomorphic to $\mathfrak{e}(1,1).$
\end{thm}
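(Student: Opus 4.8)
The plan is to follow the same template as the proofs of Theorems \ref{classif-l2} and \ref{classif-l3}, specializing the general non-diagonal formulas to the canonical form $L = L_5(\alpha)$. First I would read off from the unified form (\ref{unified}) the substitutions that carry the general matrix to $L_5(\alpha)$: comparing entries, one must set (general) $\alpha \mapsto \alpha$, $\lambda \mapsto \frac{1}{\sqrt{2}}$, $\beta \mapsto \alpha$, $\mu \mapsto \frac{1}{\sqrt{2}}$, $\gamma \mapsto \alpha$, together with $h \mapsto -h$ coming from the sign convention of Remark \ref{compare}. Applying Remark \ref{rem-dorfman} ii) to the Dorfman coefficients from the proof of Proposition 3.3 of \cite{c-k-paper}, and Remark \ref{rem-dorfman} iii) to the Ricci coefficients from the proof of Proposition 3.11 of \cite{c-k-paper}, then yields explicit expressions for all $B_{ABC}$ and all $R_{ia}^{\tilde{\delta}, +}$ in terms of $\alpha$, $h$ and the $\delta_A$. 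A distinctive feature here, absent from the cases $L_2$ and $L_3$, is that $\lambda = \frac{1}{\sqrt{2}} \neq 0$, so several Dorfman coefficients that previously vanished (such as $B_{412}$, $B_{413}$, $B_{523}$, $B_{623}$) are now non-zero; this is what makes the resulting system genuinely more coupled.

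Next I would substitute these expressions into the four groups of relations (\ref{rel-1})--(\ref{rel-4}) of Corollary \ref{forapplic} to obtain the full system characterizing the generalized Einstein condition. As in the earlier proofs, I would attack it in order of increasing nonlinearity. The additional relations (\ref{rel-4}) and the $F$-relations (\ref{rel-2}), being linear in $\delta$ for fixed geometric data, should first force linear identities among the $\delta_A$ (of the type $\delta_1 = \delta_4$, $\delta_3 = \delta_6$, and the vanishing of certain components) and among the products $F_{ab}$. I would then feed these into relations (\ref{rel-3}), which express the products $\delta_0 F_{ab}$ linearly in the $\delta_A$, and only at the end invoke the nine quadratic relations (\ref{rel-1}), which equate the Ricci components to quadratic expressions in the $F_{ab}$.

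The case analysis would branch on whether $\delta_0$ vanishes and on which of $F_{12}$, $F_{13}$, $F_{23}$ are non-zero, recalling that $F \neq 0$ by Assumption \ref{ass-unimod}. I expect two things to be decisive. First, the branch $\delta_0 \neq 0$ must be shown to be empty, in contrast with the $L_3$ case where non-zero-divergence solutions survived; this should come out of substituting the $\delta_0 F_{ab}$ expressions (\ref{rel-3}) back into the quadratic equations and reaching a contradiction. Second, combining appropriate sums and differences of the nine equations (\ref{rel-1}) should force $\alpha = 0$ and $h = 0$, after which the remaining relations collapse to $F_{13} = 0$, $F_{23} = -F_{12}$, the diagonal identities $\delta_1 = \delta_3 = \delta_4 = \delta_6$ and $\delta_0 = \delta_2 = \delta_5 = 0$, and the normalization $(F_{12})^2 = -\epsilon_1 - \frac{\delta_1}{\sqrt{2}}$, with the constraint $\epsilon_1 + \frac{\delta_1}{\sqrt{2}} < 0$ imposed precisely by positivity of $(F_{12})^2$ (together with $F_{12} \neq 0$). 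The main obstacle will be extracting $\alpha = 0$ cleanly, since the $\lambda$-dependent off-diagonal cross terms in the Ricci and Dorfman coefficients have no analogue in the $L_2$ and $L_3$ computations and must be disentangled from the quadratic $F$-terms before the system simplifies.

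Finally, since the analysis forces $\alpha = 0$, the identification $\mathfrak{g} \cong \mathfrak{e}(1,1)$ is immediate from Lemma \ref{identif-lie-alg} iii), which completes the proof.
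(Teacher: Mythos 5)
Your plan follows essentially the same route as the paper: specialize the Dorfman and Ricci coefficients of \cite{c-k-paper} via the substitutions $h\mapsto -h$, $\beta,\gamma\mapsto\alpha$, $\lambda,\mu\mapsto\frac{1}{\sqrt{2}}$, feed them into Corollary \ref{forapplic}, exhaust the linear relations first (which indeed give $\delta_1=\delta_4$, $\delta_2=\delta_5$, $\delta_3=\delta_6$ and then the collapse $\alpha=h=\delta_0=0$), and conclude with Lemma \ref{identif-lie-alg} iii). The only differences are cosmetic: the paper's case split is on the derived factorization $(h+\alpha)(\delta_1-\delta_3)=0$ rather than on $\delta_0$ and the $F_{ab}$, and of the coefficients you list as newly non-vanishing, $B_{412}$ and $B_{623}$ in fact remain zero for $L_5$ (it is $B_{146}$, $B_{256}$, $B_{413}$, $B_{523}$ that pick up the $\pm\frac{1}{\sqrt{2}}$ terms).
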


\begin{proof}
From Remark \ref{rem-dorfman} ii), 
obtain the Dorfman coefficients  $B_{ABC}$
($1\leq A, B, C\leq 6$) 
in the basis $(e_{A})_{0\leq A\leq 6}$ 
by replacing  in the Dorfman coefficients computed in the proof of Proposition 
3.3 of \cite{c-k-paper}
$h\mapsto - h$ and 
 $ \beta  , \gamma \mapsto \alpha$ and $\lambda , \mu \mapsto \frac{1}{\sqrt{2}}$ (and  leaving 
$\alpha$ unchanged): 
\begin{align}
\nonumber& B_{145} =0,\ B_{146} =-\frac{1}{\sqrt{2}},\ 
B_{156} = -\frac{1}{2} ( h+\alpha ),\ 
B_{245} =-\frac{1}{\sqrt{2}},\ B_{246} = \frac{1}{2} ( h+\alpha ),\\
\nonumber &  B_{256}
=\frac{1}{\sqrt{2}},\ B_{345} =-\frac{1}{2} ( h+\alpha ),\  B_{346}  = -\frac{1}{ \sqrt{2}},\ B_{356}  = B_{412} =0,\  B_{413} =\frac{1}{\sqrt{2}},\\
\nonumber&  B_{423} = \frac{1}{2} ( \alpha - h),\ B_{512} = \frac{1}{\sqrt{2}},\ B_{513} =\frac{1}{2} ( h-\alpha ),\ 
B_{523} =-\frac{1}{\sqrt{2}},\\
\label{dorfman-l5}&  B_{612} = \frac{1}{2} ( \alpha - h),\ 
B_{613} =\frac{1}{ \sqrt{2}},\ B_{623} =0. 
\end{align}
From Remark \ref{rem-dorfman} iii)  we obtain the coefficients  
$R_{ia}^{\tilde{\delta} , +}$ 
of the Ricci tensor 
$\mathrm{Ric}^{\tilde{\delta}, + }$ from Corollary \ref{forapplic}
by replacing  $h\mapsto - h$  (and $ \beta  , \gamma \mapsto \alpha$, $\lambda , \mu \mapsto \frac{1}{\sqrt{2}}$) in the Ricci coefficients computed 
in   the proof of Proposition 3.11  of  \cite{c-k-paper}: 
\begin{align}
\nonumber& R_{41}^{\tilde{\delta}, +} = - 1-\frac{\alpha^{2}} {2} +\frac{h^{2}}{2} - \epsilon_{1} \frac{\delta_{3}}{\sqrt{2}},\
R_{42}^{\tilde{\delta}, +} = -\frac{\alpha}{ \sqrt{2}} +\frac{\epsilon_{1}}{2} ( h-\alpha ) \delta_{3}\\
\nonumber& R_{43}^{\tilde{\delta}, +} = - 1-\frac{\epsilon_{1}}{\sqrt{2}} \delta_{1} +\frac{\epsilon_{1}}{2} ( h-\alpha  )\delta_{2},\ 
R_{51}^{\tilde{\delta}, +}  = -\frac{\alpha}{\sqrt{2}} +\frac{\epsilon_{1}}{\sqrt{2}} \delta_{2} + \frac{\epsilon_{1}}{2} ( \alpha  - h) \delta_{3}\\
\nonumber& R_{52}^{\tilde{\delta}, +} = -\frac{\alpha^{2}}{ 2} +\frac{ h^{2}}{2} -\frac{\epsilon_{1}}{ \sqrt{2}} \delta_{1} +
\frac{\epsilon_{1}}{\sqrt{2}} \delta_{3},\ R_{53}^{\tilde{\delta}, +} = -\frac{\alpha}{\sqrt{2}} + \frac{\epsilon_{1}}{2} ( \alpha  - h) \delta_{1} +\frac{\epsilon_{1}}{ \sqrt{2}} \delta_{2}\\
\nonumber& R_{61}^{\tilde{\delta}, +} = -1 +\frac{\epsilon_{1}}{2} ( \alpha - h) \delta_{2} -\frac{\epsilon_{1}}{\sqrt{2}}\delta_{3},\ 
R_{62}^{\tilde{\delta}, +} = -\frac{\alpha}{ \sqrt{2}} + \frac{\epsilon_{1}}{ 2} ( h-\alpha ) \delta_{1}\\
\label{ricci-l5}& R_{63}^{\tilde{\delta}, +} = -1 +\frac{\alpha^{2}}{2} -\frac{h^{2}}{ 2} -\frac{\epsilon_{1}}{\sqrt{2}} \delta_{1} .
 \end{align}
Using  relations (\ref{dorfman-l5}) and (\ref{ricci-l5}) and  $\epsilon_{1} =\epsilon_{2} = -\epsilon_{3}$, 
we obtain from Corollary~\ref{forapplic} that $E_{-}$ is generalized Einstein with divergence $\delta$  if and only if
relations
\begin{align}
\nonumber& -1 -\frac{\alpha^{2}}{ 2} +\frac{h^{2}}{2} -\frac{\epsilon_{1}}{\sqrt{2}}\delta_{3} = \epsilon_{1} ( (F_{12})^{2} - (F_{13})^{2})\\
\nonumber& -\frac{\alpha}{ \sqrt{2}} +\frac{\epsilon_{1}}{ \sqrt{2}} \delta_{2} +\frac{\epsilon_{1}}{2} ( \alpha - h) \delta_{3}
=\delta_{0} F_{12} - \epsilon_{1} F_{23} F_{13}\\
\nonumber& - 1 +\frac{\epsilon_{1}}{2} (\alpha - h) \delta_{2} -\frac{\epsilon_{1}}{\sqrt{2}}\delta_{3} = \delta_{0} F_{13}- \epsilon_{1}
F_{23} F_{12}\\
 \nonumber& -\frac{\alpha}{\sqrt{2}} +\frac{\epsilon_{1}}{2} ( h-\alpha ) \delta_{3 } = -\delta_{0} F_{12} -\epsilon_{1} F_{13} F_{23}\\
 \nonumber& -\frac{\alpha^{2}}{ 2} +\frac{h^{2}}{2} -\frac{\epsilon_{1}}{\sqrt{2}} \delta_{1} +\frac{\epsilon_{1}}{\sqrt{2}} \delta_{3}
 = \epsilon_{1} ( (F_{12})^{2} - (F_{23})^{2})\\
 \nonumber& -\frac{\alpha}{ \sqrt{2}} +\frac{\epsilon_{1}}{2} ( h-\alpha ) \delta_{1} = \delta_{0} F_{23} +\epsilon_{1} F_{12} F_{13}\\
 \nonumber& - 1 -\frac{\epsilon_{1}}{\sqrt{2}} \delta_{1} +\frac{\epsilon_{1}}{2} ( h-\alpha ) \delta_{2} = -\delta_{0} F_{13} -\epsilon_{1} F_{12} F_{23}\\
 \nonumber& -\frac{\alpha}{\sqrt{2}} +\frac{\epsilon_{1}}{2} (\alpha - h) \delta_{1}+ \frac{\epsilon_{1}}{\sqrt{2}} \delta_{2} = -\delta_{0} F_{23}
 +\epsilon_{1} F_{12} F_{13}\\
 \label{rel-l1} & -1 +\frac{\alpha^{2}}{ 2} -\frac{h^{2}}{2} -\frac{\epsilon_{1}}{\sqrt{2}} \delta_{1} = \epsilon_{1} ( (F_{13})^{2} 
 + (F_{23})^{2})
 \end{align}
 (which are equivalent to  relations (\ref{rel-1})),  together with 
 \begin{align}
 \nonumber& F_{12}\epsilon_{1} \delta_{2} - F_{13} (\epsilon_{1} \delta_{3} +\frac{1}{\sqrt{2}}) + F_{23} ( h+\alpha ) =0\\
 \nonumber& F_{12} (\epsilon_{1} \delta_{1}- \frac{1}{\sqrt{2}} ) + F_{13} ( h+\alpha )  + F_{23} ( \epsilon_{1} \delta_{3} -\frac{1}{\sqrt{2}} ) =0\\
  \nonumber& F_{12} ( h+\alpha ) + F_{13} ( \epsilon_{1} \delta_{1} +\frac{1}{\sqrt{2}} ) + F_{23} \epsilon_{1} \delta_{2} =0\\
  \nonumber& F_{12} (\delta_{2} - \delta_{5}) = F_{13} (\delta_{3} -\delta_{6})\\
  \nonumber& F_{12} (\delta_{1} - \delta_{4}) + F_{23} (\delta_{3} - \delta_{6}) =0\\
  \label{rel-l2}& F_{13} (\delta_{1} -\delta_{4}) + F_{23} (\delta_{2} -\delta_{5}) =0
  \end{align}
   (which are equivalent to (\ref{rel-2})), together with
   \begin{align}
 \nonumber&   \frac{1}{\sqrt{2}} \delta_{2} +\frac{1}{2} (\alpha - h) \delta_{3} +\frac{1}{2} ( h+\alpha ) \delta_{6} = \epsilon_{1} \delta_{0} F_{12}\\
 \nonumber& \frac{1}{2} (\alpha - h)\delta_{2} -\frac{1}{\sqrt{2}} \delta_{3} +\frac{1}{\sqrt{2}} \delta_{4} +\frac{1}{2} ( h+\alpha ) \delta_{5}=
 \epsilon_{1} \delta_{0} F_{13}\\
 \label{rel-l3}& \frac{1}{2} ( h-\alpha ) \delta_{1} -\frac{1}{2} ( h+\alpha ) \delta_{4} -\frac{1}{\sqrt{2}} \delta_{5} =\epsilon_{1}  \delta_{0} F_{23}
 \end{align}
 (which are equivalent to  relations (\ref{rel-3})),  together with 
 \begin{equation}\label{delta}
 \delta_{1} = \delta_{4},\ \delta_{2} =\delta_{5},\ \delta_{3} =\delta_{6}
 \end{equation}
 (which are equivalent to  relations (\ref{rel-4})) hold. Using (\ref{delta}), we obtain that the 4th, 5th  and 6th relation (\ref{rel-l2})  are  satisfied
 and relations (\ref{rel-l3}) become
 \begin{align}
\nonumber&  \epsilon_{1} \delta_{0} F_{12} = \frac{1}{\sqrt{2}} \delta_{2} +\alpha \delta_{3},\\
 \nonumber& \epsilon_{1} \delta_{0} F_{13} = \alpha \delta_{2} + \frac{1}{\sqrt{2}} (\delta_{1} -  \delta_{3}),\\
\label{delta-1-new}&  \epsilon_{1} \delta_{0} F_{23} = - ( \alpha \delta_{1} + \frac{1}{\sqrt{2}} \delta_{2}).
 \end{align}
 We substract  the  4th from the 2nd relation (\ref{rel-l1}) and  we combine it with the 1st  relation 
 (\ref{delta-1-new}). Similarly, we substract the 7th from the 3rd relation (\ref{rel-l1}) and we combine it with the
 2nd relation (\ref{delta-1-new}). We obtain
 \begin{equation}\label{delta-i}
 \delta_{2} = - \sqrt{2} (h+\alpha )\delta_{3},\ \delta_{1} = \left( 1 + 2( h+\alpha )^{2} \right) \delta_{3}. 
 \end{equation}
 We substract the 8th from the 6th relation (\ref{rel-l1}) and we obtain
 \begin{equation}
 \delta_{0} F_{23} =\frac{\epsilon_{1}}{2} ( h-\alpha ) \delta_{1} -\frac{\epsilon_{1}}{ 2\sqrt{2}} \delta_{2}.
 \end{equation}
 Combined with the 3rd relation (\ref{delta-1-new}), it gives
 \begin{equation}\label{delta-ii} 
 \delta_{2} = - \sqrt{2} ( h+\alpha ) \delta_{1}.
 \end{equation}
To summarize our argument so far:  we proved that 
$E_{-}$ is generalized Einstein with divergence $\delta$ if and only  if 
relations
(\ref{rel-l1}), the first three relations (\ref{rel-l2}), together with relations 
(\ref{delta}), (\ref{delta-i}) and (\ref{delta-ii}) hold.  Remark that the 1st relation (\ref{delta-i}) and relation (\ref{delta-ii}) imply that 
\begin{equation}\label{h+a}
(h+\alpha )(\delta_{1} -\delta_{3}) =0. 
\end{equation}
In view of relation (\ref{h+a}) we consider two cases,  namely $h+\alpha =0$ or $h+\alpha\neq 0.$\

$\bullet$ case I): $ h+\alpha =0.$ Relations (\ref{delta-i}) imply that $\delta_{2} =0$ and $\delta_{1} =\delta_{3}.$ From 
relation (\ref{delta}), we obtain
\begin{equation}\label{delta-multe} 
\delta_{2} = \delta_{5}=0,\ \delta_{1} =\delta_{3} =\delta_{4} =\delta_{6}.
\end{equation} 
The 1st, 5th and 9th relation (\ref{rel-l1})  
become
\begin{equation}\label{modif-l5}
F_{13} =0,\ (F_{12})^{2} = (F_{23})^{2} = -\epsilon_{1} -\frac{\delta_{1}}{\sqrt{2}}.
\end{equation}
The 6th and 8th relation (\ref{rel-l1})  together with $h+\alpha =0$ and $F_{23} \neq 0$
(because $F\neq 0$) 
 imply 
$$
h =\alpha = \delta_{0}=0.
$$ 
The 7th relation (\ref{rel-l1}) implies that  $F_{12} F_{23} = \epsilon_{1} +\frac{1}{\sqrt{2}} \delta_{1}$. Combined 
with (\ref{modif-l5}) it implies $F_{12} = - F_{23}$. 
We obtain the class of solutions from the statement of the theorem.\

$\bullet$ case II): $h+\alpha \neq 0.$ 
From relations  (\ref{delta}),  (\ref{delta-i}) and (\ref{h+a})  
we deduce that
 $\delta_{A} =0$ for any $A\neq 0.$  From relations (\ref{rel-l3}) and $F\neq 0$ we obtain  that 
 $\delta_{0} =0.$ From the 1st and 3rd  relation (\ref{rel-l2}) we obtain that $F_{13} \neq 0$. 
From the 2nd and 6th relation (\ref{rel-l1}) we obtain that $F_{23} = - F_{12}.$  But then  the 2nd relation
(\ref{rel-l2}) implies that $(h+\alpha ) F_{13} =0$, which is a contradiction.\

The last statement of the theorem follows from Lemma \ref{identif-lie-alg} iii).
 \end{proof}

\section{The classification when $\mathfrak{g}$ is non-unimodular}\label{non-unimod-section}

Let  $\mathfrak{g}$ be  a  $3$-dimensional non-unimodular  Lie algebra 
with unimodular kernel $\mathfrak{u}:= \{ x\in \mathfrak{g} \mid \mathrm{tr}\, \mathrm{ad}_{x} =0 \}$.
Since $\mathfrak{u}$ is a $2$-dimensional abelian ideal of $\mathfrak{g}$ (see e.g.\  \cite{milnor}),
 $\mathfrak{g} $ is a semi-direct product 
 $\mathbb{R}\ltimes_{\mathbf A} \mathbb{R}^{2}$ 
 of $\mathbb{R}$ and $\mathbb{R}^{2}$, 
where $\mathbb{R}$ acts on $\mathbb{R}^{2}$ by an endomorphism ${\mathbf A}\in \mathrm{End}\, \mathbb{R}^{2}$ of non-zero trace.
The endomorphism $\mathbf{A}$ is uniquely determined by $\mathfrak{g}$ up to conjugation and multiplication by a non-zero scalar.
It follows that there are four classes of non-unimodular $3$-dimensional Lie algebras (see \cite{vinb}, page 212),
according to the following four classes of canonical  forms for   the operator $A$:
\begin{equation}\label{canonical-forms-A}
\mathbf{A} = \left( \begin{tabular}{cc}
$1$ & $0$\\
$0$ & $0$
\end{tabular}
\right) ,\
\mathbf{A} = \left( \begin{tabular}{cc}
$1$ & $1$\\
$0$ & $1$
\end{tabular}
\right) ,\
\mathbf{A} = \left( \begin{tabular}{cc}
$1$ & $0$\\
$0$ & $\lambda$
\end{tabular}
\right) ,\
\mathbf{A} = \left( \begin{tabular}{cc}
$\lambda$ & $-1$\\
$1$ & $\lambda$
\end{tabular}
\right) ,
\end{equation}
where in the third matrix $\lambda \in \mathbb{R}\setminus \{ -1\}$, $ 0 < | \lambda | \leq 1$ and in the fourth matrix 
$\lambda \in \mathbb{R}\setminus \{ 0\}$. Following \cite{vinb} we shall denote the corresponding Lie algebras by
$\tau_{2} (\mathbb{R}) \oplus \mathbb{R}$, $ \tau_{3} (\mathbb{R})$, $\tau_{3, \lambda }(\mathbb{R})$ and 
$\tau^{\prime}_{3, \lambda }(\mathbb{R})$ respectively.\

Let $G$ be a Lie group with Lie algebra $\mathfrak{g}$
and $E_{H, F}$ a  Courant algebroid of type $B_{3}$ over $G$. 
When a basis $(v_{a})_{1\leq a\leq 3}$ of $\mathfrak{g}$ is fixed, we write as in the previous sections
$$
H= h v_{1}^{*}\wedge v_{2}^{*}\wedge v_{3}^{*},\ F= \frac12 F_{ab} v_{a}^{*}\wedge v_{b}^{*},
$$ where
$( v_{a}^{*})$ is the dual basis of $(v_{a})$ and $h, F_{ab}\in \mathbb{R}.$

Let 
$g$ be a non-degenerate scalar product on $\mathfrak{g}$, 
$\delta\in
(\mathfrak{g}\oplus\mathfrak{g}^{*}\oplus \mathbb{R})^{*}$  a left-invariant divergence operator and $\delta_{A}:= \delta (e_{A})$, 
where $(e_{A})$ is the basis of $\mathfrak{g}\oplus \mathfrak{g}^{*}\oplus \mathbb{R}$ defined by $(v_{a}).$ 
In the next section we assume that 
$g\vert_{\mathfrak{u}\times \mathfrak{u}}$ is degenerate. The case when $g\vert_{\mathfrak{u}\times\mathfrak{u}}$ 
is non-degenerate 
will be treated in Section~\ref{nondeg-sect}.

\subsection{The case $g\vert_{\mathfrak{u}\times \mathfrak{u}}$- degenerate}\label{degenerate-section}

As proved in  Proposition 3.7 of \cite{c-k-paper},
when $g\vert_{\mathfrak{u}\times \mathfrak{u}}$ is degenerate 
there is a
$g$-orthonormal basis 
$(v_{a} )$ of $\mathfrak{g}$ with $\epsilon_{1} = \epsilon_{2} = -\epsilon_{3}$
and 
\begin{align}
\nonumber& {\mathcal L}_{v_{1}} v_{2}  =
\epsilon_{1} ( \lambda v_{1} +\mu v_{2} +\mu v_{3})\\
\nonumber& \mathcal L_{v_{2}} v_{3}  
= - \epsilon_{1} ( \nu v_{1} +\rho v_{2} +\rho v_{3})\\
\label{bracket-deg}& \mathcal L_{v_{3}} v_{1} 
= \epsilon_{1} ( \lambda v_{1} +\mu  v_{2} +\mu v_{3}),
\end{align}
where $\lambda , \mu , \rho \in \mathbb{R}$,  $\lambda +\rho \neq 0$ and $\epsilon_{i}:= g(v_{i}, v_{i}).$  
The Lie algebra $\mathfrak{g}$ is the semi-direct product $\mathbb{R}\ltimes_{\mathbf{A}} \mathbb{R}^{2}$  where 
\begin{equation}\label{form-A}
\mathbf{A} =  -\epsilon_{1} \left( \begin{tabular}{cc}
$\lambda$ & $\frac{ \nu}{2}$\\
$2\mu$ & $\rho$
\end{tabular}\right) .
\end{equation}
\begin{rem}\label{corrected}{\rm 
In \cite{c-k-paper}, the   minus sign in  the expression  of  $ \mathcal L_{v_{2}} v_{3} $
from (\ref{bracket-deg}) 
 was forgotten.  As a consequence,  the Lie algebra  coefficients $k_{23a}$  ($1\leq a\leq 3$) from  the proof of Proposition  3.7 of \cite{c-k-paper} need to be 
replaced  with  $-k_{23a}$.  The correct expressions of the  Dorfman  coefficients and curvature components  from the proof of Propositions  3.7 and 3.16 of
\cite{c-k-paper}
are given in 
the proof of Theorem \ref{deg-thm} below. }
\end{rem}
From (\ref{bracket-deg}) we obtain that a  left-invariant $2$-form $F\in \Omega^{2}(G)$  is closed  if and only if 
\begin{equation}\label{F-closed-deg}
F_{12} + F_{13}=0.
\end{equation}
In particular, the twisting $2$-form $F$ of $E_{H, F}$ satisfies (\ref{F-closed-deg}).

\begin{thm}\label{deg-thm}  There are two families  1) and 2) (see below) of generalized Einstein metrics  $E_{-} = \{ X - g(X)\mid  X\in TG\}$  
with divergence $\delta$ defined on $E_{H, F}$.
In the basis $(v_{a})$ above, they are described as follows:\

1) $\lambda +\rho \neq 0$, $\nu = F_{23} = h =0$,  $F_{13} = - F_{12}$, 
\begin{equation}
(F_{12})^{2} = -\epsilon_{1}  (\lambda^{2}+\rho^{2}) +\rho \delta_{2} > 0
\end{equation}
and 
\begin{equation}
\rho (\delta_{2} -\delta_{5}) =0, \ \delta_{3} = -\delta_{2},\  \delta_{6} = -\delta_{5},\ \delta_{0} = \delta_{1}=\delta_{4} =0. 
\end{equation}

2) $\nu (\lambda +\rho ) \neq 0$,  $F_{23} = \epsilon\nu$ (where $\epsilon = \pm 1$),  $F_{12} = - F_{13} = -\epsilon \lambda$, $\epsilon_{1} =-1$, 
$ h =\nu$,  $\delta_{0} = -\epsilon \delta_{1}$ and
\begin{align}
\nonumber&(\lambda \rho - \mu \nu ) (\delta_{1} +\nu ) = (\lambda \rho - \mu \nu ) (\delta_{1} -\delta_{4}) =0\\
\label{cond-ii}& \delta_{3} = -\delta_{2} = \lambda +\rho +\frac{\lambda}{\nu} \delta_{1},\ \delta_{6} = -\delta_{5} = \lambda +\rho +\frac{\lambda}{\nu} \delta_{4}.
\end{align}
 \end{thm}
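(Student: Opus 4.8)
The plan is to follow the same scheme as in the proofs of Theorems \ref{classif-diag}--\ref{classif-l5}: translate the generalized Einstein condition into the explicit algebraic system of Corollary \ref{forapplic} and then solve that system by case analysis. First I would record the Lie algebra coefficients $k_{abc} = g(\mathcal{L}_{v_a} v_b, v_c)$ read off from the brackets (\ref{bracket-deg}), being careful to include the sign correction flagged in Remark \ref{corrected} (so that $k_{23a}$ carries the extra minus sign relative to \cite{c-k-paper}). Feeding these into Lemma \ref{1}, together with the replacement $H \mapsto -H$ from Remark \ref{compare}, produces the Dorfman coefficients $B_{ABC}$ of $E_{H,F}$ in the basis $(e_A)$; equivalently, one takes the coefficients from the proof of Proposition 3.7 of \cite{c-k-paper} and applies both sign changes. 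In the same way, via Remark \ref{rem-dorfman} iii) and the corrected form of Proposition 3.16 of \cite{c-k-paper}, I would write down the components $R^{\tilde{\delta},+}_{ia}$ of the Ricci tensor of the pair $(E_-,\tilde{\delta})$ defined on $E_H$.

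With these explicit expressions in hand, I would substitute them into the four groups of relations (\ref{rel-1})--(\ref{rel-4}). At this stage I would impose the closedness constraint (\ref{F-closed-deg}), i.e. $F_{13} = -F_{12}$, and the normalization $\epsilon_1 = \epsilon_2 = -\epsilon_3$, reducing the unknowns to the bracket parameters $\lambda, \mu, \nu, \rho$, the twist data $h, F_{12}, F_{23}$, and the divergence components $\delta_0,\ldots,\delta_6$. The outcome is a coupled polynomial system equivalent to $E_-$ being odd generalized Einstein with divergence $\delta$: relations (\ref{rel-1}) encode the nine Ricci components, while (\ref{rel-3})--(\ref{rel-4}) govern the interplay between the products $\delta_0 F_{ab}$ and the remaining divergence components.

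The solution of this system proceeds by case analysis, with the natural first dichotomy being $F_{23} = 0$ versus $F_{23} \neq 0$, and throughout one uses $\lambda + \rho \neq 0$ (non-unimodularity) together with $F \neq 0$ from Assumption \ref{ass-unimod}. In the branch $F_{23} = 0$, which forces $F_{12} \neq 0$, the relations (\ref{rel-2}) and (\ref{rel-4}) should collapse the divergence components and force $\nu = h = 0$, leading to family 1); there the surviving Ricci equation fixes $(F_{12})^2 = -\epsilon_1(\lambda^2 + \rho^2) + \rho\delta_2$, and positivity of the right-hand side is precisely the condition that a real solution $F_{12}$ exists. In the branch $F_{23} \neq 0$, the off-diagonal relations should pin down $F_{23} = \epsilon\nu$, $F_{12} = -\epsilon\lambda$, $\epsilon_1 = -1$, $h = \nu$ and $\delta_0 = -\epsilon\delta_1$, while (\ref{rel-3})--(\ref{rel-4}) determine $\delta_2, \delta_3, \delta_5, \delta_6$ in terms of $\lambda, \rho, \nu, \delta_1, \delta_4$, yielding family 2) and the constraints (\ref{cond-ii}).

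The main obstacle I anticipate is precisely this final disentangling: the relations are heavily coupled, and one must track carefully how the vanishing (or not) of the determinant-type quantity $\lambda\rho - \mu\nu = \det \mathbf{A}$ (see (\ref{form-A})) interacts with the divergence constraints, in order to rule out spurious branches and confirm that no further families arise. Establishing the exact form of (\ref{cond-ii}), and verifying that the two families are genuinely the only solutions rather than merely necessary conditions, is where the bulk of the bookkeeping lies.
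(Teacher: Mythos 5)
Your proposal follows essentially the same route as the paper: the same sign-corrected Dorfman and Ricci coefficients fed into Corollary \ref{forapplic}, reduction via $F_{13}=-F_{12}$ and $\epsilon_1=\epsilon_2=-\epsilon_3$, and the case split $F_{23}=0$ versus $F_{23}\neq 0$, with the paper likewise leaving the final disentangling as ``straightforward computations.'' The only minor inaccuracy is that in the branch $F_{23}=0$ the vanishing $h=\nu=0$ comes from the Ricci relations (\ref{rel-1}) (namely $h^2+\nu^2=-2\epsilon_1(F_{23})^2$) rather than from (\ref{rel-2})/(\ref{rel-4}), but this does not affect the viability of the argument.
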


\begin{proof} From Remarks \ref{compare}
and \ref{corrected}, 
we obtain the Dorfman coefficients  $B_{ABC}$ ($1\leq A, B, C\leq 6$)  in the basis $(e_{A})_{0\leq A\leq 6}$ of
$\tilde{E} = \mathfrak{g}\oplus \mathfrak{g}^{*} \oplus \mathbb{R}$ 
determined by the basis $(v_{a})$ 
by replacing $h\mapsto - h$,
$\nu\mapsto - \nu$, $\rho \mapsto -\rho$ and leaving $\lambda$, $\mu$ unchanged 
in the formulae from  the proof of Proposition 3.7 of \cite{c-k-paper}: 
\begin{align}
\nonumber& B_{145} =\lambda ,\ B_{146} = -\lambda ,\ B_{156} = - \frac{1}{2} ( \nu +  h),\  B_{245} = \mu ,\  B_{613} = -\mu,\  B_{623} = - \rho ,\\
\nonumber& B_{246} = \frac{1}{2} ( h -  2\mu - \nu ),\ B_{256} = - \rho ,\ B_{345} = \frac{1}{2} ( - h-2\mu +\nu ),\  B_{346} = \mu\\
\nonumber&  B_{356} = \rho ,\ B_{412}= -\lambda ,\ B_{413}=\lambda ,\ B_{423} = \frac{1}{2} (\nu -h),\  B_{512} = -\mu ,\\
\label{dorf-deg}&  B_{513} =\frac{1}{2}( h+2\mu + \nu ),\ B_{523}= \rho,\ B_{612}= - \frac{1}{2} ( h -2\mu +\nu ).
\end{align}
The   components  $R_{ia}^{\tilde{\delta}, +}$ 
(with $1\leq a\leq 3$ and $4\leq j\leq 6$) 
of the Ricci tensor $\mathrm{Ric}^{\tilde{\delta}, +}$  
from Corollary \ref{forapplic}  are computed from Lemma \ref{ricci-diferit}, 
by using  relations (\ref{dorf-deg}) and 
\begin{equation}\label{sus-jos}
B_{bi}^{j} = -\epsilon_{j-3} B_{bij},\ B_{aj}^{b} = \epsilon_{b} B_{ajb},\ B_{ia}^{c} =\epsilon_{c} B_{iac}.
\end{equation}
Since   $\epsilon_{1} = \epsilon_{2} = -\epsilon_{3}$ we obtain
\begin{align}
\nonumber& R_{41}^{\tilde{\delta}, +} =  \frac{1}{2} ( h^{2} -\nu^{2}) -\epsilon_{1}\lambda ( \delta_{2}+\delta_{3}) \\
\nonumber&R_{42}^{\tilde{\delta}, +} =
\frac{\nu}{2} (\lambda -\rho ) +\frac{h}{2} (\lambda +\rho ) +\epsilon_{1} \lambda \delta_{1} 
+\frac{\epsilon_{1}}{2} (h- \nu )\delta_{3}\\
\nonumber& R_{43}^{\tilde{\delta}, +} =
-\frac{h}{2} ( \lambda +\rho ) -\frac{\nu}{2}  (\lambda -\rho )
-\epsilon_{1} \lambda\delta_{1} +\frac{\epsilon_{1}}{2} (h- \nu ) \delta_{2}\\
\nonumber& R_{51}^{\tilde{\delta}, +} = \frac{\nu}{2} (\lambda -\rho  )  -\frac{h}{2} (\lambda +\rho ) 
-\epsilon_{1} \mu \delta_{2} -  \frac{\epsilon_{1}}{2} ( h+ 2\mu + \nu ) \delta_{3}\\
\nonumber& R_{52}^{\tilde{\delta}, +} = -\lambda^{2} - \rho^{2} +
\frac{1}{2} ( h^{2} +\nu^{2}) -\mu \nu +
\epsilon_{1}  ( \mu \delta_{1} - \rho \delta_{3})\\
\nonumber& R_{53}^{\tilde{\delta}, +} = \lambda^{2} +  \rho^{2} +  \mu \nu -  \frac{\epsilon_{1}}{2} ( h +  2\mu + \nu )\delta_{1} -
\epsilon_{1} \rho \delta_{2}\\
\nonumber& R_{61}^{\tilde{\delta}, +} =\frac{h}{2} (\lambda +\rho )  -\frac{\nu}{2}  (\lambda -\rho )
- \frac{\epsilon_{1}}{2} ( h -  2\mu + \nu ) \delta_{2} +\epsilon_{1} \mu \delta_{3}\\
\nonumber& R_{62}^{\tilde{\delta}, +} =\lambda^{2} +\rho^{2} +\mu \nu +\frac{\epsilon_{1}}{2} ( h -  2\mu + \nu ) \delta_{1} + \epsilon_{1} \rho \delta_{3}\\
\label{ricci-deg}& R_{63}^{\tilde{\delta}, +} = -\lambda^{2}  -\rho^{2} -\frac{1}{2} ( h^{2} +  \nu^{2}) 
- \mu \nu 
 +\epsilon_{1} (  \mu \delta_{1} +  \rho \delta_{2}).
\end{align}
Using  now $F_{13} = - F_{12}$,  $\epsilon_{1} =\epsilon_{2} =-\epsilon_{3}$ and  relations 
 (\ref{ricci-deg}),  
 it is easy to see that relations (\ref{rel-1}) from Corollary 
 \ref{forapplic} are equivalent to 
\begin{align}
\nonumber& h^{2} -\nu^{2}  =  2 \epsilon_{1}\lambda ( \delta_{2}+\delta_{3})\\
\nonumber&    h (\lambda +\rho ) 
+ \epsilon_{1}\mu (\delta_{2}+\delta_{3}) + \epsilon_{1} (  \lambda \delta_{1}  +  h \delta_{3} )
= -2 \delta_{0}  F_{12}\\
\nonumber& (h+\nu )(\delta_{2} +\delta_{3}) =0\\
\nonumber& (\lambda -\rho )\nu  -\epsilon_{1} \mu (\delta_{2}+\delta_{3}) +\epsilon_{1} ( \lambda \delta_{1} - \nu \delta_{3})  
= 2\epsilon_{1} F_{12} F_{23}\\
 \nonumber&    \lambda^{2} + \rho^{2}  +\mu \nu 
- \epsilon_{1}  \mu \delta_{1}  - \frac{\epsilon_{1} \rho}{2} (\delta_{2} -\delta_{3}) = - \epsilon_{1} (F_{12})^{2}\\
\nonumber& ( h -\nu ) (\delta_{2} +\delta_{3}) =0\\
\nonumber&  ( h+\nu ) \delta_{1} + \rho (\delta_{2}+ \delta_{3}) = 2\epsilon_{1} \delta_{0} F_{23}\\
\label{1-deg} & ( h^{2} +\nu^{2}) - \epsilon_{1} \rho (\delta_{2} +\delta_{3}) =- 2 \epsilon_{1} (F_{23})^{2}.
\end{align}
We now write the remaining relations 
(\ref{rel-2}), 
(\ref{rel-3}) and (\ref{rel-4}) from Corollary  \ref{forapplic}. For this, we  first notice that 
they imply 
\begin{equation}\label{indep-F}
\delta_{3} - \delta_{6} = - (\delta_{2} -\delta_{5}).
\end{equation}
In order to prove relation (\ref{indep-F})  we consider the first three relations (\ref{rel-4}). Using  the Dorfman coefficients given in (\ref{dorf-deg}), 
they become: 
\begin{align}
\nonumber& \lambda ( \delta_{2} + \delta_{3} -\delta_{5} -\delta_{6}) =0\\
\nonumber& \mu (\delta_{1} -\delta_{4}) -\rho ( \delta_{3} -\delta_{6}) =0\\
\label{three-F}& \mu ( \delta_{1} -\delta_{4}) +\rho (\delta_{2} -\delta_{5}) =0. 
\end{align}
The last two relations (\ref{three-F}) imply that
$$
\rho ( \delta_{2} + \delta_{3} -\delta_{5} -\delta_{6})  =0,
$$
and, since $\lambda +\rho \neq 0$, we obtain relation  (\ref{indep-F}), as required. 
Using 
$F_{13} = - F_{12}$,  $\epsilon_{1} =\epsilon_{2} =-\epsilon_{3}$ and  relations 
 (\ref{dorf-deg}),  (\ref{ricci-deg}) and (\ref{indep-F}), we obtain that   
relations 
(\ref{rel-2}), 
(\ref{rel-3}) and (\ref{rel-4}) from Corollary  \ref{forapplic} are equivalent to 
\begin{align}
\nonumber& \epsilon_{1} F_{12} (\delta_{2} +\delta_{3}) = F_{23} (\nu - h)\\
\nonumber& F_{12} ( h  -\epsilon_{1} \delta_{1}) = F_{23} (\rho + \epsilon_{1} \delta_{3} )\\
\nonumber& F_{23} (\delta_{2}+ \delta_{3}) =0\\ 
\nonumber& \delta_{3} -\delta_{6} = - ( \delta_{2} -\delta_{5})\\
\nonumber& F_{12} ( \delta_{1} -\delta_{4}) = F_{23} ( \delta_{2} -\delta_{5})\\
\nonumber&   (h+\nu) (\delta_{2} -\delta_{5}) - 2\mu (\delta_{2} + \delta_{3}) -2\lambda\delta_{4} =2\epsilon_{1}\delta_{0} F_{12}\\
\nonumber&  ( h- 2\mu  ) ( \delta_{1}-\delta_{4})   +\nu (\delta_{1} +\delta_{4}) + 2\rho ( \delta_{3} + \delta_{5}) 
= 2\epsilon_{1} \delta_{0} F_{23}\\
\nonumber& \mu ( \delta_{1} - \delta_{4}) + \rho ( \delta_{2} -\delta_{5}) =0\\
\label{2-deg}& \lambda (\delta_{1} -\delta_{4}) +\nu (\delta_{2} -\delta_{5}) =0.
\end{align}
We proved that $E_{-}$ is generalized Einstein with divergence $\delta$ if and only if relations
(\ref{1-deg}) and (\ref{2-deg}) hold.  (In the above argument we used nowhere our assumption that $F\neq 0$.
The generalized Einstein condition on $E_{-}$ is equivalent to relations (\ref{1-deg}) and (\ref{2-deg}) also when $F =0$, a fact which will be used in the proof of Proposition \ref{add-correct} below). 
Remark  that 
\begin{equation}\label{delta-23}
\delta_{2} +\delta_{3} =0,
\end{equation}
which follows from the 1st and 3rd relations (\ref{2-deg}), combined with $F\neq 0$ and $F_{13} = - F_{12}.$  
Using relation  (\ref{delta-23}), the 2nd relation (\ref{1-deg})  and the last relation (\ref{2-deg}), we can replace the 6th relation (\ref{2-deg}) with 
\begin{equation}
h (\lambda +\rho )  = \epsilon_{1} ( \lambda \delta_{4} + h \delta_{5}). 
\end{equation}
Similarly, comparing  the 7th relation (\ref{1-deg}) with  the 7th relation (\ref{2-deg}) and using 
the 8th relation (\ref{2-deg})  together with relation (\ref{delta-23}),  we can replace 
the 7th relation (\ref{2-deg}) with
\begin{equation}
( h -\nu )\delta_{4} =0.
\end{equation}
We obtain that relations (\ref{1-deg}) and (\ref{2-deg}) are equivalent to relation (\ref{delta-23}) together with 
\begin{align}
\nonumber& ( h-\nu )(h +\nu ) =0\\
\nonumber& h ( \lambda +\rho  -\epsilon_{1} \delta_{2})  + \epsilon_{1}\lambda \delta_{1}  = -2\delta_{0} F_{12}\\
\nonumber& \epsilon_{1} \nu  (\lambda -\rho ) + \lambda \delta_{1} + \nu \delta_{2} = 2F_{12} F_{23}\\
\nonumber&  \epsilon_{1} (\lambda^{2} +\rho^{2} +\mu \nu ) - \mu \delta_{1} - \rho \delta_{2} =- (F_{12})^{2}\\
\nonumber& \epsilon_{1} ( h+\nu) \delta_{1}  = 2\delta_{0} F_{23}\\
\nonumber&   h^{2} +\nu^{2}  = - 2\epsilon_{1} (F_{23})^{2}\\
\nonumber& F_{23} ( h -\nu ) =0\\
\nonumber& F_{12} ( h -\epsilon_{1} \delta_{1}) = F_{23} (\rho - \epsilon_{1} \delta_{2})\\
\nonumber&  \delta_{6} = -\delta_{5}\\
\nonumber& F_{12} ( \delta_{1} -\delta_{4}) = F_{23} ( \delta_{2} -\delta_{5})\\
\nonumber& \lambda \delta_{4} + h \delta_{5} = \epsilon_{1} h (\lambda +\rho )\\
\nonumber&  ( h-\nu )\delta_{4} =0\\
\nonumber&  \mu (\delta_{1} -\delta_{4}) +\rho (\delta_{2} -\delta_{5}) =0\\
\label{delta-23-last}& \lambda  (\delta_{1} -\delta_{4}) + \nu (\delta_{2} -\delta_{5}) =0.
\end{align}
We note for later purpose that the derivation of \eqref{delta-23-last} did not use that $F\neq 0$ but only $\delta_2+\delta_3=0$.
 (In the proof of Proposition 
\ref{add-correct} we will  actually show that $\delta_{2} +\delta_{3} =0$ also when $F=0$.)  
The 6th  and 7th relations (\ref{delta-23-last}) imply that 
$h = \nu.$ Straightforward computations lead to the first class of solutions listed in the theorem when $F_{23} =0$ and to the second class of solutions when $F_{23} \neq 0$. 
\end{proof}

The next proposition follows   from the classification of $3$-dimensional non-unimodular Lie algebras recalled  at the beginning 
of Section \ref{non-unimod-section}. The parameter $\lambda$ in Proposition \ref{identif-lie} 
refers to the notation explained after equation \eqref{canonical-forms-A}  and is not meant to be 
the parameter $\lambda$ in Theorem \ref{deg-thm}.

\begin{prop}\label{identif-lie}   i) There is an odd   generalized Einstein metric 
from the first family stated in Theorem \ref{deg-thm}  if and only if $\mathfrak{g}$ is isomorphic to 
$\tau_{2}(\mathbb{R}) \oplus \mathbb{R}$, $\tau_{3} (\mathbb{R})$ or $\tau_{3, \lambda }(\mathbb{R}).$ It can be chosen to be divergence-free or with non-zero divergence.\ 

ii)  There is an odd   generalized Einstein metric 
from the second  family stated in Theorem \ref{deg-thm}  if and only if $\mathfrak{g}$ is isomorphic to $\tau_{2}(\mathbb{R}) \oplus \mathbb{R}$,  
$\tau_{3} (\mathbb{R})$,  $\tau_{3, \lambda }(\mathbb{R})$  with $\lambda\neq 1$,   or $\tau_{3, \lambda}^{\prime} (\mathbb{R}).$ All such 
generalized  Einstein metrics have non-zero divergence. 
\end{prop}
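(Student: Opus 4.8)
The plan is to read off the isomorphism type of $\mathfrak{g}$ from the conjugacy-and-scaling class of the operator $\mathbf{A}$ in (\ref{form-A}), separately for each of the two families in Theorem~\ref{deg-thm}. Writing $M:=\begin{pmatrix}\lambda&\nu/2\\2\mu&\rho\end{pmatrix}$, so that $\mathbf{A}=-\epsilon_1 M$ is a nonzero scalar multiple of $M$, the type is fixed by the standard classification of $2\times2$ real matrices up to conjugation and nonzero scaling, hence by the trace $\mathrm{tr}\,M=\lambda+\rho$ (nonzero, by non-unimodularity), the determinant $\det M=\lambda\rho-\mu\nu$, and the discriminant $(\lambda-\rho)^2+4\mu\nu$. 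The dictionary against (\ref{canonical-forms-A}) is: $\det M=0$ gives eigenvalues $0,\lambda+\rho$ and $\mathfrak{g}\cong\tau_2(\mathbb{R})\oplus\mathbb{R}$; a negative discriminant gives complex eigenvalues and $\mathfrak{g}\cong\tau'_{3,\kappa}(\mathbb{R})$; a positive discriminant with $\det M\neq0$ gives distinct nonzero real eigenvalues of ratio $\kappa$ (normalized to $0<|\kappa|\le1$) and $\mathfrak{g}\cong\tau_{3,\kappa}(\mathbb{R})$; a vanishing discriminant with $\det M\neq0$ gives a repeated nonzero eigenvalue, yielding $\tau_{3,1}(\mathbb{R})$ when $M$ is scalar and the Jordan block $\tau_3(\mathbb{R})$ otherwise. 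The forbidden value $\kappa=-1$ never occurs, since it would force $\lambda+\rho=0$.

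For part~i) I would use that the first family imposes $\nu=0$, so $M=\begin{pmatrix}\lambda&0\\2\mu&\rho\end{pmatrix}$ is lower-triangular with real eigenvalues $\lambda,\rho$; a negative discriminant is impossible, ruling out $\tau'_{3,\kappa}(\mathbb{R})$. Examining the subcases $\rho=0$, $\lambda=\rho$ with $\mu\neq0$, $\lambda=\rho$ with $\mu=0$, and $\lambda\neq\rho$ both nonzero shows that exactly $\tau_2(\mathbb{R})\oplus\mathbb{R}$, $\tau_3(\mathbb{R})$ and $\tau_{3,\kappa}(\mathbb{R})$ (including the scalar case $\kappa=1$) arise. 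For the converse I would produce explicit data with $\epsilon_1=-1$: setting $\delta=0$ reduces the positivity constraint of the first family to $(F_{12})^2=\lambda^2+\rho^2>0$, giving a divergence-free metric for each of these three types; and taking $\delta_2\neq0$ (with $\delta_5=\delta_2$ when $\rho\neq0$ and $\delta_5$ free when $\rho=0$) while keeping $(F_{12})^2=\lambda^2+\rho^2+\rho\delta_2>0$ produces a metric with nonzero divergence.

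For part~ii) the crucial point is that the second family imposes $\nu\neq0$, so the off-diagonal entry $\nu/2$ of $M$ is nonzero and $M$ is never scalar; hence the scalar type $\tau_{3,1}(\mathbb{R})$ cannot occur, explaining the restriction $\lambda\neq1$ for $\tau_{3,\lambda}(\mathbb{R})$. I would then split according to $\det M=\lambda\rho-\mu\nu$. If $\det M=0$ one obtains $\tau_2(\mathbb{R})\oplus\mathbb{R}$; if $\det M\neq0$, the relations $(\lambda\rho-\mu\nu)(\delta_1+\nu)=(\lambda\rho-\mu\nu)(\delta_1-\delta_4)=0$ force $\delta_1=\delta_4=-\nu$ and (\ref{cond-ii}) then fixes all remaining $\delta_A$, while the discriminant sign distinguishes $\tau_{3,\kappa}(\mathbb{R})$ with $\kappa\neq1$ (positive), the Jordan block $\tau_3(\mathbb{R})$ (zero), and $\tau'_{3,\kappa}(\mathbb{R})$ (negative). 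Realizability follows from explicit choices: upper-triangular $M$ with $\mu=0$ for the three real cases (for instance $\lambda=1,\rho=0$; $\lambda=\rho=1$; and $\lambda=1,\rho=\kappa\neq1$, respectively), and $\lambda=\rho$ with $\mu\nu<0$ for the complex case. Finally the divergence is always nonzero: if $\det M\neq0$ then $\delta_1=-\nu\neq0$, while if $\det M=0$ then $\delta_1=0$ would force $\delta_2=-(\lambda+\rho)\neq0$ via $\delta_3=-\delta_2=\lambda+\rho+\tfrac{\lambda}{\nu}\delta_1$, so $\delta\neq0$ in every case.

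The main obstacle I anticipate is the combined bookkeeping in part~ii): one must simultaneously extract the conjugacy class of $M$ from $(\lambda,\mu,\nu,\rho)$ and track the divergence data that (\ref{cond-ii}) prescribes, establishing cleanly that the single hypothesis $\nu\neq0$ both excludes the scalar type $\tau_{3,1}(\mathbb{R})$ and forces nonzero divergence in every case. Matching the eigenvalue ratios with the normalization $0<|\kappa|\le1$, $\kappa\neq-1$ of (\ref{canonical-forms-A}) also requires care, although it is ultimately controlled by $\lambda+\rho\neq0$.
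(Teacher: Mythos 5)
Your proposal is correct and follows essentially the same route as the paper: it identifies $\mathfrak{g}$ from the characteristic polynomial of $\mathbf{A}'=-\epsilon_1\mathbf{A}$ (your $M$), splitting on the determinant $\lambda\rho-\mu\nu$ and the discriminant, realizes each type by explicit parameter choices, and reads off the non-vanishing of the divergence from (\ref{cond-ii}) via $\lambda+\rho\neq 0$. The only difference is cosmetic: the paper parametrizes realizability in family 2) by the discriminant $\Delta$ directly rather than by explicit triangular matrices, and it leaves the (easier) first family to the reader, which you spell out.
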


\begin{proof} Let us explain the argument for the second family.  As $\lambda +\rho \neq 0$ we immediately obtain
from (\ref{cond-ii})  that  any generalized metric $E_{-}$ from the second family has non-zero divergence. Let $\mathbf{A}$ be the matrix defined by
(\ref{form-A}),  $\mathbf{A}^{\prime}:= - \epsilon_{1} \mathbf{A}$ and 
$$
P_{\mathbf{A}^{\prime}} (x) = x^{2} - (\lambda +\rho ) x +\lambda \rho - \mu \nu 
$$
the characteristic polynomial of $\mathbf{A}^{\prime}.$ 
When  $\lambda \rho =\mu \nu$,  $P_{\mathbf{A}}^{\prime}$ has two real roots, one zero and the other non-zero. We obtain that
$\mathfrak{g}$ is isomorphic to $\tau_{2} (\mathbb{R}) \oplus \mathbb{R}.$  When $\lambda \rho \neq \mu \nu$,  the obstructions on the constants 
$\lambda$, $\nu$, $\mu$, $\rho$ are 
$$
\nu \neq 0,\ \lambda +\rho \neq 0,\ \lambda\rho \neq \mu \nu .
$$ Letting 
$\mu := \frac{1}{4\nu} ( \Delta -   (\lambda -\rho )^{2})$, we see that any $\Delta \in \mathbb{R}\setminus \{ (\lambda +\rho )^{2} \}$ 
can be realized as the discriminant of $P_{\mathbf{A}^{\prime}}.$ When $\Delta\in \mathbb{R}^{+} \setminus \{ 
(\lambda +\rho )^{2} \}$, the roots $x_{\pm} = \frac{1}{2} ( \lambda +\rho
\pm \sqrt{\Delta})$ 
of $P_{\mathbf{A}}^{\prime}$ 
are distinct, real, and non-zero. The corresponding Lie algebras $\mathfrak{g}$ cover  the  entire family   $\tau_{3, \lambda}(\mathbb{R})$ 
except $\tau_{3, 1}(\mathbb{R})$  (as $x_{+}\neq x_{-}$).  When $\Delta <0$, $x_{\pm}$ are non-real and  the corresponding Lie algebras $\mathfrak{g}$ cover 
 the family 
 $\tau_{3, \lambda}^{\prime} (\mathbb{R}).$ 
Finally, when $\Delta =0$, $x_{+} = x_{-} \in \mathbb{R}\setminus  \{  0\}$. As  $\mathbf{A}^{\prime}$ is non-diagonal (since  $\nu \neq 0$), we  obtain that
$\mathfrak{g} $ is isomorphic to $\tau_{3} (\mathbb{R}).$ 
\end{proof}

We end this section by describing the generalized Einstein metrics $ E_{-} = \{ X - g(X)\mid X\in TG\}$  on $E_{H}:=
E_{H, 0}$  with 
$g\vert_{\mathfrak{u}\times \mathfrak{u}}$-degenerate. This is done in the next proposition, which   corrects Proposition 3.16  of \cite{c-k-paper}.

\begin{prop}\label{add-correct} Consider the setting from the beginning of this section and  assume that $F=0.$ 

i) The generalized metric $E_{-} =\{ X - g(X)\mid X\in TG\}$
is generalized Einstein with divergence $\delta$ if and only if 
$h = \nu =0$,   $\lambda +\rho \neq 0$, $\delta_{3} = -\delta_{2}$, $\delta_{6} = -\delta_{5}$,  $\lambda \delta_{1} =\lambda \delta_{4} =0$ and
\begin{equation}\label{mu-delta-i}
\mu \delta_{1} +\rho \delta_{2} = \mu \delta_{4} +\rho \delta_{5} = \epsilon_{1} ( \lambda^{2} +\rho^{2}).
\end{equation}
ii) A generalized metric like in i) exists  if and only if $\mathfrak{g}$ is isomorphic to $\tau_{2}(\mathbb{R}) \oplus \mathbb{R}$, 
$\tau_{3} (\mathbb{R})$ or $\tau_{3, \lambda}(\mathbb{R}).$ Moreover, it has always  non-zero divergence.
\end{prop}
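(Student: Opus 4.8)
The plan is to exploit the observation, recorded in the proof of Theorem \ref{deg-thm}, that the generalized Einstein condition on $E_{-}$ is equivalent to the two systems (\ref{1-deg}) and (\ref{2-deg}) \emph{irrespective} of whether $F$ vanishes. Thus for part i) I would simply set $F_{12}=F_{13}=F_{23}=0$ in these systems and simplify. The first task is to prove that $\delta_{2}+\delta_{3}=0$, which in the $F\neq0$ case of Theorem \ref{deg-thm} had been obtained from $F\neq0$ and $F_{13}=-F_{12}$; here a different argument is needed. Writing $\sigma:=\delta_{2}+\delta_{3}$, the 3rd and 6th relations of (\ref{1-deg}) give $(h+\nu)\sigma=0$ and $(h-\nu)\sigma=0$, hence $h\sigma=\nu\sigma=0$; if $\sigma\neq0$ then $h=\nu=0$, and the 1st and 8th relations of (\ref{1-deg}) force $\lambda\sigma=\rho\sigma=0$, so $\lambda=\rho=0$, contradicting the standing hypothesis $\lambda+\rho\neq0$. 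Therefore $\sigma=0$, and the 8th relation of (\ref{1-deg}) then reads $h^{2}+\nu^{2}=0$, forcing $h=\nu=0$.

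Next I would substitute $h=\nu=0$ and $\delta_{3}=-\delta_{2}$ back into both systems. All $F$-dependent relations become trivial, the 2nd (equivalently 4th) relation of (\ref{1-deg}) yields $\lambda\delta_{1}=0$, and the 5th relation collapses to $\mu\delta_{1}+\rho\delta_{2}=\epsilon_{1}(\lambda^{2}+\rho^{2})$. From (\ref{2-deg}) the 4th relation gives $\delta_{6}=-\delta_{5}$, the 6th gives $\lambda\delta_{4}=0$, and the 8th gives $\mu(\delta_{1}-\delta_{4})+\rho(\delta_{2}-\delta_{5})=0$, which combined with the previous identity produces $\mu\delta_{4}+\rho\delta_{5}=\epsilon_{1}(\lambda^{2}+\rho^{2})$; the remaining relations are then automatically satisfied. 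This is exactly the list in i), and the converse is a direct substitution check. The only real bookkeeping obstacle is tracking which of the nine equations in each block survive and confirming that none of them imposes a constraint beyond those listed.

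For part ii) the decisive point is that the Einstein condition forces $\nu=0$, so the matrix $\mathbf{A}$ in (\ref{form-A}) is lower triangular with real eigenvalues $\lambda,\rho$; consequently $\mathfrak{g}$ can never be $\tau_{3,\lambda}^{\prime}(\mathbb{R})$, whose canonical form in (\ref{canonical-forms-A}) has non-real eigenvalues. For the remaining three types I would exhibit explicit parameters together with a solution of the $\delta$-system from i): choosing $\lambda=0$, $\rho\neq0$ gives distinct real eigenvalues $\{0,\rho\}$, hence $\tau_{2}(\mathbb{R})\oplus\mathbb{R}$, and one solves $\mu\delta_{1}+\rho\delta_{2}=\mu\delta_{4}+\rho\delta_{5}=\epsilon_{1}\rho^{2}$ for $\delta_{2},\delta_{5}$; choosing $\lambda,\rho\neq0$ distinct (with $\lambda\neq-\rho$, excluded precisely by $\lambda+\rho\neq0$) gives $\tau_{3,\lambda}(\mathbb{R})$, including $\tau_{3,1}(\mathbb{R})$ for $\lambda=\rho$, $\mu=0$, and here $\delta_{1}=\delta_{4}=0$ with $\rho\neq0$ lets one solve $\rho\delta_{2}=\rho\delta_{5}=\epsilon_{1}(\lambda^{2}+\rho^{2})$; and choosing $\lambda=\rho\neq0$, $\mu\neq0$ gives a Jordan block, i.e.\ $\tau_{3}(\mathbb{R})$. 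Finally, the divergence is always non-zero: if $\delta=0$ then $\mu\delta_{1}+\rho\delta_{2}=0=\epsilon_{1}(\lambda^{2}+\rho^{2})$ forces $\lambda=\rho=0$, once more contradicting $\lambda+\rho\neq0$. I expect the principal subtlety to be matching these realizations with the canonical forms (\ref{canonical-forms-A}) while simultaneously guaranteeing solvability of the $\delta$-system (in particular avoiding the non-solvable configuration $\lambda\neq0$, $\rho=0$), but this runs parallel to the analysis in Proposition \ref{identif-lie}.
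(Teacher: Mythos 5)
Your proposal is correct and follows essentially the same route as the paper: reduce to the systems (\ref{1-deg}) and (\ref{2-deg}) with $F=0$ (as recorded in the proof of Theorem \ref{deg-thm}), derive $\delta_2+\delta_3=0$ by the same contradiction argument (you use the 1st and 8th relations of (\ref{1-deg}) where the paper uses the 1st and 7th, but both give $\lambda=\rho=0$), and then read off the listed conditions; your part ii) is the same eigenvalue analysis of $\mathbf{A}$ that the paper leaves implicit, correctly noting that the configuration $\lambda\neq 0$, $\rho=0$ is not solvable but does not affect the list of realizable Lie algebras.
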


\begin{proof}   i) As stated in the proof of   Theorem \ref{deg-thm},   $E_{-}$ is generalized Einstein with divergence  $\delta$ if and only if 
 relations (\ref{1-deg}) and (\ref{2-deg}) hold with $F =0.$  We now prove that
 \begin{equation}\label{delta-deg-0}
 \delta_{2} +\delta_{3} =0.
 \end{equation}
If, by contradiction, $\delta_{2} +\delta_{3} \neq 0$, then the 3rd and 6th relations (\ref{1-deg}) imply that $h =\nu =0$. 
But then the   1st and 7th  relations 
(\ref{1-deg})  imply  that $\lambda = \rho =0$, which contradicts $\lambda +\rho \neq 0.$  Relation (\ref{delta-deg-0}) follows. We obtain that  $E_{-}$ is generalized Einstein with divergence $\delta$ if and only if  relations (\ref{delta-deg-0}) and 
(\ref{delta-23-last})  hold (the latter with $F =0$). We arrive at the family of solutions listed in the proposition. 

ii) Relations (\ref{mu-delta-i}) imply that $\delta \neq 0$ (otherwise $\lambda +\rho =0$). 
The identification of the Lie algebra $\mathfrak{g}$ follows from the classification of
$3$-dimensional non-unimodular Lie algebras recalled at the beginning of 
Section \ref{non-unimod-section}.
\end{proof}

\subsection{The case $g\vert_{\mathfrak{u}\times \mathfrak{u}}$ non-degenerate}\label{nondeg-sect}

Consider the setting from the beginning of Section \ref{non-unimod-section}
and assume that  $g\vert_{\mathfrak{u}\times \mathfrak{u}}$ is non-degenerate. Let 
$(v_{a})$ be a $g$-orthonormal  basis of $\mathfrak{g}$ such that   $v_{1}, v_{3} \in \mathfrak{u}$. 
The Lie bracket of $\mathfrak{g}$  is given by
\begin{align}
\nonumber& \mathcal L_{v_{1}} v_{3}  =0\\
\nonumber& \mathcal L_{v_{2}} v_{1}  = \epsilon_{1} \lambda v_{1} + \epsilon_{3} \mu v_{3}\\
\label{lie-non-deg}& \mathcal L_{v_{2}} v_{3}   = \epsilon_{1} \nu v_{1} + \epsilon_{3} \rho v_{3},
\end{align}
where $\epsilon_{i} := g (v_{i}, v_{i})$,  $\lambda , \mu , \nu , \rho \in \mathbb{R}$ and  
$\mathrm{tr}\, \mathrm{ad}_{v_{2}} = 
\epsilon_{1} \lambda + \epsilon_{3}
\rho \neq 0.$ 
The Lie algebra $\mathfrak{g}$ is the semi-direct product $\mathbb{R}\ltimes_{\mathbf{A}} \mathbb{R}^{2}$ where 
\begin{equation}\label{form-A-deg}
\mathbf{A} = \left( \begin{tabular}{cc}
$\epsilon_{1} \lambda$ & $ \epsilon_{1} \nu$\\
$\epsilon_{3}\mu$ & $\epsilon_{3} \rho$
\end{tabular}\right) .
\end{equation}
From  (\ref{lie-non-deg}) 
we obtain that a  left-invariant  $2$-form $F\in \Omega^{2}(G)$  is closed  if and only if 
\begin{equation}\label{closed-non-deg}
F_{13}=0.
\end{equation}
In particular, the twisting $2$-form $F$ of  $E_{H, F}$ satisfies (\ref{closed-non-deg}).

Case 7) of  Theorem \ref{non-deg-thm} below  will be treated in the next section.

\begin{thm}\label{non-deg-thm}  
There are seven classes of odd  generalized Einstein metrics 
$E_{-} = \{ X - g(X)\mid  X\in TG\}$  
with divergence $\delta$ defined on $E_{H, F}$. In 
the basis $(v_{a})$ above (with $v_{1}$ and $v_{3}$ interchanged, if necessary), 
they are described as follows  (below $\epsilon , \tilde{\epsilon} \in \{ \pm 1\}$):

1) $\lambda\neq 0$,  $ h = \mu$,
$F_{23} =F_{13} =0$,  $\delta_{0} F_{12} = \epsilon_{3} \mu\delta_{3} $,
$\delta_{1} = \delta_{4} =0$, $\delta_{2} =\delta_{5} = -\epsilon_{1} \lambda$
and one of the following conditions hold:

a) $\rho = \nu =0$ and $(F_{12})^{2} = - (\epsilon_{1} \lambda^{2} +\epsilon_{3} \mu^{2}) >0$;\

b) $\mu =0$, $\rho = - 2\lambda$, $\nu = 2\epsilon \lambda$, $\epsilon_{1} =-1$, $\epsilon_{3} =1$, 
$F_{12}= \tilde{\epsilon}\lambda\sqrt{3}$,  
$\delta_0=\delta_{3} =\delta_{6} =0$;\

2) $\lambda =0$,  $\mu \neq 0$, $\nu = - 2\mu$, $\rho =\tilde{\epsilon}\mu $, 
$\epsilon_{1} =-1$, 
$\epsilon_{3} =1$,  $h = \mu$,
 $F_{13} =0$, $F_{23} =\epsilon\rho$, $F_{12} = \epsilon\mu$,
$\delta_{0} =\delta_{1} = \delta_{3} =\delta_{4} =\delta_{6} =0$, $\delta_{2} = \delta_{5} = -\rho$;\

3) $\lambda \nu (\epsilon_{1}\lambda +\epsilon_{3}\rho )  \neq 0$, 
$\epsilon_{1} \lambda^{2} +\epsilon_{3} \mu^{2}<0$, $\lambda \rho = \mu \nu$, 
$ h  = \mu - \nu$, 
$F_{23} =- \frac{\nu}{\lambda} F_{12}$, $(F_{12})^{2} = - (\epsilon_{1} \lambda^{2} +\epsilon_{3} \mu^{2})$, $F_{13} =0$, $\delta_{0} F_{12} = \epsilon_{1} \frac{\lambda}{\nu} \delta_{1} ( \nu - \mu )$, $\delta_{2} = \delta_{5} = - ( \epsilon_{1} \lambda + \epsilon_{3} \rho )$, 
$\delta_{3} = -\epsilon_{1} \epsilon_{3} \frac{\lambda}{\nu } \delta_{1}$,
$\delta_{6} = -\epsilon_{1}\epsilon_{3} \frac{\lambda}{\nu} \delta_{4}$, 
and one of the following conditions holds:\

a) $\delta_{1} =\delta_{4}\neq 0$;\

b) $\delta_{0} = \delta_{1} =\delta_{4}=0$ and either   $(\rho = \mu =0, \epsilon_{1} =-1)$
or $(\mu - \nu ) \rho \neq 0$;\

c) $\delta_{1} \neq \delta_{4}$ and  either  $\delta_{1} \neq 0$ or  $\delta_{0} =\delta_{1} =0$;\

4) $\lambda = \nu =0$, $\rho \mu \neq 0$, 
$\epsilon_{3} =-1$,
$ h = \mu$,  $F_{12} = \epsilon \mu$, 
$F_{23} = -\epsilon \rho$, 
$F_{13} =0$, $\delta_{2} = 
\delta_{5} = \rho$,
$\delta_{3} = -\epsilon \delta_{0}$,  
$\delta_{3} =  \epsilon_{1}\frac{ \mu }{\rho  }  \delta_{1}$, $\delta_{6} =  \epsilon_{1} \frac{\mu}{ \rho } \delta_{4}$ 
and one of the following conditions hold:\

a)  either 
$\delta_{0} = 0$ or $\delta_{0} (\delta_{1} -\delta_{4}) \neq 0$;\

b) $\delta_{4} = \delta_{1}$ and $\delta_{1} \delta_{3}\neq 0$;\

5) $\lambda = \frac{\epsilon}{3} (2\mu + \nu )$, $\rho = \frac{\epsilon}{3} ( 2\nu +\mu )$, 
$\epsilon_{1} ( \mu^{2}-  \nu^{2})>0$, $\mu + 2\nu \neq 0$, $ \nu + 2\mu \neq 0$, $\epsilon_{3} = -\epsilon_{1}$, 
$ h = \frac{1}{3} ( \mu - \nu )$, 
$F_{13} =0$, 
$F_{23} = -\epsilon F_{12}$, $(F_{12})^{2} = \frac{\epsilon_{1} }{3} ( \mu^{2} - \nu^{2})$, $\delta_{0} = \delta_{1}
=\delta_{3} =\delta_{4} =\delta_{6} =0$, $\delta_{2} =\delta_{5} =\frac{\epsilon \epsilon_{1}}{3} (\nu - \mu )$;\

6) $ \rho = -2\lambda \neq 0$, $\mu = 2\epsilon \lambda$,  $\nu =0$,  
$\epsilon_{1} =1$, $\epsilon_{3} =-1$, $h=0$, $F_{12} = \tilde{\epsilon} \lambda$, 
$F_{23} = 2 \epsilon \tilde{\epsilon} \lambda$, $F_{13}=0$, $\delta_{0} = \delta_{1} =\delta_{3}=\delta_{4} =\delta_{6}=0$,
$\delta_{2} =\delta_{5} = -\lambda$;\

7)  all constants 
 $\mu$, $\nu$, 
$F_{12}$, $F_{23}$ and $\epsilon_{1}\lambda +\epsilon_{3}\rho$
are non-zero, 
\begin{equation}
F_{13} = h =\delta_{0}  = \delta_{1} =\delta_{3}= \delta_{4} =\delta_{6}=0,\quad  \delta_2=\delta_5
\end{equation}
and relations (\ref{syst1})  with $h=0$ hold. 
\end{thm}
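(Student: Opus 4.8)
The plan is to follow the scheme already used for Theorems \ref{classif-diag}--\ref{classif-l5} and \ref{deg-thm}: reduce the generalized Einstein condition, via Corollary \ref{forapplic}, to a finite system of polynomial equations in the structure constants $\lambda,\mu,\nu,\rho$, the $H$-coefficient $h$, the $F$-coefficients $F_{12},F_{23}$ (recall $F_{13}=0$ by the closedness constraint (\ref{closed-non-deg})), and the divergence components $\delta_0,\dots,\delta_6$, and then solve that system by case analysis. First I would read off the Lie-algebra coefficients $k_{abc}=g(\mathcal{L}_{v_a}v_b,v_c)$ from the brackets (\ref{lie-non-deg}) and feed them into Lemma \ref{1} to obtain the Dorfman coefficients $B_{ABC}$ of $E_{H,F}$ in the basis $(e_A)$. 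As in Remark \ref{rem-dorfman} and Remark \ref{compare}, these may alternatively be imported from the corresponding proposition of \cite{c-k-paper} for the non-degenerate non-unimodular case, after replacing $h$ by $-h$.

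Next I would compute the components $R^{\tilde{\delta},+}_{ia}$ of the Ricci tensor on $E_H$ through Lemma \ref{ricci-diferit} (here $dH=0$ holds automatically in dimension three), again importing the expressions from \cite{c-k-paper} with $H\mapsto-H$. Substituting these Dorfman and Ricci expressions into the four groups of relations (\ref{rel-1})--(\ref{rel-4}) of Corollary \ref{forapplic} yields the full system, which I would record as a single labeled display; this is the system (\ref{syst1}) referred to in family 7). Before branching, I would extract the constraints common to all solutions: the relations (\ref{rel-4}), being linear in the $\delta$'s with coefficients built from $\lambda,\mu,\nu,\rho$, together with $\epsilon_1\lambda+\epsilon_3\rho\neq 0$, should force identities relating $\delta_a$ and $\delta_{a+3}$ (in particular $\delta_2=\delta_5$ and sign-coupled relations for $\delta_3,\delta_6$), exactly as the identity $\delta_2+\delta_3=0$ was extracted in the proof of Theorem \ref{deg-thm}; and combining the relations (\ref{rel-1}) with the explicit Ricci expressions should pin down $h$ in terms of $\mu,\nu$.

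With these preliminary reductions in hand, the bulk of the work is the case analysis. I would first split on $\lambda=0$ versus $\lambda\neq 0$ (after allowing the interchange $v_1\leftrightarrow v_3$ noted in the statement), and within each branch split further according to whether $F_{23}=0$ or $F_{23}\neq 0$, whether $\nu=0$, whether $\delta_0=0$, and whether $\delta_1=\delta_4$; the hypothesis $F\neq 0$ of Assumption \ref{ass-unimod} is invoked repeatedly to discard degenerate branches. Each surviving branch should collapse to one of the families 1)--7), the signs $\epsilon_1,\epsilon_3$ being determined along the way (several families force $\epsilon_3=-\epsilon_1$). The branch in which all of $\mu,\nu,F_{12},F_{23}$ are nonzero is precisely family 7), whose detailed solution of (\ref{syst1}) is deferred to the next section.

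The hard part will be bookkeeping the case tree so that it is both exhaustive and free of overlaps: with four structure constants, two form coefficients, seven divergence components and two signs, a large number of subcases appear, and it is easy either to miss a solution or to record one twice. The delicate points are checking that within each branch the as-yet-unused equations of (\ref{rel-2})--(\ref{rel-3}) are indeed satisfied, and that the stated positivity conditions such as $(F_{12})^2>0$ are genuinely forced, so that each listed family is nonempty rather than vacuous.
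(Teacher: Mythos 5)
Your proposal follows essentially the same route as the paper: Corollary \ref{forapplic} applied with the Dorfman and Ricci coefficients imported from \cite{c-k-paper} (with $h\mapsto -h$), reduction to a polynomial system in $\lambda,\mu,\nu,\rho,h,F_{12},F_{23},\delta_A$, and an exhaustive case analysis, with the $\mu\nu F_{12}F_{23}\neq 0$ branch deferred to the next section. The only real difference is organizational: the paper's case tree branches first on the vanishing pattern of $(F_{12},F_{23})$ and then on whether $\delta_1=\delta_4$ and $\delta_1\delta_3=0$ (exploiting that relations (\ref{5-nondeg}) force $\delta_1,\delta_3$ to vanish simultaneously, and likewise $\delta_1-\delta_4$, $\delta_3-\delta_6$), rather than on $\lambda$; also note that $h$ is not pinned down uniformly before branching, as it takes different values ($\mu$, $-\nu$, $\mu-\nu$, $\tfrac13(\mu-\nu)$, $0$) in different families.
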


We  divide the proof of the above theorem  into several lemmas. 
To keep the text reasonably short, we only present the main steps of the computations. The remaining parts of the argument follow  easily.

\begin{lem} The metric $E_{-}$ is generalized Einstein with divergence $\delta$ if and only if
relations
\begin{align}
\nonumber&   \frac{ \epsilon_{3}}{2} ( \nu^{2} - h^{2} - \mu^{2}) + \delta_{2} \lambda  =  (F_{12})^{2}\\
\nonumber& \frac{\epsilon_{3} \delta_{3}}{2} ( h +\mu -\nu ) =\delta_{0} F_{12} \\
\nonumber&  \frac{\epsilon_{1}}{2} \lambda ( h - \mu  + \nu )+ \frac{\epsilon_{3}}{2} \rho ( h +\mu - \nu )  
+ \frac{\delta_{2}}{2} ( h - \mu - \nu ) =  F_{12}  F_{23}\\
\nonumber& \epsilon_{1} \delta_{1} \lambda +\frac{\epsilon_{3}\delta_{3}}{2} ( h +\mu +\nu ) = \delta_{0} F_{12} \\
\nonumber& -\lambda^{2} -\frac{\epsilon_{1} \epsilon_{3} }{2 } h^{2} -\frac{\epsilon_{1} \epsilon_{3}}{2} ( \mu +\nu )^{2} -\rho^{2} =
\epsilon_{1} ( F_{12})^{2} +\epsilon_{3} ( F_{23})^{2}\\
\nonumber& \frac{\epsilon_{1} \delta_{1}}{2} ( h -\mu - \nu ) -\epsilon_{3} \delta_{3} \rho = \delta_{0} F_{23} \\
\nonumber& \frac{\epsilon_{1} }{2} \lambda ( h +\mu - \nu ) +\frac{\epsilon_{3}}{2} \rho ( h -\mu +\nu ) 
+\frac{ \delta_{2}}{2} ( h+\mu +\nu ) = -  F_{12} F_{23}\\
\nonumber& \frac{\epsilon_{1}\delta_{1}}{2} ( h +\mu - \nu) = \delta_{0} F_{23} \\
\label{3-nondeg}& \frac{\epsilon_{1}}{2} ( \mu^{2} - h^{2} - \nu^{2}) + \delta_{2} \rho = (F_{23})^{2}
\end{align}
together with 
\begin{equation}\label{4-nondeg}
\delta_{2} = \delta_{5}
\end{equation}
and
\begin{align}
\nonumber& F_{12} \epsilon_{3} (\delta_{2} +\epsilon_{1} \lambda )  = F_{23} ( h+\nu ) \\
\nonumber& F_{12} \epsilon_{3} \delta_{1} =  F_{23} \epsilon_{1} \delta_{3} \\
\nonumber& F_{12} ( \mu - h)  =  F_{23} \epsilon_{1} (\epsilon_{3} \rho +\delta_{2} ) \\
\nonumber& F_{12} \epsilon_{1} ( \delta_{1} -\delta_{4}) =   F_{23} \epsilon_{3} ( \delta_{3} -\delta_{6})\\
\nonumber& \frac{\epsilon_{3}}{2} ( h +\mu -\nu ) \delta_{3} +\epsilon_{1}\lambda \delta_{4} +\frac{\epsilon_{3}}{2} ( - h +\mu +\nu )\delta_{6}
=\delta_{0} F_{12}\\
\nonumber& \frac{\epsilon_{1}}{2} ( h -\mu -\nu ) \delta_{1} -\epsilon_{3} \rho \delta_{3} -\frac{\epsilon_{1}}{2} ( h -\mu +\nu ) \delta_{4} =\delta_{0}
F_{23}\\
\nonumber& \epsilon_{1} \lambda (\delta_{1} -\delta_{4}) +\epsilon_{3} \nu (\delta_{3} -\delta_{6}) =0\\
\label{5-nondeg} & \epsilon_{1} \mu (\delta_{1} -\delta_{4}) +\epsilon_{3} \rho (\delta_{3} -\delta_{6}) =0
\end{align}
hold.  
\end{lem}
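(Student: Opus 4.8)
The plan is to follow verbatim the computational template already used for the unimodular cases (Theorems \ref{classif-diag}--\ref{classif-l5}) and for the degenerate non-unimodular case (Theorem \ref{deg-thm}): specialize the general formulas of the previous section to the bracket (\ref{lie-non-deg}) and then read off the Einstein condition from Corollary \ref{forapplic}. First I would record the Lie algebra coefficients $k_{abc}=g(\mathcal{L}_{v_a}v_b,v_c)$ determined by (\ref{lie-non-deg}). Since $v_1,v_3\in\mathfrak{u}$ and $\mathfrak{u}$ is abelian, the only nonvanishing ones are produced by brackets with $v_2$, namely $k_{211}=\lambda$, $k_{213}=\mu$, $k_{231}=\nu$, $k_{233}=\rho$, together with their antisymmetrizations $k_{12c}=-k_{21c}$ and $k_{32c}=-k_{23c}$.

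Next I would feed these $k_{abc}$, the single nonzero component $H_{123}=h$ of $H$, and the components $F_{ab}$ of $F$ into Lemma \ref{1} to obtain all Dorfman coefficients $B_{ABC}$ ($1\le A,B,C\le 6$) in the basis $(e_A)$. Here I would use that, as a twisting form of $E_{H,F}$, the $2$-form $F$ is closed, so by (\ref{closed-non-deg}) one may set $F_{13}=0$ throughout. With the $B_{ABC}$ available, the components $R^{\tilde\delta,+}_{ia}$ of the Ricci tensor of $(E_-,\tilde\delta)$ on $E_H$ follow from Lemma \ref{ricci-diferit} (equivalently Proposition \ref{ricci-expr}), raising indices exactly as in (\ref{sus-jos}) via $B_{bi}^{j}=-\epsilon_{j-3}B_{bij}$, $B_{aj}^{b}=\epsilon_b B_{ajb}$ and $B_{ia}^{c}=\epsilon_c B_{iac}$.

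The final step is to substitute all of this into the four families of relations of Corollary \ref{forapplic}. The nine relations (\ref{rel-1}), which equate each $R^{\tilde\delta,+}_{ia}$ with a quadratic expression in $F$, become (after setting $F_{13}=0$ and multiplying through by the appropriate signs $\epsilon_c$) precisely the system (\ref{3-nondeg}). The relations (\ref{rel-2}) yield the single condition recorded as (\ref{4-nondeg}), namely $\delta_2=\delta_5$, together with the first four linear conditions of (\ref{5-nondeg}); the relations (\ref{rel-3}) and (\ref{rel-4}) supply the remaining four equations of (\ref{5-nondeg}), the first three of (\ref{rel-4}) simplifying as in the diagonal case because $B_{41A}=B_{52A}=B_{63A}=0$ (a consequence of $[v_a-g(v_a),v_a+g(v_a)]_{H,F}=0$). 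I would indicate explicitly which of the $9+6+3+6$ abstract relations collapse or are automatically satisfied, so that exactly the $9+1+8$ relations of the lemma survive.

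I expect the only real difficulty to be organizational rather than conceptual. Unlike the non-diagonal unimodular situation, here the signs $\epsilon_1,\epsilon_2,\epsilon_3$ are unconstrained, so the hard part will be the careful bookkeeping of these factors and the correct pairing of the many components of $F$ and $\delta$ appearing in (\ref{rel-2})--(\ref{rel-4}) with the eight relations of (\ref{4-nondeg})--(\ref{5-nondeg}), discarding the several duplicates. Once the substitution is carried out consistently, the statement follows directly from Corollary \ref{forapplic}.
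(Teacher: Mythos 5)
Your plan is essentially the paper's own proof: the paper likewise produces the Dorfman coefficients in the basis $(e_A)$ (it imports them from Proposition~3.5 of \cite{c-k-paper} with $h\mapsto -h$ rather than recomputing them from Lemma~\ref{1}, which amounts to the same thing), obtains the components $R^{\tilde\delta,+}_{ia}$ from Lemma~\ref{ricci-diferit} together with (\ref{sus-jos}), and then reads off (\ref{3-nondeg}), (\ref{4-nondeg}) and (\ref{5-nondeg}) from Corollary~\ref{forapplic}, with (\ref{rel-1}) giving (\ref{3-nondeg}) and (\ref{rel-2})--(\ref{rel-4}) giving the rest.

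One detail of your bookkeeping is wrong, though. The identity $B_{41A}=B_{52A}=B_{63A}=0$, which you borrow from the diagonal unimodular case to dismiss the first three relations (\ref{rel-4}), fails here: from (\ref{lie-non-deg}) one has $k_{211}=\lambda$ and $k_{233}=\rho$, hence $B_{412}=B_{415}=\lambda$ and $B_{632}=B_{635}=\rho$, which are generically non-zero (equivalently, $[v_a-g(v_a),v_a+g(v_a)]_{H,F}=2\,i_{v_a}d(g(v_a))\neq 0$ for $a=1,3$ when $\lambda\rho\neq 0$; only $B_{52A}=0$ survives). The first and third relations of (\ref{rel-4}) instead reduce to $\epsilon_2\lambda(\delta_2-\delta_5)=0$ and $\epsilon_2\rho(\delta_2-\delta_5)=0$, so they are indeed redundant, but only modulo (\ref{4-nondeg}) --- which in turn is extracted from the last three relations of (\ref{rel-2}) using $F_{13}=0$ and $F\neq 0$ (Assumption~\ref{ass-unimod}). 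With that correction the substitution goes through exactly as you describe and yields the stated equivalence.
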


\begin{proof}  From Remark \ref{compare}, we obtain the Dorfman coefficients $B_{ABC}$ ($1\leq A, B, C\leq 6$) 
in the basis $(e_{A})_{0\leq A\leq 6}$ of $\tilde{E} = \mathfrak{g}\oplus \mathfrak{g}^{*}\oplus \mathbb{R}$  determined by the basis
$(v_{a})$  of $\mathfrak{g}$ by replacing $h\mapsto - h$ in the Dorfman coefficients computed in the proof of Proposition 3.5 of \cite{c-k-paper}:
\begin{align}
\nonumber&  B_{145} = -\lambda ,\ B_{156} =\frac{1}{2} ( - h +\mu + \nu),\ B_{246} =\frac{1}{2} ( h - \mu +\nu )\\
\nonumber& B_{345} =-\frac{1}{2} ( h +\mu +\nu ),\ B_{356} = \rho ,\ B_{412} =\lambda \\
\nonumber& B_{423} =-\frac{1}{2} ( h+\mu +\nu ),\ B_{513} = \frac{1}{2} ( h+\mu -\nu )\\ 
\nonumber&  B_{612} =\frac{1}{2} ( - h +\mu +\nu ),\ B_{623} = -\rho\\
\label{1-nondeg} & B_{146} = B_{245} = B_{256} = B_{346} = B_{413} = B_{512} =  B_{523} =B_{613} =0.
\end{align}
The components 
$R_{ia}^{\tilde{\delta}, +}$ 
of the Ricci tensor $\mathrm{Ric}^{\tilde{\delta}, +}$  
from Corollary \ref{forapplic}  are computed from Lemma \ref{ricci-diferit}, 
by using relations  (\ref{sus-jos}) and  (\ref{1-nondeg}):
\begin{align}
\nonumber& R_{41}^{\tilde{\delta}, +} = \frac{\epsilon_{2} \epsilon_{3}}{2} (\nu^{2} - h^{2} -\mu^{2}) +\epsilon_{2} \delta_{2} \lambda \\
\nonumber& R_{42}^{\tilde{\delta}, +} = - \epsilon_{1} \delta_{1} \lambda -\frac{\epsilon_{3} \delta_{3}}{2} ( h +\mu + \nu )\\
\nonumber& R_{43}^{\tilde{\delta}, +} = \frac{\epsilon_{1} \epsilon_{2}}{2} \lambda ( h +\mu - \nu ) +\frac{\epsilon_{2} \epsilon_{3}}{2} \rho
(h -\mu +\nu)+\frac{\epsilon_{2}\delta_{2}}{2} ( h +\mu +\nu )\\
\nonumber& R_{51}^{\tilde{\delta}, +} = \frac{\epsilon_{3} \delta_{3}}{2} ( h +\mu - \nu )\\
\nonumber& R_{52}^{\tilde{\delta}, + }= -\lambda^{2} - \rho^{2} -\frac{\epsilon_{1}\epsilon_{3}}{2} ( ( \mu +\nu )^{2} + h^{2} )\\
\nonumber & R_{53}^{\tilde{\delta}, +} =-  \frac{\epsilon_{1} \delta_{1}}{2} ( h + \mu -\nu )\\
\nonumber& R_{61}^{\tilde{\delta}, + }= - \frac{\epsilon_{1} \epsilon_{2}}{2} \lambda ( h  - \mu + \nu )  - \frac{\epsilon_{2} \epsilon_{3}}{2} 
\rho ( h +  \mu - \nu )+\frac{\epsilon_{2} \delta_{2}}{2} ( - h +\mu +\nu )\\
\nonumber& R_{62}^{\tilde{\delta}, +} = \frac{\epsilon_{1} \delta_{1}}{2} ( h -\mu -\nu ) -\epsilon_{3} \delta_{3}\rho\\
\label{2-nondeg}& R_{63}^{\tilde{\delta}, +} = \frac{\epsilon_{1} \epsilon_{2}}{2} ( \mu^{2} -h^{2} -\nu^{2}) +\epsilon_{2} \delta_{2} \rho
\end{align}
(when $\epsilon_{1} = \epsilon_{2} = -\epsilon_{3}$, they coincide with the Ricci coefficients from the proof of Proposition 3.15 of
\cite{c-k-paper}, with $h$ replaced by $-h$).  
Our claim follows from Corollary \ref{forapplic}.  More precisely, relations (\ref{3-nondeg}) are equivalent to
relations (\ref{rel-1}), while  relations (\ref{4-nondeg}) and (\ref{5-nondeg}) are equivalent to relations (\ref{rel-2}), (\ref{rel-3}) and (\ref{rel-4}).
\end{proof}

\begin{rem}{\rm  i)   Combining  the 2nd with the 4th relation (\ref{3-nondeg}), the 3rd with the 7th relation (\ref{3-nondeg}) and the 6th with the 8th relation (\ref{3-nondeg}),  we will often replace  in the arguments below the 4th, 6th and 7th relations (\ref{3-nondeg}) with 
\begin{align}
\nonumber& \epsilon_{1} \lambda \delta_{1} +\epsilon_{3} \nu \delta_{3} =0\\
\nonumber& \epsilon_{1} \mu \delta_{1} +\epsilon_{3} \rho \delta_{3} =0\\
\label{add} & h ( \delta_{2} + \epsilon_{1} \lambda + \epsilon_{3} \rho ) =0.
\end{align}
Similarly, using the 1st and last relations (\ref{3-nondeg}), we will often  replace the 5th relation (\ref{3-nondeg}) by
\begin{equation}\label{add-after}
\delta_{2} (\epsilon_{1} \lambda +\epsilon_{3} \rho ) = -\lambda^{2} -\frac{\epsilon_{1}\epsilon_{3}}{2} (\mu +\nu )^{2} -\rho^{2}
+\frac{\epsilon_{1}\epsilon_{3}}{2} h^{2}.
\end{equation}
ii)  Moreover,  if  $(\delta_{1} -\delta_{4})(\delta_{3} -\delta_{6})\neq 0$ or $\delta_{1}\delta_{3}\neq 0$, then
\begin{equation}\label{lambda-rho}
\lambda \rho = \mu \nu .
\end{equation}
This follows from the last two relations (\ref{5-nondeg}) and the first two relations (\ref{add}).
}
\end{rem}

In order to determine the generalized Einstein metrics $E_{-}$ we need to solve the system formed by the equations 
(\ref{3-nondeg}), (\ref{4-nondeg})  and (\ref{5-nondeg}). For this, we will consider various cases, namely:  I) $F_{23} =0$; II) $F_{23} \neq 0$ and
$F_{12} =  0$; III) $F_{23} F_{12}\neq 0.$  Recall that $F_{13} =0$, as  the $2$-form $F$ is closed.

\begin{lem} When $F_{23} =0$, the system (\ref{3-nondeg}), (\ref{4-nondeg}) and (\ref{5-nondeg}) leads to the classes  of solutions
1a) and 1b).
\end{lem}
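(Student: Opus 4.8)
The plan is to solve the system under the standing hypotheses $F\neq 0$ and $F_{13}=0$ (closedness of $F$, see (\ref{closed-non-deg})) together with the case assumption $F_{23}=0$. First I would observe that these force $F_{12}\neq 0$. I would then extract the ``universal'' part of the solution directly from the system (\ref{5-nondeg}): with $F_{23}=0$ its first four relations read $F_{12}\epsilon_{3}(\delta_{2}+\epsilon_{1}\lambda)=0$, $F_{12}\epsilon_{3}\delta_{1}=0$, $F_{12}(\mu-h)=0$ and $F_{12}\epsilon_{1}(\delta_{1}-\delta_{4})=0$, so dividing by $F_{12}\neq 0$ yields at once $\delta_{2}=-\epsilon_{1}\lambda$, $\delta_{1}=\delta_{4}=0$ and $h=\mu$; combined with (\ref{4-nondeg}) this gives $\delta_{5}=\delta_{2}=-\epsilon_{1}\lambda$. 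These are exactly the constraints common to the families 1a) and 1b).

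Next I would substitute $h=\mu$, $\delta_{1}=\delta_{4}=0$, $\delta_{2}=\delta_{5}=-\epsilon_{1}\lambda$ and $F_{23}=0$ into (\ref{3-nondeg}). The ninth relation collapses to the key identity $\nu^{2}=-2\lambda\rho$. The sixth relation gives $\rho\delta_{3}=0$, while subtracting the second from the fourth yields $\nu\delta_{3}=0$, and either of them then shows $\delta_{0}F_{12}=\epsilon_{3}\mu\delta_{3}$, the remaining divergence condition listed in family~1. The only nontrivial information left is carried by the third and seventh relations: after the above substitutions they simplify to $\epsilon_{1}\lambda\nu+\tfrac12\epsilon_{3}\rho(2\mu-\nu)=0$ and $\nu(\tfrac12\epsilon_{3}\rho-\epsilon_{1}\lambda)=0$; adding them gives $\rho\mu=0$.

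At this point I would use the freedom to interchange $v_{1}$ and $v_{3}$ (which swaps the roles of $\lambda,\rho$, of $\mu,\nu$, and of $\epsilon_{1},\epsilon_{3}$) to assume $\lambda\neq 0$; this is possible because $\epsilon_{1}\lambda+\epsilon_{3}\rho\neq 0$ forces at least one of $\lambda,\rho$ to be nonzero, and it is what produces the hypothesis $\lambda\neq 0$ appearing in family~1. I would then split on $\nu$. If $\nu=0$, then $\nu^{2}=-2\lambda\rho$ forces $\rho=0$, and the first relation (\ref{3-nondeg}) reduces to $(F_{12})^{2}=-(\epsilon_{1}\lambda^{2}+\epsilon_{3}\mu^{2})$, which is positive since $F_{12}\neq 0$; this is precisely family 1a), with $\delta_{0},\delta_{3},\delta_{6}$ free subject to $\delta_{0}F_{12}=\epsilon_{3}\mu\delta_{3}$. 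If $\nu\neq 0$, then $\delta_{3}=0$, the seventh relation gives $\epsilon_{3}\rho=2\epsilon_{1}\lambda$, so that $\rho=2\epsilon_{1}\epsilon_{3}\lambda\neq 0$ and hence $\rho\mu=0$ forces $\mu=0$; feeding $\rho=2\epsilon_{1}\epsilon_{3}\lambda$ into $\nu^{2}=-2\lambda\rho=-4\epsilon_{1}\epsilon_{3}\lambda^{2}>0$ forces $\epsilon_{3}=-\epsilon_{1}$, hence $\rho=-2\lambda$ and $\nu=2\epsilon\lambda$, while $(F_{12})^{2}=-3\epsilon_{1}\lambda^{2}>0$ forces $\epsilon_{1}=-1$, $\epsilon_{3}=1$ and $F_{12}=\tilde{\epsilon}\sqrt{3}\,\lambda$; finally $\delta_{0}F_{12}=\epsilon_{3}\mu\delta_{3}=0$ together with the seventh relation (\ref{5-nondeg}) give $\delta_{0}=\delta_{6}=0$. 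This is family 1b).

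The routine part that still has to be checked — and the step I expect to be the main bookkeeping obstacle rather than a conceptual one — is the converse: verifying that the two candidate families indeed satisfy every equation of (\ref{3-nondeg}), (\ref{4-nondeg}) and (\ref{5-nondeg}). In particular one must confirm that in family 1a) the remaining relations (the fifth relation (\ref{3-nondeg}) and relations five through eight of (\ref{5-nondeg})) impose no constraint beyond $\delta_{0}F_{12}=\epsilon_{3}\mu\delta_{3}$, so that $\delta_{3},\delta_{6},\delta_{0}$ genuinely remain free; the identity $\nu^{2}=-2\lambda\rho$ and the compatibility of the first and fifth relations (\ref{3-nondeg}) make this bookkeeping go through without surprises.
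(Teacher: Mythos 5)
Your proposal follows essentially the same route as the paper's (much terser) proof: from the first four relations and the sixth relation of (\ref{5-nondeg}) one extracts $\delta_2=-\epsilon_1\lambda$, $\delta_1=\delta_4=0$, $h=\mu$ and $\rho\delta_3=0$, and then splits into the two sub-families. The paper splits on $\rho=0$ versus $\rho\neq 0$, while you split on $\nu$; by your identity $\nu^2=-2\lambda\rho$ these are the same dichotomy once $\lambda\neq 0$ is known. All of your intermediate identities ($\nu^2=-2\lambda\rho$, $\nu\delta_3=0$, $\delta_0F_{12}=\epsilon_3\mu\delta_3$, $\rho\mu=0$, and the determination of the signs and of $F_{12}$ in family 1b)) check out against (\ref{3-nondeg})--(\ref{5-nondeg}).

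The one step whose justification does not work as written is the normalization $\lambda\neq 0$ ``by interchanging $v_1$ and $v_3$.'' That interchange also sends $F_{12}\mapsto -F_{23}$ and $F_{23}\mapsto -F_{12}$, so applying it inside the case $F_{23}=0$, $F_{12}\neq 0$ would land you in the case $F_{12}=0$, $F_{23}\neq 0$ (the subject of the next lemma), and your subsequent computations, which use $F_{23}=0$, would no longer apply. Fortunately $\lambda=0$ is directly impossible here and the appeal to the interchange can simply be dropped: $\lambda=0$ gives $\delta_2=0$ and, by your identity, $\nu^2=-2\lambda\rho=0$, hence $\nu=0$; then $\epsilon_1\lambda+\epsilon_3\rho\neq 0$ forces $\rho\neq 0$, your relation $\rho\mu=0$ forces $\mu=0$, hence $h=\mu=0$, and the first relation (\ref{3-nondeg}) collapses to $(F_{12})^2=0$, contradicting $F_{12}\neq 0$. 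With that substitution the argument is complete; the converse verification you defer is indeed routine, since the fifth relation (\ref{3-nondeg}) is implied by the first and ninth once $h=\mu$ and $\delta_2=-\epsilon_1\lambda$ are imposed.
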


\begin{proof} Since  $F_{23} =0$, we obtain that  $F_{12}\neq 0$ (otherwise $F=0$). 
From the first four relations  (\ref{5-nondeg}) and the 6th relation (\ref{5-nondeg}), 
we obtain
\begin{equation}
\delta_{2} = -\epsilon_{1}\lambda ,\ \delta_{1} = \delta_{4} =0,\  h =\mu ,\ \rho \delta_{3} =0.
\end{equation}
When $\rho =0$ we obtain the class of solutions 1a).  When $\rho \neq 0$ we obtain the class of solutions 1b). 
\end{proof}

\begin{lem} When $F_{23} \neq 0$ and $F_{12} =0$, 
the system (\ref{3-nondeg}), (\ref{4-nondeg}) and (\ref{5-nondeg}) leads (after interchanging $v_{1}$ and  $v_{3}$),  to the classes of solutions
1a) and 1b).
\end{lem}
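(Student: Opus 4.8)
The plan is to reduce the present case to the preceding lemma by means of the obvious symmetry of the whole configuration under the interchange of $v_1$ and $v_3$. Since $v_1,v_3\in\mathfrak{u}$ while $v_2\notin\mathfrak{u}$, the triple $(w_1,w_2,w_3):=(v_3,v_2,v_1)$ is again a $g$-orthonormal basis of $\mathfrak{g}$ adapted to $\mathfrak{u}$, so the Lie bracket keeps the normal form (\ref{lie-non-deg}) in this basis. First I would record how all the data transform. Reading (\ref{lie-non-deg}) in the basis $(w_a)$ gives the new structure constants $(\lambda',\mu',\nu',\rho')=(\rho,\nu,\mu,\lambda)$ and $(\epsilon_1',\epsilon_3')=(\epsilon_3,\epsilon_1)$. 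Expanding the twisting forms in the new basis yields $h'=-h$ (the orientation is reversed, $v_1^*\wedge v_2^*\wedge v_3^*=-\,w_1^*\wedge w_2^*\wedge w_3^*$), together with $F_{12}'=-F_{23}$, $F_{23}'=-F_{12}$ and $F_{13}'=-F_{13}=0$. Finally, the induced frame $(e_A)$ is permuted by the swaps $e_1\leftrightarrow e_3$, $e_4\leftrightarrow e_6$ with $e_0,e_2,e_5$ fixed, whence $\delta_1'=\delta_3$, $\delta_3'=\delta_1$, $\delta_4'=\delta_6$, $\delta_6'=\delta_4$ and $\delta_0',\delta_2',\delta_5'$ unchanged.

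The decisive observation is that the generalized Einstein condition $\mathrm{Ric}^{\delta}=0$ for the pair $(E_-,\delta)$ is intrinsic and does not depend on the chosen $g$-orthonormal basis adapted to $\mathfrak{u}$; by Corollary~\ref{forapplic} it is equivalent to the system (\ref{3-nondeg}), (\ref{4-nondeg}), (\ref{5-nondeg}) whatever such basis is used. I would therefore simply rewrite this system in the basis $(w_a)$. Under the hypotheses $F_{23}\neq0$, $F_{12}=0$ of the present lemma, the primed data satisfy $F_{12}'=-F_{23}\neq0$ and $F_{23}'=-F_{12}=0$, so in the basis $(w_a)$ we are exactly in the situation treated in the preceding lemma. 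Its conclusion produces the classes 1a) and 1b) for the primed data, and translating these back through the substitution above gives precisely the asserted solutions --- described, as in the statement of Theorem~\ref{non-deg-thm}, up to the interchange of $v_1$ and $v_3$. This is the whole argument.

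The only point needing care is the self-consistency of this symmetry at the level of the equations, i.e.\ that the substitution $(\lambda,\mu,\nu,\rho,h,\epsilon_1,\epsilon_3)\mapsto(\rho,\nu,\mu,\lambda,-h,\epsilon_3,\epsilon_1)$, together with $F_{12}\mapsto-F_{23}$, $F_{23}\mapsto-F_{12}$ and the above permutation of the $\delta_A$, indeed carries the system (\ref{3-nondeg})--(\ref{5-nondeg}) into itself. Abstractly this is guaranteed by the basis-independence of $\mathrm{Ric}^{\delta}$ invoked above; concretely it can be confirmed by a quick equation-by-equation inspection --- for example the first and ninth relations of (\ref{3-nondeg}) are exchanged, the fifth is fixed, the second and eighth are exchanged, and similarly for (\ref{4-nondeg}) and (\ref{5-nondeg}). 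The sign $h\mapsto-h$ forced by the orientation reversal is exactly compensated by the sign changes of the $F_{ab}$, so no inconsistency arises; carrying out this bookkeeping is the main (though entirely routine) obstacle, after which the reduction to the preceding lemma is immediate.
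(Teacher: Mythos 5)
Your proposal is correct, and it takes a genuinely different route from the paper. The paper's own proof redoes a short direct computation in the original basis: from the 1st and 3rd relations (\ref{5-nondeg}) with $F_{12}=0$ it derives $h=-\nu$ and $\delta_2=-\epsilon_3\rho$, then $\mu(\lambda-2\epsilon_1\epsilon_3\rho)=0$ from the 3rd relation (\ref{3-nondeg}), and only at the end recognizes the resulting solutions as the images of 1a) and 1b) under the interchange of $v_1$ and $v_3$ (recording, as you do, the dictionary $\lambda\leftrightarrow\rho$, $\mu\leftrightarrow\nu$, $\delta_1\leftrightarrow\delta_3$, $\delta_4\leftrightarrow\delta_6$, $\epsilon_1\leftrightarrow\epsilon_3$, $h\mapsto-h$). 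You instead perform the reduction upfront: transport the whole problem to the basis $(v_3,v_2,v_1)$, observe that the hypotheses of the preceding lemma are then satisfied, and quote that lemma. This buys you a proof with no new computation, at the cost of having to justify that the system (\ref{3-nondeg})--(\ref{5-nondeg}) is carried into itself; your abstract justification via the basis-independence of $\mathrm{Ric}^{\delta}$ and the invariance of the normal form (\ref{lie-non-deg}) is sound and is the right way to see it. One small caveat on your ``equation-by-equation inspection'': while the nine relations (\ref{3-nondeg}) are indeed literally permuted by the substitution (1$\leftrightarrow$9, 2$\leftrightarrow$8, 3$\leftrightarrow$7, 4$\leftrightarrow$6, 5 fixed), the fifth and sixth relations of (\ref{5-nondeg}) are exchanged only modulo the seventh and eighth (the transformed fifth equals the sixth plus $\epsilon_1\mu(\delta_1-\delta_4)+\epsilon_3\rho(\delta_3-\delta_6)$), so the concrete check is of the system as a whole rather than equation by equation; your abstract argument already covers this, so nothing is missing.
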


\begin{proof}
The 1st and 3rd relations (\ref{5-nondeg}) imply  that $ h = -\nu $  and 
$\delta_{2} = -\epsilon_{3} \rho .$ 
From the 3rd relation (\ref{3-nondeg}) we obtain 
$\mu ( \lambda - 2 \epsilon_{1} \epsilon_{3} \rho ) =0$. When $\mu =0$ we obtain (after interchanging  $v_{1}$ and  $v_{3}$)  the class of solutions  
1a).
(Note that under the interchange of $v_1$ and $v_3$ the variable $\lambda$ is interchanged with $\rho$, $\mu$ with $\nu$,  
$\delta_{1}$ with $\delta_{3}$, $\delta_{4}$ with $\delta_{6}$,   $\epsilon_{1}$ with $\epsilon_{3}$ and 
$\delta_{0}$, $\delta_{2}$ and $\delta_{5}$ remain the same while $h$ is replaced by $-h$).
 When $\mu \neq 0$ we obtain   (after interchanging   $v_{1}$ and  $v_{3}$)  the class of solutions 1b). 
\end{proof}

The key remark to treat the case 
$F_{12} F_{23} \neq 0$  is the observation that 
$(\delta_{1} -\delta_{4})$ and $(\delta_{3} -\delta_{6})$ are both zero or both non-zero, and, similarly, for 
$\delta_{1}$ and $\delta_{3}$ (see the  2nd and 4th relation (\ref{5-nondeg})).  Also, from the 3rd relation (\ref{add}), 
either $h =0$ or $\delta_{2} = - (\epsilon_{1} \lambda +\epsilon_{3} \rho ) .$ 
Using these facts  we divide the case 
$F_{12} F_{23} \neq 0$ 
into several lemmas, as follows.

\begin{lem} When $F_{12} F_{23}\neq 0$, $\delta_{1} =\delta_{4}$, $\delta_{3} =\delta_{6}$  and $\delta_{1} =\delta_{3} = h =0$, 
the system (\ref{3-nondeg}), (\ref{4-nondeg}) and (\ref{5-nondeg}) leads  (modulo interchanging $v_{1}$ and  $v_{3}$),
to the classes of solutions 6) and 7). 
 \end{lem}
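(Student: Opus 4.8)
The plan is to substitute the hypotheses $h=\delta_1=\delta_3=\delta_4=\delta_6=0$ (together with $F_{13}=0$, which holds because $F$ is closed) into the full system (\ref{3-nondeg}), (\ref{4-nondeg}), (\ref{5-nondeg}) and to identify which equations survive. First I would harvest the cheap consequences: the second relation of (\ref{3-nondeg}) becomes $0=\delta_0 F_{12}$, so $\delta_0=0$ because $F_{12}\neq 0$; the fourth, sixth and eighth relations of (\ref{3-nondeg}) and the second, fourth, fifth, sixth, seventh and eighth relations of (\ref{5-nondeg}) then reduce to $0=0$; and (\ref{4-nondeg}) gives $\delta_2=\delta_5$. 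The key simplification is that, once $h=0$, the third and seventh relations of (\ref{3-nondeg}) are negatives of one another and hence collapse to a single equation, which I denote $(\ast)$. Thus the whole system reduces to the first, fifth and ninth relations of (\ref{3-nondeg}), the equation $(\ast)$, and the first and third relations of (\ref{5-nondeg}), namely $F_{12}\epsilon_3(\delta_2+\epsilon_1\lambda)=F_{23}\nu$ and $F_{12}\mu=F_{23}\epsilon_1(\epsilon_3\rho+\delta_2)$.

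Next I would split on the product $\mu\nu$. If $\mu\nu\neq 0$, then $\mu,\nu,F_{12},F_{23}$ are all nonzero and $\epsilon_1\lambda+\epsilon_3\rho\neq 0$ by non-unimodularity; no further collapse occurs, and the surviving relations are exactly those collected in (\ref{syst1}) with $h=0$, which is the defining condition of class 7). This yields class 7).

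If instead $\mu\nu=0$, then after interchanging $v_1$ and $v_3$ if necessary — which swaps $\lambda\leftrightarrow\rho$, $\mu\leftrightarrow\nu$ and $\epsilon_1\leftrightarrow\epsilon_3$, fixes $\delta_0,\delta_2,\delta_5$, and changes the sign of $h$ (irrelevant here) — I may assume $\nu=0$. I would first rule out $\mu=0$: if $\mu=\nu=0$, the two surviving relations of (\ref{5-nondeg}) force $\delta_2=-\epsilon_1\lambda=-\epsilon_3\rho$, the first and ninth relations of (\ref{3-nondeg}) then force $\epsilon_1=\epsilon_3=-1$ and $\lambda=\rho$, and $(\ast)$ reads $0=F_{12}F_{23}$, contradicting $F_{12}F_{23}\neq 0$. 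Hence $\mu\neq 0$. With $\nu=0$, the first relation of (\ref{5-nondeg}) gives $\delta_2=-\epsilon_1\lambda$, and feeding this into $(\ast)$ gives $F_{12}F_{23}=\frac{\epsilon_3}{2}\rho\mu$, whence $\rho\neq 0$.

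The remaining step is algebraic: combining the first relation of (\ref{3-nondeg}), the identity obtained by multiplying the third relation of (\ref{5-nondeg}) by $F_{12}$ and using $(\ast)$, and the fifth relation of (\ref{3-nondeg}) simplified through the first and ninth, I expect to eliminate $F_{12},F_{23},\mu$ and arrive at the quadratic $\rho^2-\epsilon_1\epsilon_3\lambda\rho-2\lambda^2=0$, which factors as $(\rho-2\epsilon_1\epsilon_3\lambda)(\rho+\epsilon_1\epsilon_3\lambda)=0$. The root $\rho=-\epsilon_1\epsilon_3\lambda$ is excluded since it gives $\epsilon_1\lambda+\epsilon_3\rho=0$, against non-unimodularity, so $\rho=2\epsilon_1\epsilon_3\lambda$; then $\mu^2=-4\epsilon_1\epsilon_3\lambda^2>0$ forces $\epsilon_3=-\epsilon_1$, and positivity of $(F_{12})^2=\epsilon_1\lambda^2$ forces $\epsilon_1=1$, $\epsilon_3=-1$, $\rho=-2\lambda$, $\mu=2\epsilon\lambda$; reading off $F_{12}=\tilde\epsilon\lambda$, fixing the sign of $F_{23}=2\epsilon\tilde\epsilon\lambda$ from $(\ast)$, and $\delta_2=\delta_5=-\lambda$ recovers class 6) exactly. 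The main obstacle will be the bookkeeping in this last case: juggling the two signs $\epsilon_1,\epsilon_3$, the quadratic for $\rho$, and the positivity constraints $(F_{12})^2,(F_{23})^2>0$ to pin down every sign and the relation $\mu=\pm 2\lambda$. Everything preceding it is a routine substitution once $\delta_0=0$, $\delta_2=\delta_5$, and the coincidence of the third and seventh relations of (\ref{3-nondeg}) have been observed.
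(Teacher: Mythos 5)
Your proposal is correct and follows essentially the same route as the paper: substitute the hypotheses to get $\delta_0=0$ and $\delta_2=\delta_5$, observe that the surviving equations are exactly the system the paper records as (\ref{syst1}) with $h=0$ (your collapse of the 3rd and 7th relations of (\ref{3-nondeg}) into one equation is the same observation), split on whether $\mu\nu$ vanishes, and use the $v_1\leftrightarrow v_3$ interchange to normalize the remaining case. Your final elimination in the $\nu=0$, $\mu\neq 0$ subcase differs cosmetically from the paper's (the paper combines the 4th and 6th relations of (\ref{syst1}) to get $(F_{23})^2=-\epsilon_3\rho^2$ directly, whereas you derive the quadratic $\rho^2-\epsilon_1\epsilon_3\lambda\rho-2\lambda^2=0$), but I checked that your quadratic is correct and leads to class 6) exactly as claimed.
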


\begin{proof}
By our assumption, $\delta_{i} =0$ for any $i\notin\{ 0, 2, 5\}$. 
The 2nd relation \eqref{3-nondeg} implies that also $\delta_0=0$. 
When $\delta_{i} =0$ for any $i\notin\{0,  2, 5\}$, relations
(\ref{3-nondeg}) (with the 4th,  5th, 6th and 7th relation (\ref{3-nondeg}) replaced by relations (\ref{add})
and (\ref{add-after})),  together with relations 
(\ref{4-nondeg}) and (\ref{5-nondeg})  become
\begin{equation}\label{syst} 
\delta_{0} =0,\ \delta_2=\delta_5,\ h ( \delta_{2} +\epsilon_{1}\lambda +\epsilon_{3} \rho ) =0
\end{equation}
together with 
\begin{align}
\nonumber&  F_{12} \epsilon_{3} (\delta_{2} +\epsilon_{1}\lambda ) = F_{23} ( h+  \nu )\\
\nonumber& F_{12} (\mu  - h) = F_{23} \epsilon_{1} (\epsilon_{3} \rho +\delta_{2})\\
\nonumber& (F_{12})^{2} =\frac{\epsilon_{3}}{2} ( \nu^{2} -\mu^{2} -  h^{2} ) +\delta_{2} \lambda\\
\nonumber& \delta_{2} (\epsilon_{1} \lambda +\epsilon_{3} \rho ) = -\lambda^{2} -\frac{\epsilon_{1} \epsilon_{3}}{2} ( \mu +\nu )^{2} -\rho^{2}
+\frac{\epsilon_{1}\epsilon_{3}}{2} h^{2}\\
\nonumber&  F_{12}  F_{23}=  \frac{\epsilon_{1}}{2} \lambda ( h - \mu  + \nu )+ \frac{\epsilon_{3}}{2} \rho ( h +\mu - \nu )  
+ \frac{\delta_{2}}{2} ( h - \mu - \nu ) \\
\label{syst1}&  (F_{23})^{2} = \frac{\epsilon_{1}}{2} (\mu^{2} - \nu^{2} - h^{2}) +\delta_{2} \rho .
\end{align}
(Relations (\ref{syst}) and  (\ref{syst1})  with $h\neq 0$ will be used in the proof of the next lemma). Recall now our assumption $h =0$. 
Since $F_{23} F_{12}\neq 0$ and $h=0$,  $\mu$ and $\nu$ cannot be both zero (from the 5th relation (\ref{syst1})). 
When 
$\mu  \neq 0$ and $\nu =0$ we obtain the class of solutions 6).  In fact, when $\nu=0$ the 1st equation of \eqref{syst1} with $h=0$
yields $\delta_2=-\epsilon_1\lambda$, which can be used 
to rewrite the 4th and 6th equations as 
\begin{eqnarray*}-\epsilon_3\rho^2 &=& \epsilon_1(\frac{\mu^2}{2} -\lambda \rho)\\
(F_{23})^2&=& \epsilon_1(\frac{\mu^2}{2} -\lambda \rho).
\end{eqnarray*}
This shows that $(F_{23})^2 = -\epsilon_3\rho^2$ and the remaining equations are easily solved building on that.
When  $\mu =0$ and $\nu \neq 0$ we obtain again (after interchanging  $v_{1}$ and   $v_{3}$) the class of solutions 6). 
When $\mu \nu \neq 0$ the  solutions of the system (\ref{syst1}) with $h=0$ 
correspond  to the class of solutions 7)  from the statement of the theorem.
\end{proof}

\begin{lem}
When $F_{12} F_{23}\neq 0$, $\delta_{1} =\delta_{4}$, $\delta_{3} =\delta_{6}$, $\delta_{1} =\delta_{3}  =0$
and $h\neq 0$, 
the system (\ref{3-nondeg}), (\ref{4-nondeg}) and (\ref{5-nondeg}) leads  (modulo interchanging $v_{1}$ and  $v_{3}$) to the classes of solutions
4a) with $\delta_{0} = \delta_{4} =0$,  2), 3b)   and 5). 
\end{lem}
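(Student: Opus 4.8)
The plan is to exploit that the hypotheses $\delta_1=\delta_4=\delta_3=\delta_6=0$ place us in exactly the algebraic situation analysed in the preceding lemma: since $\delta_i=0$ for every $i\notin\{0,2,5\}$, the full system (\ref{3-nondeg}), (\ref{4-nondeg}), (\ref{5-nondeg}) collapses to (\ref{syst}) together with (\ref{syst1}), and this reduction did not use $h=0$. Because now $h\neq0$, the third relation of (\ref{syst}) forces $\delta_0=0$ and $\delta_2=\delta_5=-(\epsilon_1\lambda+\epsilon_3\rho)$. Writing $s:=\epsilon_1\lambda+\epsilon_3\rho$ and substituting $\delta_2=-s$ into (\ref{syst1}), the first two relations simplify to
\[
-\rho\,F_{12}=(h+\nu)\,F_{23},\qquad (\mu-h)\,F_{12}=-\lambda\,F_{23},
\]
while the remaining four become purely algebraic identities in $\lambda,\mu,\nu,\rho,h$: expressions for $(F_{12})^2$, $F_{12}F_{23}$ and $(F_{23})^2$, together with the single constraint $s^2=\lambda^2+\rho^2+\tfrac{\epsilon_1\epsilon_3}{2}\big((\mu+\nu)^2-h^2\big)$.

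Since $F_{12}F_{23}\neq0$, the two displayed relations drive the case analysis. I would first split on whether $\lambda=0$. If $\lambda=0$, the second relation forces $h=\mu$; a further split on $\nu=0$ versus $\nu\neq0$ then feeds the quadratic identities for $(F_{12})^2$ and $(F_{23})^2$, whose positivity constrains the signatures $\epsilon_1,\epsilon_3$ and the remaining parameter relations, yielding class 4a) (with $\delta_0=\delta_4=0$, these being forced here) in the first subcase and class 2) in the second. If $\lambda\neq0$, eliminating the ratio $F_{23}/F_{12}$ between the two displayed relations produces the constraint $h^2+h(\nu-\mu)-\mu\nu+\lambda\rho=0$; splitting on whether $\lambda\rho=\mu\nu$ then separates class 3b) (where this constraint gives $h=\mu-\nu$ and $F_{23}=-\tfrac{\nu}{\lambda}F_{12}$) from class 5).

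Throughout I would invoke the $v_1\leftrightarrow v_3$ symmetry, which swaps $\lambda\leftrightarrow\rho$, $\mu\leftrightarrow\nu$, $\epsilon_1\leftrightarrow\epsilon_3$, sends $h\mapsto-h$ and fixes $\delta_0,\delta_2,\delta_5$, in order to identify subcases related by this involution, as signalled by the phrase ``modulo interchanging $v_1$ and $v_3$'' in the statement. The hypotheses $\delta_1=\delta_4=\delta_3=\delta_6=0$ automatically exclude the branches 3a), 3c), 4b) of Theorem~\ref{non-deg-thm}, so only the four listed classes can arise.

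The main obstacle is the subcase $\lambda\neq0$, $\lambda\rho\neq\mu\nu$ that produces class 5). Here the linear relations no longer pin down the parameters, and one must combine the three quadratic identities for $(F_{12})^2$, $(F_{23})^2$ and $F_{12}F_{23}$ — using $(F_{12}F_{23})^2=(F_{12})^2(F_{23})^2$ — with the constraint on $s$ to force the signature $\epsilon_3=-\epsilon_1$, the ratio $F_{23}=-\epsilon F_{12}$, and then the rigid relations $h=\tfrac13(\mu-\nu)$, $\lambda=\tfrac{\epsilon}{3}(2\mu+\nu)$, $\rho=\tfrac{\epsilon}{3}(2\nu+\mu)$. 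The delicate bookkeeping of the signs $\epsilon_1,\epsilon_3,\epsilon,\tilde\epsilon$ and of the positivity of the squared components of $F$ is where the genuine work lies; the classes 4a), 2) and 3b) fall out comparatively directly once the two linear relations are exploited.
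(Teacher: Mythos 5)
Your reduction is the same as the paper's: the hypotheses collapse the system to (\ref{syst}) and (\ref{syst1}), $h\neq 0$ pins $\delta_2=\delta_5=-(\epsilon_1\lambda+\epsilon_3\rho)$ and $\delta_0=0$, the first two relations of (\ref{syst1}) become the two linear relations $-\rho F_{12}=(h+\nu)F_{23}$ and $(\mu-h)F_{12}=-\lambda F_{23}$, and the fourth relation becomes $4\lambda\rho=(\mu+\nu)^2-h^2$ (your $s^2$ identity). Cross-multiplying the linear relations to get $\lambda\rho=(\mu-h)(h+\nu)$ and combining with $4\lambda\rho=(\mu+\nu)^2-h^2$ yields $(\mu-\nu-h)(\mu-\nu-3h)=0$, which is exactly how the paper separates 3b) from 5). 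Your $\lambda=0$ branch reproduces the paper's verbatim (there $\rho\neq 0$ by non-unimodularity, $h=\mu$, and $\nu(\nu+2\mu)=0$ gives 4a) or 2)).

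The one genuine gap is in your $\lambda\neq 0$ branch. Since $F_{12}F_{23}\neq 0$, the first linear relation forces $\rho$ and $h+\nu$ to vanish simultaneously or not at all, and the degenerate subcase $\rho=h+\nu=0$ with $\mu\neq 0$ sits inside your ``$\lambda\rho\neq\mu\nu$'' branch (there $\lambda\rho=0$ while $\mu\nu=-2\nu^2\neq 0$, since the constraint forces $\mu=-2\nu$). You assert that this branch yields only class 5), but this subcase is excluded from class 5) by the condition $\mu+2\nu\neq 0$ and is in fact class 2) modulo interchanging $v_1$ and $v_3$; moreover your plan for extracting the rigid relations of class 5) (forcing $\epsilon_3=-\epsilon_1$, $F_{23}=-\epsilon F_{12}$, $\lambda=\frac{\epsilon}{3}(2\mu+\nu)$, $\rho=\frac{\epsilon}{3}(2\nu+\mu)$) tacitly uses $h+\nu=\frac{1}{3}(\mu+2\nu)\neq 0$, so it does not cover this subcase. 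The paper avoids the problem by splitting first on $\rho(h+\nu)=0$ versus $\rho(h+\nu)\neq 0$ and disposing of the degenerate case (giving 3b) when $\mu=0$ and 2) after the interchange when $\mu\neq 0$) before any division occurs. Your argument is repaired by inserting that preliminary dichotomy, or by explicitly invoking the $v_1\leftrightarrow v_3$ involution to send $\rho=0$ back to the $\lambda=0$ branch; as written, the case bookkeeping is incomplete. Similarly, note that $\delta_0=0$ comes from the first relation of (\ref{syst}) (i.e.\ from the second relation of (\ref{3-nondeg}) with $\delta_3=0$ and $F_{12}\neq 0$), not from the third relation of (\ref{syst}) as you state, though this is harmless.
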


\begin{proof}
From the previous proof, what we need to solve  is the system (\ref{syst}) and (\ref{syst1}) under the assumption $h\neq 0.$
From  the 3rd relation (\ref{syst})  
and the 1st relation (\ref{syst1}), 
we obtain 
\begin{equation}\label{delta-2-add}
\delta_{2} = -  (\epsilon_{1} \lambda + \epsilon_{3} \rho ),\quad 
F_{12} \rho = - F_{23} ( h +\nu ).
\end{equation}
Since 
$F_{23} F_{12}  \neq 0$, either $ \rho = h +\nu =0$ or $\rho ( h+\nu ) \neq 0.$ When $ \rho = h +\nu =0$
and $\mu =0$,  we obtain the class of solutions 3b) with $(\rho = \mu =0, \epsilon_{1} = -1)$.
When $\rho = h+\nu =0$ and $\mu \neq 0$, we obtain  (after interchanging  $v_{1}$ and  $v_{3}$) 
the class of solutions  2).
Assume now that  $\rho ( h+\nu ) \neq 0$.
Since $\delta_{2} = -(\epsilon_{1} \lambda +\epsilon_{3} \rho )$, the 3rd and 6th relations (\ref{syst1}) imply that
\begin{align}
\nonumber& (F_{12})^{2} = \frac{\epsilon_{3}}{2} (  \nu^{2}- \mu^{2} - h^{2} ) - \epsilon_{1} \lambda^{2} -\epsilon_{3} \lambda \rho\\
\label{patrat-F}& (F_{23})^{2} = \frac{\epsilon_{1}}{2} ( \mu^{2} - \nu^{2} - h^{2}) -\epsilon_{1} \lambda \rho -\epsilon_{3} \rho^{2}
\end{align} 
and the 4th relation (\ref{syst1}) becomes 
\begin{equation}\label{adaus}
4\lambda \rho =  ( \mu + \nu)^{2} -  h^{2}.
\end{equation}
Expressing $\lambda \rho $ in terms of $\mu$, $\nu$ and $h$ (using the above relation), we rewrite relations
(\ref{patrat-F}) in an equivalent way: 
\begin{align}
\nonumber& (F_{12})^{2}  = \frac{\epsilon_{3}}{4} ( - h^{2} - 3\mu^{2} +\nu^{2} - 2\mu \nu ) -\epsilon_{1} \lambda^{2}\\
\label{equivalent}& (F_{23})^{2}  = \frac{\epsilon_{1}}{4} ( - h^{2} - 3\nu^{2} + \mu^{2} - 2\mu \nu ) - \epsilon_{3} \rho^{2}.
\end{align} 
When   $\lambda =0$,  $\delta_{2} = -\epsilon_{3} \rho$ and 
the 2nd  relation (\ref{syst1}) implies   that $ h = \mu$. 
Relation
(\ref{adaus}) becomes  $\nu ( 2\mu  +\nu ) =0$. 
We   arrive at the  class of solutions 4a) 
with $\delta_{0} = \delta_{4} =0$  (when $\nu =0$) and at the  class of  solutions~2) (when $\nu = - 2\mu\neq 0$). 
Assume now that $\lambda \neq 0.$
From the 2nd  relation (\ref{syst1}), 
and $\delta_{2} = -(\epsilon_{1} \lambda +\epsilon_{3} \rho )$, 
we obtain
\begin{equation}\label{F-23-12}
F_{23} = F_{12} ( \frac{h-\mu}{\lambda}),
\end{equation}
which, combined with the 2nd relation 
(\ref{delta-2-add}),   implies
 \begin{equation}\label{prod}
 \lambda \rho = ( \mu - h) ( h+\nu ).
\end{equation}
Replacing   $F_{23}$ with the right-hand side of  (\ref{F-23-12})
we  obtain, from relations 
(\ref{equivalent}) and  the 5th relation (\ref{syst1}), 
 that $(F_{12})^{2}$ satisfies 
\begin{align}
\nonumber& (F_{12})^{2} = \frac{\epsilon_{3}}{4} ( - h^{2} - 3\mu^{2} + \nu^{2} - 2\mu \nu ) -\epsilon_{1} \lambda^{2}\\
\nonumber&  (F_{12})^{2}  (\frac{ h-\mu}{\lambda} ) = \epsilon_{1} \lambda \nu +\epsilon_{3} \rho \mu\\
\label{patrat-F-restr}& (F_{12})^{2} ( \frac{h-\mu}{\lambda })^{2}  = \frac{\epsilon_{1}}{4} ( - h^{2} - 3\nu^{2} +\mu^{2} - 2\mu \nu ) -\epsilon_{3} \rho^{2},
\end{align}
where in the second relation above we used that $\delta_{2}  = -(\epsilon_{1} \lambda +\epsilon_{3}\rho )$.
On the other hand,  remark that relations
(\ref{adaus})
and (\ref{prod}) 
imply  that 
\begin{equation}
( \mu - \nu - h) ( \mu - \nu - 3h) =0.
\end{equation}
When $ h = \mu - \nu$  straightforward computations lead to the class of solutions 3b) with $\rho (\mu - \nu ) \neq 0.$ 
When $ h =\frac{1}{3} ( \mu - \nu )$, we claim that we arrive  at the  class of solutions 5).
The key point of the argument is to solve   the system
\begin{align}
\nonumber& 9\lambda \rho = (\nu + 2\mu ) ( \mu + 2\nu )\\
\nonumber&  (F_{12})^{2} = \frac{\epsilon_{3}}{9} ( - 7 \mu^{2} + 2\nu^{2} - 4\mu \nu )  -\epsilon_{1} \lambda^{2}\\
\nonumber& (F_{12})^{2} = -\frac{3\lambda}{(2\mu +\nu )} ( \epsilon_{1} \lambda \nu  +\epsilon_{3} \rho \mu )\\
\label{compatib-compl}&  (F_{12})^{2} = \frac{ 9\lambda^{2}}{ ( 2\mu  + \nu )^{2}} \left( 
\frac{\epsilon_{1}}{9}  ( - 7\nu^{2} + 2\mu^{2} - 4\mu \nu )  - \epsilon_{3} \rho^{2}\right),
\end{align}
which is obtained from  relations (\ref{adaus}) and (\ref{patrat-F-restr}), using 
$h = \frac{1}{3} ( \mu - \nu )$, under the assumption that $\mu \neq \nu$ (as $h \neq 0$). 
The compatibility between the 2nd  and 3rd relations  (\ref{compatib-compl}), respectively the 
3rd and 4th relations (\ref{compatib-compl}), reduce to 
\begin{align}
\nonumber& 9 \epsilon_{1} \lambda^{2} + \epsilon_{3} ( 2\mu +\nu )^{2} =0\\
\label{compat1}& \epsilon_{1} \lambda ( \mu + 2\nu ) +\epsilon_{3} \rho ( 2\mu + \nu ) =0,
\end{align}
where we have used the 1st relation (\ref{compatib-compl}) to write $\lambda \rho $ in terms of $\mu$ and $\nu $
and $\mu \neq \nu$. 
 From the 1st relation (\ref{compat1}), we obtain that $\epsilon_{3} = -\epsilon_{1}$ and 
\begin{equation}\label{compat2}
\lambda = \frac{\epsilon}{3}  (2\mu +\nu )
\end{equation}
(where $\epsilon \in \{ \pm 1\}$). Then the  2nd  relation (\ref{compat1}) implies 
\begin{equation}
\rho = \frac{\epsilon}{3} ( 2\nu +\mu ).
\end{equation}
Building on these formulae we obtain the class of solutions 5).
\end{proof}

\begin{lem} When $F_{12} F_{23} \neq  0$, 
$\delta_{1} =\delta_{4}$, $\delta_{3} =\delta_{6}$ and $\delta_{1}\delta_{3} \neq 0$, the system
 (\ref{3-nondeg}), (\ref{4-nondeg}) and (\ref{5-nondeg}) leads to the classes of solutions 3a) and 4b).
 \end{lem}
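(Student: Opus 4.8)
The plan is to use the hypotheses $\delta_1=\delta_4$, $\delta_3=\delta_6$ and $\delta_1\delta_3\neq0$ to collapse the system and then branch according to whether $\lambda=\nu=0$ or not. Since $\delta_1\delta_3\neq0$, Remark (\ref{lambda-rho}) gives at once $\lambda\rho=\mu\nu$. With $\delta_1=\delta_4$ and $\delta_3=\delta_6$ the 4th, 7th and 8th relations (\ref{5-nondeg}) hold automatically, and I replace the 4th, 6th and 7th relations (\ref{3-nondeg}) by (\ref{add}). The first two relations (\ref{add}), namely $\epsilon_1\lambda\delta_1+\epsilon_3\nu\delta_3=0$ and $\epsilon_1\mu\delta_1+\epsilon_3\rho\delta_3=0$, are the decisive constraints: since $\delta_1,\delta_3\neq0$, the first shows that $\lambda=0$ if and only if $\nu=0$, and this dichotomy governs the rest of the argument. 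Throughout I also keep $\delta_2=\delta_5$ from (\ref{4-nondeg}).

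First I treat $\lambda\nu\neq0$. The first relation (\ref{add}) yields $\delta_3=-\epsilon_1\epsilon_3\frac{\lambda}{\nu}\delta_1$, hence $\delta_6=-\epsilon_1\epsilon_3\frac{\lambda}{\nu}\delta_4$; substituting the ratio $\delta_1/\delta_3$ into the 2nd relation (\ref{5-nondeg}) and using $F_{12}F_{23}\neq0$ gives $F_{23}=-\frac{\nu}{\lambda}F_{12}$. Feeding this ratio into the 1st and 3rd relations (\ref{5-nondeg}), eliminating $\delta_2$ and using $\lambda\rho=\mu\nu$ produces a factorized identity $(\mu-\nu-h)\bigl(1+\epsilon_1\epsilon_3\tfrac{\nu^2}{\lambda^2}\bigr)=0$, from which I read off $h=\mu-\nu$. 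Then the 1st relation (\ref{5-nondeg}) gives $\delta_2=\delta_5=-(\epsilon_1\lambda+\epsilon_3\rho)$, the 1st relation (\ref{3-nondeg}) gives $(F_{12})^2=-(\epsilon_1\lambda^2+\epsilon_3\mu^2)$ — whose positivity forces $\epsilon_1\lambda^2+\epsilon_3\mu^2<0$ — and the 6th relation (\ref{5-nondeg}) together with the second relation (\ref{add}) yields $\delta_0F_{12}=\epsilon_1\frac{\lambda}{\nu}\delta_1(\nu-\mu)$. Since $\lambda\nu(\epsilon_1\lambda+\epsilon_3\rho)\neq0$ by non-unimodularity and $\delta_1=\delta_4\neq0$, this is exactly class 3a).

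Next I treat $\lambda=\nu=0$, where $\lambda\rho=\mu\nu$ holds automatically. Here $\mu\neq0$: otherwise the second relation (\ref{add}) forces $\rho\delta_3=0$, hence $\rho=0$ and $\mathrm{tr}\,\mathrm{ad}_{v_2}=\epsilon_1\lambda+\epsilon_3\rho=0$, contradicting non-unimodularity; symmetrically $\rho\neq0$. The 1st relation (\ref{3-nondeg}) now reads $\tfrac{\epsilon_3}{2}(-h^2-\mu^2)=(F_{12})^2\ge0$, forcing $\epsilon_3=-1$ and $(F_{12})^2=\tfrac12(h^2+\mu^2)$, while the second relation (\ref{add}) gives $\delta_3=\epsilon_1\frac{\mu}{\rho}\delta_1$ and $\delta_6=\epsilon_1\frac{\mu}{\rho}\delta_4$. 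Adding the 3rd and 7th relations (\ref{3-nondeg}) yields $h(\delta_2-\rho)=0$; the branch $h=0$ forces $\delta_2=0$ by the 1st relation (\ref{5-nondeg}) and then leads to incompatible expressions for the squares $(F_{12})^2$ and $(F_{23})^2$, so $\delta_2=\delta_5=\rho$ and $h^2=\mu^2$. The 3rd relation (\ref{5-nondeg}) then forces $h=\mu$, whence $(F_{12})^2=\mu^2$ and $(F_{23})^2=\rho^2$ (from the 5th and last relations (\ref{3-nondeg})), with the signs tied by the 1st relation (\ref{5-nondeg}) to give $F_{12}=\epsilon\mu$, $F_{23}=-\epsilon\rho$; finally the 6th relation (\ref{5-nondeg}) gives $\delta_3=-\epsilon\delta_0$. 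As $\delta_1=\delta_4$ and $\delta_1\delta_3\neq0$, this is class 4b).

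The main obstacle is handling the degenerate algebraic configurations in which a quantity one wants to divide by vanishes. In the case $\lambda\nu\neq0$ the factor $1+\epsilon_1\epsilon_3\frac{\nu^2}{\lambda^2}$ is positive whenever $\epsilon_1=\epsilon_3$, but it can vanish precisely when $\epsilon_1=-\epsilon_3$ and $\nu^2=\lambda^2$, so that $h=\mu-\nu$ is not immediately forced; this exceptional configuration must be analyzed separately, using the remaining relations (\ref{3-nondeg}) to determine $h$ and $\delta_2$, to confirm that it still produces only class 3a) solutions (or none at all). Verifying that the full overdetermined system (\ref{3-nondeg}), (\ref{4-nondeg}), (\ref{5-nondeg}) is genuinely consistent with the extracted values — rather than forcing an extra relation that would shrink the solution set — is the remaining bookkeeping, which is routine once the substitutions above are in place.
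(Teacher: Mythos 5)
Your overall strategy (split into $\lambda\nu\neq 0$ versus $\lambda=\nu=0$, using the first relation of (\ref{add}) to show these are the only possibilities) matches the paper, and your $\lambda=\nu=0$ branch leading to class 4b) is essentially the paper's argument. However, there is a genuine gap in your $\lambda\nu\neq 0$ branch, and you acknowledge it yourself without closing it: you obtain $h=\mu-\nu$ only from the factorized identity $(\mu-\nu-h)\bigl(1+\epsilon_1\epsilon_3\tfrac{\nu^2}{\lambda^2}\bigr)=0$, whose second factor vanishes precisely when $\epsilon_1=-\epsilon_3$ and $\nu^2=\lambda^2$. In that configuration your argument determines neither $h$ nor $\delta_2$, so you have not shown that the only solutions are those of class 3a); a priori this exceptional stratum could contribute solutions outside the stated list, which would falsify the lemma as you have proved it. Deferring this to ``must be analyzed separately'' leaves the proof incomplete.

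The paper avoids the problem by deriving $h=\mu-\nu$ \emph{before} the dichotomy, from an identity that needs only $\delta_3\neq 0$. Concretely: with $\delta_1=\delta_4$ and $\delta_3=\delta_6$, the 5th relation (\ref{5-nondeg}) collapses to $\delta_0F_{12}=\epsilon_1\lambda\delta_1+\epsilon_3\mu\delta_3$; comparing with the 2nd relation (\ref{3-nondeg}), which reads $\tfrac{\epsilon_3\delta_3}{2}(h+\mu-\nu)=\delta_0F_{12}$, and substituting $\epsilon_1\lambda\delta_1=-\epsilon_3\nu\delta_3$ from the 1st relation (\ref{add}) gives
\begin{equation*}
\frac{\epsilon_3\delta_3}{2}\,(h+\mu-\nu)=\epsilon_3\delta_3\,(\mu-\nu),
\end{equation*}
hence $h=\mu-\nu$ unconditionally. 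If you insert this step, your subsequent computations in the $\lambda\nu\neq 0$ case (the ratio $F_{23}=-\tfrac{\nu}{\lambda}F_{12}$, $\delta_2=-(\epsilon_1\lambda+\epsilon_3\rho)$, $(F_{12})^2=-(\epsilon_1\lambda^2+\epsilon_3\mu^2)$, and $\delta_0F_{12}=\epsilon_1\tfrac{\lambda}{\nu}\delta_1(\nu-\mu)$) go through and the exceptional configuration never needs separate treatment.
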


\begin{proof} Since $\delta_{3} \neq 0$, the 2nd relation (\ref{5-nondeg}) expresses  $F_{23}$ in terms of
$F_{12}$: 
\begin{equation}\label{F23-12}
F_{23} = \epsilon_{1} \epsilon_{3} F_{12} \frac{\delta_{1}}{\delta_{3}}.
\end{equation}
Since $\delta_{1} =\delta_{4}$ and $\delta_{3} =\delta_{6}$, the 5th relation (\ref{5-nondeg}) can be written as 
\begin{equation}\label{adaus-2}
\delta_{0} F_{12} = \epsilon_{1} \lambda \delta_{1}  +\epsilon_{3} \mu \delta_{3}.
\end{equation}
Combining relation (\ref{adaus-2}) with the 2nd relation 
(\ref{3-nondeg}) and using the 1st relation (\ref{add}) together with $\delta_{3} \neq 0$ 
we obtain that 
\begin{equation}\label{h-mu-nu}
h = \mu - \nu .
\end{equation}
On the other hand, the 1st relation (\ref{add}) 
and $\delta_{1}\delta_{3}\neq 0$ 
implies that either $\lambda \nu \neq 0$ or  $\lambda = \nu =0.$ 
Assume first that $\lambda \nu \neq 0$.  Then,  from the 1st relation (\ref{add}) 
and relation (\ref{F23-12}), we obtain that 
\begin{equation}\label{delta3-1}
\delta_{3}= - \epsilon_{1} \epsilon_{3} \frac{\lambda}{\nu }\delta_{1},\quad  F_{23} = -\frac{\nu}{\lambda} F_{12}.
\end{equation}
We claim  that we arrive at the class of solutions 3a). To prove the claim, 
we  replace in relations  (\ref{3-nondeg}) (without the 4th,  5th, 6th and 7th relations), (\ref{5-nondeg}),  (\ref{add})
and  (\ref{add-after}), 
$h$, $\delta_{3}$ and  $F_{23}$ by the right-hand  sides of   (\ref{h-mu-nu}) and
(\ref{delta3-1}). 
Recall, from our assumption 
$\delta_{1} \delta_{3} \neq 0$ and 
relation (\ref{lambda-rho}), that 
$\lambda \rho = \mu \nu $. From the 1st relation (\ref{5-nondeg}) we obtain that
$\delta_{2} = -(\epsilon_{1} \lambda +\epsilon_{3} \rho ).$ 
Relations (\ref{add}) 
and relation (\ref{add-after}) give no further restrictions. 
Relations (\ref{5-nondeg}) reduce to 
relation (\ref{adaus-2}), which, from  the 1st relation (\ref{delta3-1}),  is equivalent to 
$\delta_{0} F_{12} = \epsilon_{1} \frac{\lambda}{\nu} \delta_{1} ( \nu - \mu )$. 
Relations (\ref{3-nondeg})  give in addition 
$(F_{12})^{2}  = - (\epsilon_{1} \lambda^{2} +\epsilon_{3} \mu^{2}).$ 
We arrive at the class of solutions 3a), as claimed.
By similar computations, when $\lambda = \nu =0$ we arrive at the class of solutions 4b). 
\end{proof}

\begin{lem}  When $F_{12} F_{23} \neq  0$, 
$\delta_{1} \neq  \delta_{4}$, $\delta_{3} \neq \delta_{6}$ and $\delta_{1} \delta_{3} \neq  0$, the system
 (\ref{3-nondeg}), (\ref{4-nondeg}) and (\ref{5-nondeg}) leads
 to the class of solutions 3c) with  $\delta_{1}\neq 0$ and to  
 the class of solutions  
4a) with $\delta_{0} (\delta_{1} -\delta_{4}) \neq 0$.
\end{lem}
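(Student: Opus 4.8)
The plan is to run the same scheme as in the preceding lemma (the case $\delta_1=\delta_4$, $\delta_3=\delta_6$), now keeping track of the nonzero differences $\delta_1-\delta_4$ and $\delta_3-\delta_6$. As a first step, since $\delta_1\delta_3\neq 0$ and $F_{12}F_{23}\neq 0$, the 2nd relation (\ref{5-nondeg}) expresses $F_{23}$ through $F_{12}$ as in (\ref{F23-12}), and the hypothesis $\delta_1\delta_3\neq 0$ together with (\ref{lambda-rho}) gives $\lambda\rho=\mu\nu$. The 1st relation (\ref{add}), read with $\delta_1\delta_3\neq 0$, then forces the dichotomy $\lambda\nu\neq 0$ or $\lambda=\nu=0$, and I would treat these two branches separately, expecting them to produce families 3c) and 4a) respectively.

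In the branch $\lambda\nu\neq 0$ I would first solve the linear constraints for the divergence: the 1st relation (\ref{add}) gives $\delta_3=-\epsilon_1\epsilon_3\frac{\lambda}{\nu}\delta_1$, the 7th relation (\ref{5-nondeg}) gives $\delta_6=-\epsilon_1\epsilon_3\frac{\lambda}{\nu}\delta_4$, and (\ref{F23-12}) becomes $F_{23}=-\frac{\nu}{\lambda}F_{12}$. The next and most delicate point is to establish $h=\mu-\nu$: subtracting the 8th relation (\ref{3-nondeg}) from the 6th relation (\ref{5-nondeg}) and using the 2nd relation (\ref{add}) yields $(h-\mu+\nu)\delta_4=0$, which settles the matter when $\delta_4\neq 0$. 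Once $h=\mu-\nu$ is known, the 1st relation (\ref{5-nondeg}) gives $\delta_2=\delta_5=-(\epsilon_1\lambda+\epsilon_3\rho)$, the 1st relation (\ref{3-nondeg}) gives $(F_{12})^2=-(\epsilon_1\lambda^2+\epsilon_3\mu^2)$ (whence the inequality $\epsilon_1\lambda^2+\epsilon_3\mu^2<0$), and the 2nd relation (\ref{3-nondeg}) gives $\delta_0F_{12}=\epsilon_1\frac{\lambda}{\nu}\delta_1(\nu-\mu)$; since here $\delta_1\neq 0$ and $\delta_1\neq\delta_4$, this is precisely family 3c) with $\delta_1\neq 0$.

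In the branch $\lambda=\nu=0$, non-unimodularity gives $\epsilon_1\lambda+\epsilon_3\rho=\epsilon_3\rho\neq 0$, hence $\rho\neq 0$, and the 2nd relation (\ref{add}) with $\delta_3\neq 0$ forces $\mu\neq 0$ and a proportionality $\delta_3\propto\delta_1$; the sign $\epsilon_3=-1$ follows because the 1st relation (\ref{3-nondeg}) reduces to $-\frac{\epsilon_3}{2}(h^2+\mu^2)=(F_{12})^2>0$, and $h=\mu$ follows from the 3rd relation (\ref{5-nondeg}). Feeding these back, the remaining relations determine $F_{12}=\epsilon\mu$, $F_{23}=-\epsilon\rho$, $\delta_2=\delta_5=\rho$, $\delta_3=\epsilon_1\frac{\mu}{\rho}\delta_1$ and $\delta_6=\epsilon_1\frac{\mu}{\rho}\delta_4$, while the 5th relation (\ref{5-nondeg}) gives $-\mu\delta_3=\delta_0F_{12}$, hence $\delta_0\neq 0$ and $\delta_3=-\epsilon\delta_0$. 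As $\delta_1\neq\delta_4$ by hypothesis, this is family 4a) with $\delta_0(\delta_1-\delta_4)\neq 0$.

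The main obstacle is the appearance of spurious branches at exactly the points where $h$ must be pinned down: when $\delta_4=0$ in the first branch the identity $(h-\mu+\nu)\delta_4=0$ is vacuous, and in the second branch the factorization coming from the 3rd relation (\ref{5-nondeg}) leaves open a second root with $\epsilon_1\rho^2=\mu^2$. In each such case the extraneous possibility cannot be excluded by the linear-in-$\delta$ equations alone; I would eliminate it using the quadratic relations in (\ref{3-nondeg}) (the squared-$F$ equations together with the 5th relation), which in the spurious branch collapse to an identity forcing $(F_{23})^2<0$ or $(\mu-\nu)^2=0$, thereby either contradicting $F\neq 0$ or collapsing back onto $h=\mu-\nu$ (resp.\ $h=\mu$). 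Once $h$ and $\delta_2=\delta_5$ are fixed, all remaining equations become linear and close up routinely.
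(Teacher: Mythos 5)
Your overall architecture matches the paper's: the dichotomy $\lambda\nu\neq 0$ versus $\lambda=\nu=0$ (obtained from the 1st relation (\ref{add}), or equivalently from the 7th relation (\ref{5-nondeg})), the identities $\delta_3=-\epsilon_1\epsilon_3\frac{\lambda}{\nu}\delta_1$, $\delta_6=-\epsilon_1\epsilon_3\frac{\lambda}{\nu}\delta_4$, $F_{23}=-\frac{\nu}{\lambda}F_{12}$ in the first branch, and the reduction to families 3c) and 4a) are all correct, and your computation $(h-\mu+\nu)\delta_4=0$ (8th relation (\ref{3-nondeg}) minus 6th relation (\ref{5-nondeg}), simplified by the 2nd relation (\ref{add})) checks out and is a genuine shortcut when $\delta_4\neq 0$.

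The gap is exactly at the two places you flag as ``the main obstacle,'' and your proposed repair does not do what you claim. In the branch $\lambda\nu\neq 0$ with $\delta_4=0$ (not excluded by the hypotheses, since only $\delta_1\neq\delta_4$ and $\delta_1\neq 0$ are given), there is \emph{no spurious branch to eliminate}: one must still \emph{derive} $h=\mu-\nu$ and the remaining constraints, and the squared-$F$ equations do not ``collapse to $(F_{23})^2<0$ or $(\mu-\nu)^2=0$.'' The paper's route is to use the 3rd relation (\ref{5-nondeg}) to write $h=\mu+\epsilon_1\frac{\nu}{\lambda}(\delta_2+\epsilon_3\rho)$ and then the 1st relation (\ref{5-nondeg}) together with $\lambda\rho=\mu\nu$ to obtain $\bigl(1+\epsilon_1\epsilon_3(\frac{\nu}{\lambda})^2\bigr)(\delta_2+\epsilon_1\lambda+\epsilon_3\rho)=0$; when the second factor vanishes $h=\mu-\nu$ is immediate, and when the first factor vanishes one still lands in family 3c), with $h=\mu-\nu$ recovered from the 5th relation (\ref{3-nondeg}) (which gives $h=\tilde\epsilon(\lambda-\rho)\neq 0$) and the sign fixed by the 3rd relation (\ref{add}). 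Similarly, in the branch $\lambda=\nu=0$ the second root $\mu^2+\epsilon_1\epsilon_3\rho^2=0$ of the factorization is excluded not by a sign obstruction on $(F_{23})^2$ but by a clash between two computed values of $\delta_2$: the 1st relation (\ref{5-nondeg}) forces $\delta_2=-\epsilon\mu$ while the 3rd relation (\ref{3-nondeg}) forces $\delta_2=\epsilon\mu$, with $\mu\neq 0$. As written, your proof establishes the result only on the locus $\delta_4\neq 0$ of the first branch and asserts the rest; to close it you need to carry out the two case analyses above rather than appeal to a sign contradiction that the equations do not actually produce.
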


\begin{proof} 
From the 7th relation (\ref{5-nondeg}) and $(\delta_{1} -\delta_{4}) (\delta_{3}-\delta_{6})\neq 0$, either 
$\lambda = \nu =0$ or $ \lambda \nu \neq 0.$ Assume first that $\lambda = \nu =0.$ Then $\rho \neq 0$ (since $\mathrm{tr}\, \mathrm{ad}_{v_{2}} = 
\epsilon_{1} \lambda + \epsilon_{3}
\rho \neq 0$), and, from the last relation
(\ref{5-nondeg}),  also $\mu \neq  0.$ The last relation (\ref{5-nondeg}) becomes
\begin{equation}\label{1436}
\frac{\delta_{1} -\delta_{4}}{ \delta_{3} -\delta_{6}} = -\epsilon_{1} \epsilon_{3} \frac{\rho}{\mu }.
\end{equation}
From  relation (\ref{1436}) combined with the 4th relation (\ref{5-nondeg}),  we obtain that $F_{23} = -\frac{\rho }{\mu}F_{12}$
and, from the 1st relation (\ref{5-nondeg}) we obtain that 
\begin{equation}\label{contr}
\delta_{2} = -\frac{\epsilon_{3} \rho h}{\mu }.
\end{equation}
Replacing these expressions of $F_{23}$ and $\delta_{2}$ into the 3rd relation (\ref{5-nondeg}) we obtain that 
\begin{equation}
( \mu - h) (\mu^{2} + \epsilon_{1} \epsilon_{3} \rho^{2}) =0.
\end{equation}
We claim that $h = \mu .$ If, by contradiction, $h \neq \mu$,  then, from the above relation, 
$\epsilon_{3} = - \epsilon_{1}$, $\mu = \epsilon \rho$ (where $\epsilon \in \{ \pm 1\}$), $F_{12} = - \epsilon F_{23}$ 
and the 5th relation (\ref{3-nondeg}) implies that  $ h = - \mu$. From the 1st relation (\ref{3-nondeg}) we conclude that  $\epsilon_{3} =-1$
and $F_{12} = \tilde{\epsilon} \mu$ (where $\tilde{\epsilon}\in \{ \pm 1\}$).  From relation (\ref{contr}) we obtain that
$\delta_{2} = - \epsilon \mu .$ But 
the 3rd relation (\ref{3-nondeg}) implies that
$\delta_{2} = \epsilon \mu .$ We obtain a contradiction. 
We proved that $h =\mu $  and 
we arrive at the class of solutions 4a) with $\delta_{0} (\delta_{1} -\delta_{4}) \neq 0.$ 
Assume now that $\lambda \nu \neq 0.$ 
Since $\lambda\neq 0$,  the 1st relation (\ref{add}) implies that  $\frac{\delta_{1}}{\delta_{3}} = - \epsilon_{1} \epsilon_{3} \frac{\nu}{\lambda}$.
Then   $F_{23} = -\frac{\nu}{\lambda} F_{12}$, and, from the 
3rd relation (\ref{5-nondeg}), 
$h = \mu +\epsilon_{1}\frac{\nu}{\lambda} ( \delta_{2} + \epsilon_{3} \rho ).$
Replacing this expression of $h$ into the 1st relation (\ref{5-nondeg})  and 
using relation  (\ref{lambda-rho}) 
together with $F_{23} = - \frac{\nu}{\lambda} F_{12}$ and $F_{12}\neq 0$, 
we obtain that 
\begin{equation}\label{auxi:eq}
( 1+\epsilon_{1} \epsilon_{3}  (\frac{\nu}{\lambda})^{2}) (\delta_{2} +\epsilon_{1} \lambda +\epsilon_{3} \rho ) =0.
\end{equation}
If the first factor from  the left-hand side of (\ref{auxi:eq}) vanishes, then $ \epsilon_{3} = -\epsilon_{1}$, $\nu = \epsilon \lambda$
(where $\epsilon \in \{ \pm 1\}$),  $ \rho = \epsilon \mu$ (as $\lambda\rho = \mu \nu$),  $F_{23} = - \epsilon F_{12}$  
and $\delta_{2} = \epsilon \epsilon_{1} h$ (from the 1st relation  (\ref{5-nondeg}). 
The 5th relation 
(\ref{3-nondeg})  implies that $h =\tilde{\epsilon} ( \lambda -\rho )$, where $\tilde{\epsilon}\in \{ \pm 1\} .$   In particular, 
$h\neq 0$ as $\lambda  -\rho = \epsilon_{1} ( \epsilon_{1} \lambda +\epsilon_{3} \rho ) \neq 0$ 
and $\delta_{2} = \epsilon \tilde{\epsilon} \epsilon_{1} ( \lambda -\rho ).$ 
From the 3rd relation (\ref{add}), 
we obtain that $\tilde{\epsilon}  = -\epsilon .$ We arrive at the class of solutions 3c) with 
$\delta_{1}\neq 0$, $\nu = \epsilon \lambda$ and $ \epsilon_{3} = - \epsilon_{1}$.  
If the first  factor from the left-hand side of (\ref{auxi:eq})  is non-zero, we arrive, 
by similar computations, to the class of solutions 3c) with $\delta_{1}\neq 0$
and $ 1+\epsilon_{1} \epsilon_{3} ( \frac{\nu}{\lambda})^{2} \neq 0.$ Therefore, when $\lambda \nu \neq 0$ 
we arrive at the class of solutions 3c) with $\delta_{1}\neq 0.$ 
 \end{proof}

\begin{lem} When $F_{12} F_{23} \neq  0$, 
$\delta_{1} \neq  \delta_{4}$, $\delta_{3} \neq \delta_{6}$ and $\delta_{1} = \delta_{3}  =  0$, the system
 (\ref{3-nondeg}), (\ref{4-nondeg}) and (\ref{5-nondeg}) leads to the class of solutions  
 3c) with $\delta_{0} = \delta_{1} =0$  and to the class of solutions 
 4a) with
 $\delta_{0} =0$ and $\delta_{4}\neq 0.$ 
  \end{lem}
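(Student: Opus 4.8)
The plan is to solve the system (\ref{3-nondeg}), (\ref{4-nondeg}), (\ref{5-nondeg}) under the standing hypotheses $F_{12}F_{23}\neq 0$, $\delta_{1}=\delta_{3}=0$ and $\delta_{1}\neq\delta_{4}$, $\delta_{3}\neq\delta_{6}$, which immediately force $\delta_{4}\neq 0$ and $\delta_{6}\neq 0$. First I would record that $(\delta_{1}-\delta_{4})(\delta_{3}-\delta_{6})=\delta_{4}\delta_{6}\neq 0$, so relation (\ref{lambda-rho}) applies and yields $\lambda\rho=\mu\nu$. Next, inserting $\delta_{1}=\delta_{3}=0$ into the $7$th relation (\ref{5-nondeg}) gives $\epsilon_{1}\lambda\delta_{4}+\epsilon_{3}\nu\delta_{6}=0$; since $\delta_{4},\delta_{6}\neq 0$, this splits the analysis into the two cases $\lambda=\nu=0$ and $\lambda\nu\neq 0$, exactly as in the previous lemma.

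In the case $\lambda=\nu=0$ I would argue as follows. The trace condition $\epsilon_{1}\lambda+\epsilon_{3}\rho\neq 0$ forces $\rho\neq 0$, and then the $8$th relation (\ref{5-nondeg}), namely $\epsilon_{1}\mu\delta_{4}+\epsilon_{3}\rho\delta_{6}=0$, forces $\mu\neq 0$ (otherwise $\epsilon_{3}\rho\delta_{6}=0$). Combining the $4$th and $8$th relations (\ref{5-nondeg}) gives $F_{23}=-\tfrac{\rho}{\mu}F_{12}$, and the $1$st relation (\ref{5-nondeg}) then gives $\delta_{2}=-\tfrac{\epsilon_{3}\rho h}{\mu}$. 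Substituting these into the $3$rd relation (\ref{5-nondeg}) produces $(\mu-h)(\mu^{2}+\epsilon_{1}\epsilon_{3}\rho^{2})=0$. The alternative $h\neq\mu$ is excluded exactly as in the proof of the preceding lemma (it forces $\epsilon_{3}=-1$, $h=-\mu$, and then the $1$st and $3$rd relations (\ref{3-nondeg}) return the incompatible values $\delta_{2}=\pm\epsilon\mu$). Hence $h=\mu$; the $5$th relation (\ref{5-nondeg}) then collapses to $\delta_{0}F_{12}=0$, so $\delta_{0}=0$, while the $1$st relation (\ref{3-nondeg}) gives $\epsilon_{3}=-1$ and $(F_{12})^{2}=\mu^{2}$. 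This is precisely the class of solutions $4a)$ with $\delta_{0}=0$ and $\delta_{4}\neq 0$.

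In the case $\lambda\nu\neq 0$ I would use the $7$th relation (\ref{5-nondeg}) to write $\delta_{6}=-\epsilon_{1}\epsilon_{3}\tfrac{\lambda}{\nu}\delta_{4}$ and the $4$th to write $F_{23}=-\tfrac{\nu}{\lambda}F_{12}$. The third relation (\ref{add}) reads $h(\delta_{2}+\epsilon_{1}\lambda+\epsilon_{3}\rho)=0$, and the heart of the argument is to show that both branches force $\delta_{2}=-(\epsilon_{1}\lambda+\epsilon_{3}\rho)$: if instead $h=0$, substituting $F_{23}=-\tfrac{\nu}{\lambda}F_{12}$ and $\lambda\rho=\mu\nu$ into the $1$st and $3$rd relations (\ref{5-nondeg}) yields $(\mu-\nu)(\lambda^{2}+\epsilon_{1}\epsilon_{3}\nu^{2})=0$, and a short check shows that each factor again returns $\delta_{2}=-(\epsilon_{1}\lambda+\epsilon_{3}\rho)$. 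With this value in hand, the $1$st relation (\ref{5-nondeg}) combined with $\lambda\rho=\mu\nu$ gives $h=\mu-\nu$, the $5$th relation (\ref{5-nondeg}) then collapses to $\delta_{0}F_{12}=0$ so that $\delta_{0}=0$, and the $1$st relation (\ref{3-nondeg}) simplifies to $(F_{12})^{2}=-(\epsilon_{1}\lambda^{2}+\epsilon_{3}\mu^{2})$, whence $\epsilon_{1}\lambda^{2}+\epsilon_{3}\mu^{2}<0$. This is the class of solutions $3c)$ with $\delta_{0}=\delta_{1}=0$.

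The main obstacle I anticipate is the bookkeeping in the branch $\lambda\nu\neq 0$, $h=0$, where one must verify that the spurious possibility $\delta_{2}\neq-(\epsilon_{1}\lambda+\epsilon_{3}\rho)$ is genuinely empty rather than producing a new family. The computation hinges on the constraint $(\mu-\nu)(\lambda^{2}+\epsilon_{1}\epsilon_{3}\nu^{2})=0$ together with $\lambda\rho=\mu\nu$: tracking the signs $\epsilon_{1},\epsilon_{3}$ shows that the factor $\lambda^{2}+\epsilon_{1}\epsilon_{3}\nu^{2}=0$ forces $\epsilon_{3}=-\epsilon_{1}$ and $\nu=\pm\lambda$, and then the $5$th relation (\ref{3-nondeg}) forces $\lambda=\pm\mu$, which collapses back into $\mu=\nu$ and hence again into $\delta_{2}=-(\epsilon_{1}\lambda+\epsilon_{3}\rho)$. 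Everything else is a routine, if lengthy, substitution along the lines already carried out in the preceding lemmas.
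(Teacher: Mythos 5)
Your proof is correct and reaches the right two classes, with the same case decomposition ($\lambda=\nu=0$ versus $\lambda\nu\neq 0$) as the paper. The difference is one of ordering, and it costs you some work. The paper opens by reading off $\delta_{0}=0$ directly from the 2nd relation (\ref{3-nondeg}), $\tfrac{\epsilon_{3}\delta_{3}}{2}(h+\mu-\nu)=\delta_{0}F_{12}$, since $\delta_{3}=0$ and $F_{12}\neq 0$; feeding $\delta_{0}=\delta_{1}=\delta_{3}=0$ and $\delta_{4}\neq 0$ into the 6th relation (\ref{5-nondeg}) then gives $h=\mu-\nu$ at the outset. With these two facts in hand, the $\lambda=\nu=0$ case yields $h=\mu$ immediately (no need to rule out $h=-\mu$ via the sign-contradiction from the previous lemma), and in the $\lambda\nu\neq 0$ case the 1st relation (\ref{5-nondeg}) together with $\lambda\rho=\mu\nu$ gives $\delta_{2}=-(\epsilon_{1}\lambda+\epsilon_{3}\rho)$ directly, so the $h=0$ sub-analysis you carry out (including the branch $\lambda^{2}+\epsilon_{1}\epsilon_{3}\nu^{2}=0$, which in fact forces $\epsilon_{1}\lambda+\epsilon_{3}\rho=0$ and is therefore empty by non-unimodularity) is avoided. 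Your longer route is nonetheless sound: the determinant argument for $\lambda\rho=\mu\nu$ from $(\delta_{1}-\delta_{4})(\delta_{3}-\delta_{6})=\delta_{4}\delta_{6}\neq 0$ is valid, the transplanted contradiction excluding $h=-\mu$ only uses relations independent of $\delta_{1},\delta_{3},\delta_{4},\delta_{6}$ and so carries over, and the final identifications with classes 3c) and 4a) check out.
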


\begin{proof}
Since  $F_{12}\neq 0$ and $\delta_{3} =0$,  the 2nd relation
(\ref{3-nondeg}) implies that
$\delta_{0} =0$. The  6th relation  (\ref{5-nondeg}) implies that $ h =\mu - \nu .$ 
As in the previous proof,  either $\lambda\nu \neq 0$ or $\lambda =\nu =0.$ 
By similar computations, when 
$\lambda\nu \neq 0$ we arrive  at the class of solutions 3c) with $\delta_{0} = \delta_{1} =0$.
When $\lambda = \nu =0$ we arrive at the class of solutions 4a) with $\delta_{0}  =0$ and $\delta_{4} \neq 0$.
\end{proof}

The proof of Theorem \ref{non-deg-thm}  is now completed.

Like in Proposition \ref{identif-lie},  we obtain:

\begin{prop}\label{identif-lie-non-deg} Consider the setting from Theorem \ref{non-deg-thm}.\

i) For the class of solutions 1a), 3) and  4),    $\mathfrak{g}$ is isomorphic to $\tau_{2} (\mathbb{R}) \oplus \mathbb{R}$.

ii) For the class of solutions 1b) and 6), $\mathfrak{g}$ is isomorphic to $\tau_{3, \frac{1}{2}} (\mathbb{R})$. 

iii) For the class of solutions 2) and 5), $\mathfrak{g}$ is isomorphic to $\tau_{3, -\frac{1}{2}} (\mathbb{R}).$ 
\end{prop}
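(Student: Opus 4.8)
The plan is to follow verbatim the strategy of Proposition~\ref{identif-lie}: since $\mathfrak{g} = \mathbb{R}\ltimes_{\mathbf{A}}\mathbb{R}^{2}$, its isomorphism class is determined by the conjugacy class of the matrix $\mathbf{A}$ of (\ref{form-A-deg}) up to multiplication by a non-zero scalar, and inspection of the list (\ref{canonical-forms-A}) shows that this class is read off from the pair of eigenvalues of $\mathbf{A}$, taken up to a common non-zero scalar and reordering. The discriminating data are thus: whether $\det\mathbf{A}=\epsilon_{1}\epsilon_{3}(\lambda\rho-\mu\nu)$ vanishes; the sign of the discriminant of the characteristic polynomial $P_{\mathbf{A}}(x)=x^{2}-(\epsilon_{1}\lambda+\epsilon_{3}\rho)x+\epsilon_{1}\epsilon_{3}(\lambda\rho-\mu\nu)$ (which decides real versus complex eigenvalues); and, when the eigenvalues are real, distinct and non-zero, the scale-invariant unordered ratio, normalized so that the canonical parameter $\lambda$ satisfies $0<|\lambda|\le 1$. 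I would therefore substitute the parameter constraints of each of the six classes 1)--6) from Theorem~\ref{non-deg-thm} into $P_{\mathbf{A}}$ and match.

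For claim i) the point is that the classes 1a), 3) and 4) all force $\det\mathbf{A}=0$: in 1a) one has $\rho=\nu=0$, in 4) one has $\lambda=\nu=0$, and in 3) the defining relation $\lambda\rho=\mu\nu$ holds. Since the trace $\epsilon_{1}\lambda+\epsilon_{3}\rho$ is non-zero by non-unimodularity, $\mathbf{A}$ has one zero and one non-zero eigenvalue, so up to scaling it is conjugate to $\mathrm{diag}(1,0)$; hence $\mathfrak{g}\cong\tau_{2}(\mathbb{R})\oplus\mathbb{R}$.

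For claims ii) and iii) I would insert the explicit values of $\lambda,\mu,\nu,\rho,\epsilon_{1},\epsilon_{3}$ and compute the eigenvalues. In 1b) and 6) the matrix $\mathbf{A}$ is triangular, with eigenvalues in ratio $\tfrac12$; normalizing gives $\mathrm{diag}(1,\tfrac12)$, so $\mathfrak{g}\cong\tau_{3,\frac12}(\mathbb{R})$. In 2) a short computation yields trace $\tilde{\epsilon}\mu$, determinant $-2\mu^{2}$ and discriminant $9\mu^{2}$, while in 5) one finds (using $\epsilon_{3}=-\epsilon_{1}$) determinant $-\tfrac{2}{9}(\mu-\nu)^{2}$ and discriminant $(\mu-\nu)^{2}$; in both cases the product of eigenvalues is negative and $\mathrm{tr}^{2}/\det=-\tfrac12$, which forces the ratio to lie in $\{-\tfrac12,-2\}$ independently of the sign choices $\epsilon,\tilde{\epsilon}$. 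Normalizing gives $\mathrm{diag}(1,-\tfrac12)$, so $\mathfrak{g}\cong\tau_{3,-\frac12}(\mathbb{R})$. Along the way one checks that the discriminant is always non-negative and the eigenvalues distinct, so none of these classes produces the complex type $\tau_{3,\lambda}^{\prime}(\mathbb{R})$ or the Jordan type $\tau_{3}(\mathbb{R})$.

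These computations are entirely routine; the only point that needs care is the normalization bookkeeping, namely confirming that the eigenvalue ratios $\tfrac12$ and $-\tfrac12$ are genuine conjugation-and-scaling invariants that distinguish $\tau_{3,\frac12}(\mathbb{R})$ from $\tau_{3,-\frac12}(\mathbb{R})$ (the invariant unordered ratio set being $\{\tfrac12,2\}$ in the former and $\{-\tfrac12,-2\}$ in the latter), and recalling that the excluded class 7) is deferred to the next section as announced before Theorem~\ref{non-deg-thm}.
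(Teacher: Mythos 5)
Your proposal is correct and follows essentially the same route as the paper, which proves this proposition exactly "like in Proposition \ref{identif-lie}", i.e.\ by reading off the conjugacy class of $\mathbf{A}$ from (\ref{form-A-deg}) up to scale via its characteristic polynomial; your explicit verifications (vanishing determinant with non-zero trace for 1a), 3), 4); eigenvalue ratio $\tfrac12$ for the triangular matrices in 1b) and 6); and $\mathrm{tr}^{2}/\det=-\tfrac12$ with positive discriminant for 2) and 5)) all check out against the parameter constraints of Theorem \ref{non-deg-thm}.
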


It remains to describe the odd generalized Einstein metrics which correspond to case 7) of Theorem \ref{non-deg-thm}.
This will be done in the next section.

\subsubsection{The case 7) of Theorem \ref{non-deg-thm}}\label{case-9}

We   need to solve the system  (\ref{syst1}) with
$h =0$ and 
 $\mu$, $\nu$, $F_{12}$, $F_{23}$, $\epsilon_{1} \lambda +\epsilon_{3} \rho$   all  non-zero.  It  is convenient to replace
the variables $\lambda$ and $\rho$ with 
$$
\tilde{\lambda }
:= \delta_{2} +\epsilon_{1} \lambda ,\ \tilde{\rho} := \delta_{2} +\epsilon_{3} \rho .
$$
Since   $\epsilon_{1}\lambda + \epsilon_{3} \rho \neq 0$, we obtain that 
\begin{equation}\label{delta-diferit}
\delta_{2}\neq \frac{1}{2} ( \tilde{\lambda} +\tilde{\rho}).
\end{equation}
In terms of the unknowns $(\mu , \nu , F_{12}, F_{23},  \delta_{2}, \tilde{\lambda}, \tilde{\rho })$, the system
(\ref{syst1}) takes the form 
\begin{align}
\nonumber& F_{23} = F_{12} \epsilon_{3} \frac{\tilde{\lambda}}{\nu}\\
\nonumber& \mu \nu =\epsilon_{1} \epsilon_{3} \tilde{\lambda}\tilde{\rho}\\
\nonumber& (F_{12})^{2} = \frac{\epsilon_{3}}{2} (\nu^{2} - \mu^{2}) +\epsilon_{1} \delta_{2} (\tilde{\lambda} -\delta_{2})\\
\nonumber&  \delta_{2} ( \tilde{\lambda} +\tilde{\rho}) = \tilde{\lambda}^{2} +\tilde{\rho}^{2}  +\frac{\epsilon_{1}\epsilon_{3}}{2} 
(\mu + \nu)^{2}\\
\nonumber& (F_{12})^{2} \frac{\tilde{\lambda}}{\nu} = \frac{\epsilon_{3}}{2} (\nu - \mu)  (\tilde{\lambda} -\tilde{\rho} ) -\frac{\epsilon_{3}\delta_{2}}{ 2}
(\mu + \nu )\\
\label{syst1-var}& (F_{12})^{2} (\frac{\tilde{\lambda}}{\nu} )^{2} =\frac{\epsilon_{1}}{2} (\mu^{2} - \nu^{2}) +\delta_{2} \rho .
\end{align}
(The 2nd, 5th and 6th relation (\ref{syst1-var}) are obtained from the 2nd, 5th and 6th relation (\ref{syst1}),  by 
replacing $F_{23}$ with $F_{12} \epsilon_{3} \frac{\tilde{\lambda}}{\nu}$, see  the 1st relation (\ref{syst1-var})).
Remark, from  the second relation (\ref{syst1-var}) and $\mu\nu\neq 0$,  that $\tilde{\lambda}\tilde{\rho}\neq 0.$
The 1st, 2nd and 3rd relations  (\ref{syst1-var}) determine $F_{23}$,   $\tilde{\rho}$ and $F_{12}$
in terms of  $\tilde{\lambda}$,  $\mu$, $\nu$, 
$\delta_{2}.$ 
We are left with the 4th, 5th and 6th relations (\ref{syst1-var}), which are compatibility relations
between the unknows  $\tilde{\lambda}$, $\mu$,  $\nu$, $\delta_{2}$.

\begin{prop}
i) The expression $\tilde{\lambda}+\epsilon_{1}\epsilon_{3} \frac{\mu \nu}{\tilde{\lambda}}$  is non-zero. 
The 4th relation (\ref{syst1-var}) determines  the  unknown  $\delta_{2}$ in terms of 
$\tilde{\lambda}$, $\mu$, $\nu$,  by  
\begin{equation}\label{compatibility-delta}
\delta_{2}  (\tilde{\lambda}   +\epsilon_{1} \epsilon_{3} \frac{\mu\nu}{\tilde{\lambda }}) = \tilde{\lambda}^{2} + (\frac{\mu \nu}{ \tilde{\lambda}})^{2} 
+\frac{\epsilon_{1} \epsilon_{3}}{2} (\mu + \nu )^{2}.
\end{equation}
ii)  The 5th and 6th relations (\ref{syst1-var}) 
hold if and only if 
\begin{equation}\label{compatibility}
(\mu - \nu   ) (\tilde{\lambda}^{2} +\epsilon_{1}\epsilon_{3}  \nu^{2}) 
\left( \tilde{\lambda}^{2} +(\frac{\mu \nu}{\tilde{\lambda}})^{2} +\frac{\epsilon_{1} \epsilon_{3}}{2} (\mu + \nu )^{2} + (\tilde{\lambda} +\epsilon_{1}\epsilon_{3} 
\frac{\mu\nu}{ \tilde{\lambda}})^{2} \right) =0.
\end{equation}
\end{prop}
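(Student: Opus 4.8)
The plan is to use that the first three relations of (\ref{syst1-var}) already solve for $F_{23}$, $\tilde{\rho}$ and $(F_{12})^{2}$, so that the only genuine constraints left are the fourth, fifth and sixth relations of (\ref{syst1-var}), viewed as equations in $\tilde{\lambda},\mu,\nu,\delta_{2}$. Throughout one has $\tilde{\lambda}\neq 0$ and $\tilde{\rho}\neq 0$, which is forced by the second relation $\mu\nu=\epsilon_{1}\epsilon_{3}\tilde{\lambda}\tilde{\rho}$ together with $\mu\nu\neq 0$.

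For part i), I substitute $\tilde{\rho}=\epsilon_{1}\epsilon_{3}\mu\nu/\tilde{\lambda}$ (from the second relation) into the fourth relation $\delta_{2}(\tilde{\lambda}+\tilde{\rho})=\tilde{\lambda}^{2}+\tilde{\rho}^{2}+\tfrac{\epsilon_{1}\epsilon_{3}}{2}(\mu+\nu)^{2}$. Since $\tilde{\lambda}+\tilde{\rho}=\tilde{\lambda}+\epsilon_{1}\epsilon_{3}\mu\nu/\tilde{\lambda}$ and $\tilde{\rho}^{2}=(\mu\nu/\tilde{\lambda})^{2}$, this is precisely (\ref{compatibility-delta}). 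To see that the coefficient $\tilde{\lambda}+\tilde{\rho}$ is non-zero, so that (\ref{compatibility-delta}) determines $\delta_{2}$, I argue by contradiction: if $\tilde{\lambda}+\tilde{\rho}=0$, then (\ref{delta-diferit}) forces $\delta_{2}\neq 0$, the fourth relation forces $2\tilde{\lambda}^{2}+\tfrac{\epsilon_{1}\epsilon_{3}}{2}(\mu+\nu)^{2}=0$, and the second relation gives $\mu\nu=-\epsilon_{1}\epsilon_{3}\tilde{\lambda}^{2}$; combining these yields $(\mu-\nu)^{2}=0$ and $\epsilon_{3}=-\epsilon_{1}$. Substituting $\mu=\nu$ and $\epsilon_{3}=-\epsilon_{1}$ into the third and sixth relations of (\ref{syst1-var}) and equating the two resulting expressions for $(F_{12})^{2}$ forces $\delta_{2}=0$, a contradiction.

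For part ii), with $\delta_{2}$ fixed by (\ref{compatibility-delta}) and $(F_{12})^{2}=A$ by the third relation (here $A$, $B$, $C$ denote the right-hand sides of the third, fifth and sixth relations), the fifth and sixth relations read $A\,\tilde{\lambda}/\nu=B$ and $A\,(\tilde{\lambda}/\nu)^{2}=C$. Clearing denominators turns them into the polynomial conditions $\Phi:=A\tilde{\lambda}-B\nu=0$ and $A\tilde{\lambda}^{2}-C\nu^{2}=0$. I will in fact show that \emph{each} of these is by itself equivalent to (\ref{compatibility}), so that their conjunction is too. Substituting $\tilde{\rho}=\epsilon_{1}\epsilon_{3}\mu\nu/\tilde{\lambda}$ and splitting $\Phi$ into a $\delta_{2}$-free part and a $\delta_{2}$-part, the $\delta_{2}$-free part collapses (using $\nu^{2}-\mu^{2}=(\nu-\mu)(\nu+\mu)$) to $\mu(\nu-\mu)(\tilde{\lambda}^{2}+\epsilon_{1}\epsilon_{3}\nu^{2})$, and, after inserting $\delta_{2}$ from (\ref{compatibility-delta}), the $\delta_{2}$-part becomes the \emph{same} factor $\mu(\nu-\mu)(\tilde{\lambda}^{2}+\epsilon_{1}\epsilon_{3}\nu^{2})$ multiplied by $N\tilde{\lambda}^{2}/(\tilde{\lambda}^{2}+\epsilon_{1}\epsilon_{3}\mu\nu)^{2}$, where $N$ denotes the right-hand side of (\ref{compatibility-delta}). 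Hence, up to the non-zero factors $\mu$, $\tilde{\lambda}^{2}$ and $(\tilde{\lambda}^{2}+\epsilon_{1}\epsilon_{3}\mu\nu)^{2}=\tilde{\lambda}^{2}(\tilde{\lambda}+\tilde{\rho})^{2}$,
\[
\Phi\ \sim\ (\nu-\mu)\,(\tilde{\lambda}^{2}+\epsilon_{1}\epsilon_{3}\nu^{2})\,\bigl((\tilde{\lambda}^{2}+\epsilon_{1}\epsilon_{3}\mu\nu)^{2}+N\tilde{\lambda}^{2}\bigr),
\]
and the key algebraic identity $(\tilde{\lambda}^{2}+\epsilon_{1}\epsilon_{3}\mu\nu)^{2}+N\tilde{\lambda}^{2}=\tilde{\lambda}^{2}L$, with $L$ the last factor of (\ref{compatibility}), identifies $\Phi=0$ with (\ref{compatibility}). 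The same computation applied to $A\tilde{\lambda}^{2}-C\nu^{2}$ produces an identical factorization and the same conclusion.

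The main obstacle will be the bookkeeping in part ii): verifying that both the $\delta_{2}$-free part and the $\delta_{2}$-coefficient of each of $\Phi$ and $A\tilde{\lambda}^{2}-C\nu^{2}$ carry the common factor $(\nu-\mu)(\tilde{\lambda}^{2}+\epsilon_{1}\epsilon_{3}\nu^{2})$, and establishing the polynomial identity $(\tilde{\lambda}^{2}+\epsilon_{1}\epsilon_{3}\mu\nu)^{2}+N\tilde{\lambda}^{2}=\tilde{\lambda}^{2}L$. These are finite but delicate manipulations with the signs $\epsilon_{1},\epsilon_{3}$ (recalling $\epsilon_{1}^{2}=\epsilon_{3}^{2}=1$) and with the substitution of $\delta_{2}$; the point is to group terms so that the common factors become manifest rather than expanding everything blindly.
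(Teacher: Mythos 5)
Your part i) is correct and essentially follows the paper's route: relation (\ref{compatibility-delta}) is just the 4th relation of (\ref{syst1-var}) after substituting $\tilde{\rho}=\epsilon_{1}\epsilon_{3}\mu\nu/\tilde{\lambda}$, and your contradiction argument for $\tilde{\lambda}+\tilde{\rho}\neq 0$ (forcing $\epsilon_{3}=-\epsilon_{1}$, $\mu=\nu$, $\tilde{\lambda}^{2}=\nu^{2}$, and then $\delta_{2}=0$ and $F_{12}=0$) goes through. The paper extracts the final contradiction from the 5th relation rather than from the 3rd and 6th, but both variants work. Your analysis of the 5th relation in part ii) is also correct, including the identity $(\tilde{\lambda}^{2}+\epsilon_{1}\epsilon_{3}\mu\nu)^{2}+N\tilde{\lambda}^{2}=\tilde{\lambda}^{2}L$, which is immediate from $\tilde{\lambda}(\tilde{\lambda}+\tilde{\rho})=\tilde{\lambda}^{2}+\epsilon_{1}\epsilon_{3}\mu\nu$.

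The gap is in your treatment of the 6th relation. You assert that $A\tilde{\lambda}^{2}-C\nu^{2}$ ``produces an identical factorization'' and is therefore by itself equivalent to (\ref{compatibility}); this is false. Carrying out your own splitting, the $\delta_{2}$-free part of $A\tilde{\lambda}^{2}-C\nu^{2}$ is $\tfrac{\epsilon_{3}}{2}(\nu^{2}-\mu^{2})(\tilde{\lambda}^{2}+\epsilon_{1}\epsilon_{3}\nu^{2})$ and the $\delta_{2}$-part (after inserting $\delta_{2}$ from (\ref{compatibility-delta})) equals this same quantity times $N\tilde{\lambda}^{2}/(\tilde{\lambda}^{2}+\epsilon_{1}\epsilon_{3}\mu\nu)^{2}$, so the 6th relation is equivalent to $(\mu+\nu)(\mu-\nu)(\tilde{\lambda}^{2}+\epsilon_{1}\epsilon_{3}\nu^{2})L=0$: there is an extra factor $(\mu+\nu)$ relative to the 5th. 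Concretely, taking $\epsilon_{1}=\epsilon_{3}=-1$, $\nu=-\mu\neq 0$ and $\tilde{\lambda}$ so that relations 1--4 of (\ref{syst1-var}) hold with $(F_{12})^{2}>0$, the 6th relation is automatically satisfied while the left-hand side of (\ref{compatibility}) is non-zero, so the 6th relation alone is strictly weaker than (\ref{compatibility}). The statement of part ii) survives, and the repair is the one the paper makes: the 5th relation alone is equivalent to (\ref{compatibility}), and (\ref{compatibility}) implies the 6th because the latter is the former multiplied by $\mu+\nu$; hence the conjunction is still equivalent to (\ref{compatibility}). But you must actually compute the factorization of the 6th relation rather than asserting it coincides with that of the 5th.
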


\begin{proof}
Relation (\ref{compatibility-delta}) follows immediately from the 4th relation (\ref{syst1-var}), 
by using $\tilde{\rho } = \epsilon_{1} \epsilon_{3} \frac{\mu \nu}{\tilde{\lambda}}$.
Using the expression of $(F_{12})^{2}$ 
provided by  the 3rd relation (\ref{syst1-var}) and $\tilde{\rho } = \epsilon_{1} \epsilon_{3} \frac{\mu \nu}{\tilde{\lambda}}$, we now rewrite 
the 5th and 6th relation (\ref{syst1-var}). 
We obtain that the 
5th relation (\ref{syst1-var})  is equivalent to
\begin{equation}\label{5-var}
(\delta_{2})^{2}  - \left( \tilde{\lambda} +\epsilon_{1}\epsilon_{3} \frac{(\mu + \nu) \nu}{2\tilde{\lambda}} \right)\delta_{2} +\frac{\epsilon_{1}\epsilon_{3} }{2} (\mu - \nu)
\mu \left( 1+ \epsilon_{1} \epsilon_{3} \frac{\nu^{2}}{ \tilde{\lambda}^{2}} \right)  =0.
\end{equation}
Similarly,  the 6th relation (\ref{syst1-var}) is equivalent to
\begin{equation}\label{6-var}
(\delta_{2})^{2}  ( \epsilon_{1} \tilde{\lambda}^{2}-\epsilon_{3} \nu^{2}) +\epsilon_{1} ( \frac{\mu \nu^{3}}{ \tilde{\lambda}} -\tilde{\lambda}^{3} )\delta_{2}
+\frac{1}{2} ( \epsilon_{3}  \tilde{\lambda}^{2} +\epsilon_{1}  \nu^{2}) (\mu^{2} -\nu^{2}) =0.  
\end{equation}
(Relation (\ref{6-var}) is obtained in the following way:  from  the 3rd relation (\ref{syst1-var}), 
 \begin{equation}\label{mu-nu}
 \frac{1}{2} (\mu^{2} - \nu^{2}) = -\epsilon_{3} (F_{12})^{2} +\epsilon_{1}\epsilon_{3} \delta_{2} ( \tilde{\lambda} -\delta_{2}).
  \end{equation}
Replacing in the  6th relation  (\ref{syst1-var})  the term $\frac{\epsilon_{1}}{2} ( \mu^{2} - \nu^{2})$ by   the right-hand side of  (\ref{mu-nu}) 
multiplied with  $\epsilon_{1}$ and using that $\rho = \epsilon_{3} ( \tilde{\rho} -\delta_{2})$, 
we obtain
\begin{equation}\label{agove} 
(F_{12})^{2} ( (\frac{\tilde{\lambda}}{\nu })^{2} +\epsilon_{1} \epsilon_{3}) = \epsilon_{3} \delta_{2} (\tilde{\lambda} +\tilde{\rho }) - 2\epsilon_{3} ( \delta_{2})^{2}.
\end{equation}
Using now the expression of $\delta_{2} (\tilde{\lambda }
 +\tilde{\rho})$ provided by  the 4th relation (\ref{syst1-var})
 and the 2nd relation (\ref{syst1-var}), 
 we obtain that relation (\ref{agove})  is equivalent to
 \begin{equation}\label{6-var-initial}
 (F_{12})^{2} ( ( \frac{\tilde{\lambda}}{\nu})^{2} +\epsilon_{1}\epsilon_{3} ) =\epsilon_{3} ( \tilde{\lambda}^{2} +\tilde{\rho}^{2} +\tilde{\lambda}\tilde{\rho} )
 +\frac{\epsilon_{1}}{2} (\mu^{2} +\nu^{2}) - 2\epsilon_{3} (\delta_{2})^{2}.
 \end{equation}
 Replacing in (\ref{6-var-initial})   $(F_{12})^{2}$ by  its expression from the 3rd relation (\ref{syst1-var})  and using  \eqref{compatibility-delta} 
together with $\mu \nu = \epsilon_{1}\epsilon_{3} \tilde{\lambda}\tilde{\rho}$  we obtain  relation (\ref{6-var}), as  needed.)\

We now claim that $\tilde{\lambda}+\epsilon_{1}\epsilon_{3} \frac{\mu \nu}{\tilde{\lambda}}\neq 0.$ Assume, by contradiction, that 
 $\tilde{\lambda}+\epsilon_{1}\epsilon_{3} \frac{\mu \nu}{\tilde{\lambda}} = 0.$ 
Then  the right-hand  side of relation (\ref{compatibility-delta}) vanishes, that is, 
\begin{equation}
\epsilon_{3} = -\epsilon_{1},\ \tilde{\lambda}^{2} + (\frac{\mu \nu}{\tilde{\lambda}})^{2} =\frac{1}{2} ( \mu +\nu )^{2}.
\end{equation}
We deduce  that 
$\mu = \nu $, since $\mu \nu =  \tilde{\lambda}^{2} $.   Hence   $\tilde{\lambda}^{2} = \nu^{2}.$  
But then relation (\ref{5-var}) implies that $\delta_{2} =0$ and  the 3rd relation  (\ref{syst1-var}) implies that $F_{12} =0$, which is  a contradiction. 
The  claim is proved.\

Straightforward computations show that  by multiplying relation 
(\ref{5-var}) with  the non-zero  factor $(\tilde{\lambda}+\epsilon_{1}\epsilon_{3} \frac{\mu \nu}{\tilde{\lambda}})^{2}$
and using  relation (\ref{compatibility-delta}) we obtain  relation  (\ref{compatibility}). Similarly,   multiplying 
relation (\ref{6-var}) with $(\tilde{\lambda}+\epsilon_{1}\epsilon_{3} \frac{\mu \nu}{\tilde{\lambda}})^{2}$
and using  relation (\ref{compatibility-delta}) again we obtain 
$$
(\mu^{2} - \nu^{2}   ) (\tilde{\lambda}^{2} +\epsilon_{1}\epsilon_{3}  \nu^{2}) 
\left( \tilde{\lambda}^{2} +(\frac{\mu \nu}{\tilde{\lambda}})^{2} +\frac{\epsilon_{1} \epsilon_{3}}{2} (\mu + \nu )^{2} + (\tilde{\lambda} +\epsilon_{1}\epsilon_{3} 
\frac{\mu\nu}{ \tilde{\lambda}})^{2} \right) =0.
$$
Remark that the left-hand side of the above relation coincides with the left-hand side of 
(\ref{compatibility}), multiplied with  $\mu +\nu .$ 
This concludes our proof.
\end{proof}

We arrive at the description of odd generalized Einstein metrics from case 7) of Theorem \ref{non-deg-thm}.

\begin{cor}\label{case-7}  There are two classes  i) and ii) (see below) of odd generalized  Einstein metrics generated by case 7) of Theorem
\ref{non-deg-thm}. The corresponding  constants $\tilde{\lambda}$,  $\tilde{\rho}$,  $\mu$, $\nu$,  $\epsilon_{1}$, $ \epsilon_{3}$, 
$F_{12}$, $F_{23}$, $\delta_{2}$
satisfy the following conditions:

i)  $  \mu = \nu$ ,  $\mu \tilde{\lambda }\neq 0$,   $\delta_{2} = \tilde{\lambda}  +\epsilon_{1} \epsilon_{3} \frac{\mu^{2}}{\tilde{\lambda}} $
and 
\begin{equation}
(F_{12})^{2} = -(\frac{\mu}{\tilde{\lambda}})^{2} (\epsilon_{3} \tilde{\lambda}^{2} +\epsilon_{1} \mu^{2}) >0;
\end{equation}

ii) $\tilde{\lambda}^{2}\neq \nu^{2}$, 
$\mu  \tilde{\lambda}\neq 0$, 
$\epsilon_{3} = -\epsilon_{1}$, $\delta_{2} =\frac{\epsilon}{2} (\nu - \mu )$ (where 
$\epsilon \in \{ \pm 1\}$) and
\begin{align}
\nonumber&  \tilde{\lambda}^{2} + \frac{\epsilon}{2} ( \nu -\mu  ) \tilde{\lambda } -\mu \nu =0\\
\label{ec-lambda}& (F_{12})^{2} =\epsilon_{1} (\nu - \mu ) \nu ( \epsilon \frac{\mu}{2\tilde{\lambda}} -1) >0.
\end{align}
For both families, 
\begin{equation}
F_{23} = F_{12}\epsilon_{3} \frac{\tilde{\lambda}}{\nu},\ \tilde{\rho } = \epsilon_{1} \epsilon_{3} \frac{ \mu \nu }{\tilde{\lambda}}.
\end{equation}
For the family i), $\mathfrak{g}$ is isomorphic to $\tau_{2} (\mathbb{R}) \oplus \mathbb{R}.$ For the family ii),
$\mathfrak{g}$ is isomorphic to $\tau_{3, \frac{1}{2} }(\mathbb{R})$.
\end{cor}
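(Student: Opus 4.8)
The strategy is to take the system \eqref{syst1-var} together with the compatibility relations \eqref{compatibility-delta} and \eqref{compatibility} established in the preceding proposition, and simply read off the solution families by examining when each factor in \eqref{compatibility} can vanish. Since the constants $\mu$, $\nu$, $F_{12}$, $F_{23}$, $\epsilon_1\lambda+\epsilon_3\rho$ are all nonzero by hypothesis (case 7), and since we showed $\tilde\lambda\tilde\rho\neq 0$ after the second relation of \eqref{syst1-var}, relation \eqref{compatibility} forces at least one of its three factors to vanish:
\begin{equation*}
(\mu-\nu)=0,\qquad \tilde\lambda^{2}+\epsilon_1\epsilon_3\nu^{2}=0,\qquad \text{or}\qquad \Big(\tilde\lambda^{2}+\tfrac{\mu^2\nu^2}{\tilde\lambda^2}+\tfrac{\epsilon_1\epsilon_3}{2}(\mu+\nu)^2+(\tilde\lambda+\epsilon_1\epsilon_3\tfrac{\mu\nu}{\tilde\lambda})^2\Big)=0.
\end{equation*}

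First I would dispatch the third factor. I claim it cannot vanish in a way producing a new solution: the last summand is the square of the quantity $\tilde\lambda+\epsilon_1\epsilon_3\frac{\mu\nu}{\tilde\lambda}$, which we already proved is nonzero, and combining this with relation \eqref{compatibility-delta} forces $\delta_2$ and then $(F_{12})^2$ (via the third relation of \eqref{syst1-var}) into degenerate values — this is exactly the contradiction argument already used to prove $\tilde\lambda+\epsilon_1\epsilon_3\frac{\mu\nu}{\tilde\lambda}\neq 0$. Hence only the first two factors are relevant, and they yield families i) and ii) respectively. For family i), setting $\mu=\nu$ makes $\tilde\rho=\epsilon_1\epsilon_3\frac{\mu^2}{\tilde\lambda}$ and \eqref{compatibility-delta} reduces to the stated value of $\delta_2$; substituting into the third relation of \eqref{syst1-var} gives $(F_{12})^2=-(\frac{\mu}{\tilde\lambda})^2(\epsilon_3\tilde\lambda^2+\epsilon_1\mu^2)$, and one checks positivity forces the inequality. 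For family ii), the factor $\tilde\lambda^2+\epsilon_1\epsilon_3\nu^2=0$ forces $\epsilon_3=-\epsilon_1$ and $\tilde\lambda^2=\nu^2$; feeding this back into \eqref{5-var} (the rewritten 5th relation) produces a quadratic whose solution is $\delta_2=\frac{\epsilon}{2}(\nu-\mu)$, after which the quadratic $\tilde\lambda^2+\frac{\epsilon}{2}(\nu-\mu)\tilde\lambda-\mu\nu=0$ and the formula for $(F_{12})^2$ follow by direct substitution into the 3rd and 2nd relations of \eqref{syst1-var}.

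The expressions $F_{23}=F_{12}\epsilon_3\frac{\tilde\lambda}{\nu}$ and $\tilde\rho=\epsilon_1\epsilon_3\frac{\mu\nu}{\tilde\lambda}$ for both families are immediate from the 1st and 2nd relations of \eqref{syst1-var}, so these require no separate work. The final Lie-algebra identification is the analogue of Propositions \ref{identif-lie} and \ref{identif-lie-non-deg}: I would form the matrix $\mathbf{A}$ from \eqref{form-A-deg}, compute its characteristic polynomial, and match the eigenvalue configuration against the canonical forms \eqref{canonical-forms-A}. For family i) the condition $\mu=\nu$ together with $\lambda\rho=\mu\nu$ (which holds throughout case 7, coming from \eqref{lambda-rho}) yields a zero eigenvalue and a nonzero one, giving $\tau_2(\mathbb{R})\oplus\mathbb{R}$; for family ii) the quadratic constraint on $\tilde\lambda$ combined with $\epsilon_3=-\epsilon_1$ produces two distinct real eigenvalues in ratio $\tfrac12$ (i.e.\ $\lambda$ and $2\lambda$ up to the sign conventions absorbed in $\tilde\lambda,\tilde\rho$), giving $\tau_{3,\frac12}(\mathbb{R})$.

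The main obstacle I anticipate is the algebraic bookkeeping in ruling out the third factor and in verifying that family ii)'s quadratic for $\delta_2$ really has the clean root $\frac{\epsilon}{2}(\nu-\mu)$: this requires carefully tracking the sign factors $\epsilon_1,\epsilon_3$ through \eqref{5-var} and \eqref{6-var} and confirming that the two compatibility relations are not merely necessary but jointly sufficient (i.e.\ that the value of $\delta_2$ extracted from \eqref{5-var} is consistent with \eqref{6-var}, not over-determining the system). The positivity assertions $(F_{12})^2>0$ are not automatic — they are constraints selecting the admissible sign combinations of $\epsilon_1,\epsilon_3$ and the relative magnitudes of $\tilde\lambda,\mu,\nu$ — so I would state them as part of the defining conditions of each family rather than as derived consequences, exactly as the statement does.
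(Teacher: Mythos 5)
Your case analysis of the three factors in (\ref{compatibility}) is inverted, and this is a genuine error rather than a presentational difference. The third factor can vanish nontrivially: your argument that it cannot relies on the last summand $(\tilde{\lambda}+\epsilon_{1}\epsilon_{3}\frac{\mu\nu}{\tilde{\lambda}})^{2}$ being a nonzero square, but the middle term $\frac{\epsilon_{1}\epsilon_{3}}{2}(\mu+\nu)^{2}$ is negative whenever $\epsilon_{3}=-\epsilon_{1}$, so the expression is not a sum of nonnegative terms and can (and does) vanish. In the paper it is precisely the vanishing of this third factor, under $\mu\neq\nu$ and $\tilde{\lambda}^{2}+\epsilon_{1}\epsilon_{3}\nu^{2}\neq 0$, that produces family ii): that vanishing forces $\epsilon_{3}=-\epsilon_{1}$ and can be rewritten as $\frac{1}{2}(\mu-\nu)^{2}=2\bigl(\tilde{\lambda}-\frac{\mu\nu}{\tilde{\lambda}}\bigr)^{2}$, hence $\mu-\nu=2\epsilon\bigl(\tilde{\lambda}-\frac{\mu\nu}{\tilde{\lambda}}\bigr)$, which is the first relation of (\ref{ec-lambda}); relation (\ref{compatibility-delta}) then gives $\delta_{2}=-\bigl(\tilde{\lambda}-\frac{\mu\nu}{\tilde{\lambda}}\bigr)=\frac{\epsilon}{2}(\nu-\mu)$.

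Conversely, the second factor $\tilde{\lambda}^{2}+\epsilon_{1}\epsilon_{3}\nu^{2}=0$, which you propose as the source of family ii), produces no solutions at all: it forces $\epsilon_{3}=-\epsilon_{1}$, $\tilde{\lambda}=\epsilon\nu$, $\tilde{\rho}=-\epsilon\mu$, and then $\delta_{2}=\frac{\epsilon}{2}(\nu-\mu)=\frac{1}{2}(\tilde{\lambda}+\tilde{\rho})$, which violates (\ref{delta-diferit}), i.e.\ the standing hypothesis $\epsilon_{1}\lambda+\epsilon_{3}\rho\neq 0$. Note also that this branch would impose $\tilde{\lambda}^{2}=\nu^{2}$, directly contradicting the condition $\tilde{\lambda}^{2}\neq\nu^{2}$ in the statement of family ii) that you are trying to prove, so the mismatch is already visible at the level of the statement. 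Your treatment of family i), of the formulas for $F_{23}$ and $\tilde{\rho}$, and of the final Lie-algebra identifications is essentially the paper's, but the argument as proposed would fail to produce family ii) and would instead admit a spurious branch that the paper correctly discards.
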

  
\begin{proof}
Starting from  relation  (\ref{compatibility}) we distinguish various cases. If  $\mu = \nu$ then we obtain the family i).
If   $\mu \neq \nu$ and 
$\tilde{\lambda}^{2} +\epsilon_{1} \epsilon_{3} \nu^{2}=0$ we arrive at 
$\delta_{2} =\frac{\epsilon}{2} ( \nu -\mu )$, $\tilde{\lambda} =\epsilon \nu$, 
$\tilde{\rho} = -\epsilon \mu$ 
 and $\epsilon_{3} = -\epsilon_{1}.$ 
But then 
relation (\ref{delta-diferit}) is not satisfied. 
Finally,  if 
$\mu \neq \nu$ and $ \tilde{\lambda}^{2} +\epsilon_{1}\epsilon_{3}  \nu^{2}\neq 0$ then relation (\ref{compatibility}) becomes 
\begin{equation}\label{last_comp:eq}
\tilde{\lambda}^{2} +(\frac{\mu \nu}{\tilde{\lambda}})^{2} +\frac{\epsilon_{1} \epsilon_{3}}{2} (\mu + \nu )^{2} + (\tilde{\lambda} +\epsilon_{1}\epsilon_{3} 
\frac{\mu\nu}{ \tilde{\lambda}})^{2} =0.
\end{equation} 
In particular, $\epsilon_{3} =- \epsilon_{1}$, $\tilde{\lambda}^{2} \neq \nu^{2}$ and 
the above relation reduces to the 1st relation (\ref{ec-lambda}). 
In fact,  relation (\ref{last_comp:eq})  
can be rewritten as 
\begin{eqnarray*} \frac12 (\mu +\nu )^2 &=& \tilde{\lambda}^{2} +(\frac{\mu \nu}{\tilde{\lambda}})^{2} + (\tilde{\lambda} - \frac{\mu\nu}{ \tilde{\lambda}})^{2}\\ 
&=&2\left( \tilde{\lambda} - \frac{\mu \nu}{\tilde{\lambda}}\right)^2+2\mu \nu\end{eqnarray*}
and therefore as
\[ \frac12 (\mu -\nu )^2 = 2\left( \tilde{\lambda} - \frac{\mu \nu}{\tilde{\lambda}}\right)^2,\]
which leads to 
\begin{equation} \label{muminusnu:eq} \mu -\nu  = 2 \epsilon (\tilde{\lambda} - \frac{\mu \nu}{\tilde{\lambda}}),\quad \epsilon \in \{ \pm 1\}.\end{equation} 
That is the 1st relation (\ref{ec-lambda}). From  relation \eqref{last_comp:eq} we also see that 
the right-hand side of \eqref{compatibility-delta} is precisely $-(\tilde{\lambda} - \frac{\mu \nu}{\tilde{\lambda}})^2$, which proves that 
\[ \delta_2 = -(\tilde{\lambda} - \frac{\mu \nu}{\tilde{\lambda}})  =\frac{\epsilon}{2} (\nu - \mu ),\]
where in the 2nd  equality we used relation (\ref{muminusnu:eq}). 
We now prove the 2nd relation (\ref{ec-lambda}). 
From the 3rd relation  
(\ref{syst1-var}) we obtain 
\begin{align}
\nonumber(F_{12})^{2} & =  - \frac{\epsilon_{1}}{2} (\nu ^{2} - \mu^{2}) +\frac{\epsilon_{1} \epsilon}{2} ( \nu - \mu ) \left( \tilde{\lambda} - 
\frac{\epsilon}{2} (\nu -\mu )\right)\\
\nonumber& = \frac{\epsilon_{1}}{2} (\nu -\mu  ) ( \epsilon \tilde{\lambda}  -\frac{\mu}{2}  - \frac{3\nu}{2} )
\end{align}
where  in the 1st  relation above we used that  $\epsilon_{3} = -\epsilon_{1}$ and $\delta_{2} = \frac{\epsilon}{2} (\nu - \mu ).$ 
On the other hand, the 1st  relation (\ref{ec-lambda}) implies that 
\begin{equation}
\frac{1}{2}  ( \epsilon \tilde{\lambda} -\frac{\mu}{2}  - \frac{3\nu}{2} )= \nu (\epsilon \frac{\mu}{2\tilde{\lambda}} -1)
\end{equation} 
The 2nd  relation (\ref{ec-lambda}) follows 
and we  obtain 
the family ii).

We remark that 
relation  (\ref{delta-diferit}) is satisfied for both families i) and ii). For example, for the family ii), 
$$
\frac{1}{2} ( \tilde{\lambda} +\tilde{\rho }) =\frac{1}{2} ( \tilde{\lambda} + \epsilon_{1} \epsilon_{3} \frac{\mu \nu}{ \tilde{\lambda  }})=   
 \frac{\epsilon}{4} (\mu - \nu )\neq \delta_{2} 
$$ 
as  $\mu \neq \nu$.\

The statements on the Lie algebra $\mathfrak{g}$ are  straightforward,
by using that the matrix $\mathbf{A}$ is given by (\ref{form-A-deg})
and 
\begin{equation}
\lambda = \epsilon_{1} ( \tilde{\lambda} -\delta_{2}),\ \rho = \epsilon_{1}\frac{ \mu \nu }{ \tilde{\lambda}} -\epsilon_{3} \delta_{2}.
\end{equation} 
For example, for the family ii) the matrix $\mathbf{A}$ is given by
\begin{equation}
\mathbf{A} = \left(\begin{tabular}{cc}
$\tilde{\lambda} - \frac{\epsilon}{2} (\nu - \mu )$ & $ \epsilon_{1} \nu$\\
$ -\epsilon_{1} \mu $ & $ -\frac{\mu\nu}{\tilde{\lambda}} - \frac{\epsilon}{2} (\nu - \mu )$
\end{tabular}\right) .
\end{equation}
Its  the characteristic polynomial  is given by 
\begin{equation}
P_{\mathbf{A}}(x) = x^{2} +\frac{3\epsilon}{2} (\nu - \mu) x +\frac{1}{2} ( \nu - \mu )^{2}.
\end{equation}
As the roots of $P_{\mathbf{A}}$ are given by $x_{\pm}  = \frac{1}{4} (3\epsilon \pm 1) (\mu - \nu )$ 
and $\mu \neq \nu$, 
$\epsilon \in \{ \pm 1\}$, 
we obtain that  $\mathfrak{g}$ is isomorphic to $\tau_{3,\frac{1}{2}} (\mathbb{R})$, as claimed.
\end{proof}

V. Cort\'es: vicente.cortes@math.uni-hamburg.de\

Department of Mathematics and Center for Mathematical Physics, University of Hamburg,  Bundesstrasse 55, D-20146, Hamburg, Germany.\\

L. David: liana.david@imar.ro\

Institute of Mathematics  `Simion Stoilow' of the Romanian Academy,   Calea Grivitei no. 21,  Sector 1, 010702, Bucharest, Romania.

\end{document}